\newcites{app}{Reference}
\newcommand{\assign}{:=}
\newcommand{\cdummy}{\cdot}
\newcommand{\tmop}[1]{\ensuremath{\operatorname{#1}}}
\newcommand{\smod}{\texttt{SMOD}}
\newcommand{\spl}{\texttt{SPL}}
\newcommand{\spp}{\texttt{SPP}}
\newcommand{\sgd}{\texttt{SGD}}
\newcommand{\segd}{\texttt{SEGD}}
\newcommand{\sepl}{\texttt{SEPL}}
\newcommand{\sepp}{\texttt{SEPP}}
\newcommand{\extra}{\texttt{SEMOD}}
\global\long\def\vertiii#1{\left\vert \kern-0.25ex  \left\vert \kern-0.25ex  \left\vert #1\right\vert \kern-0.25ex  \right\vert \kern-0.25ex  \right\vert }%
\global\long\def\brbra#1{\big(#1\big)}%
\global\long\def\til#1{\tilde{#1}}%
\global\long\def\trans{\textrm{T}}%
\global\long\def\Expe{\mathbb{E}}%
\global\long\def\argmin{\operatornamewithlimits{argmin}}%
\global\long\def\sign{\operatornamewithlimits{sign}}%
\global\long\def\prox{\mathrm{prox}}%
\global\long\def\and{\mathrm{and}}%
\global\long\def\raw{\rightarrow}%
\global\long\def\vep{\varepsilon}%
\global\long\def\Ebb{\mathbb{E}}%
\global\long\def\Rbb{\mathbb{R}}%
\global\long\def\Ncal{\mathcal{N}}%
\global\long\def\Ocal{\mathcal{O}}%
\global\long\def\Scal{\mathcal{S}}%
\global\long\def\Tcal{\mathcal{T}}%
\global\long\def\Xcal{\mathcal{X}}%
\theoremstyle{plain}
\newtheorem{thm}{\protect\theoremname}[section]
\theoremstyle{definition}
\theoremstyle{plain}
\newtheorem{lem}[thm]{\protect\lemmaname}
\theoremstyle{remark}
\newtheorem*{rem*}{\protect\remarkname}
\newtheorem{rem}{\protect\remarkname}
\providecommand{\definitionname}{Definition}
\providecommand{\lemmaname}{Lemma}
\providecommand{\remarkname}{Remark}
\providecommand{\theoremname}{Theorem}
\global\long\def\trans{^{\mathrm T}}
\providecommand{\definitionname}{Definition}
\providecommand{\lemmaname}{Lemma}
\providecommand{\remarkname}{Remark}
\providecommand{\theoremname}{Theorem}
\newif\ifOneCol
\author{%
  Qi Deng$^1$\footnote{QD was partially supported by National Natural Science Foundation of China (Grant 11831002, 72150001)}~~~~~~~~~~~~~~~~~~Wenzhi Gao$^2$  \\\\
  Shanghai University of Finance and Economics\\ \\
  $^1$\texttt{qideng@sufe.edu.cn}~~~~$^2$\texttt{gwz@163.shufe.edu.cn}  \\
  }
\date{}
\begin{document}

\title{Minibatch and Momentum Model-based Methods for Stochastic Weakly Convex Optimization}

\maketitle

\begin{abstract}
Stochastic model-based methods have received increasing attention
lately due to their appealing robustness to the stepsize selection
and provable efficiency guarantee.
We make two important extensions for  improving model-based methods on stochastic weakly convex optimization. 
First, we propose new minibatch model-based methods by involving a set of samples to approximate the model function in each iteration. For the first time, we show that stochastic algorithms achieve linear speedup over the batch size even for non-smooth and non-convex (particularly, weakly convex) problems. To this end, we develop a novel sensitivity analysis of the proximal mapping involved in each algorithm iteration. Our analysis appears to be of independent interests in more general settings.
Second, motivated by the success of momentum stochastic gradient descent, we propose a new stochastic extrapolated model-based method, greatly extending the classic Polyak momentum technique to a wider class of stochastic algorithms for weakly convex optimization. The rate of convergence to some natural stationarity condition is established over a fairly flexible range of extrapolation terms.

While mainly focusing on weakly convex optimization, we also extend our work to convex optimization. 
We apply the minibatch and extrapolated model-based methods to stochastic convex optimization, for which we provide a new complexity bound and promising linear speedup in batch size. Moreover, an accelerated model-based method based on Nesterov's momentum is presented, for which we establish an optimal complexity bound for reaching optimality.
\end{abstract}

\section{Introduction}

In this paper, we are interested in the following stochastic optimization problem:
\begin{mini}
{x\in\Xcal}{f(x)= \Expe_{\xi\sim\Xi} \big[f(x,\xi)\big]}{}{}\label{prob:main}
\end{mini}
where $f(\cdot,\xi)$ stands for the loss function,
 sample $\xi$ follows certain distribution $\Xi$, and $\Xcal$
is a closed convex set. We assume that $f(\cdot,\xi)$ is  weakly convex, namely, the sum of $f(x,\xi)$ and a  quadratic function $\frac{\lambda}{2}\|x\|^2$ is convex ($\lambda>0$). This type of non-smooth non-convex functions can be found in a variety of machine learning applications,  such as phase retrieval, robust PCA and low rank decomposition \citep{charisopoulos2019low}. To solve problem~(\ref{prob:main}), we consider the stochastic model-based method ({\smod}, \citep{duchi2018stochastic,davis2019stochasticweakly,asi2019the}), which comprises a large class of stochastic algorithms (including stochastic (sub)gradient descent, proximal point, among others).
Recent work \citep{duchi2018stochastic,davis2019stochasticweakly} show that {\smod} exhibits  promising convergence property: both asymptotic convergence and
rates of convergence to certain stationarity measure have been established for the {\smod} family. In addition, empirical results~\citep{davis2019stochasticweakly, duchi2019solving} indicate that {\smod} exhibits remarkable robustness to hyper-parameter tuning and often outperforms {\sgd}.

Despite much recent progress, our understanding of model-based methods for weakly convex optimization is still quite limited. Particularly, it  is still unknown whether {\smod} is competitive against modern {\sgd} used in practice.
We highlight some important remaining questions. 
First, despite the appealing robustness and stable convergence, the {\smod} family is sequential in nature. 
It is unclear whether  minibatching, which is immensely used in training learning models, can improve the performance of {\smod} when the problem is non-smooth.
Particularly, the current best complexity bound $\Ocal(\frac{L^{2}}{\vep^{4}})$ from \citep{davis2019stochasticweakly}, which is regardless of batch size,
is unsatisfactory.
Were this bound tight, 
a sequential algorithm (using one sample per iteration) would be optimal: it offers the highest processing speed per iteration as well as the
best iteration complexity. 
Therefore, it is  crucial to know whether minibatching can
 improve the complexity bound of the {\smod} family or the current bound is tight.
 Second, in modern applications, momentum technique has been playing a vital role in large-scale non-convex optimization (see \citep{yan2018a,RN312}). 
In spite of its effectiveness, to the best of our knowledge,
momentum technique has been provably efficient only in \textbf{1)} unconstrained smooth optimization \citep{liu2020improved,defazio2020understanding, gitman2019understanding} and \textbf{2)} non-smooth optimization with a simple constraint \citep{pmlr-v119-mai20b}, which constitute only a portion of the interesting applications.
From the practical aspect, it is peculiarly desirable to know whether momentum  technique is applicable beyond in {\sgd} and whether it can benefit the {\smod} algorithm family in the non-smooth and non-convex setting.

\textbf{Contributions.}
Our work is motivated by the aforementioned challenge to make {\smod} more practically efficient. We summarize the contributions as follows. First, we extend {\smod} to the minibatch setting and develop sharper rates of convergence to stationarity. Leveraging the tool of algorithm stability (\citep{bousquet2002stability, shalev2010learnability,hardt2016train}), we provide a nearly complete recipe on when minibatching would be helpful even in presence of non-smoothness. Our theory implies that  stochastic proximal point  and stochastic prox-linear are inherently parallelizable: both algorithms achieve linear speedup over the minibatch size.
To the best of our knowledge, this is the first time
  that these minibatch stochastic algorithms are proven to exhibit such an acceleration even for \emph{non-smooth} and \emph{non-convex} (particularly, \emph{weakly convex}) optimization.
 Moreover, our theory recovers the complexity of minibatch (proximal) {\sgd} in~\citep{davis2019stochasticweakly}, showing that  (proximal) {\sgd} enjoys the same linear speedup by minibatching for smooth composite problems with non-smooth regularizers or with constrained domain. 
 
Second, we present new extrapolated model-based methods by incorporating a Polyak-type momentum term. 
We develop a unified Lyapunov analysis to show that a worst-case complexity of $\Ocal(1/\vep^{4})$ holds for all momentum {\smod} algorithms. 
To the best of our knowledge, these are the first complexity results of momentum  stochastic  prox-linear and stochastic proximal point  for non-smooth non-convex optimization.
Since our analysis offers complexity guarantees for momentum {\sgd} and its proximal variant, our work appears to be more general than a recent study~\citep{pmlr-v119-mai20b}, which only proves the convergence of momentum projected {\sgd}.
Proximal {\sgd} is more advantageous in composite optimization, where the non-smooth term is often involved via its proximal operator rather than the subgradient.
For example, in the Lasso problem, it is often favorable to invoke the proximal operator of $\ell_1$ function (Soft-Thresholding) to enhance solution sparsity.
We summarize  the complexity results in Table~\ref{tab:complexity}.

Third, we develop  new convergence results of {\smod} for convex optimization, showing that minibatch extrapolated {\smod} achieves a promising linear speedup over the batch size under some mild condition. Specifically, to obtain some $\vep$-optimal solution, our proposed method exhibits an $\Ocal(1/\vep+1/(m\vep^2))$ complexity bound in the worst case. Moreover, we develop a new minibatch {\smod} based on Nesterov's momentum, achieving the $\Ocal(1/\vep^{1/2}+1/(m\vep^2))$ optimal complexity bound. Note that a similar complexity result, explicitly relying on the smoothness assumption, has been shown in a recent study~\citep{chadha2021accelerated}. Compared to this work, our analysis makes weaker assumptions, showing that  smoothness is not a must-have for many model-based algorithms, such as {\spl} and {\spp}, to get sharper complexity bound.

\begin{table}[!h]
\centering

\caption{Complexity of  {\smod} to
reach $\protect\Ebb\,\|\nabla_{1/\rho}f\|\le\protect\vep$ (M: minibatch; E: Extrapolation, $m$: batch size) \label{tab:complexity}
}
\begin{tabular}{c|c|c|c}
\hline 
\hline 
Algorithms  & Problem  & Current Best  & Ours\tabularnewline
\hline 
{M + \sgd}  & $f$: non-smooth  & $\Ocal(1/\vep^{4})$\cite{davis2019stochasticweakly}  & $\Ocal(1/\vep^{4})$\tabularnewline
M + Prox. {\sgd}  & $f=\ell+\omega$; $\ell$:smooth  & $\Ocal(1/(m\vep^{4})+1/\vep^{2})$\cite{davis2019stochasticweakly}  & $\Ocal(1/(m\vep^{4})+1/\vep^{2})$ \tabularnewline
M + \spl /\spp  & $f$: non-smooth  & $\Ocal(1/\vep^{4})$\cite{davis2019stochasticweakly}  & $\Ocal(1/(m\vep^{4})+1/\vep^{2})$ \tabularnewline
\hline 
{E + {\sgd}}  & $f$: non-smooth  & $\Ocal(1/\vep^{4})$\cite{pmlr-v119-mai20b}  & $\Ocal(1/\vep^{4})$\tabularnewline
{E + Prox. {\sgd}}  & $f=\ell+\omega$; $\ell$:smooth  & ---  & $\Ocal(1/\vep^{4})$ \tabularnewline
E + \spl /\spp  & $f$: non-smooth  & ---  & $\Ocal(1/\vep^{4})$\tabularnewline
\hline 
{M + E + {\sgd}}  & $f$: non-smooth  & $\Ocal(1/\vep^{4})$\cite{pmlr-v119-mai20b}  & $\Ocal(1/\vep^{4})$\tabularnewline
{M + E + Prox. {\sgd}}  & $f=\ell+\omega$; $\ell$:smooth  & ---  & $\Ocal(1/(m\vep^{4})+1/\vep^{2})$ \tabularnewline
M + E + \spl /\spp  & $f$: non-smooth  & ---  & $\Ocal(1/(m\vep^{4})+1/\vep^{2})$\tabularnewline
\hline 
\hline
\end{tabular}
\end{table}

\textbf{Other related work.}
 For smooth and composite optimization, it is well known that {\sgd}  can be linearly accelerated by minibatching (c.f. \citep{dekel2012optimal,saeed-lan-nonconvex-2013,takavc2015distributed}).
 Minibatch model-based methods have been studied primarily in the convex setting.
\citet{asi2020minibatch} investigates the speedups of minibatch stochastic model-based methods in the convex smooth, restricted strongly convex and convex interpolation settings, respectively. 
Since their assumptions differ from ours, the technique does not
readily apply to the non-convex setting. 
\citet{chadha2021accelerated} studies the accelerated minibatch model-based methods for convex smooth and convex interpolated problems. 
The interpolation setting, where the model can perfectly fit the data, is not considered in our paper.
Algorithm stability~\citep{bousquet2002stability,shalev2010learnability}---an important technique for analyzing the generalization performance of stochastic algorithms \citep{hardt2016train, bassily2020stability}, is the key tool to obtain some of our convergence results.
In contrast to the traditional work, our paper employs the stability argument to obtain sharper optimization convergence rates (with respect to the batch size). 
See Section~\ref{sec:minibatch}.
As noted by an anonymous reviewer, a similar idea of using stability analysis was proposed by~\citet{wang2017memory}, albeit with a different motivation from distributed stochastic optimization.
Robustness and fast convergence of model-based methods have been shown on various
 statistical learning problems \citep{charisopoulos2019low,duchi2019solving,asi2019the,berrada2019deep,frerix2018proximal,botev2017practical}.
 \citet{drusvyatskiy2018efficiency}
give a complete complexity analysis of the accelerated proximal-linear
methods for deterministic optimization. \citet{zhang2021stochastic} further improve the convergence rates of prox-linear methods on certain finite-sum and stochastic problems by using variance-reduction.  Momentum and accelerated methods for convex stochastic optimization can be referred from \citep{loizou2020momentum, sebbouh2020convergence}.
The study~\citep{defazio2020understanding, liu2020improved, yan2018a}
develop the convergence rate of stochastic momentum method
for smooth non-convex optimization.

\section{Background\label{sec:background}}
Throughout the paper, we use $\|\cdot\|$ to denote the Euclidean
norm and $\langle\cdot,\cdot\rangle$ to denote the Euclidean inner
product.
We assume that $f(x)$ is bounded below. i.e., $\min_{x}f(x)>-\infty$.
The subdifferential ${\partial}f(x)$ of function $f(x)$ is the
set of vectors $v\in\Rbb^{d}$ that satisfy:
$
f(y)\ge f(x)+\langle v,y-x\rangle+o(\|x-y\|),\text{as } y\raw x.
$
Any such vector in $\partial f(x)$ is called a subgradient and is denoted by $f^{\prime}(x)\in\partial f(x)$ for simplicity. 
We say that a point $x$ is stationary if $0\in \partial f(x) + N_\Xcal(x)$, where the normal cone $N_\Xcal(x)$ is defined as $N_\Xcal(x)\triangleq\{d: \langle d, y-x\rangle \le 0, \forall y\in \Xcal\}$. For a set $S$, define the set distance to $0$ by: $\|\Scal\|_{-}\triangleq\inf \{\|x-0\|, x\in \Scal\}$. It is natural to use the quantity $\|\partial f(x)+N_\Xcal(x)\|_{-}$ to measure the stationarity of point $x$.

\textbf{Moreau-envelope.} The $\mu$-Moreau-envelope
of $f$ is defined  by
$
f_{\mu}(x)\triangleq\min_{y\in\Xcal}\big\{ f(y)+\frac{1}{2\mu}\|x-y\|^{2}\big\}
$
and the  proximal mapping associated with $f(\cdot)$ is defined by
$
\prox_{\mu f}(x)\triangleq\argmin_{y\in\Xcal}\big\{ f(y)+\frac{1}{2\mu}\|x-y\|^{2}\big\}.
$
Assume that
$f(x)$ is $\lambda$-weakly convex, then for $\mu<\lambda^{-1}$, 
the Moreau envelope $f_{\mu}(\cdot)$ is differentiable and its gradient
is $\nabla f_{\mu}(x)=\mu^{-1}(x-\prox_{\mu f}(x))$.

The {\smod} family  iteratively computes the  proximal
map associated with a model function $f_{x^{k}}(\cdot,\xi_{k})$:%
\begin{equation}
x^{k+1}=\argmin_{x\in\Xcal}\left\{ f_{x^{k}}(x,\xi_{k})+\frac{\gamma_{k}}{2}\|x-x^{k}\|^{2}\right\} ,\label{eq:model-update}
\end{equation}
where $\{\xi_{k}\}$ are i.i.d. samples. Typical algorithms
and the accompanied models are described below.	

\textbf{Stochastic (Proximal) Gradient Descent}: consider the composite function $f(x,\xi)=\ell(x,\xi)+\omega(x)$ where  $\ell(x,\xi)$ is a data-driven and weakly-convex loss term and $\omega(x)$ is a convex regularizer such as $\ell_1$-penalty. {\sgd} applies the model function:
\begin{equation}
f_{y}(x,\xi)=\ell(y,\xi)+\big\langle \ell^{\prime}(y,\xi),x-y\big\rangle +\omega(x).\label{eq:model-linear}
\end{equation}

\textbf{Stochastic Prox-linear ({\spl})}: consider the composition function
$f(x,\xi)=h(C(x,\xi))$ where $h(\cdot,\xi)$ is convex continuous
 and $C(x,\xi)$ is a continuously differentiable map. We
perform partial linearization to obtain the model
\begin{equation}
f_{y}(x,\xi)=h\bigl(C(y,\xi)+\langle\nabla C(y,\xi),x-y\rangle\bigr).\label{eq:model-prox-linear}
\end{equation}

\textbf{Stochastic Proximal Point ({\spp})}: compute~(\ref{eq:model-update})
with full stochastic function:
\begin{equation}
f_{y}(x,\xi)=f(x,\xi).\label{eq:model-full}
\end{equation}

Throughout the paper, we assume that $f(x,\xi)$ is continuous and $\mu$-weakly convex, and that the model
function $f_{x}(\cdot,\cdot)$ satisfies the following assumptions \citep{davis2019stochasticweakly}.
\begin{enumerate}[label=\textbf{A\arabic*:},ref=A\arabic*]
\item \label{ass:model-weakly-cvx}For any $\xi\sim\Xi$, the model function $f_{x}(y,\xi)$
is $\lambda$-weakly convex in $y$ ($\lambda\ge0$). 
\item \label{ass:unbiased}Tightness condition: 
$
f_{x}(x,\xi)=f(x,\xi),\ \forall x\in\Xcal,\,  \xi\sim\Xi.
$
\item \label{ass:one-side-quad}One-sided quadratic approximation: 
$
f_{x}(y,\xi)-f(y,\xi)  \le\frac{\tau}{2}\|x-y\|^{2},\ \forall x,y\in\Xcal, \xi\sim\Xi.
$
\item \label{ass:model-lip}Lipschitz continuity:
 There exists $ L>0$ that
$
 f_{x}(z,\xi)-f_{x}(y,\xi) \leq L\thinspace\|z-y\|,$ for any $x, y, z\in\Xcal,\, \xi\sim\Xi. \\
$
\end{enumerate}
\begin{rem}
Assumption~\ref{ass:unbiased} is quite standard and will be used only in the convergence proof.
Combining
\ref{ass:model-weakly-cvx} and \ref{ass:one-side-quad},
we immediately have that $f(x,\xi)$ is  $(\lambda+\tau)$-weakly convex.  Thus, it suffices to assume that $\mu<\tau+\lambda$. 
Assumptions~\ref{ass:unbiased}-\ref{ass:model-lip} can be slightly relaxed by replacing the uniform bound with a bound on expectation  over $\xi$, leading to only a minor adjustment to the  analysis. 
\end{rem}
Denote $\hat{x}\triangleq\prox_{f/\rho}(x)=\argmin_y\big\{f(y)+\frac{\rho}{2}\|y-x\|^2\big\}$
for some $\rho>\mu$. \citet{davis2019stochasticweakly} revealed a striking feature of Moreau envelope to characterize stationarity:
\begin{equation*}
\|\hat{x}-x\| =\rho^{-1}\|\nabla f_{1/\rho}(x)\|,\ \textrm{and}\ 
\|\partial f(\hat{x})+N_{\Xcal}(\hat{x})\|_{-} \le\|\nabla f_{1/\rho}(x)\|.
\end{equation*}
Namely,
a point $x$ with small gradient norm $\|\nabla f_{1/\rho}(x)\|$ stays
in the proximity of a nearly-stationary point $\hat{x}$. With this
observation, they show the first complexity result of {\smod} for non-smooth non-convex optimization: 
$
\min_{1\le k\le K}\Ebb[\|\nabla f_{1/\rho}(x^{k})\|]^{2}\le\Ocal(\frac{L}{\sqrt{K}}).
$
Note that this rate is regardless of the size of minibatches since it does not explicitly use any information of the samples other than the Lispchitzness of the model function. 
Due to this limitation, it remains unclear whether  minibatching can further improve the  convergence rate of {\smod}.

\section{{\smod} with minibatches\label{sec:minibatch}}

In this section, we present a minibatch {\smod}
method which takes a small batch of i.i.d. samples to estimate
the model function. The overall procedure is detailed
in Algorithm\,\ref{alg:mini-batch}. Within each iteration, Algorithm\,\ref{alg:mini-batch}
forms a stochastic model function $f_{x^{k}}(\cdot,B_{k})=\frac{1}{m_{k}}\sum_{i=1}^{m_{k}}f_{x^{k}}(x,\xi_{k,i})$
parameterized at $x^{k}$ by sampling over $m_{k}$ i.i.d. samples
$B_{k}=\xi_{k,1},\ldots,\xi_{k,m_{k}}$. Then it performs proximal update to get the next iterate $x^{k+1}$.  We will illustrate  the main convergence results of Algorithm~\ref{alg:mini-batch} and leave all the proof details in  Appendix sections.
But first, let us present an additional assumption.
\begin{enumerate}[label=\textbf{A\arabic*:},ref=A\arabic*,start=5]
\item \label{ass:two-sides-quad}Two-sided quadratic bound: for any $x,y\in \Xcal$,  $\xi\sim \Xi$,
$
\big\vert f_{x}(y,\xi)-f(y,\xi)\big\vert\le\frac{\tau}{2}\|x-y\|^{2}.
$\\

\end{enumerate}

\begin{rem}
 Assumption~\ref{ass:two-sides-quad} is   vital  for our improved convergence analysis. While it is slightly stronger than \ref{ass:one-side-quad}, \ref{ass:two-sides-quad} is indeed satisfied by the {\smod} family in most contexts: \textbf{1)} For {\spp}, \ref{ass:two-sides-quad} is trivially satisfied by taking $f_x(y,\xi)=f(y,\xi)$.
 \textbf{2)} For {\spl}, we minimize a composition function $f(x,\xi)=h(C_\xi(x))$ where $h(\cdot)$ is a $c_1$-Lipschitz convex function and $C_\xi(\cdot)$ is a $c_2$-Lipschitz smooth map.
 In view of (\ref{eq:model-prox-linear}), \ref{ass:two-sides-quad} is verified with
$
|f_x(y,\xi)-f(y,\xi)| \le  c_1\big\|C_\xi(y)-C_\xi(x)-\nabla C_\xi(x)\trans(y-x)\big\| \le  \frac{c_1 c_2}{2} \|x-y\|^2.
$
\textbf{3)} For {\sgd}, \ref{ass:two-sides-quad} is satisfied if $\ell(\cdot,\xi)$ is $c_3$-Lipschitz smooth for some $c_3>0$, as
$
|f_x(y,\xi)-f(y,\xi)| \le  |\ell(y,\xi)-\ell(x,\xi)-\nabla \ell(x,\xi)\trans(y-x)| \le  \frac{c_3}{2} \|x-y\|^2.
$
 We note that \ref{ass:two-sides-quad} is not satisfied by {\sgd} when the loss $\ell(\cdot,\xi)$ is also non-smooth. Unfortunately, there seems to be little hope to accelerate {\sgd} in such a case since the convergence rate of {\sgd} already matches the rate of deterministic subgradient method.
\end{rem}

\begin{algorithm}[tb]
   \caption{Stochastic Model-based Method with Minibatches ({\smod})\label{alg:mini-batch}}
   \label{alg:mini-batch}
\begin{algorithmic}
   \STATE {\bfseries Input:}  $x^{1}, \gamma_k$;
   \FOR{$k=1$ {\bfseries to} $K$}
   \STATE Sample a minibatch $B_{k}=\{\xi_{k,1},\ldots,\xi_{k,m_{k}}\}$ and update $x^{k+1}$ by solving
   \begin{equation}
\min_{x\in\Xcal}\, \left\{\frac{1}{m_{k}}\sum_{i=1}^{m_{k}}f_{x^{k}}\big(x,\xi_{k,i}\big)+\frac{\gamma_{k}}{2}\big\Vert x-x^{k}\big\Vert^{2} \right\} \label{eq:mb-prox-map}
\end{equation}
   \ENDFOR
\end{algorithmic}
\end{algorithm}

We present an improved complexity analysis of {\smod} by leveraging the framework of algorithm stability \citep{bousquet2002stability,shalev2010learnability}.
In stark contrast to its standard application in characterizing the algorithm generalization performance, stability analysis is applied to determine how the variation of a minibatch affects the
\emph{estimation of the model function} in each algorithm iteration.

\textbf{Notations.} Let $B=\{\xi_{1},\xi_{2},\ldots,\xi_{m}\}$
be a batch of i.i.d. samples and $B_{(i)}=B\setminus \{\xi_i\}\cup \{\xi_{i}^{\prime}\}$
by replacing $\xi_{i}$ with an i.i.d. copy $\xi_{i}^{\prime}$, and $B^{\prime}= \{\xi_{1}^{\prime},\xi_{2}^{\prime},\ldots,\xi_{m}^{\prime}\}$.
Let $h(\cdot,\xi)$ be a stochastic model function, and denote $h(y,B)=\frac{1}{m}\sum_{i=1}^{m}h(y,\xi_{i})$.
The stochastic proximal mapping associated with $h(\cdot, B)$ is defined by $\prox_{\rho h}(x,B)\triangleq\argmin_{y\in\Xcal}\big\{ h(y,B)+\frac{1}{2\rho}\|y-x\|^{2}\big\}$
for some $\rho>0$. We denote $x_B^+\triangleq\prox_{\rho h}(x,B)$ for brevity.
We say that the stochastic proximal mapping $\prox_{\rho h}$ is \textit{$\varepsilon$-stable}
if, for any $x \in \Xcal$, we have
\begin{equation}
\big|\Ebb_{B,B^{\prime},i}\big[h(x_{B_{(i)}}^+,\xi_{i}^{\prime})-h(x_B^+,\xi_{i}^{\prime})\big]\big|\leq\varepsilon,
\label{eq:def-prox-stability}
\end{equation}

where $i$ is an index chosen from $\{1,2,\ldots,m\}$ uniformly
at random.

The next lemma exploits the stability of proximal mapping associated with
the model function.
\begin{lem}
\label{lem:model-uni-stable} Let $f_{z}(\cdot,B)$ be a stochastic
model function under the assumptions\,\ref{ass:model-weakly-cvx}-\ref{ass:model-lip}.
For $\gamma\in(\lambda,\infty)$, vectors $z$ and $y$, the proximal mapping
$\prox_{f_{z}/\gamma}(y,B)=\argmin_{x\in\Xcal}\big\{ f_{z}(x,B)+\frac{\gamma}{2}\|x-y\|^{2}\big\}$
is $\vep$-stable with
$
\vep=\frac{2L^{2}}{m(\gamma-\lambda)}.
$
\end{lem}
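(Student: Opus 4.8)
The plan is to reduce the stability estimate (\ref{eq:def-prox-stability}) to a bound on how far the minimizer $x_B^+$ moves when a single sample of the batch is resampled, and then to convert that displacement back into a model-value gap through the Lipschitz property \ref{ass:model-lip}. Throughout, the parameter $z$ and the proximal center $y$ are fixed; I abbreviate the objective by $F_B(x)\assign f_z(x,B)+\frac{\gamma}{2}\norm{x-y}^2$, so that $x_B^+=\argmin_{x\in\Xcal}F_B(x)$. The first thing I would record is that $F_B$ is strongly convex with modulus $\alpha\assign\gamma-\lambda>0$: by \ref{ass:model-weakly-cvx} each $f_z(\cdot,\xi_i)$ is $\lambda$-weakly convex, weak convexity is preserved under the average so $f_z(\cdot,B)$ is $\lambda$-weakly convex, and adding $\frac{\gamma}{2}\norm{x-y}^2$ with $\gamma>\lambda$ upgrades this to $\alpha$-strong convexity (in particular the minimizer is unique).

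The crux of the argument is the uniform bound on $\norm{x_B^+-x_{B_{(i)}}^+}$, where $B_{(i)}$ is obtained from $B$ by replacing $\xi_i$ with its i.i.d. copy $\xi_i'$. Writing $a=x_B^+$ and $b=x_{B_{(i)}}^+$, I would apply the strong-convexity lower bound at each minimizer, namely $F_B(b)-F_B(a)\ge\frac{\alpha}{2}\norm{a-b}^2$ and $F_{B_{(i)}}(a)-F_{B_{(i)}}(b)\ge\frac{\alpha}{2}\norm{a-b}^2$, and add them. Here the two quadratic proximal terms cancel, because $F_B$ and $F_{B_{(i)}}$ differ only in their $i$-th summand, so $F_B-F_{B_{(i)}}=\frac{1}{m}\brbra{f_z(\cdot,\xi_i)-f_z(\cdot,\xi_i')}$; the sum therefore collapses to $\frac{1}{m}\brbra{[f_z(b,\xi_i)-f_z(a,\xi_i)]-[f_z(b,\xi_i')-f_z(a,\xi_i')]}\ge\alpha\norm{a-b}^2$. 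Bounding each of the two increments on the left by $L\norm{a-b}$ via \ref{ass:model-lip} yields $\alpha\norm{a-b}^2\le\frac{2L}{m}\norm{a-b}$, and dividing out $\norm{a-b}$ gives the argument-stability estimate $\norm{x_B^+-x_{B_{(i)}}^+}\le\frac{2L}{m(\gamma-\lambda)}$.

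To finish, I would plug this into the defining inequality (\ref{eq:def-prox-stability}). For any fixed realization of $B$, $B'$ and $i$, the Lipschitz property \ref{ass:model-lip} applied to $f_z(\cdot,\xi_i')$ gives $\bigl|f_z(x_{B_{(i)}}^+,\xi_i')-f_z(x_B^+,\xi_i')\bigr|\le L\norm{x_{B_{(i)}}^+-x_B^+}\le\frac{2L^2}{m(\gamma-\lambda)}$, a bound that holds uniformly in the randomness and hence survives taking $\Ebb_{B,B',i}$ and moving the absolute value inside the expectation; this is exactly $\vep=\frac{2L^2}{m(\gamma-\lambda)}$. I expect the only genuine obstacle to be the middle step: one must check that the perturbation $F_B-F_{B_{(i)}}$ touches a single summand (giving the crucial $1/m$) and that invoking \ref{ass:model-lip} twice produces the factor $2L$, so that the strong-convexity modulus $\gamma-\lambda$ enters in precisely the right place; the remaining manipulations are routine.
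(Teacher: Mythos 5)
Your proposal is correct and follows essentially the same route as the paper's proof: the paper also derives the argument-stability bound $\|x_B^+-x_{B_{(i)}}^+\|\le\frac{2L}{m(\gamma-\lambda)}$ by summing two quadratic-growth inequalities at the two minimizers (stated there via its three-point lemma, Lemma~\ref{lem:three-point}, which is exactly your strong-convexity bound), exploiting that the objectives differ only in the $i$-th summand, and then converts displacement to a function-value gap via \ref{ass:model-lip} and Jensen's inequality. The only difference is cosmetic ordering: the paper pushes the absolute value inside the expectation first and then bounds each term, whereas you establish the uniform pointwise bound and then take expectations.
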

Applying Lemma~\ref{lem:model-uni-stable}, we  obtain the  error bound for approximating the full model function  in the next theorem.
\begin{thm}
\label{thm:stable-mod-func}Under all the assumptions of Lemma~\ref{lem:model-uni-stable}, we have
\begin{equation}
\big\vert\mathbb{E}_{B_{k}}\big[f_{x^{k}}(x^{k+1},B_{k})-\mathbb{E}_{\xi}f_{x^{k}}(x^{k+1},\xi)\vert\sigma_{k}\big]\big\vert\leq\vep_{k},\
\varepsilon_{k}=\tfrac{2L^{2}}{m_{k}(\gamma_{k}-\lambda)}.\label{eq:epsi-stable}
\end{equation}
where $\sigma_{k}$ is the $\sigma$-algebra generating $\{B_{i}\}_{1\le i \le k-1}$.
\end{thm}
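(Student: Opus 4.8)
The key observation is that the quantity inside the absolute value is precisely a \emph{generalization gap}: it compares the empirical value $f_{x^{k}}(x^{k+1},B_{k})$ of the model function, averaged over the very batch that produced $x^{k+1}$, against its population value $\Ebb_{\xi}f_{x^{k}}(x^{k+1},\xi)$, both evaluated at the \emph{same} batch-dependent minimizer $x^{k+1}$. The plan is to reduce this gap to the proximal stability quantity of Lemma~\ref{lem:model-uni-stable} through the classical symmetrization argument that relates uniform stability to expected generalization error.

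First I would condition on $\sigma_{k}$. Given $\sigma_{k}$, the iterate $x^{k}$ and the stepsize $\gamma_{k}$ are deterministic, while $B_{k}=\{\xi_{k,1},\dots,\xi_{k,m_{k}}\}$ consists of fresh i.i.d.\ samples. Under this conditioning the update~\eqref{eq:mb-prox-map} identifies $x^{k+1}=\prox_{f_{x^{k}}/\gamma_{k}}(x^{k},B_{k})$ with model parameter and proximal center both equal to $x^{k}$, which is exactly the mapping analyzed in Lemma~\ref{lem:model-uni-stable}. I will write $x^{k+1}=x_{B_{k}}^{+}$, and denote by $x_{B_{k,(i)}}^{+}$ the minimizer obtained after replacing $\xi_{k,i}$ with an i.i.d.\ copy $\xi_{k,i}'$. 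Next I rewrite the two terms. Since $f_{x^{k}}(x^{k+1},B_{k})=\frac{1}{m_{k}}\sum_{i}f_{x^{k}}(x^{k+1},\xi_{k,i})$, introducing a uniform index $i$ gives $\Ebb_{B_{k}}[f_{x^{k}}(x^{k+1},B_{k})\mid\sigma_{k}]=\Ebb_{B_{k},i}[f_{x^{k}}(x_{B_{k}}^{+},\xi_{k,i})\mid\sigma_{k}]$; and because a fresh sample $\xi_{k,i}'$ is independent of $B_{k}$ and hence of $x_{B_{k}}^{+}$, integrating it reproduces the population average, so $\Ebb_{\xi}f_{x^{k}}(x^{k+1},\xi)=\Ebb_{\xi_{k,i}'}[f_{x^{k}}(x_{B_{k}}^{+},\xi_{k,i}')\mid B_{k}]$.

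The crux is a renaming step. In $\Ebb_{B_{k},i}[f_{x^{k}}(x_{B_{k}}^{+},\xi_{k,i})]$ the evaluation point $\xi_{k,i}$ is a member of the batch defining the minimizer, whereas in $\Ebb[f_{x^{k}}(x_{B_{k,(i)}}^{+},\xi_{k,i}')]$ the fresh sample $\xi_{k,i}'$ plays the identical in-batch role inside $B_{k,(i)}$. Because $\xi_{k,i}$ and $\xi_{k,i}'$ are i.i.d.\ and the remaining $m_{k}-1$ samples are exchangeable, these two expectations coincide. Substituting, the conditional generalization gap becomes $\Ebb_{B_{k},B_{k}',i}[f_{x^{k}}(x_{B_{k,(i)}}^{+},\xi_{k,i}')-f_{x^{k}}(x_{B_{k}}^{+},\xi_{k,i}')\mid\sigma_{k}]$, which is exactly the $\vep$-stability expression~\eqref{eq:def-prox-stability} for $\prox_{f_{x^{k}}/\gamma_{k}}$ with center $x^{k}$. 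Lemma~\ref{lem:model-uni-stable} then bounds its absolute value by $\vep_{k}=\tfrac{2L^{2}}{m_{k}(\gamma_{k}-\lambda)}$, which is the claim.

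The main delicate point is this renaming step: one must justify the distributional identity between ``evaluate the minimizer at an in-sample point'' and ``evaluate the perturbed minimizer at the swapped-in point.'' It rests entirely on the i.i.d./exchangeability structure of $B_{k}$ and must be carried out carefully in tandem with the conditioning on $\sigma_{k}$, so that $x^{k}$ and $\gamma_{k}$ stay frozen while only $B_{k}$ is resampled. Everything else is bookkeeping, and the numerical constant $\vep_{k}$ is inherited verbatim from Lemma~\ref{lem:model-uni-stable}.
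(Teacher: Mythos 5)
Your proposal is correct and takes essentially the same route as the paper: the paper proves the result by combining Lemma~\ref{lem:model-uni-stable} with a standalone symmetrization theorem (Theorem~\ref{thm:stability-gen}, modeled on Lemma~11 of \citeapp{shalev2010learnabilitydup}), whose core identity $\Ebb_{B}\big[h(x_B^+,\xi_{i})\big]=\Ebb_{B_{(i)}}\big[h(x_{B_{(i)}}^+,\xi_{i}^{\prime})\big]$ is exactly your ``renaming'' step. The only difference is organizational---you inline the symmetrization argument into the theorem, conditioning on $\sigma_k$ throughout, rather than isolating it as a general lemma about arbitrary $\vep$-stable proximal mappings.
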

Note that since $x^{k+1}$ is dependent on $B_k$, $f_{x^{k}}(x^{k+1},B_{k})$ is not an unbiased estimator of  $\mathbb{E}_{\xi}[f_{x^{k}}(x^{k+1},\xi)]$.
	However, the stability argument identifies that the expected approximation error is a decreasing function of batch size $m_k$. This observation is the key to the sharp analysis of minibatch stochastic algorithms. With all the tools at our hands, we obtain the key descent property in
the following theorem.
\begin{thm}
\label{thm:main-mbsmod} Suppose that $\rho>\lambda+\tau$, $\gamma_{k}\ge\rho+\tau$, ~\ref{ass:two-sides-quad} and all the assumptions in Lemma~\ref{lem:model-uni-stable} hold. Let $\Expe_{k}[\cdot]$ abbreviates $\Ebb_{B_{k}}\big[\cdot|\sigma_{k}\big]$
and $\vep_{k}$ be given by\,(\ref{eq:epsi-stable}),
then we have 
\begin{equation}
\frac{(\rho-\lambda-\tau)}{\rho(\gamma_{k}+\rho-2\lambda-\tau)}\|\nabla f_{1/\rho}(x^{k})\|^{2}\le f_{1/\rho}(x^{k})-\mathbb{E}_{k}\big[f_{1/\rho}(x^{k+1})\big]+\frac{\rho\vep_{k}}{\gamma_{k}+\rho-2\lambda-\tau}.\label{eq:main-bound-mini}
\end{equation}
\end{thm}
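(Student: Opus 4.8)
The plan is to track the one-step decrease of the Moreau envelope $f_{1/\rho}$, using the exact proximal point $\hat{x}^{k}\triangleq\prox_{f/\rho}(x^{k})$ as an anchor. Recall the two facts from the Background: $f_{1/\rho}(x^{k})=f(\hat{x}^{k})+\tfrac{\rho}{2}\|\hat{x}^{k}-x^{k}\|^{2}$ and $\|\nabla f_{1/\rho}(x^{k})\|=\rho\|\hat{x}^{k}-x^{k}\|$, while feeding the suboptimal point $\hat{x}^{k}$ into the minimization defining the envelope at $x^{k+1}$ gives $f_{1/\rho}(x^{k+1})\le f(\hat{x}^{k})+\tfrac{\rho}{2}\|\hat{x}^{k}-x^{k+1}\|^{2}$. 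Subtracting, the envelope decrease is controlled by $\|\hat{x}^{k}-x^{k}\|^{2}-\mathbb{E}_{k}\|\hat{x}^{k}-x^{k+1}\|^{2}$, so the entire proof reduces to upper bounding $\mathbb{E}_{k}\|\hat{x}^{k}-x^{k+1}\|^{2}$ in terms of $\|\hat{x}^{k}-x^{k}\|^{2}$ and $\vep_{k}$.

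I would obtain that bound by exploiting strong convexity twice. Since the averaged model $f_{x^{k}}(\cdot,B_{k})$ is $\lambda$-weakly convex by \ref{ass:model-weakly-cvx}, the subproblem in (\ref{eq:mb-prox-map}) is $(\gamma_{k}-\lambda)$-strongly convex with minimizer $x^{k+1}$, so comparing its value at $\hat{x}^{k}$ yields
\[
f_{x^{k}}(\hat{x}^{k},B_{k})+\tfrac{\gamma_{k}}{2}\|\hat{x}^{k}-x^{k}\|^{2}\ge f_{x^{k}}(x^{k+1},B_{k})+\tfrac{\gamma_{k}}{2}\|x^{k+1}-x^{k}\|^{2}+\tfrac{\gamma_{k}-\lambda}{2}\|\hat{x}^{k}-x^{k+1}\|^{2}.
\]
Symmetrically, since $f$ is $(\lambda+\tau)$-weakly convex (combining \ref{ass:model-weakly-cvx} and \ref{ass:two-sides-quad}) and $\rho>\lambda+\tau$, the exact prox objective $f(\cdot)+\tfrac{\rho}{2}\|\cdot-x^{k}\|^{2}$ is $(\rho-\lambda-\tau)$-strongly convex with minimizer $\hat{x}^{k}$, giving a matching lower bound $\mathbb{E}_{k}[f(x^{k+1})]\ge f(\hat{x}^{k})+\tfrac{\rho}{2}\|\hat{x}^{k}-x^{k}\|^{2}-\tfrac{\rho}{2}\mathbb{E}_{k}\|x^{k+1}-x^{k}\|^{2}+\tfrac{\rho-\lambda-\tau}{2}\mathbb{E}_{k}\|\hat{x}^{k}-x^{k+1}\|^{2}$.

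The crux is converting model values back to $f$. For the anchor term, \ref{ass:two-sides-quad} and the $\sigma_{k}$-measurability of $\hat{x}^{k}$ give $\mathbb{E}_{k}[f_{x^{k}}(\hat{x}^{k},B_{k})]\le f(\hat{x}^{k})+\tfrac{\tau}{2}\|\hat{x}^{k}-x^{k}\|^{2}$. The delicate term is $\mathbb{E}_{k}[f_{x^{k}}(x^{k+1},B_{k})]$: since $x^{k+1}$ depends on $B_{k}$ it is a \emph{biased} estimator of the population model, and this is exactly where Theorem~\ref{thm:stable-mod-func} enters, bounding the bias by $\vep_{k}$; combined with the lower half of \ref{ass:two-sides-quad} it gives $\mathbb{E}_{k}[f_{x^{k}}(x^{k+1},B_{k})]\ge\mathbb{E}_{k}[f(x^{k+1})]-\tfrac{\tau}{2}\mathbb{E}_{k}\|x^{k+1}-x^{k}\|^{2}-\vep_{k}$. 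I expect this bias control to be the main obstacle, and it is the single place where the batch size (hidden in $\vep_{k}$) enters; everything else is convexity bookkeeping.

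Finally I would take $\mathbb{E}_{k}$ of the model inequality, substitute the two estimates above to eliminate all model values in favour of $f(\hat{x}^{k})$ and $\mathbb{E}_{k}[f(x^{k+1})]$, and then eliminate $\mathbb{E}_{k}[f(x^{k+1})]$ against its strong-convexity lower bound. This leaves one scalar inequality among $\|\hat{x}^{k}-x^{k}\|^{2}$, $\mathbb{E}_{k}\|x^{k+1}-x^{k}\|^{2}$ and $\mathbb{E}_{k}\|\hat{x}^{k}-x^{k+1}\|^{2}$; the hypothesis $\gamma_{k}\ge\rho+\tau$ makes the coefficient of $\mathbb{E}_{k}\|x^{k+1}-x^{k}\|^{2}$ nonnegative, so that term is dropped to yield $(\gamma_{k}+\rho-2\lambda-\tau)\,\mathbb{E}_{k}\|\hat{x}^{k}-x^{k+1}\|^{2}\le(\gamma_{k}+\tau-\rho)\|\hat{x}^{k}-x^{k}\|^{2}+2\vep_{k}$. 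Substituting this into the envelope-decrease expression from the first paragraph and using the algebraic identity $\gamma_{k}+\tau-\rho=(\gamma_{k}+\rho-2\lambda-\tau)-2(\rho-\lambda-\tau)$ collapses the constants \emph{exactly} into the claimed coefficient, after rewriting $\|\hat{x}^{k}-x^{k}\|^{2}=\rho^{-2}\|\nabla f_{1/\rho}(x^{k})\|^{2}$.
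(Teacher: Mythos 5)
Your proposal is correct and follows essentially the same route as the paper's proof: the paper's two applications of its three-point lemma (to the minibatch subproblem at $\hat{x}^{k}$ and to the exact prox problem defining $\hat{x}^{k}$) are exactly your two strong-convexity comparisons, and the conversion of model values to $f$ via Assumption~\ref{ass:two-sides-quad} together with the stability bound $\vep_{k}$ from Theorem~\ref{thm:stable-mod-func}, the dropping of the $\Ebb_{k}\|x^{k+1}-x^{k}\|^{2}$ term using $\gamma_{k}\ge\rho+\tau$, and the final envelope-suboptimality step all match the paper line for line. The only difference is bookkeeping order (you eliminate $\Ebb_{k}[f(x^{k+1})]$ after substituting the model-to-$f$ conversions, whereas the paper sums the two inequalities first), which is algebraically equivalent.
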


Next, we specify the rate of convergence to stationarity using
a constant stepsize policy.
\begin{thm}
\label{thm:rate-mb-smod}Under the assumptions of Theorem\,\ref{thm:main-mbsmod},
let $\Delta=f_{1/\rho}(x^{1})-\min_{x}f(x)$, $m_{k}=m$, and $\gamma_{k}=\gamma=\max\{\rho+\tau,\lambda+\eta\}$
where $\eta=\frac{\sqrt{K}}{\alpha_0\sqrt{m}}$ and $\alpha_0\in (0,\infty)$.
Let $k^{*}$ be an index chosen in $\{1,2,\ldots,K\}$ uniformly, then we have
\begin{equation}
\Ebb\big[\|\nabla f_{1/\rho}(x^{k^{*}})\|^{2}\big]\le\frac{\rho}{\rho-\lambda-\tau}\bigg[\frac{(2\rho-\lambda)\Delta}{K}+ \Big(\frac{\Delta}{\alpha_0}+2\alpha_0\rho L^2\Big) \frac{1}{\sqrt{mK}}\bigg].\label{eq:mb-mid-04}
\end{equation}
\end{thm}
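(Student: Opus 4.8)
The plan is to turn the per-iteration descent inequality of Theorem~\ref{thm:main-mbsmod} into a rate by a standard telescope-and-average argument, with the only nonroutine part being the bookkeeping of the constant-stepsize choice. First I would freeze the stepsizes: since $m_k=m$ and $\gamma_k=\gamma$ are constant, the stability error collapses to a single $\vep=\frac{2L^2}{m(\gamma-\lambda)}$ and all coefficients in~(\ref{eq:main-bound-mini}) become $k$-independent. Writing $C\assign\gamma+\rho-2\lambda-\tau$ for brevity, I would take the full expectation of~(\ref{eq:main-bound-mini}) via the tower property $\Ebb[\Ebb_k[\cdot]]=\Ebb[\cdot]$, turning the conditional Moreau-envelope decrement into an unconditional one, and then sum over $k=1,\dots,K$. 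The right-hand side telescopes to $f_{1/\rho}(x^1)-\Ebb[f_{1/\rho}(x^{K+1})]+K\rho\vep/C$.

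Next I would discard the trailing envelope value: since $f_{1/\rho}(x)=\min_y\{f(y)+\frac{\rho}{2}\|x-y\|^2\}\ge\min_x f(x)$ pointwise, we have $\Ebb[f_{1/\rho}(x^{K+1})]\ge\min_x f(x)$, hence $f_{1/\rho}(x^1)-\Ebb[f_{1/\rho}(x^{K+1})]\le\Delta$. Dividing by $K$ and invoking the uniform choice of $k^*$, which gives $\Ebb[\|\nabla f_{1/\rho}(x^{k^*})\|^2]=\frac1K\sum_{k=1}^K\Ebb[\|\nabla f_{1/\rho}(x^k)\|^2]$, I obtain the intermediate bound
\[
\Ebb\big[\|\nabla f_{1/\rho}(x^{k^*})\|^2\big]\le\frac{\rho}{\rho-\lambda-\tau}\Big(\frac{C\Delta}{K}+\rho\vep\Big).
\]
It then remains to substitute the prescribed $\gamma=\max\{\rho+\tau,\lambda+\eta\}$ with $\eta=\sqrt{K}/(\alpha_0\sqrt m)$ and to bound $C$ and $\vep$ separately.

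The final step is the constant-stepsize bookkeeping that balances the two error sources. For the noise term, $\gamma-\lambda=\max\{\rho+\tau-\lambda,\eta\}\ge\eta$, so $\rho\vep=\frac{2\rho L^2}{m(\gamma-\lambda)}\le\frac{2\rho L^2}{m\eta}=2\alpha_0\rho L^2/\sqrt{mK}$, producing exactly the $2\alpha_0\rho L^2\,(mK)^{-1/2}$ contribution. For the optimization term, distributing the additive constant through the maximum gives $C=\max\{2(\rho-\lambda),\,(\rho-\lambda-\tau)+\eta\}$; since $\rho,\tau,\lambda\ge0$, both arguments are dominated by $(2\rho-\lambda)+\eta$, whence $C\le(2\rho-\lambda)+\eta$ and $\frac{C\Delta}{K}\le\frac{(2\rho-\lambda)\Delta}{K}+\frac{\eta\Delta}{K}$. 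The crucial identity is that $\eta=\sqrt K/(\alpha_0\sqrt m)$ makes $\frac{\eta\Delta}{K}=\frac{\Delta}{\alpha_0\sqrt{mK}}$, matching the $\frac{\Delta}{\alpha_0}(mK)^{-1/2}$ term exactly. Combining the two estimates yields the claimed inequality~(\ref{eq:mb-mid-04}). I do not anticipate a genuine obstacle, as the substantive work (the stability-based descent estimate) is already absorbed into Theorem~\ref{thm:main-mbsmod}; the only care required is the case split hidden inside the $\max$ defining $\gamma$, which is precisely engineered so that the $\Ocal(1/K)$ optimization term and the $\Ocal(1/\sqrt{mK})$ stability term are controlled simultaneously and the free parameter $\alpha_0$ trades them off.
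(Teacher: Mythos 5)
Your proposal is correct and follows essentially the same route as the paper's proof: telescope the descent inequality of Theorem~\ref{thm:main-mbsmod} under full expectation, bound $f_{1/\rho}(x^1)-\Ebb[f_{1/\rho}(x^{K+1})]\le\Delta$, invoke the uniform index $k^*$, and then control $\gamma+\rho-2\lambda-\tau\le(2\rho-\lambda)+\eta$ and $\gamma-\lambda\ge\eta$ via the two branches of the $\max$ defining $\gamma$. Your bookkeeping (the bounds on $C$ and $\rho\vep$ and the identities $\eta\Delta/K=\Delta/(\alpha_0\sqrt{mK})$, $2\rho L^2/(m\eta)=2\alpha_0\rho L^2/\sqrt{mK}$) matches the paper's inequalities exactly.
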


\begin{rem}
The performance of {\smod} depends on $\alpha_0$ and batch size $m$.  
\eqref{eq:mb-mid-04} implies that when batch size is fixed, the best rate is obtained at $\alpha_0^*=\sqrt{\tfrac{\Delta}{2\rho}}\frac{1}{L}$.  
Since both $\Delta$ and $L$ are unknown, hyper-parameter tuning over $\alpha_0$ is required to obtain good empirical performance.  For the simplicity of theoretical analysis, let us take $\alpha_0=\alpha_0^*$. 
Hence, to obtain an iterate whose Moreau envelop has expected gradient norm smaller than $\vep$, the total iteration count
is
$
\Tcal_\vep=\max \big\{\Ocal(\frac{\Delta}{\vep^{2}}),\Ocal(\frac{L^{2}\Delta}{m\vep^{4}})\big\}.
$
For small batch size $m$ (i.e. $m=o(1/\vep^2)$), 
the second term in $\max(,)$ dominates the bound $\Tcal_\vep$, yielding a total complexity of $\mathcal{O} (
{\frac{L^2 \Delta}{m\varepsilon^4}} )$. Note that this complexity bound is better than the $\mathcal{O} (
{\frac{L^2 \Delta}{\varepsilon^4}} )$ bound~\citep{davis2019stochasticweakly} by a factor of $m$.
\end{rem}
\begin{rem}
	Theorem~\ref{thm:rate-mb-smod} implies that {\sgd} can be accelerated by minibatching on the smooth composite problems (\ref{eq:model-linear}) but leaves out the more general problems where $\ell(x,\xi)$ is non-smooth and weakly convex. In the latter case, showing any improved rate of minibatch {\sgd} is substantially more challenging. Without additional knowledge, the  $\mathcal{O} (\frac{L^2 \Delta}{\varepsilon^4})$ complexity of {\sgd}  already matches the  best result for deterministic subgradient method (c.f. \citep{davis2019stochasticweakly}). It remains unknown whether such $\Ocal(1/\vep^4)$ bound is tight or not, and a possible direction to obtain sharper complexity bound is by exploiting the non-smooth structure information such as  sharpness.
\end{rem}
\textbf{Solving the subproblems.} {\sgd} is embarrassingly parallelizable by simply averaging the stochastic subgradients. We highlight how to solve the proximal subproblems for {\spl} and {\spp}. Consider the composition function $f(x,\xi)=h(C(x,\xi))$ where $h(a)=|a|$. For {\spl}, it is easy to transform the corresponding subproblem to an $\Ocal(m_k)$-dimensional quadratic program (QP) in the dual space (e.g.~\citep{asi2020minibatch}). The dual QP can be efficiently solved in parallel, for example, by a fast interior point solver.  For {\spp}, we show that the subproblem can be solved by a deterministic prox-linear method at a rapid linear convergence rate. Note that the {\spp} subproblem is especially well-conditioned because our stepsize policy ensures a large strongly convex parameter $\gamma-\lambda$.
	We refer to the appendix for more technical details. 

\section{{\smod} with momentum\label{sec:momentum}}
We present a new model-based method by incorporating
an additional extrapolation term, and we record this stochastic extrapolated
model-based method in Algorithm\,\ref{alg:semod}. Each iteration
of Algorithm\,\ref{alg:semod} consists of two steps, first, an extrapolation step
is performed to get an auxiliary update $y^{k}$. Then a random sample $\xi_k$ is collected and the proximal
mapping, associated with the model function $f_{x^k}(\cdot,\xi_k)$,  is computed at $y^{k}$ to obtain the new point $x^{k+1}$.
For ease of exposition, we take constant values of stepsize and extrapolation term.
\begin{algorithm}[tb]
   \caption{Stochastic Extrapolated Model-Based Method~({\extra})}\label{alg:semod}
\begin{algorithmic}
   \STATE {\bfseries Input:} $x^{0}$, $x^{1}$, $\beta$, $\gamma$;
   \FOR{$k=1$ {\bfseries to} $K$}
   \STATE Sample data $\xi^{k}$ and update:
   \begin{align}
y^{k} & =x^{k}+\beta(x^{k}-x^{k-1}) \label{eq:update-yk}\\
x^{k+1} & =\argmin_{x\in\Xcal} \left\{ f_{x^{k}}(x,\xi^{k})+\frac{\gamma}{2}\|x-y^{k}\|^{2} \right\} \label{eq:update-xk}
\end{align}
   \ENDFOR
\end{algorithmic}
\end{algorithm}

Note that Algorithm\,\ref{alg:semod} can be interpreted as an extension
of the momentum {\sgd}  by replacing the gradient descent step
with a broader class of proximal mappings.
To see this intuition, we combine (\ref{eq:update-yk}) and (\ref{eq:update-xk})
to get
\ifOneCol
\begin{equation}\label{eq:combined}
x^{k+1} =\argmin_{x\in\Xcal}\Big\{ f_{x^{k}}(x,\xi^{k})+\gamma\beta\langle x^{k-1}-x^{k},x-x^{k}\rangle +\frac{\gamma}{2}\|x-x^{k}\|^{2}\Big\},
\end{equation}
\else
\begin{align}
x^{k+1} & =\argmin_{x\in\Xcal}\ f_{x^{k}}(x,\xi^{k})+\gamma\beta\langle x^{k-1}-x^{k},x-x^{k}\rangle \nonumber\\
& \quad \quad \quad \quad  +\frac{\gamma}{2}\|x-x^{k}\|^{2}, \label{eq:combined}
\end{align}
\fi
If we choose the linear model (\ref{eq:model-linear}), i.e.,
$f_{x^{k}}(x,\xi^{k})=f(x^k,\xi^{k})+\langle f^{\prime}(x^k,\xi^{k}),x-x^{k}\rangle,$
and assume $\Xcal=\Rbb^{d}$, then the update\,(\ref{eq:combined})
has the following form:
\begin{equation}
x^{k+1}=x^{k}-\gamma^{-1}f^{\prime}(x^k,\xi^{k})-\beta(x^{k-1}-x^{k}).\label{eq:extra-mid-13}
\end{equation}
Define $v^{k}\triangleq\gamma(x^{k-1}-x^{k})$ and apply it to (\ref{eq:extra-mid-13}),
then Algorithm\,\ref{alg:semod} reduces to the heavy-ball method
\begin{align}
v^{k+1} & =f^{\prime}(x^k,\xi^{k})+\beta v^{k},\label{eq:hb1}\\
x^{k+1} & =x^{k}-\gamma^{-1}v^{k+1}.\label{eq:hb2}
\end{align}
Despite such relation, the gradient averaging view~(\ref{eq:hb1}) only applies to {\sgd} for unconstrained optimization, which limits the use of standard analysis of heavy-ball method (\citep{yan2018a}) for our problem.
To overcome this issue, we present a unified convergence analysis which can deal with all the model functions and is amenable to both constrained and composite problems.

Our theoretical analysis of  Algorithm\,\ref{alg:semod}
relies on a different potential function from the one in the previous section. Let us define the auxiliary variable
\begin{equation}
\label{eq:zk}
z^{k}\triangleq x^{k}+\frac{\beta}{1-\beta}(x^{k}-x^{k-1}).
\end{equation}
The following lemma proves some approximate descent property by adopting
the potential function 
\label{eq:fzk-potential}$
f_{1 / \rho} (z^k)  +\frac{\rho(\gamma\beta+\rho\beta^{2}\theta^{-2})}{2(\gamma\theta-\lambda\theta)}\|x^{k}-x^{k-1}\|^{2}$
and measuring the quantity of $\|\nabla f_{1/\rho}(z^{k})\|$.
\begin{lem}
\label{lem:extra-2} Assume that $\rho\ge2(\tau+\lambda)$ and $\beta\in[0,1)$.
Let $\theta=1-\beta$. Then we have 
\begin{align}\label{eq:extra-decent-prop}
 \frac{(\rho-\lambda\theta)}{2\rho(\gamma\theta-\lambda\theta)}\|\nabla f_{1/\rho}(z^{k})\|^{2} 
 & \le f_{1/\rho}(z^{k})-\Ebb_{k}\big[f_{1/\rho}(z^{k+1})\big]+\frac{\rho L^{2}}{(\gamma\theta^{2}-\rho\beta^{2}\theta^{-1})(\gamma\theta^{2}-\lambda\theta^{2})}\nonumber \\
&\quad  +  \frac{\rho(\gamma\beta+\rho\beta^{2}\theta^{-2})}{2(\gamma\theta-\lambda\theta)}\big(\|x^{k}-x^{k-1}\|^{2}-\Ebb_{k}[\|x^{k+1}-x^{k}\|^{2}]\big)\nonumber\\
& - \frac{\rho(\gamma-\rho\beta^{2}\theta^{-3})}{4(\gamma-\lambda)} \Ebb_{k}[\|x^{k+1}-x^{k}\|^{2}].
\end{align}
\end{lem}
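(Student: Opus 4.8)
The plan is to track the Moreau envelope along the auxiliary sequence $z^{k}$ and reduce everything to quadratics in the successive increments $d^{j}\triangleq x^{j}-x^{j-1}$. The first step is purely algebraic: from the definitions of $y^{k}$ and $z^{k}$ I would verify the identities $z^{k+1}-z^{k}=\theta^{-1}(x^{k+1}-y^{k})$ and $z^{k}-y^{k}=\frac{\beta^{2}}{\theta}(x^{k}-x^{k-1})$, together with $x^{k+1}-y^{k}=d^{k+1}-\beta d^{k}$. The first identity is the crucial one: it says the increment of the auxiliary variable is exactly the rescaled proximal step taken from $y^{k}$, so that in the $z$-variable the extrapolated iteration behaves like a plain model-based step.

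Next I would set $\hat{z}^{k}\triangleq\prox_{f/\rho}(z^{k})$ and derive the raw envelope descent. Using $f_{1/\rho}(z^{k+1})\le f(\hat{z}^{k})+\frac{\rho}{2}\norm{z^{k+1}-\hat{z}^{k}}^{2}$ (feasibility of $\hat{z}^{k}$ for the envelope at $z^{k+1}$) and the exact identity $f_{1/\rho}(z^{k})=f(\hat{z}^{k})+\frac{\rho}{2}\norm{z^{k}-\hat{z}^{k}}^{2}$, subtracting and expanding the square with the first identity above yields
\[
f_{1/\rho}(z^{k+1})-f_{1/\rho}(z^{k})\le\tfrac{\rho}{\theta}\inprod{x^{k+1}-y^{k}}{z^{k}-\hat{z}^{k}}+\tfrac{\rho}{2\theta^{2}}\norm{x^{k+1}-y^{k}}^{2}.
\]
Here the $f(\hat{z}^{k})$ terms cancel, so no function value at $x^{k+1}$—and hence no single-sample bias—ever enters this bound.

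The heart of the argument is to turn the inner product into a negative multiple of $\norm{z^{k}-\hat{z}^{k}}^{2}=\rho^{-2}\norm{\nabla f_{1/\rho}(z^{k})}^{2}$. Here I would invoke the three-point optimality inequality for the proximal subproblem, which is $(\gamma-\lambda)$-strongly convex by Assumption~\ref{ass:model-weakly-cvx}, evaluated at the point $\hat{z}^{k}$. Assumption~\ref{ass:two-sides-quad} lets me swap the model values $f_{x^{k}}(\cdot,\xi^{k})$ for the true values with $\frac{\tau}{2}\norm{\cdot}^{2}$ corrections, and Assumption~\ref{ass:model-lip} bounds the only surviving sample-dependent term, $f_{x^{k}}(\hat{z}^{k},\xi^{k})-f_{x^{k}}(x^{k+1},\xi^{k})\le L\norm{\hat{z}^{k}-x^{k+1}}$; a Young split of this peels off the $L^{2}$ error while leaving a $\norm{\hat{z}^{k}-x^{k+1}}^{2}$ term that is reabsorbed by the strong-convexity margin. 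Combined with the decomposition $z^{k}-\hat{z}^{k}=(z^{k}-x^{k+1})+(x^{k+1}-\hat{z}^{k})$, this produces the negative coefficient on $\norm{z^{k}-\hat{z}^{k}}^{2}$ together with pure-increment quadratics in $d^{k}$ and $d^{k+1}$.

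Finally I would take the conditional expectation $\Ebb_{k}$ (all of $z^{k},y^{k},\hat{z}^{k},d^{k}$ are $\sigma_{k}$-measurable, only $d^{k+1}$ is random), collect the resulting quadratic form in $(d^{k},d^{k+1})$, and apply Young's inequality to the single cross term $\inprod{d^{k+1}}{d^{k}}$. The Young parameter must be tuned so that the coefficient of $\norm{d^{k}}^{2}$ becomes exactly the momentum weight $\frac{\rho(\gamma\beta+\rho\beta^{2}\theta^{-2})}{2(\gamma\theta-\lambda\theta)}$ appearing in the potential, while the leftover on $\Ebb_{k}\norm{d^{k+1}}^{2}$ splits into the matching telescoping term minus the residual $\frac{\rho(\gamma-\rho\beta^{2}\theta^{-3})}{4(\gamma-\lambda)}\Ebb_{k}\norm{d^{k+1}}^{2}$. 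I expect this last step—simultaneously matching the $\norm{z^{k}-\hat{z}^{k}}^{2}$ coefficient to $\frac{(\rho-\lambda\theta)}{2\rho(\gamma\theta-\lambda\theta)}\rho^{2}$, the $L^{2}$ coefficient to $\frac{\rho L^{2}}{(\gamma\theta^{2}-\rho\beta^{2}\theta^{-1})(\gamma\theta^{2}-\lambda\theta^{2})}$, and getting the residual sign right—to be the main obstacle; it is precisely the hypothesis $\rho\ge2(\tau+\lambda)$ (with $\beta\in[0,1)$, $\theta=1-\beta$, and the implicit $\gamma\theta^{3}\ge\rho\beta^{2}$ guaranteeing the residual is nonpositive) that forces all three quantities to come out with the claimed values.
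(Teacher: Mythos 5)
Your preparatory steps are sound: the identities $z^{k+1}-z^{k}=\theta^{-1}(x^{k+1}-y^{k})$ and $z^{k}-y^{k}=\beta^{2}\theta^{-1}(x^{k}-x^{k-1})$ are correct, and the envelope expansion
\[
f_{1/\rho}(z^{k+1})-f_{1/\rho}(z^{k})\le\tfrac{\rho}{\theta}\inprod{x^{k+1}-y^{k}}{z^{k}-\hat{z}^{k}}+\tfrac{\rho}{2\theta^{2}}\|x^{k+1}-y^{k}\|^{2}
\]
is a legitimate starting point (it is essentially how the paper finishes, via $f_{1/\rho}(z^{k+1})\le f(\hat z^{k})+\frac{\rho}{2}\|z^{k+1}-\hat z^{k}\|^{2}$). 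The gap is in what you call the heart of the argument. The tools you list there --- the three-point inequality for the algorithm's $\gamma$-prox subproblem evaluated at $\hat z^{k}$, the model-to-true swaps, the Lipschitz bound $f_{x^{k}}(\hat z^{k},\xi^{k})-f_{x^{k}}(x^{k+1},\xi^{k})\le L\|\hat z^{k}-x^{k+1}\|$, and Young's inequality --- cannot produce a \emph{negative} multiple of $\|z^{k}-\hat z^{k}\|^{2}$, which is what the left-hand side of (\ref{eq:extra-decent-prop}) demands. Indeed, the three-point inequality at $\hat z^{k}$ contributes $+\frac{\gamma}{2}\|\hat z^{k}-y^{k}\|^{2}$ (a positive multiple of $\|\hat z^{k}-z^{k}\|^{2}$ up to increment terms), its strong-convexity margin $-\frac{\gamma-\lambda}{2}\|\hat z^{k}-x^{k+1}\|^{2}$ is eaten by the Young split of the $L$-term, and any Cauchy--Schwarz treatment of $\inprod{x^{k+1}-y^{k}}{z^{k}-\hat z^{k}}$ again puts a positive coefficient on $\|z^{k}-\hat z^{k}\|^{2}$. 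With only these ingredients you can upper-bound how much the envelope \emph{increases}, but you cannot extract the forced decrease proportional to $\|\nabla f_{1/\rho}(z^{k})\|^{2}$.

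The missing ingredient is the optimality of the \emph{other} proximal problem: $\hat z^{k}=\prox_{f/\rho}(z^{k})$ itself, compared against the feasible point $x^{k}$, i.e.\ the paper's inequality (\ref{eq:extra-mid-8}),
\[
f(\hat z^{k})+\tfrac{\rho}{2}\|\hat z^{k}-z^{k}\|^{2}\le f(x^{k})+\tfrac{\rho}{2}\|x^{k}-z^{k}\|^{2}-\tfrac{\rho-\tau-\lambda}{2}\|x^{k}-\hat z^{k}\|^{2},
\]
which uses the $(\tau+\lambda)$-weak convexity of $f$. This is the sole source of the term $-\frac{\rho\theta}{2}\|\hat z^{k}-z^{k}\|^{2}$ (the companion term $\frac{\rho}{2}\|x^{k}-z^{k}\|^{2}=\frac{\rho\beta^{2}}{2\theta^{2}}\|x^{k}-x^{k-1}\|^{2}$ is a pure increment), and it necessarily injects true function values $f(\hat z^{k})$ and $f(x^{k})$ into the estimate. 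Those must then be cancelled against model values, which is why the paper runs the three-point lemma not at $\hat z^{k}$ but at the convex combination $\bar x=\beta x^{k}+\theta\hat z^{k}$, splits $f_{x^{k}}(\bar x,\xi^{k})$ by weak convexity using \ref{ass:model-weakly-cvx}, \ref{ass:unbiased}, \ref{ass:one-side-quad}, and handles the model value at $x^{k+1}$ through $f(x^{k},\xi^{k})-L\|x^{k+1}-x^{k}\|\le f_{x^{k}}(x^{k+1},\xi^{k})$; this last trick also removes any need for \ref{ass:two-sides-quad}, which your sketch invokes but which the lemma does not assume (it is introduced only for the minibatch analysis). Finally, the hypothesis $\rho\ge2(\tau+\lambda)$ is consumed precisely when combining (\ref{eq:extra-mid-8}) with the weak-convexity corrections, to discard $-\frac{\theta(\rho-2(\lambda+\tau)+\lambda\theta)}{2}\|\hat z^{k}-x^{k}\|^{2}\le0$; it plays no role in tuning the Young parameter for the cross term $\inprod{x^{k+1}-x^{k}}{x^{k}-x^{k-1}}$, contrary to where your plan places it.
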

Invoking Lemma\,\ref{lem:extra-2} and specifying the stepsize policy,
we obtain the main convergence result of Algorithm\,\ref{alg:semod}
in the following theorem.
\begin{thm}
\label{thm:extra-2-1}Under assumptions of Lemma\,\ref{lem:extra-2},
if we choose $x^{1}=x^{0}$, and set 
$
\gamma=\gamma_{0}\theta^{-1}\sqrt{K}+\lambda+\rho\beta^{2}\theta^{-3}
$
for some $\gamma_{0}>0$, then 
\begin{equation}
\Ebb[\|\nabla f_{1/\rho}(z^{k^{*}})\|^{2}]\le\frac{2\rho}{\rho-\lambda}\bigg[\frac{\rho\beta^{2}\theta^{-2}\Delta}{K}+\Big(\gamma_{0}\Delta+\frac{\rho L^{2}}{\theta\gamma_{0}}\Big)\frac{1}{\sqrt{K}}\bigg]\label{eq:extra-main-3}
\end{equation}
where $k^{*}$ is an index chosen in $\{1,2,\ldots,K\}$ uniformly
at random.
\end{thm}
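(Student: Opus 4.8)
The plan is to turn the one-step estimate of Lemma~\ref{lem:extra-2} into a telescoping sum. I first fold the two squared-distance terms, which share the common coefficient $D\triangleq\frac{\rho(\gamma\beta+\rho\beta^{2}\theta^{-2})}{2(\gamma\theta-\lambda\theta)}$, into the Lyapunov function $\Phi_{k}\triangleq f_{1/\rho}(z^{k})+D\,\|x^{k}-x^{k-1}\|^{2}$ already isolated before the lemma. Writing $A\triangleq\frac{\rho-\lambda\theta}{2\rho(\gamma\theta-\lambda\theta)}$ and $C\triangleq\frac{\rho L^{2}}{(\gamma\theta^{2}-\rho\beta^{2}\theta^{-1})(\gamma\theta^{2}-\lambda\theta^{2})}$, inequality~\eqref{eq:extra-decent-prop} becomes
\[
A\,\|\nabla f_{1/\rho}(z^{k})\|^{2}\le\Phi_{k}-\Ebb_{k}[\Phi_{k+1}]+C-\tfrac{\rho(\gamma-\rho\beta^{2}\theta^{-3})}{4(\gamma-\lambda)}\,\Ebb_{k}[\|x^{k+1}-x^{k}\|^{2}].
\]
Taking total expectations, summing over $k=1,\dots,K$, and telescoping $\sum_{k}(\Ebb[\Phi_{k}]-\Ebb[\Phi_{k+1}])=\Ebb[\Phi_{1}]-\Ebb[\Phi_{K+1}]$ is the backbone of the argument.

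Next I would discharge the boundary and residual terms. Since $x^{1}=x^{0}$ we have $z^{1}=x^{1}$ and $\|x^{1}-x^{0}\|^{2}=0$, so $\Ebb[\Phi_{1}]=f_{1/\rho}(x^{1})$; moreover $\Phi_{K+1}\ge f_{1/\rho}(z^{K+1})\ge\min_{x}f(x)$ because both the quadratic defining the Moreau envelope and the added distance term are nonnegative. Hence $\Ebb[\Phi_{1}]-\Ebb[\Phi_{K+1}]\le f_{1/\rho}(x^{1})-\min_{x}f(x)=\Delta$. The last term is nonpositive and may be dropped once its coefficient is nonnegative: the prescribed stepsize gives $\gamma-\rho\beta^{2}\theta^{-3}=\gamma_{0}\theta^{-1}\sqrt{K}+\lambda\ge0$. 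I would record in passing that this same identity makes $\gamma\theta^{2}-\rho\beta^{2}\theta^{-1}=\theta^{2}(\gamma-\rho\beta^{2}\theta^{-3})>0$, so $C$ is well defined and positive, and that $\rho-\lambda\theta>0$ follows from $\rho\ge2(\tau+\lambda)$ together with $\theta\le1$.

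These reductions leave $A\sum_{k=1}^{K}\Ebb[\|\nabla f_{1/\rho}(z^{k})\|^{2}]\le\Delta+KC$. Dividing by $K$ and noting that a uniformly random index $k^{*}$ makes the average equal to $\Ebb[\|\nabla f_{1/\rho}(z^{k^{*}})\|^{2}]$, I multiply through by $A^{-1}=\frac{2\rho\theta(\gamma-\lambda)}{\rho-\lambda\theta}$ and substitute $\gamma-\lambda=\gamma_{0}\theta^{-1}\sqrt{K}+\rho\beta^{2}\theta^{-3}$. The $\Delta/(KA)$ contribution then splits into the $\frac{\rho\beta^{2}\theta^{-2}\Delta}{K}$ and $\frac{\gamma_{0}\Delta}{\sqrt{K}}$ pieces after using $\theta(\gamma-\lambda)=\gamma_{0}\sqrt{K}+\rho\beta^{2}\theta^{-2}$; for $C/A$ I cancel the common factor $\gamma-\lambda$ against $\gamma\theta^{2}-\lambda\theta^{2}=\theta^{2}(\gamma-\lambda)$ and lower-bound $\gamma\theta^{2}-\rho\beta^{2}\theta^{-1}\ge\gamma_{0}\theta\sqrt{K}$ to expose the $\tfrac{1}{\sqrt{K}}$ scaling of the $L^{2}$ term. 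Relaxing $\rho-\lambda\theta\ge\rho-\lambda$ finally produces the common prefactor $\frac{2\rho}{\rho-\lambda}$ in~\eqref{eq:extra-main-3}.

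I expect the careful bookkeeping of the $\theta$-powers in the $L^{2}$ term to be the main obstacle. The constants in Lemma~\ref{lem:extra-2} mix powers $\theta^{\pm1},\theta^{\pm2},\theta^{\pm3}$, and only after the cancellation $\frac{\gamma\theta-\lambda\theta}{\gamma\theta^{2}-\lambda\theta^{2}}=\theta^{-1}$ together with the substitution of $\gamma$ do they collapse to the net power displayed in~\eqref{eq:extra-main-3}; I would therefore track these exponents explicitly rather than lean on the order-of-magnitude relaxations above. Everything else---the telescoping, the Moreau-envelope lower bound, and the random-index averaging---is routine.
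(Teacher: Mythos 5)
Your proposal follows essentially the same route as the paper's own proof: telescope Lemma~\ref{lem:extra-2} over $k=1,\dots,K$ (your Lyapunov function $\Phi_{k}$ is just the paper's unfolded sum written compactly), use $x^{1}=x^{0}=z^{1}$ to kill the boundary term, bound $f_{1/\rho}(z^{1})-\Ebb[f_{1/\rho}(z^{K+1})]\le\Delta$ via the Moreau-envelope lower bound, drop the final negative term using $\gamma-\rho\beta^{2}\theta^{-3}=\gamma_{0}\theta^{-1}\sqrt{K}+\lambda\ge0$, average over the uniform index, substitute the stepsize, and relax $\rho-\lambda\theta\ge\rho-\lambda$. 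All of these steps are sound and match the paper step for step.

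One caveat, precisely at the point you flag as delicate: carried out honestly, your computation gives
\begin{equation*}
\frac{C}{A}=\frac{2\rho^{2}L^{2}}{(\rho-\lambda\theta)\,\theta\,(\gamma\theta^{2}-\rho\beta^{2}\theta^{-1})}\le\frac{2\rho}{\rho-\lambda}\cdot\frac{\rho L^{2}}{\theta^{2}\gamma_{0}\sqrt{K}},
\end{equation*}
i.e.\ the noise term carries $\theta^{-2}$, not the $\theta^{-1}$ displayed in \eqref{eq:extra-main-3}; the exponents do not fully collapse as you assert. This is not a defect of your argument relative to the paper: the paper's own proof makes the same silent replacement of $\gamma\theta^{2}-\rho\beta^{2}\theta^{-1}$ by $\gamma\theta-\rho\beta^{2}\theta^{-2}$ (dropping one factor of $\theta$), so what Lemma~\ref{lem:extra-2} actually yields is the bound with $\rho L^{2}/(\theta^{2}\gamma_{0})$. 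Since $\theta\in(0,1]$ is a fixed constant this changes nothing about the $\Ocal(1/\sqrt{K})$ rate, but your write-up, done with the explicit exponent tracking you promise, would prove the theorem with that slightly larger constant rather than literally reproducing \eqref{eq:extra-main-3}.
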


\begin{rem}
Despite the fact that convergence is established for all $\gamma_{0}>0$,
we can see that the optimal $\gamma_{0}$ would be $\gamma_{0}=\sqrt{\frac{\rho}{\Delta\theta}}L$,
which gives the bound 
$
\Ebb[\thinspace\|\nabla f_{1/\rho}(z^{k^{*}})\|^{2}]\le\frac{2\rho}{\rho-\lambda}\big(\frac{\rho\beta^{2}\theta^{-2}\Delta}{K}+2L\sqrt{\frac{\rho \Delta}{\theta K}}\big).
$
In practice, we can set $\gamma_{0}$ to a suboptimal value and obtain a possibly loose upper-bound.
\end{rem}
\begin{rem}
Since $z^k$ is an extrapolated solution, it may not be feasible. It is desirable to show optimality guarantee at iterates $x^k$.  Note that using Lemma~\ref{lem:extra-2} and the parameters in Theorem~\ref{thm:extra-2-1}, it is easy to show that $\Ebb [\|x^{k^*}-x^{k^*-1}\|^2]=\Ocal(\frac{1}{K})$. 
Based on (\ref{eq:zk}) we have $\|z^{k^*}-x^{k^*}\|^2=\beta^2\theta^{-2}\Ebb [\|x^{k^*}-x^{k^*-1}\|^2]=\Ocal(\frac{1}{K})$. 
Using Lipschitz smoothness of Moreau envelop, we can show $\Ebb[\|\nabla f_{1/\rho}(x^{k^*})\|^2]$ converges at the same $\Ocal(\frac{1}{\sqrt{K}})$ rate as is shown in Theorem~\ref{thm:extra-2-1}.
\end{rem}
Combining momentum and minibatching,  we develop a minibatch version of Algorithm~\ref{alg:semod} that takes a batch of samples $B_k$ in each iteration. The convergence analysis of this minibatch {\extra} is more involving. We leave the details in the Appendix but informally state the main result below.
\begin{thm}[Informal]
In the minibatch {\extra}, suppose that \ref{ass:two-sides-quad} holds, the batch size $|B_k|=m$ and  $\gamma=\Ocal(\sqrt{\frac{K}{m}})$, then $\Ebb[\|\nabla f_{1/\rho}(z^{k^{*}})\|^{2}]=\Ocal\brbra{\frac{1}{K}+\sqrt{\frac{1}{mK}}}$.
\end{thm}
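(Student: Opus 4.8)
The plan is to fuse the two tools built above: the stability-based bias control of Section~\ref{sec:minibatch} and the Lyapunov argument of Section~\ref{sec:momentum}. I first make the minibatch variant of Algorithm~\ref{alg:semod} explicit: at iteration $k$ form the extrapolated center $y^k=x^k+\beta(x^k-x^{k-1})$, draw a fresh batch $B_k$ of size $m$, and set $x^{k+1}=\argmin_{x\in\Xcal}\{f_{x^k}(x,B_k)+\frac{\gamma}{2}\|x-y^k\|^2\}$ with $f_{x^k}(\cdot,B_k)$ the averaged model. The decisive structural fact is a conditional independence: given the $\sigma$-algebra $\sigma_k$ generated by $B_1,\dots,B_{k-1}$, both the model point $x^k$ and the center $y^k$ are deterministic while $B_k$ is fresh i.i.d. This is exactly the configuration of Lemma~\ref{lem:model-uni-stable}, whose bound is stated for an arbitrary center distinct from the model point. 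Hence $\prox_{f_{x^k}/\gamma}(y^k,B_k)$ is $\varepsilon_k$-stable with $\varepsilon_k=\frac{2L^2}{m(\gamma-\lambda)}$, and the argument of Theorem~\ref{thm:stable-mod-func} carries over with $y^k$ playing the role of the center to yield $|\Ebb_{B_k}[f_{x^k}(x^{k+1},B_k)-\Ebb_\xi f_{x^k}(x^{k+1},\xi)\,|\,\sigma_k]|\le\varepsilon_k$.

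Next I would re-run the approximate-descent derivation of Lemma~\ref{lem:extra-2} verbatim at the structural level, but feed the minibatch model and the bias bound above into every place where the single-sample analysis controls the stochastic error. Concretely, one compares $x^{k+1}$ against $\hat{z}^k=\prox_{f/\rho}(z^k)$ through optimality of the minibatch subproblem, converts model values into true values using the two-sided bound~\ref{ass:two-sides-quad} (this is why \ref{ass:two-sides-quad}, rather than the one-sided \ref{ass:one-side-quad}, is indispensable), and reassembles the estimates into the same potential $f_{1/\rho}(z^k)+\frac{\rho(\gamma\beta+\rho\beta^2\theta^{-2})}{2(\gamma\theta-\lambda\theta)}\|x^k-x^{k-1}\|^2$ anchored on the auxiliary variable $z^k$ of~\eqref{eq:zk}. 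The only substantive change is that the stochastic error term $\frac{\rho L^2}{(\gamma\theta^2-\rho\beta^2\theta^{-1})(\gamma\theta^2-\lambda\theta^2)}$ appearing in Lemma~\ref{lem:extra-2}---which is precisely the single-sample ($m=1$) instance of the bias---is replaced by a term of order $\rho\varepsilon_k/\gamma\asymp\rho L^2/(m\gamma^2)$, carrying the decisive factor $1/m$.

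I would then take total expectations, sum over $k=1,\dots,K$, and telescope. With the initialization $x^1=x^0$ the momentum correction $\|x^k-x^{k-1}\|^2$ telescopes to a non-positive remainder, the trailing term $-\frac{\rho(\gamma-\rho\beta^2\theta^{-3})}{4(\gamma-\lambda)}\Ebb_k[\|x^{k+1}-x^k\|^2]$ is discarded under the stepsize constraint $\gamma>\rho\beta^2\theta^{-3}$, and the Moreau-envelope values telescope to at most $\Delta$. This leaves $\frac{1}{\gamma}\,\frac{1}{K}\sum_{k}\Ebb[\|\nabla f_{1/\rho}(z^k)\|^2]\lesssim\frac{\Delta}{K}+\frac{\rho L^2}{m\gamma^2}$. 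Multiplying through by $\gamma$ and choosing $\gamma=\Theta(\sqrt{K/m})$ (together with the additive constants $\lambda+\rho\beta^2\theta^{-3}$ that generate the lower-order $1/K$ term, exactly as in Theorem~\ref{thm:extra-2-1}) balances $\frac{\gamma\Delta}{K}$ against $\frac{\rho L^2}{m\gamma}$ and produces the claimed $\Ocal(\frac{1}{K}+\sqrt{\frac{1}{mK}})$ rate for $\Ebb[\|\nabla f_{1/\rho}(z^{k^*})\|^2]$ at a uniformly random $k^*\in\{1,\dots,K\}$.

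The main obstacle is the second step. Because the model method's descent carries no deterministic step penalty (the proximal subproblem value absorbs it, as already in Theorem~\ref{thm:main-mbsmod}), the $1/m$ acceleration is only possible if the \emph{sole} surviving $L^2$ contribution is the stability-controlled bias; any stray term scaling like $L^2/\gamma$ that failed to telescope would not shrink with $m$ and would destroy the speedup. Threading the bias bound through the extrapolation identities relating $z^{k+1}-z^k$, $x^{k+1}-y^k$ and $x^k-x^{k-1}$, and verifying that every remaining weakly-convex and quadratic-approximation error either telescopes or stays non-positive for the full range $\beta\in[0,1)$ and $\rho\ge2(\tau+\lambda)$ while $\gamma=\Theta(\sqrt{K/m})$, is the delicate bookkeeping the formal Appendix proof must settle.
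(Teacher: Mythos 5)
Your first step (stability of the minibatch proximal map at the extrapolated center $y^k$, conditioned on $\sigma_k$) is sound and matches the paper. The gap is in your second step, and it is exactly the issue you flag in your closing paragraph but then defer as ``delicate bookkeeping'': it is not bookkeeping, and your plan as written cannot close it. You assert that the error term $\frac{\rho L^{2}}{(\gamma\theta^{2}-\rho\beta^{2}\theta^{-1})(\gamma\theta^{2}-\lambda\theta^{2})}$ in Lemma~\ref{lem:extra-2} ``is precisely the single-sample instance of the bias.'' It is not. That term comes from the Lipschitz inequality (\ref{eq:extra-mid-16}), i.e.\ $f_{x^{k}}(x^{k},\xi^{k})-f_{x^{k}}(x^{k+1},\xi^{k})\le L\|x^{k+1}-x^{k}\|$, followed by Young's inequality; the proof of Lemma~\ref{lem:extra-2} never invokes stability at all. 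Since the averaged model $f_{x^k}(\cdot,B_k)$ is still $L$-Lipschitz under \ref{ass:model-lip}, re-running that derivation ``verbatim at the structural level'' with the minibatch model reproduces the identical $m$-independent term, and no speedup can emerge from substituting the bias bound into the places where the single-sample proof ``controls the stochastic error''---there are no such places.

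The paper's actual proof (Lemma~\ref{lem:extra-mb-2}) resolves this by restructuring the argument, not by substitution. The Moreau-envelope three-point step compares $\hat z^k=\prox_{f/\rho}(z^k)$ against $x^{k+1}$ rather than against $x^{k}$, so that the model value $f_{x^k}(x^{k+1},B_k)$ is converted to the true value $f(x^{k+1})$ via \ref{ass:two-sides-quad} together with the stability bound (this is where the $1/m$ enters, through $\vep=\frac{2L^2}{m(\gamma-\lambda)}$), and the Lipschitz step is avoided entirely. The price is a leftover difference $(1-\theta)\big[f(x^{k})-\Ebb_k[f(x^{k+1})]\big]$ that cannot be cancelled within a single iteration; it only telescopes if the potential is augmented with a function-value term, which is why the paper uses $f_{1/\rho}(z^k)+\alpha f(x^k)+\beta\|x^k-x^{k-1}\|^2$ instead of the sequential potential. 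Your proposal explicitly retains the sequential potential $f_{1/\rho}(z^k)+\frac{\rho(\gamma\beta+\rho\beta^{2}\theta^{-2})}{2(\gamma\theta-\lambda\theta)}\|x^k-x^{k-1}\|^2$, so the restructured descent inequality has nowhere to deposit that term. A minor further discrepancy: the restructuring costs extra factor-of-two splittings ($\|a+b\|^2\le 2\|a\|^2+2\|b\|^2$), which is why the paper requires $\rho>3(\tau+\lambda)$ in the minibatch-plus-momentum case rather than the $\rho\ge 2(\tau+\lambda)$ you carry over from Lemma~\ref{lem:extra-2}.
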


\section{{\smod} for convex optimization}
Besides the study on non-convex optimization,  we also apply model-based methods to stochastic convex optimization.
Due to the space limit, we highlight main theoretical results but defer all the technical details to the Appendix section.
We show that if certain  assumption adapted from \ref{ass:two-sides-quad} for the convex setting holds,  the function gap of minibatching {\extra} will converge at a rate of $
\Ocal\Big(\frac{1}{K}+\frac{1}{\sqrt{mK}}\Big).$ In view of this result, the deterministic part of our rate is consistent with the best  $\Ocal{(\frac{1}{K}})$ rate for the heavy-ball method. For example, see \citep{diakonikolas2021generalized,ghadimi2015global}.
Moreover, the stochastic part of the rate is improved from the  $\Ocal(\frac{1}{\sqrt{K}})$ rate of Theorem~4.4~\citep{davis2019stochasticweakly} by a factor of $\sqrt{m}$.

An important question arises naturally: Can we further improve the  convergence rate of model-based methods for stochastic convex optimization?  Due to the widely known limitation of heavy-ball type momentum, it would be interesting to consider Nesterov's acceleration.
To this end, we present a model-based method with Nesterov type momentum. Thanks to the stability argument, we obtain the following improved rate of convergence: 
$
\Ocal\Big(\frac{1}{K^2}+\frac{1}{\sqrt{mK}}\Big).
$
We note that a similar convergence rate for minibatching model-based methods is obtained in a recent paper~\citep{chadha2021accelerated}. However, their result requires the assumption that the stochastic function is Lipschitz smooth while our assumption is much weaker.

\section{Experiments\label{sec:Experi}}
In this section, we examine the empirical performance of our proposed methods through experiments on the problem of robust phase retrieval. (Additional experiments on 
blind deconvolution are given in Appendix section).
 Given a set of vectors $a_i\in \Rbb^d$ and nonnegative scalars $b_i\in\Rbb_+$,  the goal of phase retrieval is to recover the true signal $x^*$ from the measurement $b_i=|\langle a_i, x^*\rangle|^2$. 
Due to the potential corruption in the dataset, we consider the following penalized formulation 
\begin{mini}
{x\in\Rbb^d}{ \frac{1}{n} \sum_{i = 1}^n \big\vert \langle a_i,  x \rangle^2 -
  b_i \big\vert }{}{}\label{pb:phase}
\end{mini}
where we impose $\ell_1$-loss to promote robustness and stability (cf. \cite{duchi2019solving, davis2019stochasticweakly,pmlr-v119-mai20b}).

\textbf{Data Preparation.} We conduct experiments on both synthetic and real datasets.

\textbf{1) Synthetic data.} Synthetic data is generated following the setup in \cite{pmlr-v119-mai20b}.
We set $n = 300, d = 100$ and select $x^{\ast}$
from unit sphere uniformly at random. Moreover, we generate $A = Q D$ where $Q \in \mathbb{R}^{n
\times d}, q_{i j} \sim \Ncal (0, 1)$ and $D \in \mathbb{R}^d$ is a diagonal matrix whose diagonal entries are evenly distributed in $[1/\kappa, 1]$. Here $\kappa \geq 1$ plays the role of condition number (large $\kappa$ makes problem hard).  The measurements are generated by $b_i = \langle a_i, x^{\ast} \rangle^2 + \delta_i \zeta_i$ ($1\le i\le n$) with $\zeta_i \sim \Ncal (0, 25)$, $\delta_i
\sim \tmop{Bernoulli} (p_{\tmop{fail}})$, where $p_{\tmop{fail}}\in[0,1]$ controls the fraction of corrupted observations on expectation.\

 \textbf{2) Real data.} We consider  \texttt{zipcode}, a dataset of $16 \times 16$ handwritten digits collected from \cite{RN113}. Following the setup in \cite{duchi2019solving},  let $H\in \mathbb{R}^{256 \times 256}$ be a normalized Hadamard matrix such that $h_{ij} \in \left\{\frac{1}{16}, -\frac{1}{16} \right\}, H=H{\trans}$ and $H=H^{-1}$. Then we generate $k=3$ diagonal sign matrices $S_1, S_2, S_3$ such that each diagonal element of $S_k$ is uniformly sampled from $\{-1, 1\}$. Last we set $A = \left[H S_1, HS_2, HS_3 \right]{\trans} \in \mathbb{R}^{(3\times 256)\times 256}$.
As for the true signal and measurements, each image is represented by a data matrix $X\in \mathbb{R}^{16\times16}$ and gets vectorized to $x^{\ast}=\text{vec}(X)$. To simulate the case of corruption, we set measurements $b=\phi_{p_{\text{fail}}}(Ax^*)$, where $\phi_{p_{\text{fail}}}(\cdot)$ denotes element-wise squaring and setting a fraction $p_\text{fail}$ of entries to $0$ on expectation.

In the first experiment, we illustrate that {\smod} methods enjoy linear speedup in the size of minibatches and exhibit strong robustness to the stepsize policy.  We conduct comparison on {\spl} and {\sgd} and describe the detailed experiment setup as follows.

\textbf{1) Dataset generation.} We generate four testing cases: the synthetic datasets with $(\kappa, p_\text{fail})=(10, 0.2)$, and  $(10, 0.3)$; \texttt{zipcode} with digit images of id 2 and 24;

\textbf{2) Initial point.} We set the initial point $x^1(=x^0)\sim\Ncal(0, I_d)$ for synthetic data and $x^1 = x^* + \Ncal(0, I_d)$ for \texttt{zipcode};

\textbf{3) Stopping criterion.} We set the stopping criterion to be $f(x^k) \leq 1.5 \hat{f}$, where $\hat{f} = f(x^*)$ is the corrupted objective evaluated at the true signal $x^*$;

\textbf{3) Stepsize.} We set the parameter $\gamma=\alpha_0^{-1}\sqrt{K/m}$ where $m$ is the batch size;
For synthetic dataset, we test 10 evenly spaced $\alpha_0$ values in range $ [10^{-1}, 10^{2}]$ on logarithmic scale, and for \texttt{zipcode} dataset we set such range of $\alpha_0 $ to $ [10^1, 10^{3}]$;

\textbf{4) Maximum iteration.} We set the maximum number of epochs to be 200 and 400 respectively for minibatch and momentum related tests;

\textbf{5) Batch size.} We take minibatch size $m$ from the range $\{1, 4, 8, 16, 32, 64\}$;

\textbf{6) Sub-problems} The solution to the proximal sub-problems is left in the appendix.

For each algorithm, speedup from minibatching is quantified as $T_1^*/T_m^*$ where $T_m^*$ is the total number of iterations for reaching the desired accuracy, with batch size $m$ and the best initial stepsize $\alpha_0$ among values specified above. Specially, if an algorithm fails to reach desired accuracy after running out of 400 epochs, we set its iteration number to the maximum.

\begin{figure*}[!htb]
\centering
\includegraphics[scale=0.20]{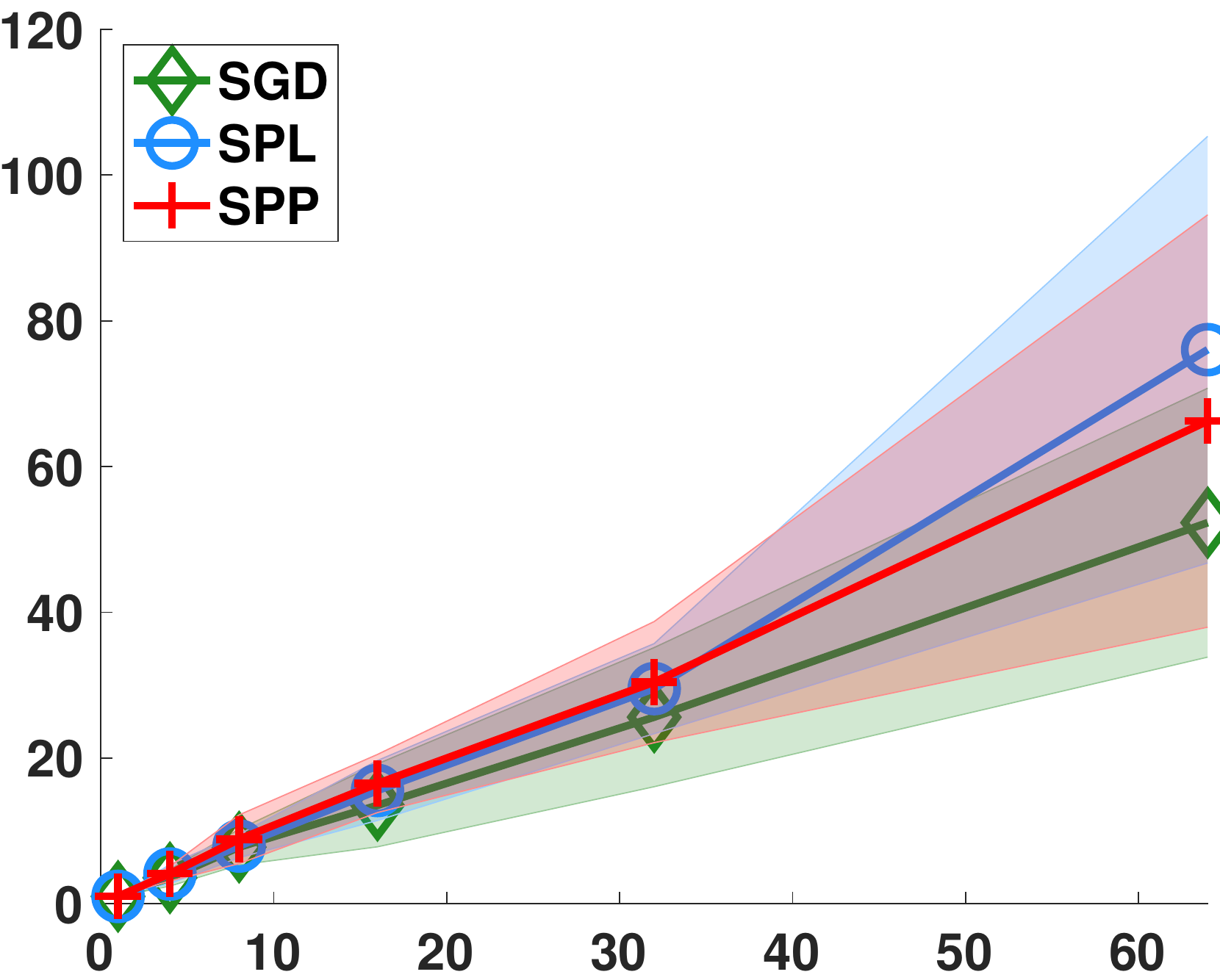} \includegraphics[scale=0.20]{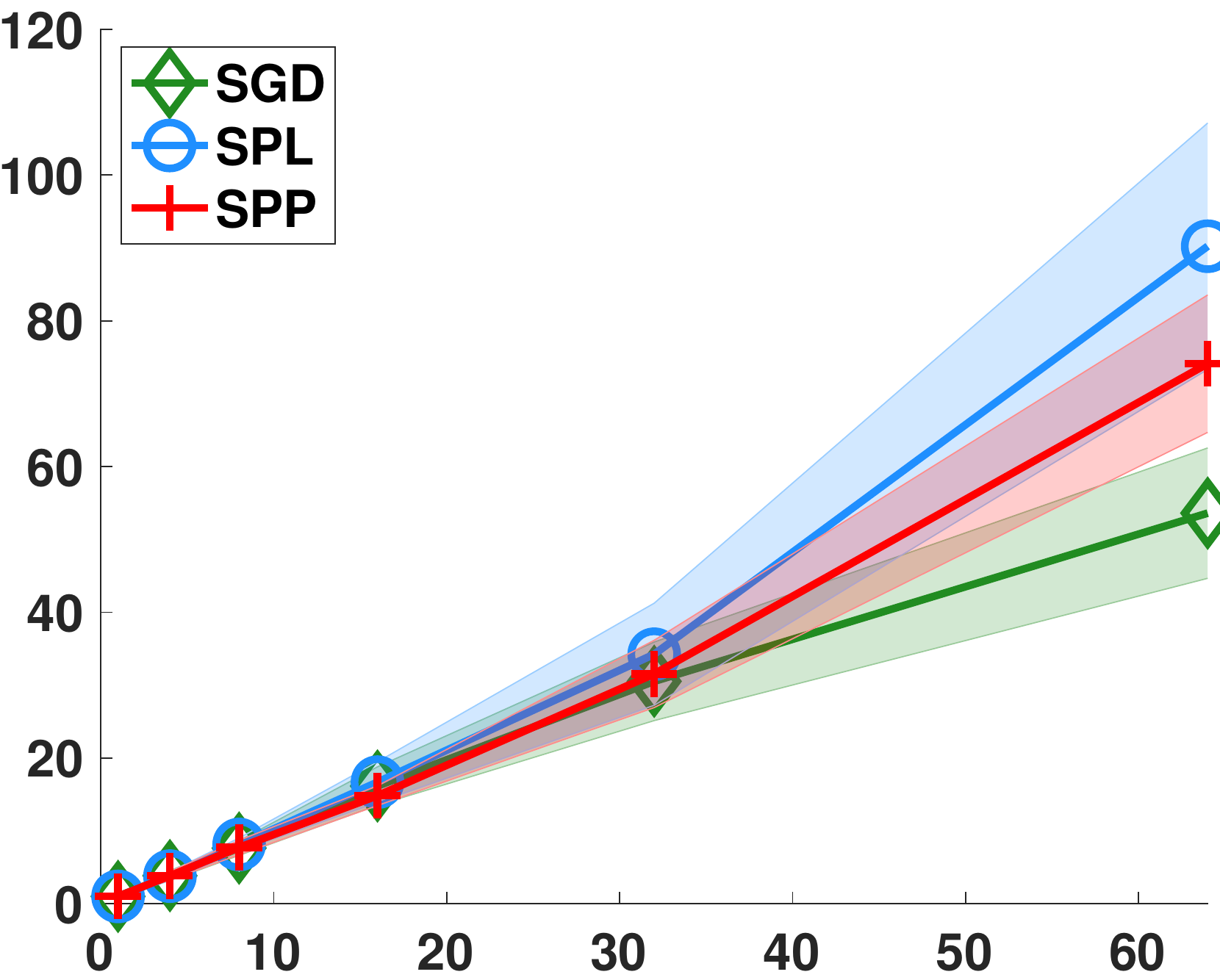}\includegraphics[scale=0.20]{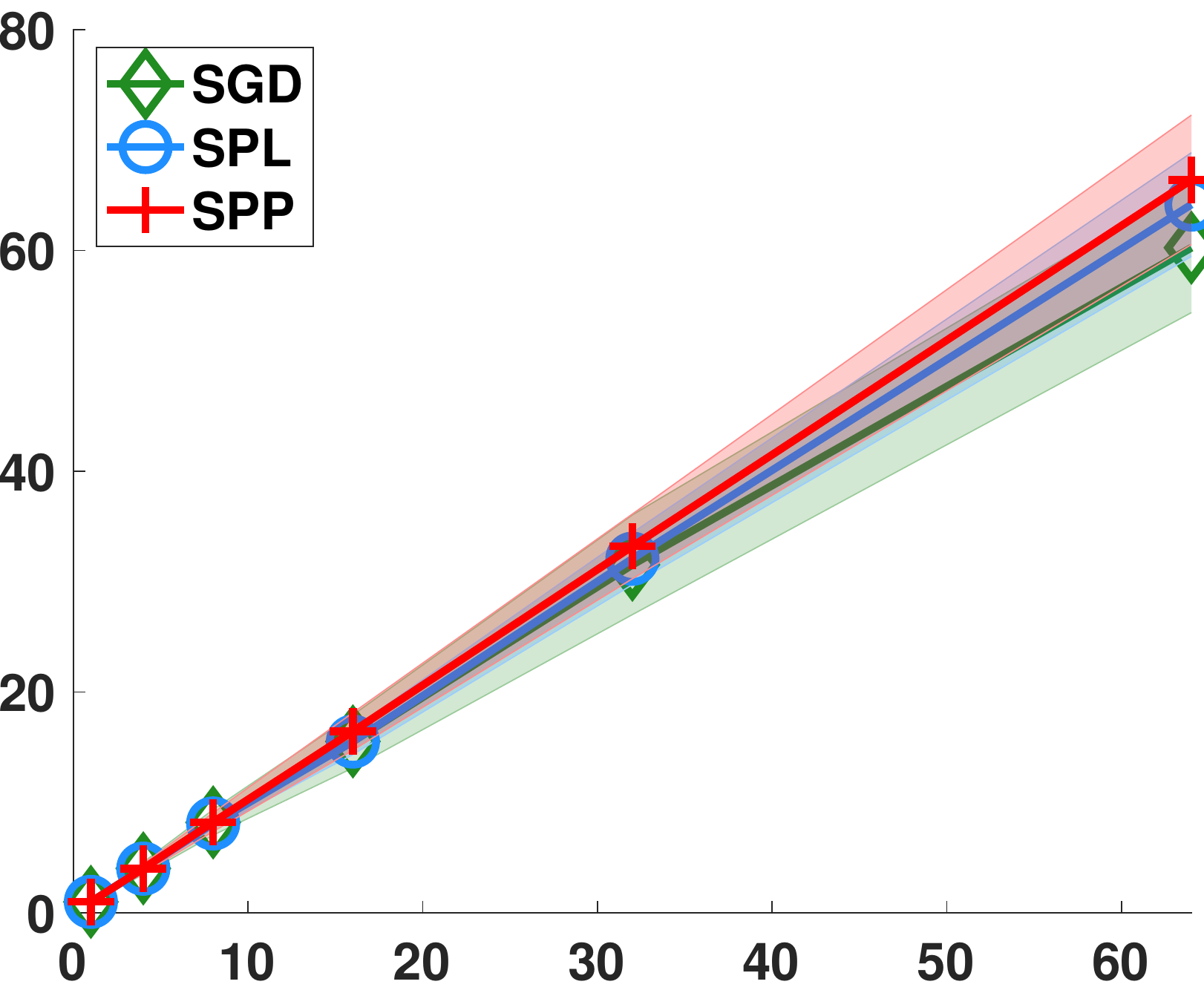}\includegraphics[scale=0.20]{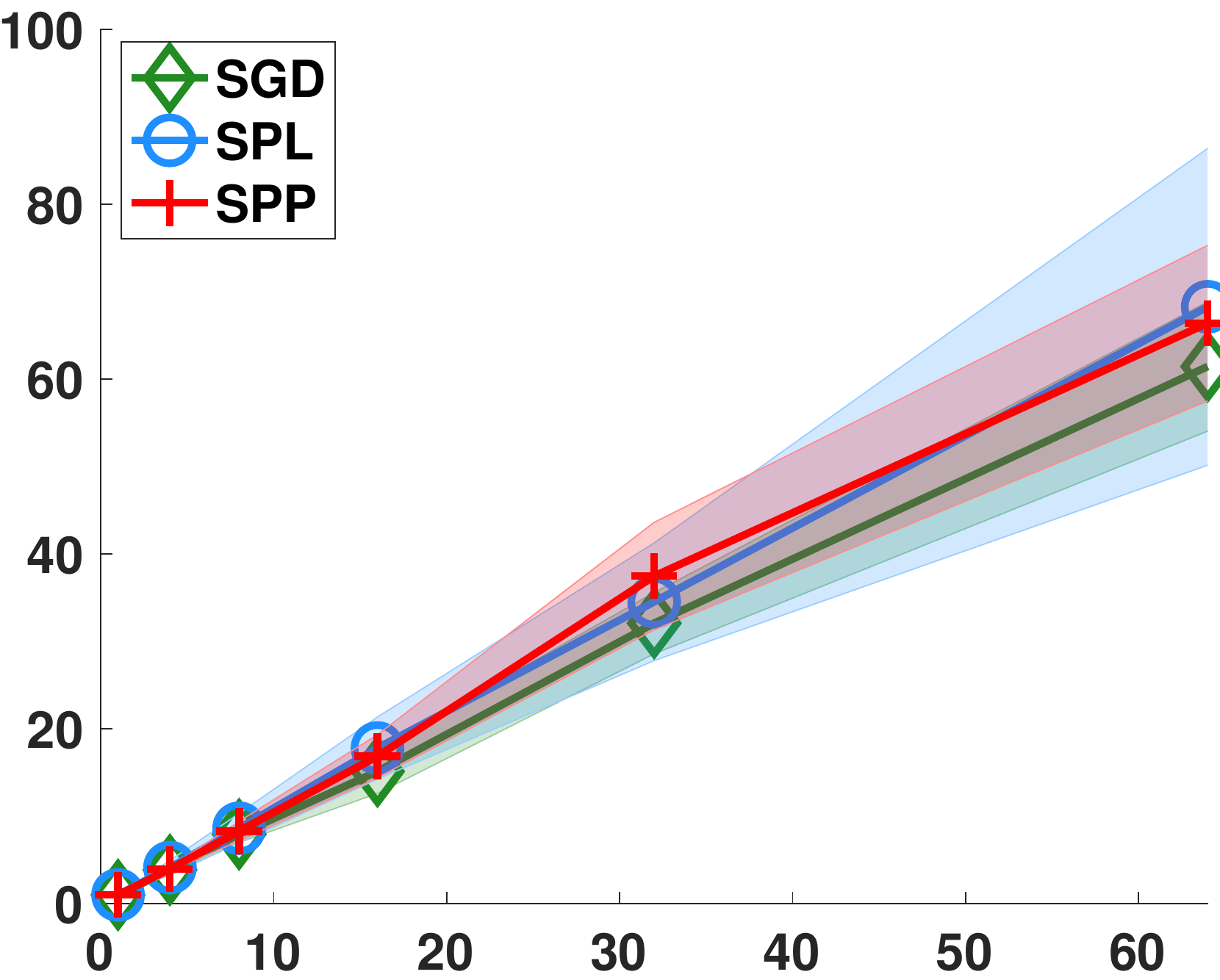}
	\caption{Speedup over minibatch sizes. The left two are for synthetic datasets  $\kappa=10, p_\text{fail} \in \{0.2, 0.3\}$; Digit datasets: digit image (id:24) with $p_{\text{fail}}\in \{0.2, 0.3\}$. \label{exp1:mb-speedup-1}}
\end{figure*}
 
\begin{figure*}[!htb]
\centering
\includegraphics[scale=0.20]{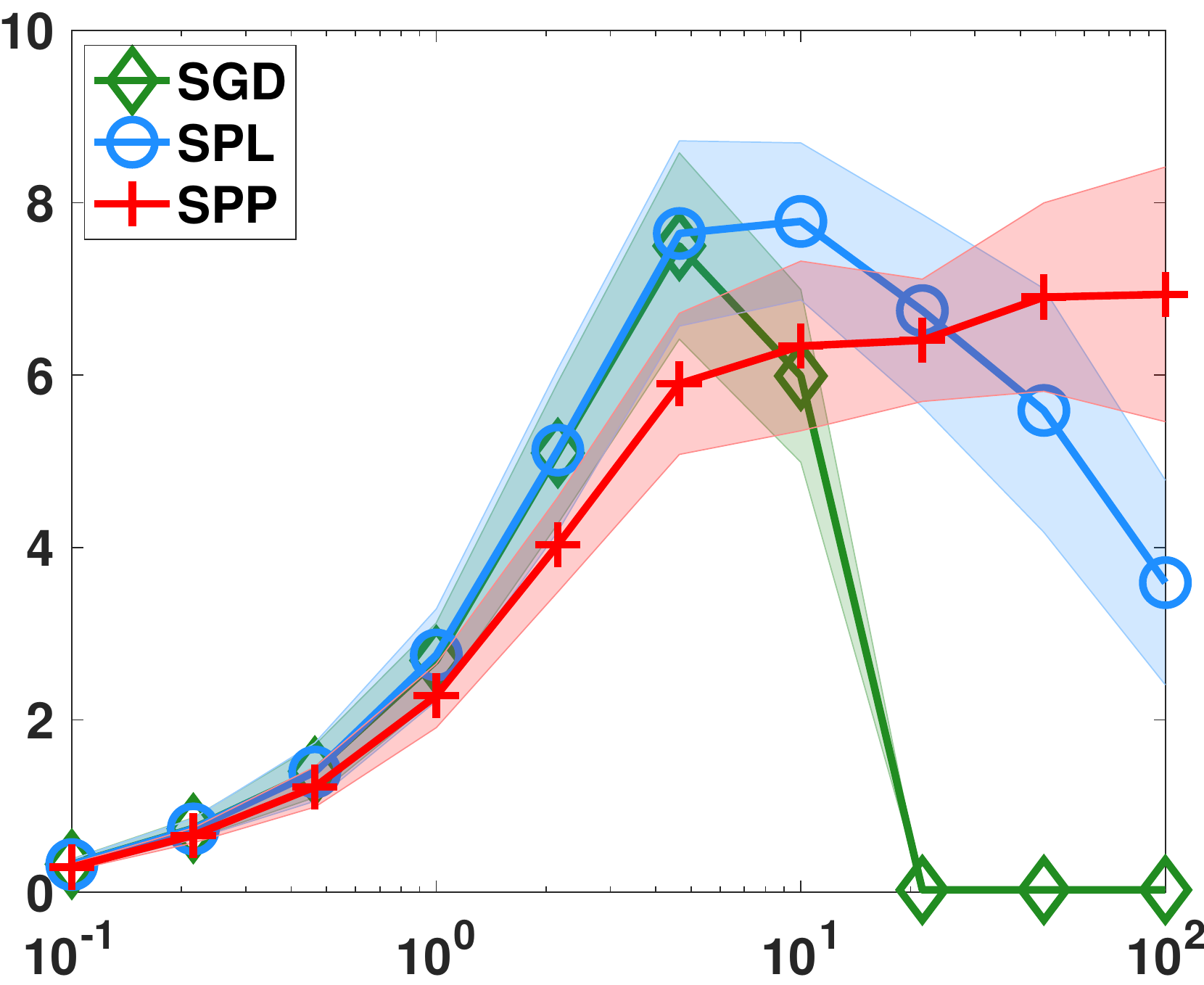} \includegraphics[scale=0.20]{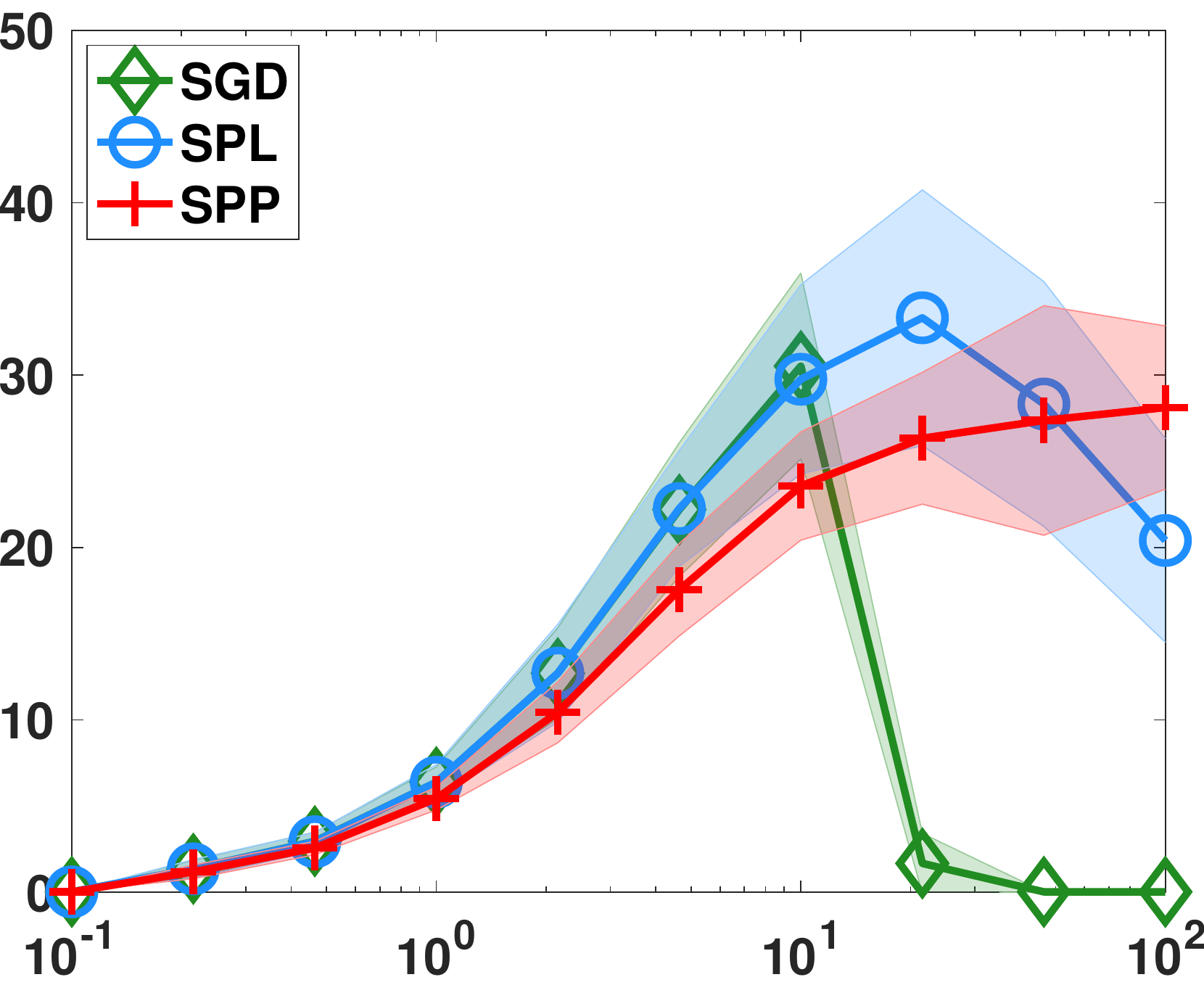}\includegraphics[scale=0.20]{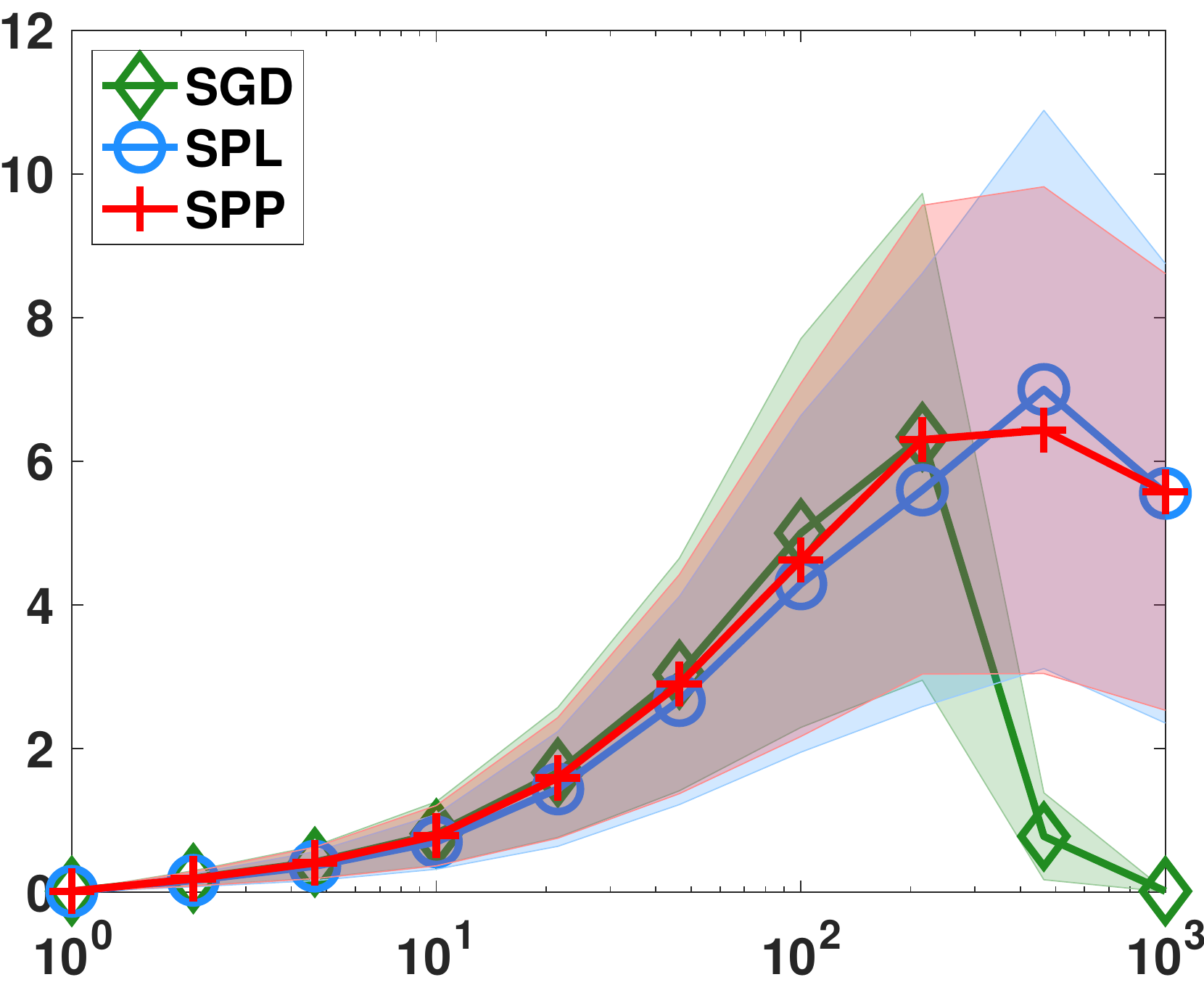}\includegraphics[scale=0.20]{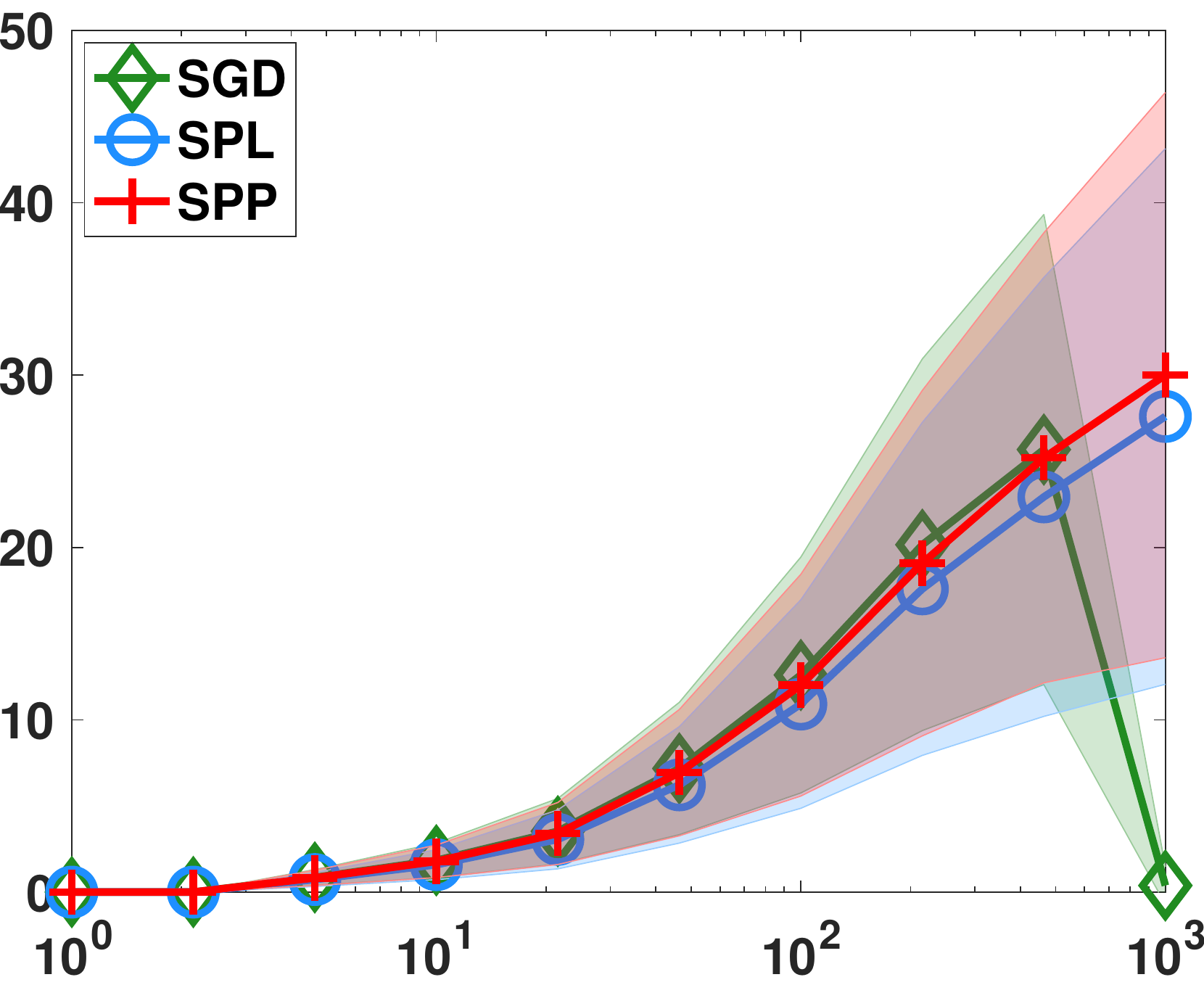}
\includegraphics[scale=0.20]{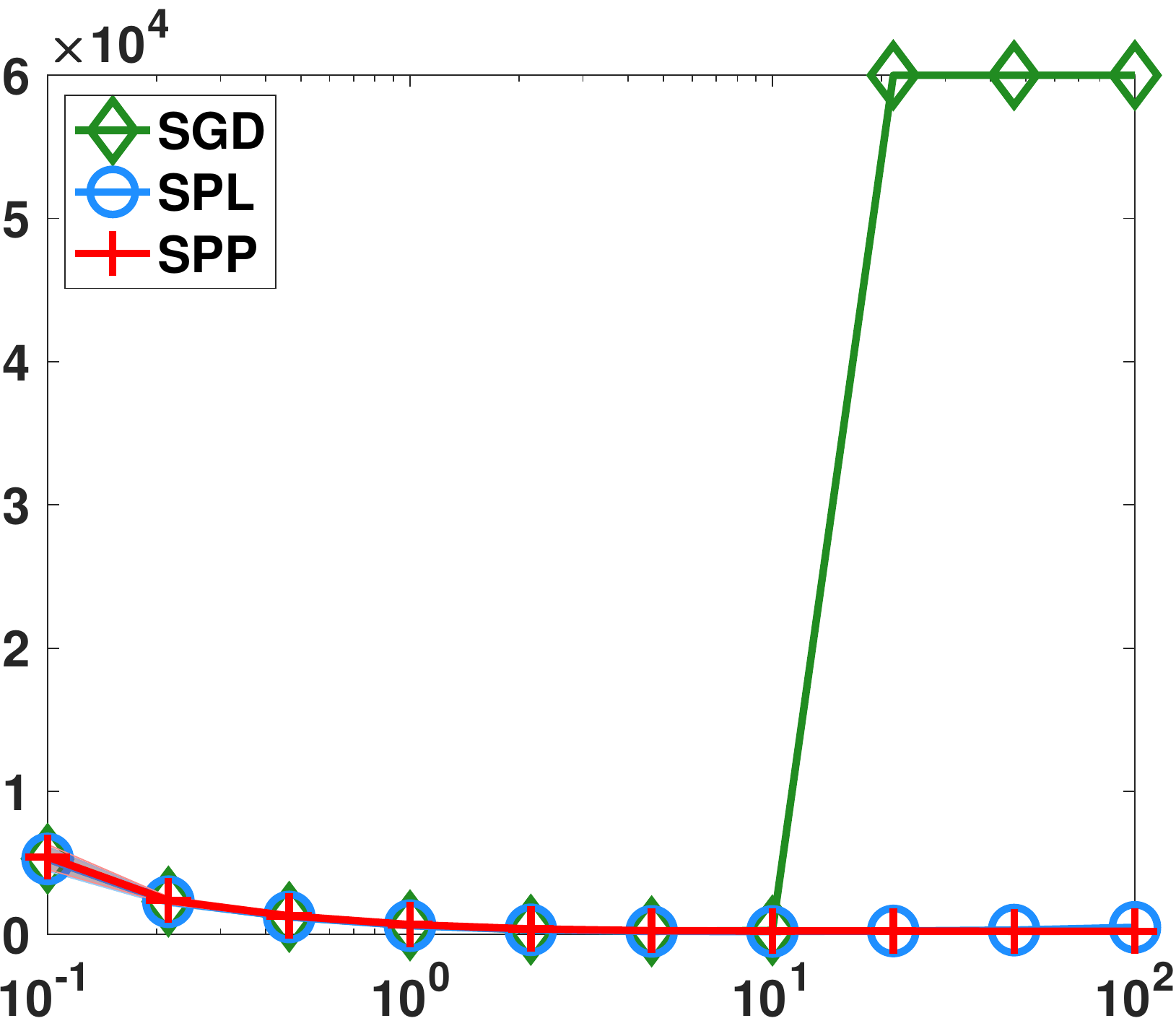} \includegraphics[scale=0.20]{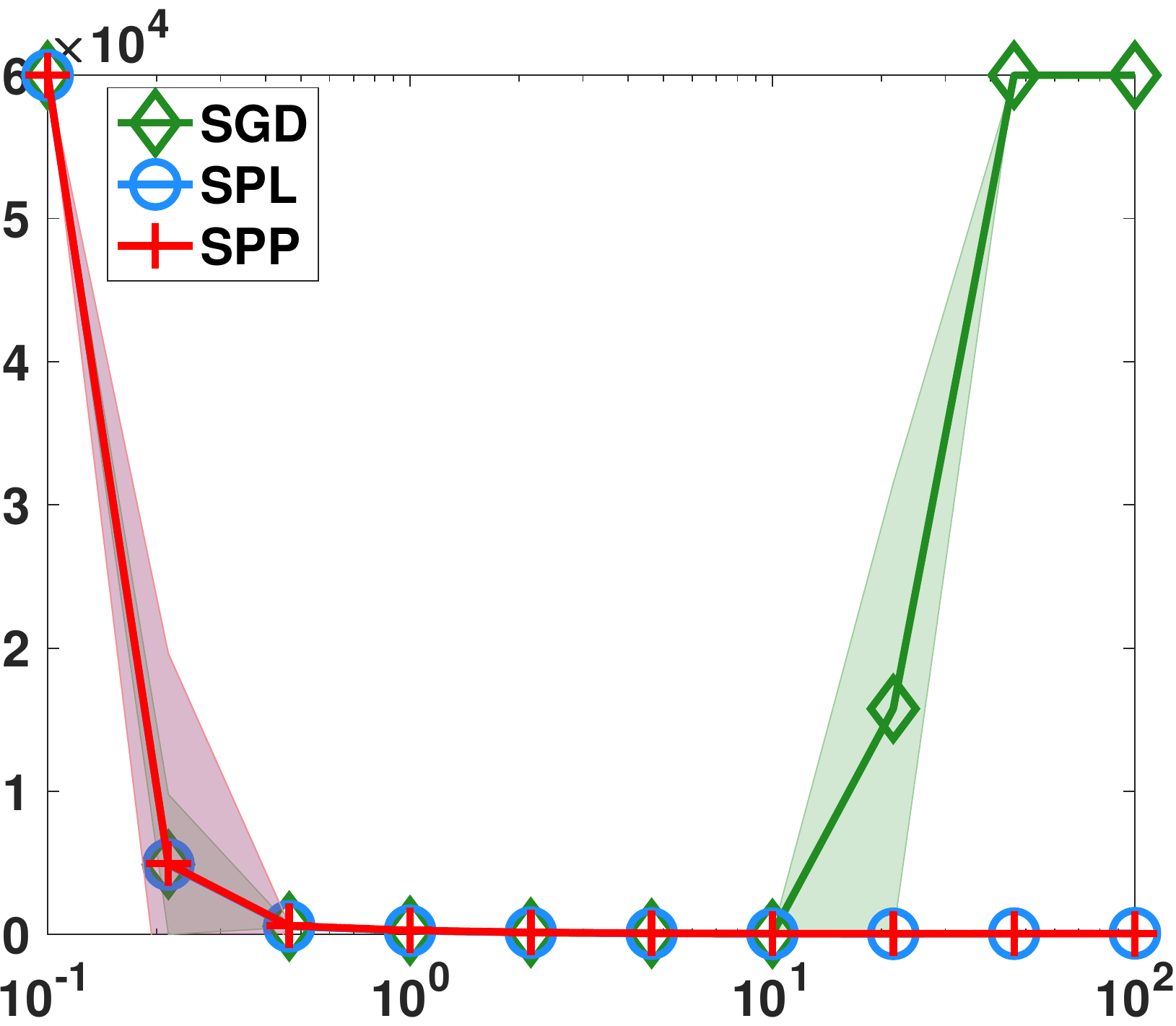}\includegraphics[scale=0.20]{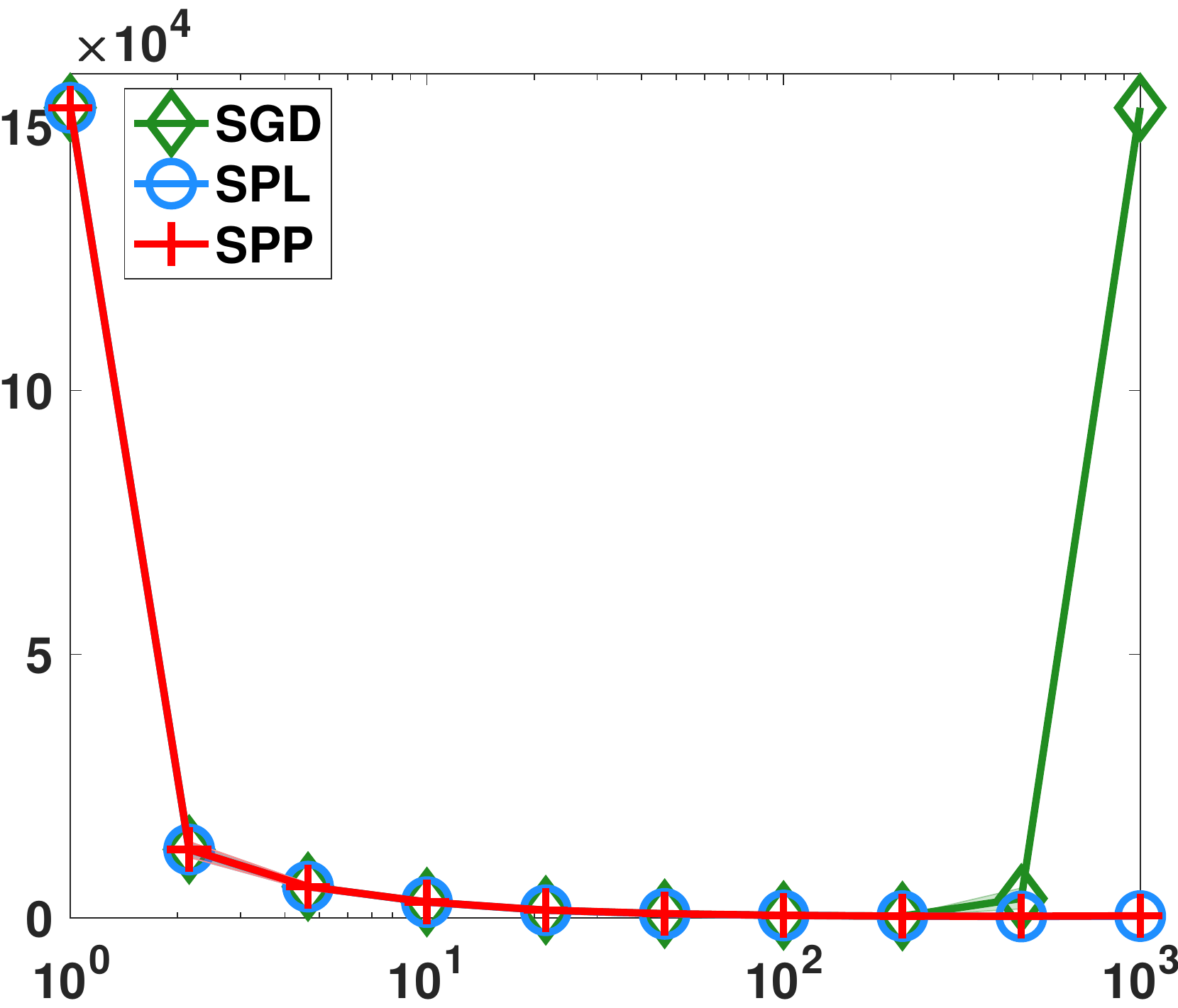}\includegraphics[scale=0.20]{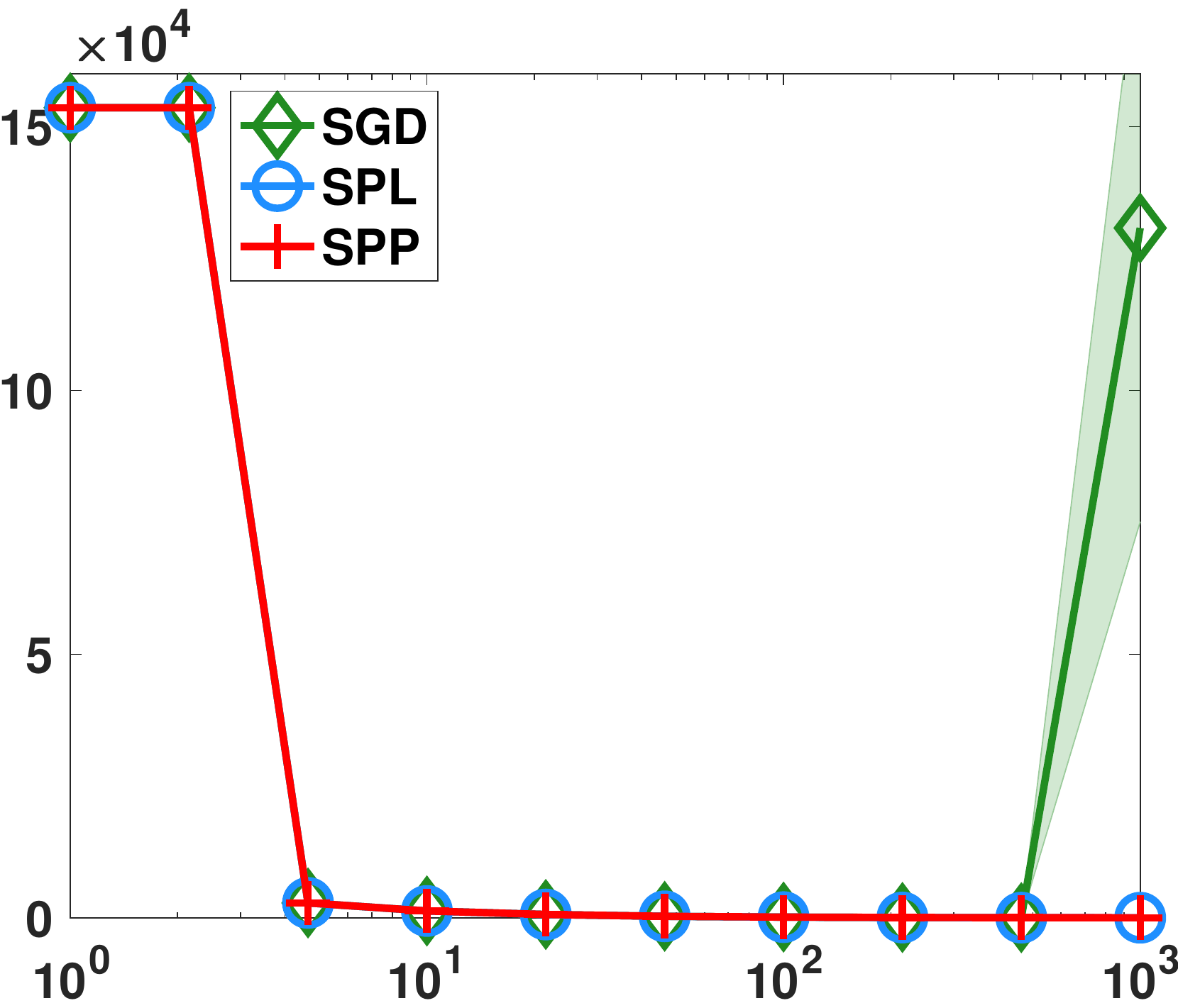}
	\caption{From left to right: synthetic datasets with $m \in \{8, 32\}$ and \texttt{zipcode} image (id=24) with $m \in \{8, 32\}$.
	x-axis: initial stepsize $\alpha_0$. y-axis (first row): speedup over the sequential version: $T_1^*/T_m^*(\alpha_0)$ where $T_m^*(\alpha_0)$ stands for the number of iterations when using batch size $m$ and initial stepsize $\alpha_0$.
	y-axis (second row): Total number of iterations.
	 \label{exp1:speedup-lr}}
\end{figure*}

  Figure~\ref{exp1:mb-speedup-1}  plots the speedup of each algorithm over different values of batch size according to the average of 20 independent runs.
 It can be seen that {\spl} exhibits a linear acceleration over the batch size, which confirms our theoretical analysis.
Moreover, we  find  {\sgd} admits considerable acceleration using minibatches, and sometimes the speedup performance matches that of {\spl} and {\spp}.  This observation seems to suggest the  effectiveness of minibatch {\sgd} in practice, despite the lack of theoretical support.
 
 Next, we investigate the sensitivity of minibatch acceleration to the choice of initial stepsizes.  We  plot the algorithm speedup over the initial stepsize $\alpha_0$ in Figure~\ref{exp1:speedup-lr} (1st row).
 It can be readily seen that {\sgd}, {\spl} and {\spp} all  achieve  considerable minibatch acceleration when choosing the  initial stepsize properly. However, {\spl} and {\spp} enjoy a much wider range of initial stepsizes for good speedup performance, and hence, lays more robust performance than {\sgd}.
 To further illustrate the robustness of {\spl} and {\spp}, we compare the efficiency of both algorithms in the minibatch setting. In contrast to the previous comparison on the relative scale, we directly compare the iteration complexity of the two algorithms. We plot the total iteration number over the choice of initial stepsizes in Figure~\ref{exp1:speedup-lr} (2nd row) for batch size $m=8$ and $ 32$. We observe that minibatch {\spl}({\spp})s exhibits promising performance for a wide range of stepsize policies, while minibatch {\sgd} quickly diverges for large stepsizes. Overall, our experiment complements the recent work \cite{davis2019stochasticweakly}, which shows that {\spl} ({\spp}) is more robust than {\sgd} in the sequential setting.

Our second experiment investigates the  performance of the proposed momentum methods.
We compare three  model-based methods ({\sgd}, {\spl}, {\spp}) and extrapolated model-based methods  ({\segd}, {\sepl}, {\sepp}). We generate four testing cases: the synthetic datasets with $(\kappa, p_\text{fail})=(10, 0.2)$ and $(10, 0.3)$; \texttt{zipcode} with digit images of id 2 and $p_\text{fail} \in \{0.2, 0.3\}$. We set $\alpha_0 \in [10^{-2}, 10^0], \beta = 0.6$ for synthetic data, and  set $\alpha_0 \in [10^0, 10^1], \beta = 0.9$ for \texttt{zipcode} dataset.
The rest of settings are the same as in minibatch with $m=1$.

Figure~\ref{exp4:epoch-lr} plots the number of epochs to $\varepsilon$-accuracy over initial stepsize $a_0$. It can be seen that with properly selected momentum parameters ({\segd}, {\sepl}, {\sepp}) all suggest improved convergence when stepsize is relatively small.

\begin{figure*}[!htb]
\centering
\includegraphics[scale=0.20]{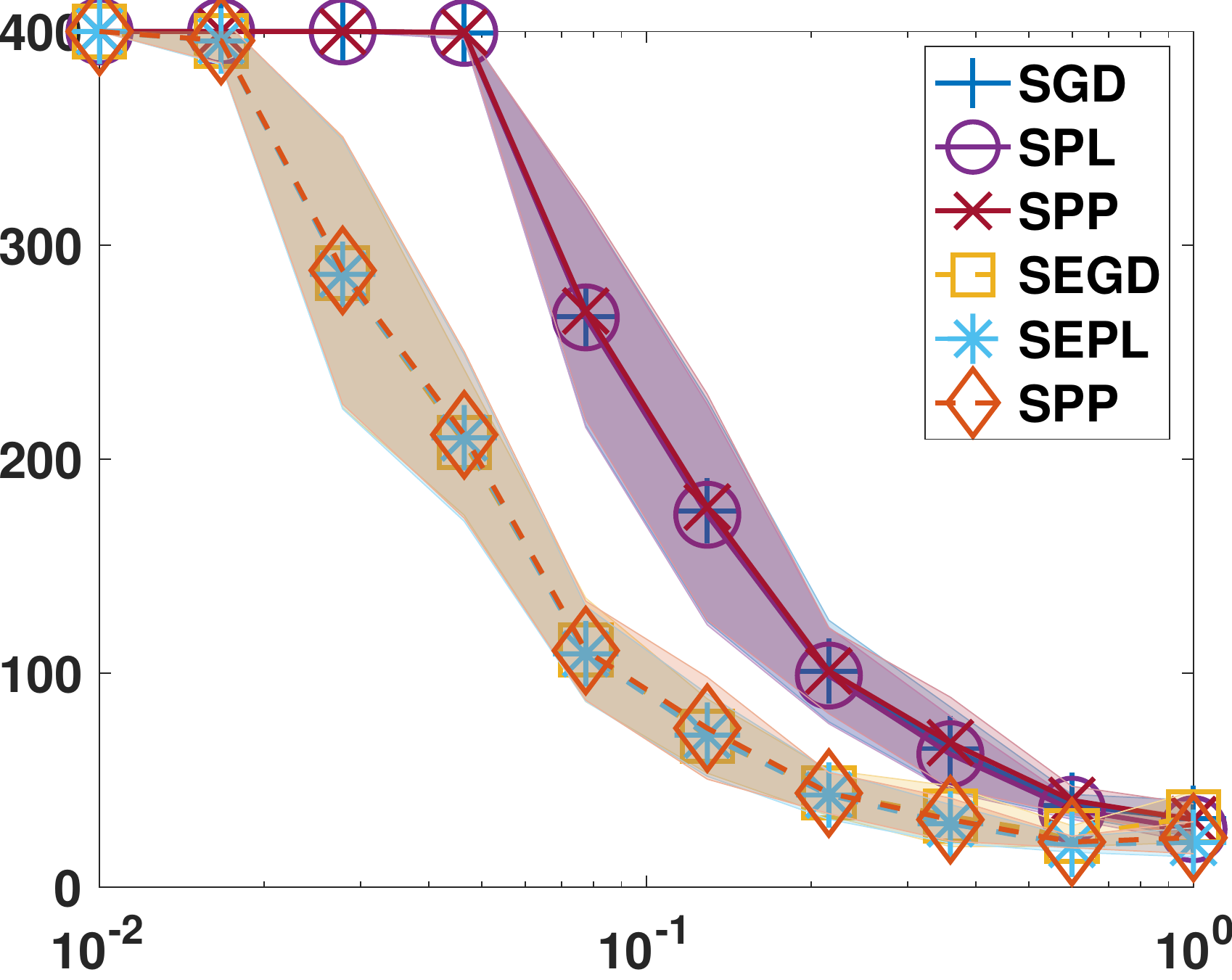} \includegraphics[scale=0.20]{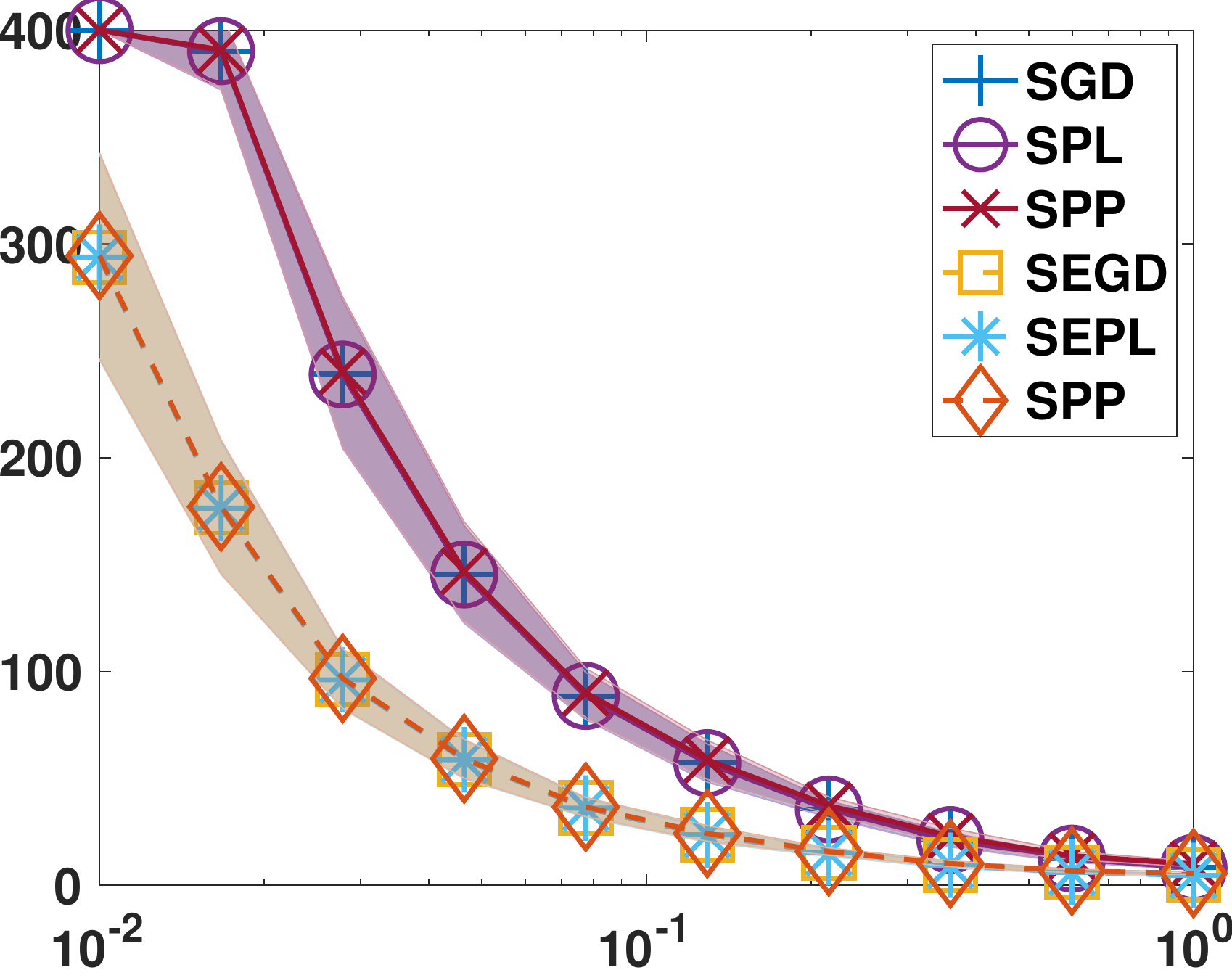}\includegraphics[scale=0.20]{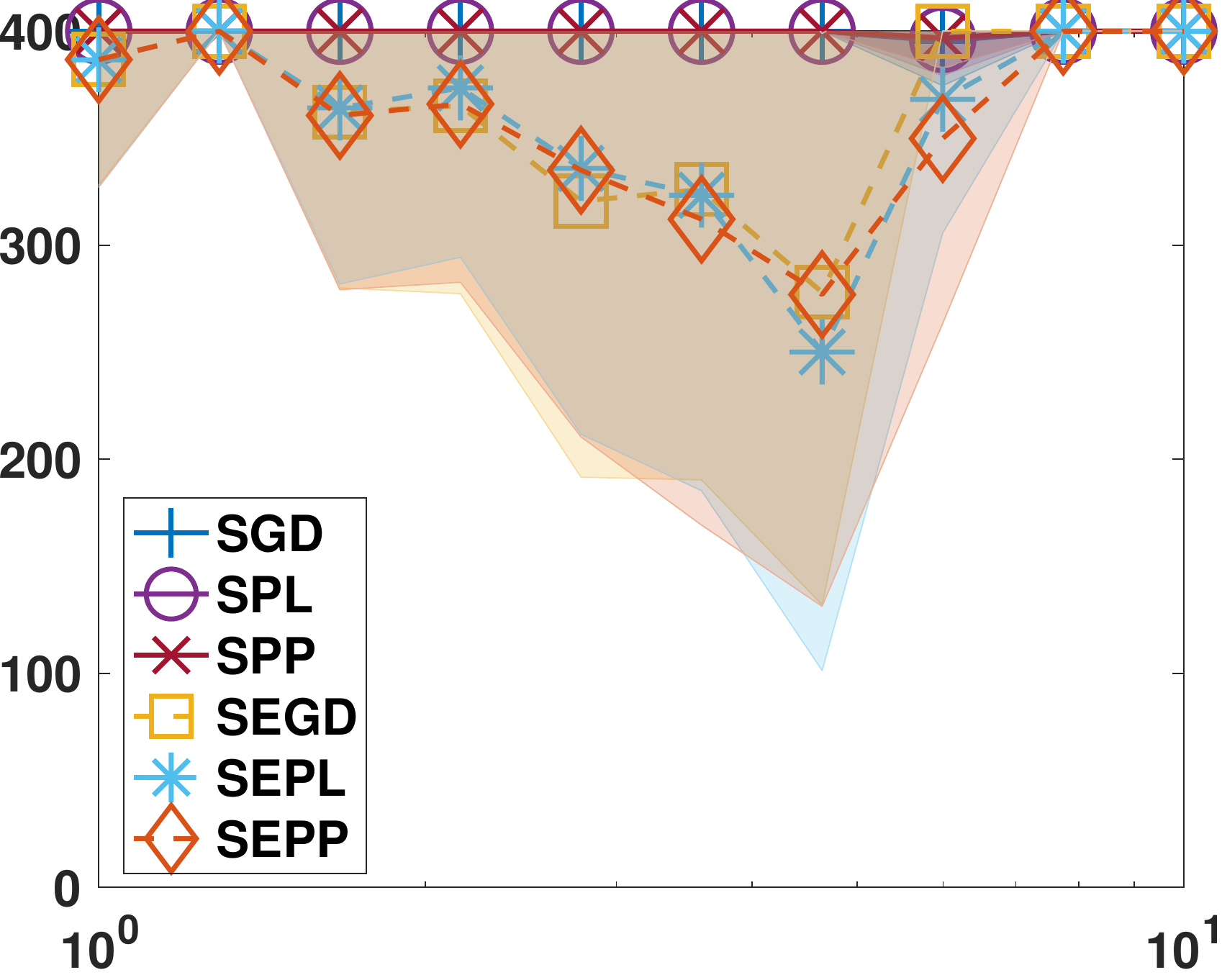}\includegraphics[scale=0.20]{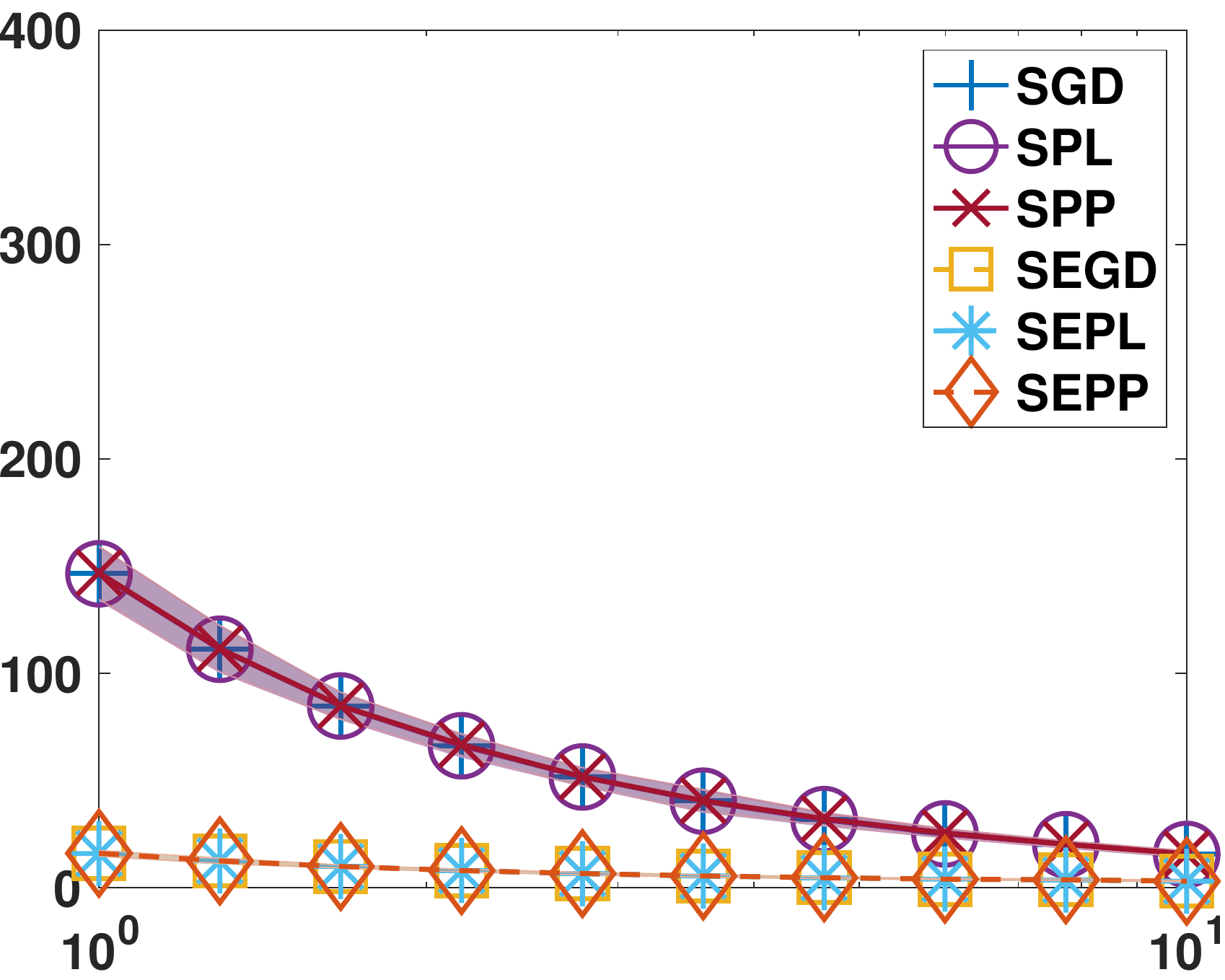}
	\caption{From left to right: synthetic datasets with $\kappa = 10, p_\text{fail}\in\{0.2, 0.3\}, \beta = 0.6$ and \texttt{zipcode} image (id=2) with $p_\text{fail} \in \{0.2, 0.3\}, \beta = 0.9$.
	x-axis: initial stepsize $\alpha_0$. y-axis: number of epochs on reaching desired accuracy 
	 \label{exp4:epoch-lr}}
\end{figure*}

\begin{figure*}[!htb]
\centering
\includegraphics[scale=0.20]{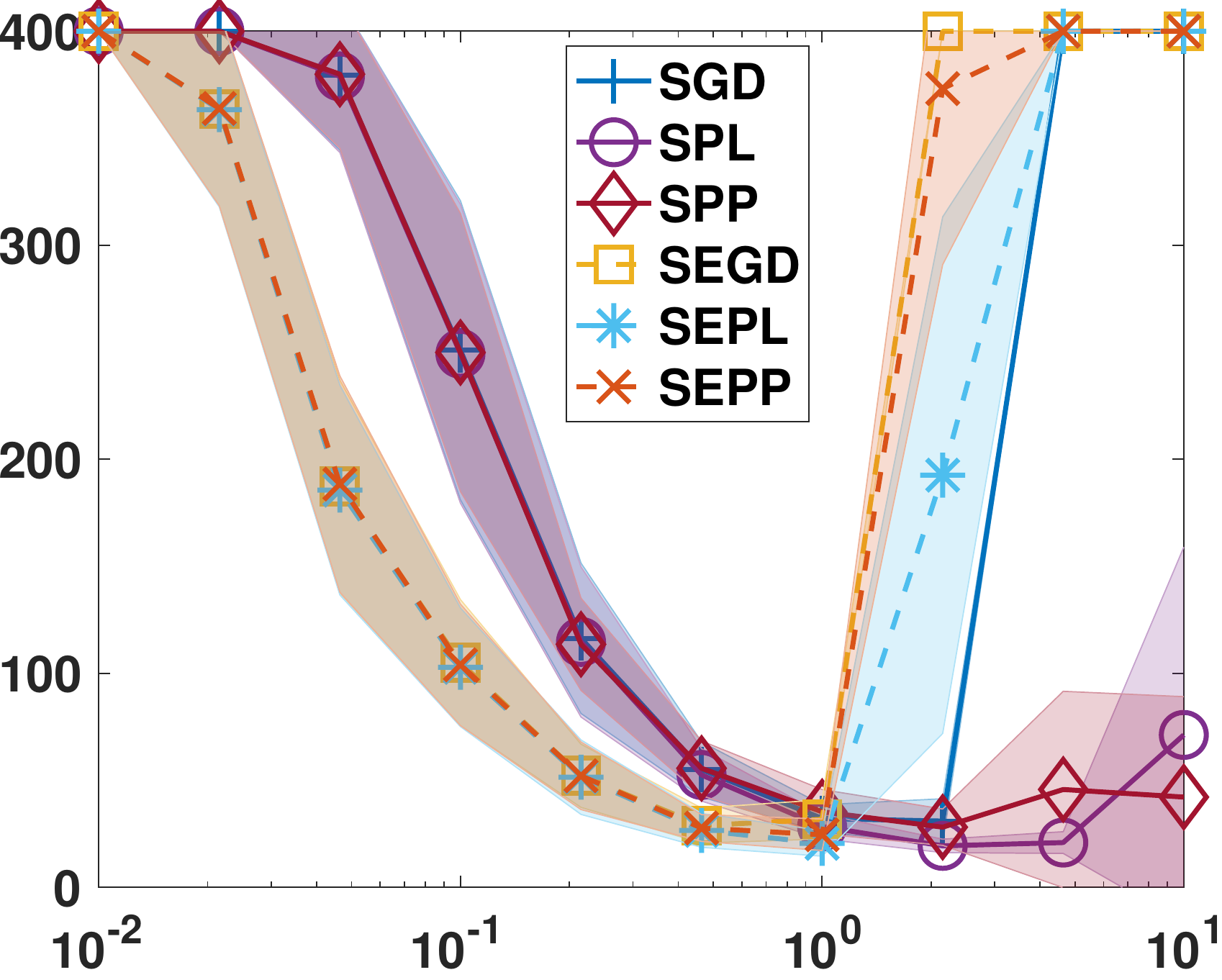} \includegraphics[scale=0.20]{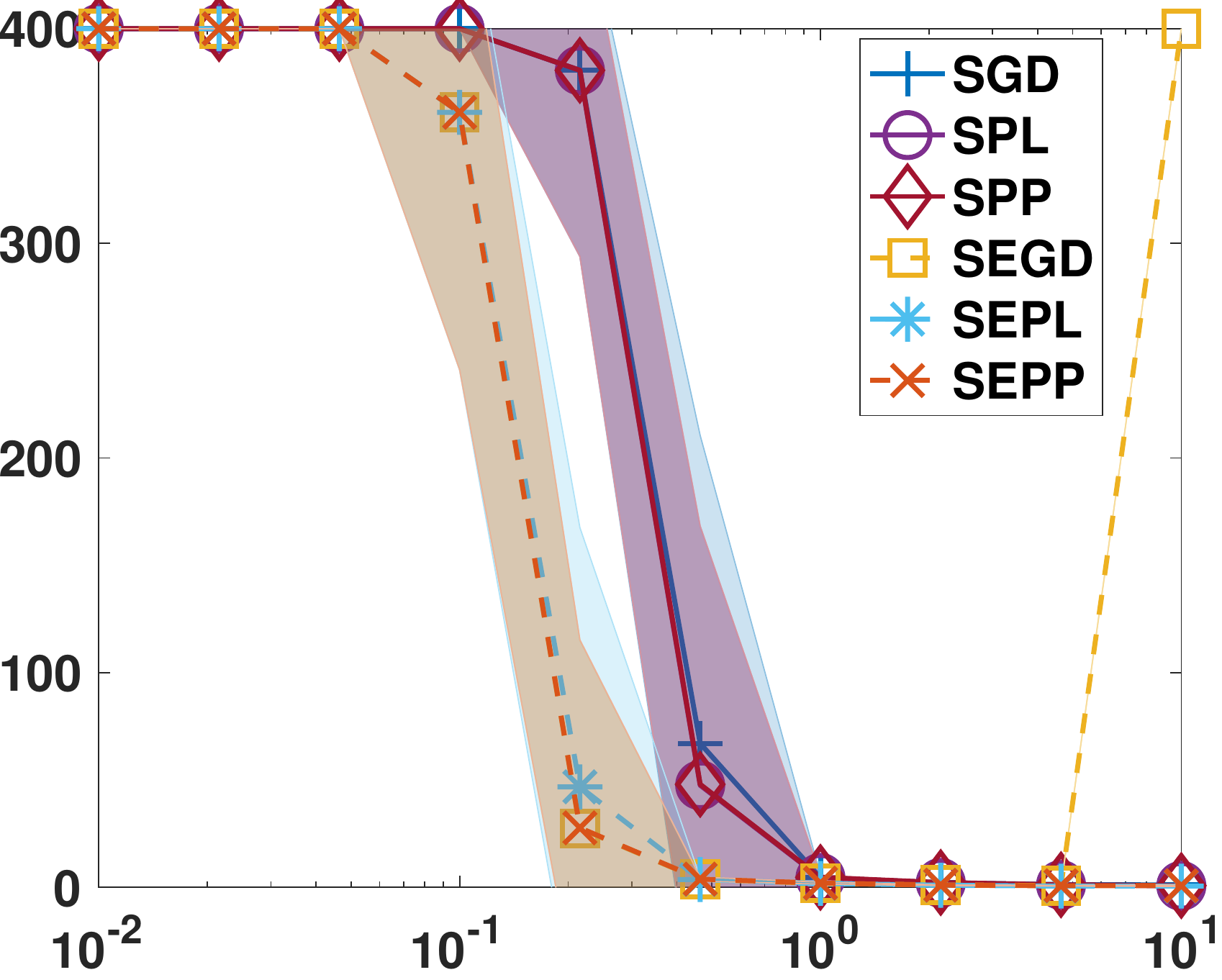}\includegraphics[scale=0.20]{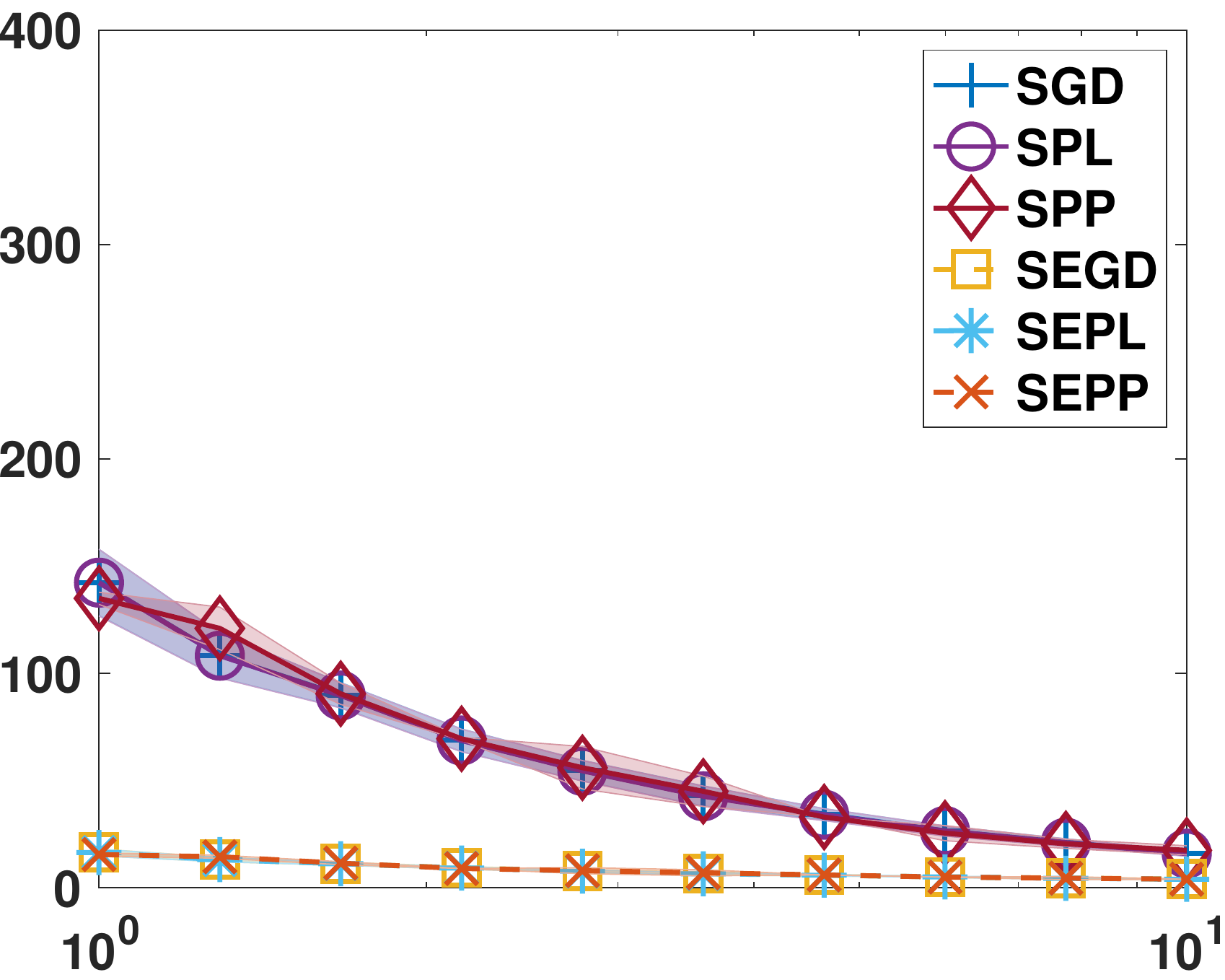}\includegraphics[scale=0.20]{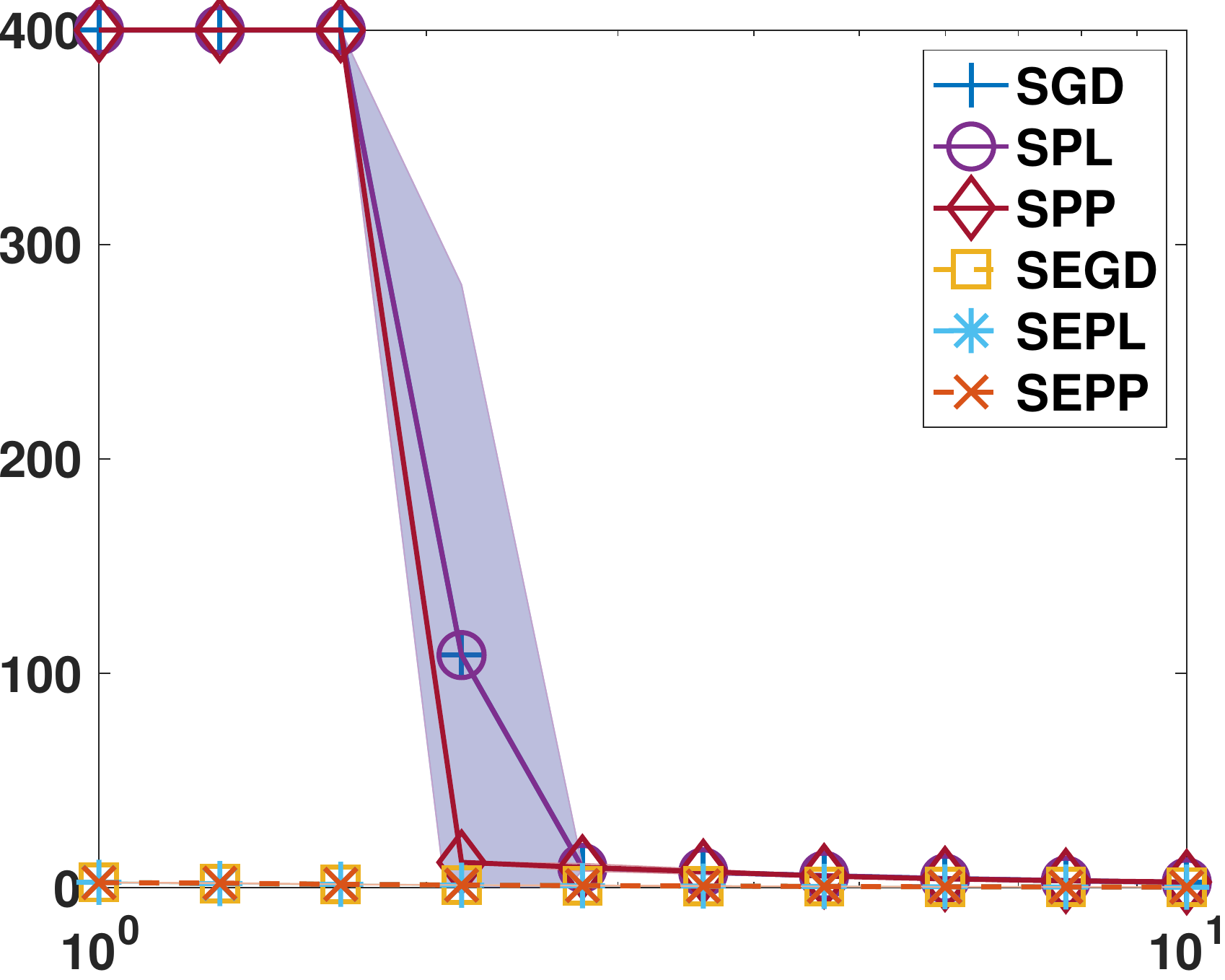}
	\caption{From left to right: synthetic datasets with $\kappa = 10, p_\text{fail} = 0.2, \beta = 0.6, m \in \{1, 32\}$ and \texttt{zipcode} image (id=24) with $p_\text{fail} = 0.3, \beta = 0.9, m \in \{1, 32\}$.
	x-axis: initial stepsize $\alpha_0$. y-axis: number of epochs for reaching desired accuracy 
	 \label{exp4:bmepoch-lr}}
\end{figure*}

In the last experiment,  we attempt to exploit the performance of the compared algorithms when minibatching and momentum are applied simultaneously. The  parameter setting is the same as that of the second experiment, except that we choose $m \in \{1, 32\}$. Results are plotted in Figure \ref{exp4:bmepoch-lr} and it can be seen that minibatch {\smod}, when combined with momentum, exhibits even better convergence performance and robustness.

\section{Discussion\label{sec:Conclu}}
On a broad class of non-smooth non-convex (particularly, weakly convex) problems,  we make stochastic model-based methods more efficient by leveraging minibatching and momentum---two techniques that are well-known only for {\sgd}.
Applying algorithm stability for optimization analysis is a key step to achieving improved convergence rate over the batch size. 
This perspective appears to be interesting for stochastic optimization in a much broader context. 
Although some progress is made, we are unable to show whether minibatches can accelerate {\sgd} when the objective does not have a smooth component. Note that the complexity of {\sgd} already matches the best bound of full subgradient method. It would be interesting to know whether this bound for {\sgd} is tight or improvable. It would also be interesting to study the lower bound of {\sgd} (and other stochastic algorithms) in the non-smooth setting.  Some interesting recent results can be referred from \citep{kornowski2021oracle, zhang2020complexity}.

\newpage
\bibliographystyle{abbrvnat}
\bibliography{citations}
\doparttoc
\faketableofcontents %
\part{} %

\newpage

\appendix

\addcontentsline{toc}{section}{Appendix}
\part{Appendix} %
\parttoc %

\vspace{10pt}
In the appendix, we present additional convergence analysis of the proposed  algorithms. Appendix~\ref{app:sec3} proves the convergence results for minibatching {\smod}. Appendix~\ref{app:sec4} proves the convergence results of momentum {\smod}.  Convergence results of {\smod} with both minibatching and momentum is formally presented in Appendix~\ref{app:mm}. 
Besides the missing proof for the main article, we present some new  convergence results of {\smod} for convex stochastic optimization in Appendix~\ref{app:cvx}, and show how to achieve and possibly improve state-of-the-art complexity rates.  {\smod} with Nesterov acceleration, which achieves the best complexity rate, is developed in Appendix~\ref{subsec:nes}. We provide details on how to solve the subproblems in the experiments in Section~\ref{app:subproblem}. Additional experiments on blind deconvolution are given in Appendix~\ref{app:addexp}.

\newpage
\section{Proof of results in Section~\ref{sec:minibatch}} \label{app:sec3}

Our paper will make use of the following elementary result, we refer
to \citeapp{davis2019stochasticweaklydup} for proof details.
\begin{lem}
A function $f(x)$ is $\lambda$-weakly convex if and only if for
any $x$, $y$ and $f'(x)\in\partial f(x)$, we have
$f(y)\ge f(x)+\langle f'(x),y-x\rangle-\frac{\lambda}{2}\|y-x\|^{2}$.
\end{lem}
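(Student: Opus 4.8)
The plan is to reduce the claim to the first-order characterization of ordinary convexity via the substitution $g(x)=f(x)+\frac{\lambda}{2}\|x\|^{2}$. By the definition of weak convexity stated in the paper, $f$ is $\lambda$-weakly convex precisely when $g$ is convex, so it suffices to translate the convex subgradient inequality for $g$ into an inequality for $f$, and to observe that the translation is reversible.

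First I would record the subdifferential sum rule. Since $\frac{\lambda}{2}\|\cdot\|^{2}$ is everywhere differentiable with gradient $\lambda x$, the subdifferential satisfies $\partial g(x)=\partial f(x)+\lambda x$; that is, $v\in\partial g(x)$ if and only if $v=f'(x)+\lambda x$ for some $f'(x)\in\partial f(x)$. Next I would invoke the standard fact that $g$ is convex if and only if, for every $x,y$ and every $g'(x)\in\partial g(x)$, one has $g(y)\ge g(x)+\langle g'(x),y-x\rangle$.

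The remaining step is purely algebraic. Substituting $g(x)=f(x)+\frac{\lambda}{2}\|x\|^{2}$ and $g'(x)=f'(x)+\lambda x$ into the convex inequality and rearranging gives
\[
f(y)\ge f(x)+\langle f'(x),y-x\rangle+\tfrac{\lambda}{2}\|x\|^{2}-\tfrac{\lambda}{2}\|y\|^{2}+\lambda\langle x,y-x\rangle .
\]
The quadratic remainder collapses via $\tfrac{\lambda}{2}\|x\|^{2}-\tfrac{\lambda}{2}\|y\|^{2}+\lambda\langle x,y-x\rangle=-\tfrac{\lambda}{2}\|y-x\|^{2}$, which is exactly the claimed bound $f(y)\ge f(x)+\langle f'(x),y-x\rangle-\frac{\lambda}{2}\|y-x\|^{2}$. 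Because each link in this chain is an equivalence, the two directions of the ``if and only if'' follow simultaneously.

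The one genuinely delicate point is the backward implication of the first-order convexity characterization: that the subgradient inequality, holding globally at every point for the (Fréchet) subdifferential, actually forces $g$ to be convex. This is where I would lean on a standard result from variational analysis (for instance, a monotonicity argument applied to restrictions of $g$ along line segments, using that $\partial g$ is nonempty on a dense set for the lower-semicontinuous functions at hand). The forward implication, the sum rule, and the algebraic collapse are all routine.
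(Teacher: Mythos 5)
The paper never actually proves this lemma: it states the result and defers entirely to the citation of Davis and Drusvyatskiy, so there is no in-paper argument to diverge from. Your proof is correct and is the standard route that the cited reference effectively takes: pass to $g(x)=f(x)+\frac{\lambda}{2}\|x\|^{2}$, use exactness of the Fr\'echet-subdifferential sum rule when one summand is differentiable (so $\partial g(x)=\partial f(x)+\lambda x$, an equality, not just an inclusion), invoke the first-order characterization of convexity, and collapse the remainder via $\frac{\lambda}{2}\|x\|^{2}-\frac{\lambda}{2}\|y\|^{2}+\lambda\langle x,y-x\rangle=-\frac{\lambda}{2}\|y-x\|^{2}$; I checked the algebra and it is right. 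The one genuine subtlety is exactly the one you flag: with the paper's Fr\'echet-type definition of $\partial f$, the ``if'' direction is \emph{not} valid for completely arbitrary $f$, since the hypothesis can be vacuous at too many points (there are discontinuous functions whose Fr\'echet subdifferential is empty off a negligible set, so the global inequality holds trivially at the few points where it is tested, yet $f+\frac{\lambda}{2}\|\cdot\|^{2}$ is nowhere near convex). One needs lower semicontinuity (or the continuity the paper assumes for $f(\cdot,\xi)$ and its model functions) to run the argument you sketch: the global inequality makes $\partial g$ monotone, and for lsc $g$ monotonicity of the Fr\'echet subdifferential implies convexity, by the characterization of Correa, Jofr\'e, and Thibault, or equivalently by the density-plus-line-segment argument you describe. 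So your proof is complete for the class of functions the paper actually applies the lemma to, and your identification of where regularity enters is the honest way to state it; the paper sidesteps the issue by citation.
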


We  state an important result which generalizes the well-known
three-point lemma to handle nonconvex function. 
\begin{lem}
\label{lem:three-point}Let $g(x)$ be a $\eta$-weakly convex function,
and $\kappa>\eta$. If 
\[
z^{+}=\argmin_{x\in\Xcal} \left\{g(x)+\frac{\kappa}{2}\|x-z\|^{2}\right\},
\]
then for any $x\in\Xcal$, we have 
\begin{equation}
g(z^{+})+\frac{\kappa}{2}\|z^{+}-z\|^{2}\le g(x)+\frac{\kappa}{2}\|x-z\|^{2}-\frac{\kappa-\eta}{2}\|x-z^{+}\|^{2}. 
\end{equation}
\end{lem}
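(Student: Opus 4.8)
The plan is to exploit the fact that the regularized objective $\Phi(x) := g(x) + \frac{\kappa}{2}\|x-z\|^{2}$ is \emph{strongly} convex with modulus $\kappa-\eta>0$, even though $g$ itself is only weakly convex: the quadratic penalty contributes $\kappa$ units of strong convexity, while $g$ can erode at most $\eta$ of it. For a strongly convex function the constrained minimizer furnishes a quadratic lower bound on the function value everywhere on $\Xcal$, and this lower bound is, after rearrangement, exactly the claimed inequality. So the whole argument reduces to making this lower bound explicit at $z^{+}$.

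To carry this out I would proceed in three steps. First, apply the preceding characterization of $\eta$-weak convexity to $g$ at the point $z^{+}$: for a suitable subgradient $g'(z^{+})\in\partial g(z^{+})$ and any $x\in\Xcal$,
\[
g(x)\ge g(z^{+})+\langle g'(z^{+}),x-z^{+}\rangle-\tfrac{\eta}{2}\|x-z^{+}\|^{2}.
\]
Second, write the first-order optimality condition for $z^{+}$ as the minimizer of $\Phi$ over the convex set $\Xcal$; in variational-inequality form this reads $\langle g'(z^{+})+\kappa(z^{+}-z),\,x-z^{+}\rangle\ge 0$ for all $x\in\Xcal$, with $g'(z^{+})$ the same subgradient as above. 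Third, add $\frac{\kappa}{2}\|x-z\|^{2}$ to both sides of the weak-convexity bound and expand the quadratic via the identity $\frac{\kappa}{2}\|x-z\|^{2}=\frac{\kappa}{2}\|x-z^{+}\|^{2}+\kappa\langle x-z^{+},z^{+}-z\rangle+\frac{\kappa}{2}\|z^{+}-z\|^{2}$. The linear terms then combine into precisely $\langle g'(z^{+})+\kappa(z^{+}-z),x-z^{+}\rangle$, which is nonnegative by optimality and may be dropped, while $-\frac{\eta}{2}\|x-z^{+}\|^{2}$ and $+\frac{\kappa}{2}\|x-z^{+}\|^{2}$ merge into $+\frac{\kappa-\eta}{2}\|x-z^{+}\|^{2}$. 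This yields the stated inequality.

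The only genuine subtlety — the step I would treat carefully rather than as routine — is the interface between the constraint set $\Xcal$ and the subgradient calculus: one must ensure that a \emph{single} vector $g'(z^{+})$ simultaneously satisfies the weak-convexity lower bound and the variational inequality encoding optimality over $\Xcal$. This is guaranteed by the optimality condition $0\in\partial g(z^{+})+\kappa(z^{+}-z)+N_{\Xcal}(z^{+})$, which selects the required $g'(z^{+})$. Equivalently, one can bypass the explicit subgradient bookkeeping by invoking the standard lower bound $\Phi(x)\ge\Phi(z^{+})+\frac{\kappa-\eta}{2}\|x-z^{+}\|^{2}$ valid at the constrained minimizer of a $(\kappa-\eta)$-strongly convex function. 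Everything else is the algebraic recombination of quadratic terms described above.
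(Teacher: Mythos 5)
Your proposal is correct and follows essentially the same route as the paper: both rest on the observation that $g(\cdot)+\frac{\kappa}{2}\|\cdot-z\|^{2}$ is $(\kappa-\eta)$-strongly convex, so the constrained minimizer $z^{+}$ furnishes the quadratic growth bound $\Phi(x)\ge\Phi(z^{+})+\frac{\kappa-\eta}{2}\|x-z^{+}\|^{2}$ on $\Xcal$. The paper invokes this fact directly via the optimality condition, while you unpack it at the subgradient level (weak-convexity inequality at $z^{+}$ plus the variational inequality with the normal cone $N_{\Xcal}(z^{+})$); this is merely a more explicit rendering of the same argument, and is in fact slightly more careful about the constraint set than the paper's terse statement $0\in\partial\bigl[g(z^{+})+\frac{\kappa}{2}\|z^{+}-z\|^{2}\bigr]$.
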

\begin{proof}
Since $g(x)$ is $\eta$-weakly convex, $g(x)+\frac{\kappa}{2}\|x-z\|^{2}=\big[g(x)+\frac{\eta}{2}\|x-z\|^{2}\big]+\frac{\kappa-\eta}{2}\|x-z\|^{2}$
is strongly convex with parameter $\kappa-\eta$. Using the optimality
condition $0\in\partial\big[g(z^{+})+\frac{\kappa}{2}\|z^{+}-z\|^{2}\big]$
and strong convexity of $g(\cdot)+\frac{\kappa}{2}\|\cdot-z\|^{2}$,
we immediately obtain
\[
g(x)+\frac{\kappa}{2}\|x-z\|^{2}\ge g(z^{+})+\frac{\kappa}{2}\|z^{+}-z\|^{2}+\langle0,x-z^{+}\rangle+\frac{\kappa-\eta}{2}\|x-z^+\|^{2}.
\]
\end{proof}

Before getting down to the proof, first recall that in Section \ref{sec:minibatch}, we let $B=\{\xi_{1},\xi_{2},\ldots,\xi_{m}\}$
be the i.i.d. samples and $B_{(i)}=\{\xi_{1},\ldots,\xi_{i-1},\xi_{i}^{\prime},\xi_{i+1}\ldots,\xi_{m}\}$
by replacing $\xi_{i}$ with an i.i.d. copy $\xi_{i}^{\prime}$. We denote $B^{\prime}=\{\xi_{1}^{\prime},\xi_{2}^{\prime},\ldots,\xi_{m-1}^\prime, \xi_{m}^{\prime}\}$.

\subsection{Proof of Lemma~\ref{lem:model-uni-stable}}
For brevity, for $i=1,2,\ldots, m$, we denote
\[
\begin{aligned}
\hat{y} & =\arg\min_{x\in\Xcal}\left\{ f_{z}(x,B)+\frac{\gamma}{2}\|x-y\|^{2}\right\},\\
\hat{y}_{i} & =\arg\min_{x\in\Xcal}\left\{ f_{z}(x,B_{(i)})+\frac{\gamma}{2}\|x-y\|^{2}\right\}. 
\end{aligned}
\]
Using triangle inequality and Jensen's inequality, we deduce
\begin{align}
 & \big\vert\Ebb_{B,B^{\prime},i}\big[f_{z}(\hat{y}_{i},\xi_{i}^{\prime})-f_{z}(\hat{y},\xi_{i}^{\prime})\big]\big\vert\nonumber \\
={} & \Big\vert\frac{1}{m}\sum_{i=1}^{m}\Ebb_{B,\xi_{i}^{\prime}}\big[f_{z}(\hat{y}_{i},\xi_{i}^{\prime})-f_{z}(\hat{y},\xi_{i}^{\prime})\big]\Big\vert\nonumber \\
\le{} & \frac{1}{m}\sum_{i=1}^{m}\Ebb_{B,\xi_{i}^{\prime}}\big\vert f_{z}(\hat{y}_{i},\xi_{i}^{\prime})-f_{z}(\hat{y},\xi_{i}^{\prime})\big\vert\nonumber \\
\le{} & \frac{L}{m}\sum_{i=1}^{m}\Ebb_{B,\xi_{i}^{\prime}}\|\hat{y}_{i}-\hat{y}\|,\label{eq:bound-mid-1}
\end{align}
where the last inequality follows from \ref{ass:model-lip}.

Next we bound $\|\hat{y}-\hat{y}_{i}\|$. Due to $\lambda$-weak convexity
of $f_{z}(x,B)$ and by Lemma\,\ref{lem:three-point}, for any $i\in\{1,2,\ldots,m\}$,
we obtain
\begin{align*}
f_{z}(\hat{y},B)+\frac{\gamma}{2}\|\hat{y}-y\|^{2} & \leq f_{z}(\hat{y}_{i},B)+\frac{\gamma}{2}\|\hat{y}_{i}-y\|^{2}-\frac{\gamma-\lambda}{2}\|\hat{y}_{i}-\hat{y}\|^{2},\\
f_{z}(\hat{y}_{i},B_{(i)})+\frac{\gamma}{2}\|\hat{y}_{i}-y\|^{2} & \leq f_{z}(\hat{y},B_{(i)})+\frac{\gamma}{2}\|\hat{y}-y\|^{2}-\frac{\gamma-\lambda}{2}\|\hat{y}_{i}-\hat{y}\|^{2}.
\end{align*}
Summing up the above two relations, we deduce that
\begin{align}
 & (\gamma-\lambda)\|\hat{y}_{i}-\hat{y}\|^{2}\nonumber \\
\le{} & f_{z}(\hat{y},B_{(i)})-f_{z}(\hat{y},B)+f_{z}(\hat{y}_{i},B)-f_{z}(\hat{y}_{i},B_{(i)})\nonumber \\
={} & \frac{1}{m}\big[f_{z}(\hat{y},\xi_{i}^{\prime})-f_{z}(\hat{y}_{i},\xi_{i}^{\prime})+f_{z}(\hat{y}_{i},\xi_{i})-f_{z}(\hat{y},\xi_{i})\big]\label{eq:bound-mid-2}
\end{align}
Next, we use Assumption~\ref{ass:model-lip} and (\ref{eq:bound-mid-2}) 
to obtain
$
(\gamma-\lambda)\|\hat{y}_{i}-\hat{y}\|^{2}\le\frac{2L}{m}\|\hat{y}_{i}-\hat{y}\|,
$ which implies that  
\begin{equation}
\|\hat{y}_{i}-\hat{y}\|\leq\frac{2L}{m(\gamma-\lambda)}.\label{eq:bound-diff-1}
\end{equation}

In view of (\ref{eq:bound-mid-1}) and (\ref{eq:bound-diff-1}),
we have 
\begin{equation*}
	\big\vert\Ebb_{B,B^{\prime},i}\big[f_{z}(\hat{y}_{i},\xi_{i}^{\prime})-f_{z}(\hat{y},\xi_{i}^{\prime})\big]\big\vert
	\le \frac{2L^{2}}{m(\gamma-\lambda)}\\
= \vep.
\end{equation*}

\subsection{Proof of Theorem \ref{thm:stable-mod-func}}
Theorem \ref{thm:stable-mod-func} is an immediate consequence of Lemma\,\ref{lem:model-uni-stable} and the  following theorem which indicates that stability bounds the error of
approximating the full model function on expectation. %
\begin{thm}
\label{thm:stability-gen} Assume that $\prox_{\rho h}(\cdot,\cdot)$
is $\varepsilon$-stable and denote $x_B^+ = \prox_{\rho h}(x,B)$. Then, we have
\[
\big\vert\Ebb_{B}\big\{ h\big(x_B^+,B\big)-\Ebb_{\xi}\big[h\big(x_B^+,\xi\big)\big]\big\}\big|\leq\varepsilon.
\]
\end{thm}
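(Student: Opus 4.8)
The plan is to recognize this as the classical ``stability implies generalization'' argument (in the spirit of Bousquet--Elisseeff and Shalev-Shwartz et al.), transplanted to the proximal-mapping setting. Here $\Ebb_{B}[h(x_B^+,B)]$ plays the role of an empirical risk, since $x_B^+$ is produced from the batch $B$ and then evaluated on that same $B$, while $\Ebb_{B}\Ebb_{\xi}[h(x_B^+,\xi)]$ is the matching population risk, where $x_B^+$ is tested on a fresh independent sample. Their difference is exactly a generalization gap, and the goal is to identify it with the stability term in \eqref{eq:def-prox-stability}.

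First I would expand the empirical term by linearity of expectation, writing $\Ebb_{B}[h(x_B^+,B)]=\tfrac{1}{m}\sum_{i=1}^{m}\Ebb_{B}[h(x_B^+,\xi_{i})]$. Next, because $\xi$ in the population term is a fresh i.i.d. draw, I would replace it by the independent copy $\xi_{i}^{\prime}$, which is independent of $B$; this gives $\Ebb_{B}\Ebb_{\xi}[h(x_B^+,\xi)]=\Ebb_{B,\xi_{i}^{\prime}}[h(x_B^+,\xi_{i}^{\prime})]$ for each fixed $i$, and hence also after averaging over $i$.

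The crux is a symmetry (renaming) step. Since $\xi_{i}$ and $\xi_{i}^{\prime}$ are i.i.d. and independent of the remaining samples, swapping the roles of $\xi_{i}$ and $\xi_{i}^{\prime}$ leaves the joint law of the whole collection unchanged, and this swap carries the pair $(B,\xi_{i})$ to $(B_{(i)},\xi_{i}^{\prime})$. It follows that $\Ebb_{B}[h(x_B^+,\xi_{i})]=\Ebb_{B,\xi_{i}^{\prime}}[h(x_{B_{(i)}}^+,\xi_{i}^{\prime})]$. Substituting this identity into the expanded empirical term and subtracting the population term, the two sums combine into $\tfrac{1}{m}\sum_{i=1}^{m}\Ebb_{B,\xi_{i}^{\prime}}[h(x_{B_{(i)}}^+,\xi_{i}^{\prime})-h(x_B^+,\xi_{i}^{\prime})]$, which is precisely $\Ebb_{B,B^{\prime},i}[h(x_{B_{(i)}}^+,\xi_{i}^{\prime})-h(x_B^+,\xi_{i}^{\prime})]$ since $i$ is uniform and only $\xi_{i}^{\prime}$ is used. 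Taking absolute values and invoking $\varepsilon$-stability \eqref{eq:def-prox-stability} then yields the claimed bound $\varepsilon$.

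The main obstacle is making the renaming step airtight: because $x_B^+$ depends on the \emph{entire} batch, the exchangeability argument must be applied to the joint distribution of all samples (including the copy $\xi_{i}^{\prime}$), rather than naively to $x_B^+$ in isolation. Once that identity is established, the remainder is routine bookkeeping with linearity of expectation and the definition of $\varepsilon$-stability.
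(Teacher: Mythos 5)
Your proposal is correct and follows essentially the same route as the paper's own proof (which adapts Lemma~11 of Shalev-Shwartz et al.): both expand the empirical term over the batch, use independence to write the population term with the fresh copies $\xi_i^{\prime}$, apply the exchangeability identity $\Ebb_{B}\big[h(x_B^+,\xi_{i})\big]=\Ebb_{B_{(i)}}\big[h(x_{B_{(i)}}^+,\xi_{i}^{\prime})\big]$, and then invoke the $\varepsilon$-stability definition. Your emphasis on applying the swap to the joint law of all samples (rather than to $x_B^+$ in isolation) is exactly the point the paper's equation~(\ref{eq:stable-mid-02}) encodes, so there is no gap.
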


\paragraph{Proof of Theorem~\ref{thm:stability-gen}}
The
proof resembles the argument of Lemma\,11 \citeapp{shalev2010learnabilitydup}.
For brevity we denote $\hat{x}=\prox_{\rho h}(x,B)$ and $\hat{x}_{i}=\prox_{\rho h}(x,B_{(i)})$.
Since $\xi_{i}^{\prime}$ is independent of $B$, we have $\Expe_{\xi}\big[h\big(\hat{x},\xi\big)\big]=\Expe_{\xi_{i}^{\prime}}\big[h\big(\hat{x},\xi_{i}^{\prime}\big)\big]$
for any $i\in\{1,\ldots,m\}$. Therefore, we have 
\begin{equation}
\Expe_{\xi}\big[h\big(\hat{x},\xi\big)\big]=\frac{1}{m}\sum_{j=1}^{m}\Expe_{\xi_{j}^{\prime}}\big[h\big(\hat{x},\xi_{j}^{\prime}\big)\big].\label{eq:stable-mid-01}
\end{equation}
Similarly, due to the independence assumption, we have
\begin{equation}
\Expe_{B}\big[h\big(\hat{x},\xi_{i}\big)\big]=\Expe_{B_{(i)}}\big[h(\hat{x}_{i},\xi_{i}^{\prime})\big],\label{eq:stable-mid-02}
\end{equation}
which implies that
\begin{equation}
\Expe_{B}\big[h\big(\hat{x},B\big)\big]=\frac{1}{m}\sum_{i=1}^{m}\Expe_{B}\big[h(\hat{x},\xi_{i})\big]=\frac{1}{m}\sum_{i=1}^{m}\Expe_{B_{(i)}}\big[h(\hat{x}_{i},\xi_{i}^{\prime})\big]\label{eq:stable-mid-03}
\end{equation}
In view of (\ref{eq:stable-mid-01}) and (\ref{eq:stable-mid-03}),
we deduce
\begin{align*}
 & \Ebb_{B}\Big\{ h\big(\hat{x},B\big)-\Ebb_{\xi}\big[h\big(\hat{x},\xi\big)\big]\Big\}\\
={} & \frac{1}{m}\sum_{i=1}^{m}\Expe_{B_{(i)}}\big[h(\hat{x}_{i},\xi_{i}^{\prime})\big]-\frac{1}{m}\sum_{i=1}^{m}\Ebb_{B,\xi_{i}^{\prime}}\big[h\big(\hat{x},\xi_{i}^{\prime}\big)\big]\\
={} & \frac{1}{m}\sum_{i=1}^{m}\Expe_{B,\xi_{i}^{\prime}}\big[h(\hat{x}_{i},B_{(i)})-h\big(\hat{x},\xi_{i}^{\prime}\big)\big]\\
={} & \Expe_{B,B^{\prime},i}\big[h(\hat{x}_{i},B_{(i)})-h\big(\hat{x},\xi_{i}^{\prime}\big)\big].
\end{align*}
Appealing to the stability assumption, we complete the proof.

\subsection{Proof of Theorem~\ref{thm:main-mbsmod}}
First, due to the weak convexity of $f_{x^{k}}(\cdot,B_{k})$ and
Lemma\,\ref{lem:three-point}, we have 
\begin{equation}
f_{x^{k}}(x^{k+1},B_{k})+\frac{\gamma_{k}}{2}\|x^{k+1}-x^{k}\|^{2}\le f_{x^{k}}(x,B_{k})+\frac{\gamma_{k}}{2}\|x-x^{k}\|^{2}-\frac{\gamma_{k}-\lambda}{2}\|x^{k+1}-x\|^{2},\quad\forall x\in\Xcal.\label{eq:mb-mid-10}
\end{equation}
For simplicity, we denote $\hat{x}^{k}=\prox_{f/\rho}(x^{k})=\argmin_{x\in\Xcal}\big\{ f(x)+\frac{\rho}{2}\|x-x^{k}\|^{2}\big\}$.
Then substituting $x=\hat{x}^{k}$ in~(\ref{eq:mb-mid-10}), we have
\begin{equation}
f_{x^{k}}(x^{k+1},B_{k})+\frac{\gamma_{k}}{2}\|x^{k+1}-x^{k}\|^{2}\leq f_{x^{k}}(\hat{x}^{k},B_{k})+\frac{\gamma_{k}}{2}\|\hat{x}^{k}-x^{k}\|^{2}-\frac{\gamma_{k}-\lambda}{2}\|x^{k+1}-\hat{x}^{k}\|^{2}.\label{eq15}
\end{equation}

Analogously, since $f(x)$ is $(\lambda+\tau)$-weakly convex, applying
Lemma\,\ref{lem:three-point} with $g(x)=f(x)$, $\eta=\lambda+\tau$
and $\kappa=\rho$, we have
\begin{equation}
f(\hat{x}^{k})+\frac{\rho}{2}\|\hat{x}^{k}-x^{k}\|^{2}\leq f(x^{k+1})+\frac{\rho}{2}\|x^{k+1}-x^{k}\|^{2}-\frac{\rho-\lambda-\tau}{2}\|\hat{x}^{k}-x^{k+1}\|^{2}.\label{eq16}
\end{equation}
Summing up \eqref{eq15} and \eqref{eq16} gives
\begin{align}
 & \frac{\gamma_{k}-\rho}{2}\|x^{k+1}-x^{k}\|^{2}+\frac{\gamma_{k}+\rho-2\lambda-\tau}{2}\|\hat{x}^{k}-x^{k+1}\|^{2}-\frac{\gamma_{k}-\rho}{2}\mathbb{E}_{k}\|\hat{x}^{k}-x^{k}\|^{2}\nonumber \\
\le{} & f(x^{k+1})-f_{x^{k}}(x^{k+1},B_{k})+f_{x^{k}}(\hat{x}^{k},B_{k})-f(\hat{x}^{k})\nonumber \\
={} & \big\{f(x^{k+1})-\Ebb_{\xi}\big[f_{x^{k}}(x^{k+1},\xi)\big]\big\}+\big\{\Ebb_{\xi}\big[f_{x^{k}}(x^{k+1},\xi)\big]-f_{x^{k}}(x^{k+1},B_{k})\big\}\nonumber\\
 & +\big[f_{x^{k}}(\hat{x}^{k},B_{k})-f(\hat{x}^{k})\big]\nonumber \\
\le{} & \frac{\tau}{2}\|x^{k}-x^{k+1}\|^{2}+\frac{\tau}{2}\|x^{k}-\hat{x}^{k}\|^{2}+\Ebb_{\xi}\big[f_{x^{k}}(x^{k+1},\xi)\big]-f_{x^{k}}(x^{k+1},B_{k}),\label{eq:mb-mid-01}
\end{align}

where the last inequality uses the Assumption~\ref{ass:two-sides-quad}.
Moreover, note that Theorem~\ref{thm:stable-mod-func} implies 
\begin{equation}
\Ebb_{k}\big\{\Ebb_{\xi}\big[f_{x^{k}}(x^{k+1},\xi)\big]-f_{x^{k}}(x^{k+1},B_{k})\big\}\le\vep_{k}.\label{eq:mb-mid-02}
\end{equation}

Taking expectation over $B_{k}$ in (\ref{eq:mb-mid-01}) and combining
the result with (\ref{eq:mb-mid-02}), we obtain
\begin{align*}
 & \frac{\gamma_{k}-\rho}{2}\thinspace\Ebb_{k}\big[\|x^{k+1}-x^{k}\|^{2}\big]+\frac{\gamma_{k}+\rho-2\lambda-\tau}{2}\thinspace\Ebb_{k}\big[\|\hat{x}^{k}-x^{k+1}\|^{2}\big]-\frac{\gamma_{k}-\rho}{2}\|\hat{x}^{k}-x^{k}\|^{2}\\
\le{} & \frac{\tau}{2}\thinspace\Ebb_{k}\big[\|x^{k}-x^{k+1}\|^{2}\big]+\frac{\tau}{2}\|\hat{x}^{k}-x^{k}\|^{2}+\vep_{k},
\end{align*}
which, by rearranging terms, implies
\begin{align}
 & \mathbb{E}_{k}\big[\|x^{k+1}-\hat{x}^{k}\|^{2}\big]\nonumber \\
\le{} & \frac{\gamma_{k}-\rho+\tau}{\gamma_{k}+\rho-2\lambda-\tau}\|\hat{x}^{k}-x^{k}\|^{2}-\frac{\gamma_{k}-\rho-\tau}{\gamma_{k}+\rho-2\lambda-\tau}\mathbb{E}_{k}\big[\|x^{k}-x^{k+1}\|^{2}\big]+\frac{2\vep_{k}}{\gamma_{k}+\rho-2\lambda-\tau}\nonumber \\
\le{} & \|\hat{x}^{k}-x^{k}\|^{2}-\frac{2(\rho-\lambda-\tau)}{\gamma_{k}+\rho-2\lambda-\tau}\|\hat{x}^{k}-x^{k}\|^{2}+\frac{2\vep_{k}}{\gamma_{k}+\rho-2\lambda-\tau},\label{eq:mb-mid-09}
\end{align}
Above, the last inequality in (\ref{eq:mb-mid-09}) uses the assumption $\gamma_{k}-\rho-\tau\ge0$.

Moreover, following the result (\ref{eq:mb-mid-09}) and the
definition of Moreau envelope, we have 
\begin{align*}
& \mathbb{E}_{k}\big[f_{1/\rho}(x^{k+1})\big] \\
={} & \mathbb{E}_{k}\Big[f(\hat{x}^{k+1})+\frac{\rho}{2}\|\hat{x}^{k+1}-x^{k+1}\|^2\Big]\\
\le{} & f(\hat{x}^{k})+\mathbb{E}_{k}\Big[\frac{\rho}{2}\|\hat{x}^{k}-x^{k+1}\|^2\Big]\\
\le{} &  f(\hat{x}^{k})+\frac{\rho}{2}\|\hat{x}^{k}-x^{k}\|^{2}-\frac{\rho(\rho-\lambda-\tau)}{\gamma_{k}+\rho-2\lambda-\tau}\|\hat{x}^{k}-x^{k}\|^{2}+\frac{\rho\vep_{k}}{\gamma_{k}+\rho-2\lambda-\tau}\\
={} & f_{1/\rho}(x^{k})-\frac{\rho(\rho-\lambda-\tau)}{\gamma_{k}+\rho-2\lambda-\tau}\|\hat{x}^{k}-x^{k}\|^{2}+\frac{\rho\vep_{k}}{\gamma_{k}+\rho-2\lambda-\tau}.
\end{align*}
Finally, applying the identity $\|\hat{x}^{k}-x^{k}\|^{2}=\rho^{-2}\|\nabla f_{1/\rho}(x^{k})\|^{2}$
and rearranging the terms, we get\,(\ref{eq:main-bound-mini}).

\subsection{Proof of Theorem \ref{thm:rate-mb-smod}}
First, summing up (\ref{eq:main-bound-mini}) over $k=1,2,\ldots,K$,
and taking expectation over all randomness, we have 
\begin{align*}
& \sum_{k=1}^{K}\frac{\rho-\lambda-\tau}{\rho(\gamma_{k}+\rho-2\lambda-\tau)}\Ebb[\|\nabla f_{1/\rho}(x^{k})\|^{2}]\\
\le{} &  f_{1/\rho}(x^{1})-\mathbb{E}\big[f_{1/\rho}(x^{K+1})\big]+\sum_{k=1}^{K}\frac{\rho\vep_{k}}{\gamma_{k}+\rho-2\lambda-\tau}\\
\le{} & \Delta+\sum_{k=1}^{K}\frac{\rho\vep_{k}}{\gamma_{k}+\rho-2\lambda-\tau},
\end{align*}
where the second inequality uses $-f_{1/\rho}(x^{K+1})\le-\min_{x}f(x)$.
Plugging in $\gamma_{k}=\gamma$ and $m_{k}=m$ in above and appealing
to the definition of $x^{{k^*}}$, we have
\begin{align}
 \frac{\rho-\lambda-\tau}{\rho}\Ebb\big[\|\nabla f_{1/\rho}(x^{{k^*}})\|^{2}\big]
 & = \frac{\rho-\lambda-\tau}{\rho K}\sum_{k=1}^{K}\Ebb\big[\|\nabla f_{1/\rho}(x^{k})\|^{2}\big]\nonumber \\
 & \le \frac{(\gamma+\rho-2\lambda-\tau)\Delta}{K}+\frac{\rho}{K}\sum_{k=1}^{K}\vep_{k}\nonumber \\
 & \le \frac{(2\rho-\lambda)\Delta}{K}+\frac{\eta\Delta}{K}+ \frac{2\rho L^{2}}{m(\gamma-\lambda)}\nonumber \\
 & \le \frac{(2\rho-\lambda)\Delta}{K}+ \frac{\eta\Delta}{K}+\frac{2\rho L^{2}}{m\eta}, \label{eq:mb-mid-05}
\end{align}
where the second inequality uses $\gamma\le\rho+\tau+\lambda+\eta$,
the third inequality uses $\gamma-\lambda\ge\eta$. 
Dividing both sides of (\ref{eq:mb-mid-05}) by $\frac{\rho-\lambda-\tau}{\rho}$
gives (\ref{eq:mb-mid-04}).

\section{Proof of results in Section~\ref{sec:momentum}}\label{app:sec4}
\subsection{Proof of Lemma~\ref{lem:extra-2}}
Denote $\bar{x}=\beta x^{k}+(1-\beta)x$ for $x\in\Xcal$. Then $\bar{x}$
is also feasible due to the convexity of $\Xcal.$ Noting that $\theta=1-\beta$,
we have the following identities:
\begin{align}
\bar{x}-x^{k} & =\theta(x-x^{k}),\label{eq:ident-mid-1}\\
\bar{x}-y^{k} & =\theta(x-z^{k}),\label{eq:ident-mid-2}\\
\bar{x}-x^{k+1} & =\theta(x-z^{k+1}).\label{eq:ident-mid-3}
\end{align}
Applying Lemma\,\ref{lem:three-point} and using the optimality of
$x^{k+1}$, we have
\begin{align}
& f_{x^{k}}(x^{k+1},\xi^{k})+\frac{\gamma}{2}\|x^{k+1}-y^{k}\|^{2} \nonumber \\
\le{} &  f_{x^{k}}(\bar{x},\xi^{k})+\frac{\gamma}{2}\|\bar{x}-y^{k}\|^{2}-\frac{\gamma-\lambda}{2}\|x^{k+1}-\bar{x}\|^{2}\nonumber \\
={} & f_{x^{k}}(\bar{x},\xi^{k})+\frac{\gamma\theta^{2}}{2}\|x-z^{k}\|^{2}-\frac{(\gamma-\lambda)\theta^{2}}{2}\|x-z^{k+1}\|^{2}\label{eq:extra-mid-14}
\end{align}
Since $f_{x^{k}}(\cdot,\xi^{k})+\frac{\lambda}{2}\|\cdot-x^{k}\|^{2}$
is convex, we have
\begin{align}
 f_{x^{k}}(\bar{x},\xi^{k})  & \le (1-\theta)\big[f_{x^{k}}(x^{k},\xi^{k})\big]+\theta\big[f_{x^{k}}(x,\xi^{k})+\frac{\lambda}{2}\|x-x^{k}\|^{2}\big]-\frac{\lambda}{2}\|\bar{x}-x^{k}\|^{2}\nonumber \\
& \le (1-\theta)f(x^{k},\xi^{k})+\theta\big[f(x,\xi^{k})+\frac{\lambda+\tau}{2}\|x-x^{k}\|^{2}\big]-\frac{\lambda\theta^{2}}{2}\|x-x^{k}\|^{2}\label{eq:extra-mid-15}
\end{align}
where the second inequality uses Assumptions~\ref{ass:unbiased}, \ref{ass:one-side-quad}
and (\ref{eq:ident-mid-1}). 
Summing up (\ref{eq:extra-mid-14}) and (\ref{eq:extra-mid-15}), we get
\begin{align}
& f_{x^{k}}(x^{k+1},\xi^{k})+\frac{\gamma}{2}\|x^{k+1}-y^{k}\|^{2} \nonumber \\
\le {} & (1-\theta)f(x^{k},\xi^{k})+\theta\big[f(x,\xi^{k})+\frac{\lambda+\tau}{2}\|x-x^{k}\|^{2}\big]-\frac{\lambda\theta^{2}}{2}\|x-x^{k}\|^{2}\nonumber \\
& + \frac{\gamma\theta^{2}}{2}\|x-z^{k}\|^{2}-\frac{(\gamma-\lambda)\theta^{2}}{2}\|x-z^{k+1}\|^{2} 
\label{eq:extra-mid-17}
\end{align}
Moreover, appealing to Assumption~\ref{ass:unbiased}
and \ref{ass:model-lip}, we have 
\begin{equation}
f(x^{k},\xi^{k})-L\|x^{k+1}-x^{k}\|=f_{x^{k}}(x^{k},\xi^{k})-L\|x^{k+1}-x^{k}\|\le f_{x^{k}}(x^{k+1},\xi^{k}).\label{eq:extra-mid-16}
\end{equation}
Next, Putting (\ref{eq:extra-mid-17}) and (\ref{eq:extra-mid-16}) together, we have
\begin{align}
& -L\|x^{k+1}-x^{k}\| +\frac{\gamma}{2}\|x^{k+1}-y^{k}\|^{2} \nonumber\\
\le {} & -\theta f(x^{k},\xi^{k})+\theta\big[f(x,\xi^{k})+\frac{\lambda+\tau}{2}\|x-x^{k}\|^{2}\big]-\frac{\lambda\theta^{2}}{2}\|x-x^{k}\|^{2}\nonumber \\
& + \frac{\gamma\theta^{2}}{2}\|x-z^{k}\|^{2}-\frac{(\gamma-\lambda)\theta^{2}}{2}\|x-z^{k+1}\|^{2}
\end{align}

Denote $\hat{z}^{k}=\prox_{f/\rho}(z^{k})$. 
Note that $z^{k}$ may be infeasible, but the feasibility of $\hat{z}^{k}$ is always guaranteed.
Substituting $x=\hat{z}^{k}$ in the above result and then taking expectation
over $\xi^{k}$, we have 
\begin{align}
 & -L\Ebb_{k}[\|x^{k+1}-x^{k}\|]+\theta f(x^{k})\nonumber \\
\le{} & \theta f(\hat{z}^{k})+\frac{\theta(\lambda+\tau)}{2}\|\hat{z}^{k}-x^{k}\|^{2}-\frac{\lambda\theta^{2}}{2}\|\hat{z}^{k}-x^{k}\|^{2}\nonumber \\
 & +\frac{\gamma\theta^{2}}{2}\|\hat{z}^{k}-z^{k}\|^{2}-\frac{(\gamma-\lambda)\theta^{2}}{2}\Ebb_{k}[\|\hat{z}^{k}-z^{k+1}\|^{2}]-\frac{\gamma}{2}\Ebb_{k}[\|x^{k+1}-y^{k}\|^{2}]\label{eq:extra-mid-7}
\end{align}
Next we apply Lemma\,\ref{lem:three-point} and use the optimality
condition for $\hat{z}^{k}$, noting that $f(x)$ is $(\tau+\lambda)$-weakly
convex, we get
\begin{equation}
f(\hat{z}^{k})+\frac{\rho}{2}\|\hat{z}^{k}-z^{k}\|^{2}\le f(x^{k})+\frac{\rho}{2}\|x^{k}-z^{k}\|^{2}-\frac{\rho-\tau-\lambda}{2}\|x^{k}-\hat{z}^{k}\|^{2}.\label{eq:extra-mid-8}
\end{equation}
Multiplying (\ref{eq:extra-mid-8}) by $\theta$ and then adding the result to (\ref{eq:extra-mid-7}), we
deduce 
\begin{align}
 & -L\Ebb_{k}[\|x^{k+1}-x^{k}\|]\nonumber \\
\le{} & \frac{\rho\theta}{2}\|x^{k}-z^{k}\|^{2}-\frac{\theta(\rho-\tau-\lambda)}{2}\|x^{k}-\hat{z}^{k}\|^{2}-\frac{\rho\theta}{2}\|\hat{z}^{k}-z^{k}\|^{2}\nonumber \\
 & +\frac{\theta(\lambda+\tau)}{2}\|\hat{z}^{k}-x^{k}\|^{2}-\frac{\lambda\theta^{2}}{2}\|\hat{z}^{k}-x^{k}\|^{2}\nonumber \\
 & +\frac{\gamma\theta^{2}}{2}\|\hat{z}^{k}-z^{k}\|^{2}-\frac{(\gamma-\lambda)\theta^{2}}{2}\Ebb_{k}[\|\hat{z}^{k}-z^{k+1}\|^{2}]-\frac{\gamma}{2}\Ebb_{k}[\|x^{k+1}-y^{k}\|^{2}]\nonumber \\
={} & \frac{\gamma\theta^{2}-\lambda\theta^{2}}{2}\big(\|\hat{z}^{k}-z^{k}\|^{2}-\Ebb_{k}[\|\hat{z}^{k}-z^{k+1}\|^{2}]\big)-\frac{\rho\theta-\lambda\theta^{2}}{2}\Ebb_{k}[\|\hat{z}^{k}-z^{k}\|^{2}]\nonumber \\
 & -\frac{\theta((\rho-2(\lambda+\tau))+\lambda\theta)}{2}\|\hat{z}^{k}-x^{k}\|^{2}\nonumber \\
 & -\frac{\gamma}{2}\Ebb_{k}[\|x^{k+1}-y^{k}\|^{2}]+\frac{\rho\beta^{2}\theta^{-1}}{2}\|x^{k}-x^{k-1}\|^{2}.\label{eq:extra-mid-9}
\end{align}
where the last equality uses the identity $z^{k}-x^{k}=\beta\theta^{-1}(x^{k}-x^{k-1})$.

Moreover, we can bound the term $\Ebb_{k}[\|x^{k+1}-y^{k}\|^{2}]$ using the following relation 
\begin{align}
& \|x^{k+1}-y^{k}\|^{2} \nonumber \\
={} & \|x^{k+1}-x^{k}\|^{2}+\beta^{2}\|x^{k}-x^{k-1}\|^{2}-2\beta\langle x^{k+1}-x^{k},x^{k}-x^{k-1}\rangle\nonumber \\
\ge{} & \|x^{k+1}-x^{k}\|^{2}+\beta^{2}\|x^{k}-x^{k-1}\|^{2}-\beta\|x^{k+1}-x^{k}\|^{2}-\beta\|x^{k}-x^{k-1}\|^{2}\nonumber \\
={} & \theta^{2}\|x^{k+1}-x^{k}\|^{2}+\beta\theta\big(\|x^{k+1}-x^{k}\|^{2}-\|x^{k}-x^{k-1}\|^{2}\big).\label{eq:extra-mid-10}
\end{align}
Next, adding $L\Ebb_{k}[\|x^{k+1}-x^{k}\|]$ to both sides of (\ref{eq:extra-mid-9}),
using the non-negativity of $\rho-2(\lambda+\tau)$ and the bound
(\ref{eq:extra-mid-10}), we deduce
\begin{align*}
0{} & \le\frac{\gamma\theta^{2}-\lambda\theta^{2}}{2}\big(\|\hat{z}^{k}-z^{k}\|^{2}-\Ebb_{k}[\|\hat{z}^{k}-z^{k+1}\|^{2}]\big)-\frac{\rho\theta-\lambda\theta^{2}}{2}\|\hat{z}^{k}-z^{k}\|^{2}\\
 & \quad-\frac{\gamma\beta\theta+\rho\beta^{2}\theta^{-1}}{2}\Ebb_{k}[\|x^{k+1}-x^{k}\|^{2}]+\frac{\gamma\beta\theta+\rho\beta^{2}\theta^{-1}}{2}\|x^{k}-x^{k-1}\|^{2}\\
 & \quad+\Ebb_{k}\Big[L\|x^{k+1}-x^{k}\|-\frac{\gamma\theta^{2}-\rho\beta^{2}\theta^{-1}}{2}\|x^{k+1}-x^{k}\|^{2}\Big]\\
 & \le\frac{\gamma\theta^{2}-\lambda\theta^{2}}{2}\big(\|\hat{z}^{k}-z^{k}\|^{2}-\Ebb_{k}[\|\hat{z}^{k}-z^{k+1}\|^{2}]\big)-\frac{\rho\theta-\lambda\theta^{2}}{2}\|\hat{z}^{k}-z^{k}\|^{2}\\
 & \quad-\frac{\gamma\beta\theta+\rho\beta^{2}\theta^{-1}}{2}\Ebb_{k}[\|x^{k+1}-x^{k}\|^{2}]+\frac{\gamma\beta\theta+\rho\beta^{2}\theta^{-1}}{2}\|x^{k}-x^{k-1}\|^{2}\\
 & \quad+\frac{L^{2}}{(\gamma\theta^{2}-\rho\beta^{2}\theta^{-1})} 
 -\frac{\gamma\theta^{2}-\rho\beta^{2}\theta^{-1}}{4}\Ebb_{k}[\|x^{k+1}-x^{k}\|^{2}]
\end{align*}
where the last inequality identifies the fact that $bx-\frac{a}{4}x^{2}\le\frac{b^{2}}{a}$
for $a,b>0$, $\forall x\in\Rbb$. It then follows that
\begin{align}
& \Ebb_{k}[\|\hat{z}^{k}-z^{k+1}\|^{2}] \nonumber \\
\le{} & \|\hat{z}^{k}-z^{k}\|^{2}-\frac{\rho-\lambda\theta}{\gamma\theta-\lambda\theta}\|\hat{z}^{k}-z^{k}\|^{2}
+\frac{2L^{2}}{(\gamma\theta^{2}-\rho\beta^{2}\theta^{-1})(\gamma\theta^{2}-\lambda\theta^{2})}\nonumber \\
 & -\frac{\gamma\beta+\rho\beta^{2}\theta^{-2}}{\gamma\theta-\lambda\theta}\big(\Ebb_{k}[\|x^{k+1}-x^{k}\|^{2}]-\|x^{k}-x^{k-1}\|^{2}\big)\nonumber \\
 & -  \frac{\gamma-\rho\beta^{2}\theta^{-3}}{2(\gamma-\lambda)} \Ebb_{k}[\|x^{k+1}-x^{k}\|^{2}]
 \label{eq:extra-mid-11}
\end{align}
In view of (\ref{eq:extra-mid-11}) and the definition of Moreau envelope,
we have 
\begin{align}
& \Ebb_{k} \left[ f_{1/\rho}(z^{k+1}) \right] \nonumber \\
={} & \Ebb_{k}\big[f(\hat{z}^{k+1})+\frac{\rho}{2}\|z^{k+1}-\hat{z}^{k+1}\|^{2}\big]\nonumber \\
\le{} & \Ebb_{k}\big[f(\hat{z}^{k})+\frac{\rho}{2}\|z^{k+1}-\hat{z}^{k}\|^{2}\big]\nonumber \\
\le{} &  f_{1/\rho}(z^{k})-\frac{\rho(\rho-\lambda\theta)}{2(\gamma\theta-\lambda\theta)}\|z^{k}-\hat{z}^{k}\|^{2}
+\frac{\rho L^{2}}{(\gamma\theta^{2}-\rho\beta^{2}\theta^{-1})(\gamma\theta^{2}-\lambda\theta^{2})}\nonumber \\
 & +\frac{\rho(\gamma\beta+\rho\beta^{2}\theta^{-2})}{2(\gamma\theta-\lambda\theta)}\big\{\|x^{k}-x^{k-1}\|^{2}-\Ebb_{k}\big[\|x^{k+1}-x^{k}\|^{2}\big]\big\}.\nonumber\\
 & - \frac{\rho(\gamma-\rho\beta^{2}\theta^{-3})}{4(\gamma-\lambda)} \Ebb_{k}[\|x^{k+1}-x^{k}\|^{2}]
 \label{eq:extra-mid-6-1}
\end{align}

In view of the above result and the relation $\|z^{k}-\hat{z}^{k}\|^2 = \rho^{-2} \|\nabla_{1/\rho}f(z^{k})\|^2$,
we obtain\,(\ref{eq:extra-decent-prop}).

\subsection{Proof of Theorem~\ref{thm:extra-2-1}}
Unfolding the relation\,(\ref{eq:extra-decent-prop}) and then taking
expectation over all the randomness, we have 
\begin{align}
 & \frac{\rho-\lambda\theta}{2\rho(\gamma\theta-\lambda\theta)}\sum_{k=1}^{K}\Ebb[\|\nabla f_{1/\rho}(z^{k})\|^{2}]\nonumber \\
\le{} & f_{1/\rho}(z^{1})-\Ebb\big[f_{1/\rho}(z^{K+1})\big] +\frac{\rho(\gamma\beta+\rho\beta^{2}\theta^{-2})}{2(\gamma\theta-\lambda\theta)}\|x^{1}-x^{0}\|^{2}\nonumber \\
& +\frac{\rho L^{2}K}{(\gamma\theta^{2}-\rho\beta^{2}\theta^{-1})(\gamma\theta^{2}-\lambda\theta^{2})} \nonumber \\ 
\le{} & \Delta+\frac{\rho L^{2}K}{(\gamma\theta^{2}-\rho\beta^{2}\theta^{-1})(\gamma\theta^{2}-\lambda\theta^{2})},\label{eq:extra-mid-12}
\end{align}
where the last inequality uses $x^{1}=x^{0}=z^{1}$ and that $f_{1/\rho}(z^{1})-f_{1/\rho}(z^{K+1})\le f(z^{1})-\min_{x}f(x)=\Delta$.
Appealing to the definition of $k^{*}$ and relation~(\ref{eq:extra-mid-12}),
we have
\begin{align*}
& \Ebb\thinspace[\|\nabla f_{1/\rho}(z^{k^{*}})\|^{2}] \\
={} & \frac{1}{K}\sum_{k=1}^{K}\Ebb\thinspace[\|\nabla f_{1/\rho}(z^{k})\|^{2}]\\
 \leq{} &\frac{2\rho(\gamma\theta-\lambda\theta)\Delta}{(\rho-\lambda\theta)K}+\frac{2\rho^{2}L^{2}}{\theta(\rho-\lambda\theta)(\gamma\theta-\rho\beta^{2}\theta^{-2})}\\
 \le{} & \frac{2\rho}{\rho-\lambda}\bigg[\frac{(\gamma\theta-\lambda\theta)\Delta}{K}+\frac{\rho L^{2}}{\theta(\gamma\theta-\rho\beta^{2}\theta^{-2})}\bigg]\\
 ={} & \frac{2\rho}{\rho-\lambda}\bigg[\frac{(\rho\beta^{2}\theta^{-2}+\gamma_{0}\sqrt{K})\Delta}{K}+\frac{\rho L^{2}}{\theta(\gamma_{0}\sqrt{K}+\lambda\theta)}\bigg]\\
 \le & \frac{2\rho}{\rho-\lambda}\bigg[\frac{\rho\beta^{2}\theta^{-2}\Delta}{K}+\Big(\gamma_{0}\Delta+\frac{\rho L^{2}}{\theta\gamma_{0}}\Big)\frac{1}{\sqrt{K}}\bigg].
\end{align*}
where the first inequality uses the fact that $(\rho-\lambda\theta)^{-1}\le (\rho-\lambda)^{-1}$ for $\theta\in(0,1]$ and that  $\gamma=\gamma_{0}\theta^{-1}\sqrt{K}+\lambda+\rho\beta^{2}\theta^{-3}$.
Therefore, (\ref{eq:extra-main-3}) immediately follows.

\subsection{{\smod} with momentum and minibatching}\label{app:mm}
We present a new model-based method by  combining the momentum and minibatching techniques in a single framework.
\begin{algorithm}[h]
   \caption{Stochastic Extrapolated Model-Based Method with Minibatching}\label{alg:semod-mb}
\begin{algorithmic}
   \STATE {\bfseries Input:} $x^{0}$, $x^{1}$, $\beta$, $\gamma$
   \FOR{$k=1$ {\bfseries to} $K$}
   \STATE Sample a minibatch  $B_k=\{\xi_{k,1},\ldots, \xi_{k,m}\}$ and update:
   \begin{align}
y^{k} & =x^{k}+\beta(x^{k}-x^{k-1}) \label{eq:update-yk-2}\\
x^{k+1} & =\argmin_{x\in\Xcal} \left\{\ f_{x^{k}}(x,B_k)+\frac{\gamma}{2}\|x-y^{k}\|^{2} \right\} \label{eq:update-xk-2}
\end{align}
   \ENDFOR
\end{algorithmic}
\end{algorithm}

The convergence analysis of Algorithm~\ref{alg:semod-mb} is more complicated than that of the sequential extrapolated {\smod}. 
We require a different design of potential function:
\[
f_{1/\rho}(z^k)+ \alpha f(x^k) +\beta \|x^k-x^{k-1}\|^2
\]
where $\alpha$ and $\beta$ are some constants and $z_k$ is defined as in Section \ref{sec:momentum}.
 We summarize the approximate descent property in the following function.
\begin{lem}
\label{lem:extra-mb-2}In Algorithm~\ref{alg:semod-mb},  Assume that \ref{ass:two-sides-quad} holds and $\rho > 3(\tau+\lambda)$, 
then we have 
\begin{align}
 & \frac{\rho-\lambda\theta}{2\theta\rho(\gamma-\lambda)}\|\nabla f_{1/\rho}(z^{k})\|^{2}\nonumber \\
\le{} & f_{1/\rho}(z^{k})-\Ebb_{k}\big[f_{1/\rho}(z^{k+1})\big]+\frac{\rho\beta}{2\theta^{2}(\gamma-\lambda)}\big[f(x^{k})-\Ebb_{k}[f(x^{k+1})]\big]\nonumber \\
& -\frac{\rho(\gamma\theta^{2}-\zeta)}{4\theta^{2}(\gamma-\lambda)}\|x^{k+1}-x^{k}\|^{2}+\frac{\rho\vep}{2\theta^{2}(\gamma-\lambda)}\nonumber\\
 & +\frac{\rho(\gamma\beta+2\rho\beta^{2}\theta^{-2})}{2\theta(\gamma-\lambda)}\big\{\|x^{k}-x^{k-1}\|^{2}-\Ebb_{k}[\|x^{k+1}-x^{k}\|^{2}]\big\}.\label{eq:extra-mb-decent-prop}
\end{align}
where $\zeta=2\theta(\rho+\lambda\beta+\tau)+\tau+2\rho\beta^{2}\theta^{-1}$ and  $\vep=\frac{2L^2}{m(\gamma-\lambda)}$.
\end{lem}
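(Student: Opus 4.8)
The plan is to re-run the proof of Lemma~\ref{lem:extra-2} with the single-sample model $f_{x^k}(\cdot,\xi^k)$ replaced throughout by the minibatch model $f_{x^k}(\cdot,B_k)$, splicing in the stability estimate of Theorem~\ref{thm:stable-mod-func} at the one point where the minibatch introduces a bias. First I would set $\bar x=\beta x^k+\theta x$ with $\theta=1-\beta$, recycle the identities $\bar x-x^k=\theta(x-x^k)$, $\bar x-y^k=\theta(x-z^k)$ and $\bar x-x^{k+1}=\theta(x-z^{k+1})$, and apply Lemma~\ref{lem:three-point} to the update~\eqref{eq:update-xk-2} to obtain the batch analogue of~\eqref{eq:extra-mid-14}. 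Since averaging preserves $\lambda$-weak convexity, the convexity step~\eqref{eq:extra-mid-15} carries over with $B_k$ in place of $\xi^k$, so that $f_{x^k}(\bar x,B_k)$ is controlled by $(1-\theta)f_{x^k}(x^k,B_k)+\theta f_{x^k}(x,B_k)$ plus the usual quadratic remainders.

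The decisive departure from Lemma~\ref{lem:extra-2} is the treatment of $f_{x^k}(x^{k+1},B_k)$. In the sequential case it was lower bounded by $f(x^k,\xi^k)-L\|x^{k+1}-x^k\|$ via~\eqref{eq:extra-mid-16}, which discards $f(x^{k+1})$; since the potential now carries an explicit multiple of $f(x^k)$, I instead keep $f(x^{k+1})$. Because $x^{k+1}=\prox_{f_{x^k}/\gamma}(y^k,B_k)$ is exactly the mapping of Lemma~\ref{lem:model-uni-stable} (with $z=x^k$, $y=y^k$, both $\sigma_k$-measurable), Theorem~\ref{thm:stable-mod-func} gives $\Ebb_k[f_{x^k}(x^{k+1},B_k)]\ge\Ebb_k[\Ebb_\xi f_{x^k}(x^{k+1},\xi)]-\vep$ with $\vep=\tfrac{2L^2}{m(\gamma-\lambda)}$, and the two-sided bound~\ref{ass:two-sides-quad} applied to a fresh independent sample then yields the central estimate
\[
\Ebb_k\big[f_{x^k}(x^{k+1},B_k)\big]\ge\Ebb_k\big[f(x^{k+1})\big]-\tfrac{\tau}{2}\,\Ebb_k\big[\|x^{k+1}-x^k\|^2\big]-\vep,
\]
which is the source both of the $\vep$ term in~\eqref{eq:extra-mb-decent-prop} and of one extra $\tfrac{\tau}{2}\|x^{k+1}-x^k\|^2$ correction. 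Substituting $x=\hat z^k=\prox_{f/\rho}(z^k)$ and taking $\Ebb_k$, and using that $\hat z^k$ and $x^k$ are independent of $B_k$, tightness~\ref{ass:unbiased} turns $\Ebb_k[f_{x^k}(x^k,B_k)]$ into $f(x^k)$ while the one-sided bound~\ref{ass:one-side-quad} turns $\Ebb_k[f_{x^k}(\hat z^k,B_k)]$ into at most $f(\hat z^k)+\tfrac{\tau}{2}\|\hat z^k-x^k\|^2$. Applying Lemma~\ref{lem:three-point} to $\hat z^k$ as in~\eqref{eq:extra-mid-8} finally converts $\theta f(\hat z^k)$ into $\theta f(x^k)$, so the $(1-\theta)f(x^k)$ from convexity and this $\theta f(x^k)$ combine into a full $f(x^k)$, producing the telescoping block $f(x^k)-\Ebb_k[f(x^{k+1})]$ with the coefficient appearing in~\eqref{eq:extra-mb-decent-prop}.

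It then remains to collect the quadratic terms. I would expand $\|x^{k+1}-y^k\|^2$ through the Young-type bound~\eqref{eq:extra-mid-10} to expose the momentum difference $\|x^k-x^{k-1}\|^2-\Ebb_k[\|x^{k+1}-x^k\|^2]$, use the identity $z^k-x^k=\beta\theta^{-1}(x^k-x^{k-1})$ to route the cross terms into $\|x^k-x^{k-1}\|^2$, and gather every contribution to $\Ebb_k[\|x^{k+1}-x^k\|^2]$ — from the strong-convexity gain $\gamma-\lambda$, from the $\|x^{k+1}-y^k\|^2$ expansion, and from the new $\tfrac{\tau}{2}$ correction — into the single threshold $\zeta=2\theta(\rho+\lambda\beta+\tau)+\tau+2\rho\beta^2\theta^{-1}$. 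The hard part will be this bookkeeping: verifying that the residual multiple of $\|\hat z^k-x^k\|^2$ is sign-definite and can be dropped, which, because of the additional quadratic corrections introduced by decoupling the minibatch bias (the same corrections that double the $\rho\beta^2\theta^{-2}$ coefficient of the momentum term relative to Lemma~\ref{lem:extra-2}), now forces $\rho>3(\tau+\lambda)$ in place of $\rho\ge2(\tau+\lambda)$. Substituting the Moreau-envelope bound $\Ebb_k[f_{1/\rho}(z^{k+1})]\le\Ebb_k[f(\hat z^k)+\tfrac{\rho}{2}\|z^{k+1}-\hat z^k\|^2]$ together with $\|z^k-\hat z^k\|^2=\rho^{-2}\|\nabla f_{1/\rho}(z^k)\|^2$ and rearranging then gives~\eqref{eq:extra-mb-decent-prop}.
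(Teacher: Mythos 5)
Your central new ingredient is exactly the paper's: the paper likewise discards the Lipschitz bound \eqref{eq:extra-mid-16} of the sequential proof and uses Lemma~\ref{lem:model-uni-stable} together with Theorem~\ref{thm:stability-gen} and Assumption~\ref{ass:two-sides-quad} to get $\Ebb_{k}\big[f(x^{k+1})-f_{x^{k}}(x^{k+1},B_{k})\big]\le\frac{\tau}{2}\Ebb_{k}[\|x^{k}-x^{k+1}\|^{2}]+\vep$ (its \eqref{eq:mbm-mid-02}), and up to that point --- three-point lemma for the update, convexity of the batch model, substitution of $x=\hat z^{k}$, expectation over $B_{k}$ --- your plan coincides with the paper's proof.

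The gap is in the final assembly, and it is not mere bookkeeping: anchored where you anchor it, the computation cannot produce the inequality as stated. You apply Lemma~\ref{lem:three-point} to $\hat z^{k}$ with comparison point $x^{k}$, ``as in \eqref{eq:extra-mid-8}'', converting $\theta f(\hat z^{k})$ into $\theta f(x^{k})$; combined with the $(1-\theta)f(x^{k})$ from convexity this produces the block $f(x^{k})-\Ebb_{k}[f(x^{k+1})]$ with coefficient proportional to $1$, not to $\beta$, so you cannot recover the stated coefficient $\frac{\rho\beta}{2\theta^{2}(\gamma-\lambda)}$. The $\beta$ there arises only by anchoring the comparison at $x^{k+1}$, as the paper does in \eqref{eq:extra-mb-mid-8}: then $\theta f(\hat z^{k})$ becomes $\theta f(x^{k+1})$, which partially cancels against the $-f(x^{k+1})$ kept from the stability step and leaves $(1-\theta)\big[f(x^{k})-\Ebb_{k}[f(x^{k+1})]\big]=\beta[\cdot]$. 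That same choice is the source of the other two features you claim to recover: it creates the term $\frac{\rho\theta}{2}\|x^{k+1}-z^{k}\|^{2}$, whose splitting (the paper's \eqref{eq:mbm-mid-07}) doubles the momentum coefficient to $2\rho\beta^{2}\theta^{-2}$, and it forces the leftover $\|\hat z^{k}-x^{k}\|^{2}$ to be split as $\le 2\|\hat z^{k}-x^{k+1}\|^{2}+2\|x^{k+1}-x^{k}\|^{2}$ (the paper's \eqref{eq:extra-mb-mid-17-1}) so it can be absorbed by the negative $\|x^{k+1}-\hat z^{k}\|^{2}$ term --- and it is this absorption, not the decoupling of the minibatch bias, that requires $\rho>3(\tau+\lambda)$. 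With your anchoring at $x^{k}$, the residual multiple of $\|\hat z^{k}-x^{k}\|^{2}$ is $-\frac{\theta(\rho-2(\lambda+\tau)+\lambda\theta)}{2}$, so only $\rho\ge2(\tau+\lambda)$ is needed, no doubling occurs, and what you would actually prove is a valid but different descent inequality (one that would in fact suffice for Theorem~\ref{thm:extra-mb-momentum} with slightly different constants). As a proof of Lemma~\ref{lem:extra-mb-2} with the stated constants and hypothesis, the plan is internally inconsistent; the fix is simply to take $x^{k+1}$, not $x^{k}$, as the comparison point in the three-point step and then carry out the splittings above.
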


\begin{proof}
Analogous to the relation (\ref{eq:extra-mid-17}), we have 
\begin{align}
 & f_{x^{k}}(x^{k+1},B_{k})+\frac{\gamma}{2}\|x^{k+1}-y^{k}\|^{2}\nonumber \\
\le{} & (1-\theta)f(x^{k},B_{k})+\theta\big[f(x,B_{k})+\frac{\lambda+\tau}{2}\|x-x^{k}\|^{2}\big]-\frac{\lambda\theta^{2}}{2}\|x-x^{k}\|^{2}\nonumber \\
 & +\frac{\gamma\theta^{2}}{2}\|x-z^{k}\|^{2}-\frac{(\gamma-\lambda)\theta^{2}}{2}\|x-z^{k+1}\|^{2}\label{eq:extra-mb-mid-17}
\end{align}
Placing the value $x=\hat{z}^{k}$, we arrive at
\begin{align}
 & f_{x^{k}}(x^{k+1},B_{k})+\frac{\gamma}{2}\|x^{k+1}-y^{k}\|^{2}\nonumber \\
\le{} & (1-\theta)f(x^{k},B_{k})+\theta f(\hat{z}^{k},B_{k})+\frac{(\lambda+\tau)\theta-\lambda\theta^{2}}{2}\|\hat{z}^{k}-x^{k}\|^{2}\nonumber \\
 & +\frac{\gamma\theta^{2}}{2}\|\hat{z}^{k}-z^{k}\|^{2}-\frac{(\gamma-\lambda)\theta^{2}}{2}\|\hat{z}^{k}-z^{k+1}\|^{2}\nonumber \\
\le{} & (1-\theta)f(x^{k},B_{k})+\theta f(\hat{z}^{k},B_{k})+\theta(\lambda\beta+\tau)\big[\|\hat{z}^{k}-x^{k+1}\|^{2}+\|x^{k}-x^{k+1}\|^{2}\big]\nonumber \\
 & +\frac{\gamma\theta^{2}}{2}\|\hat{z}^{k}-z^{k}\|^{2}-\frac{(\gamma-\lambda)\theta^{2}}{2}\|\hat{z}^{k}-z^{k+1}\|^{2}\label{eq:extra-mb-mid-17-1}
\end{align}
where the last inequality uses the fact $(\lambda+\tau)\theta-\lambda\theta^{2}=\theta(\lambda\beta+\tau)$
and applies $\|a+b\|^{2}\le2\|a\|^{2}+2\|b\|^{2}$ with $a=\hat{z}^{k}-x^{k+1}$
and $b=x^{k+1}-x^k$. 

Recall that $\hat{z}^{k}=\prox_{f/\rho}(z^{k})$. In view of Lemma~\ref{lem:three-point} and the $(\tau+\lambda)$-weak convexity of $f(\cdot)$, we have
\begin{align}
\theta f(\hat{z}^{k})+\frac{\rho\theta}{2}\|\hat{z}^{k}-z^{k}\|^{2} & \le \theta f(x^{k+1})+\frac{\rho\theta}{2}\|x^{k+1}-z^{k}\|^{2}-\frac{\theta(\rho-\tau-\lambda)}{2}\|x^{k+1}-\hat{z}^{k}\|^{2}.\label{eq:extra-mb-mid-8}
\end{align}
Summing up (\ref{eq:extra-mb-mid-17-1}) and (\ref{eq:extra-mb-mid-8}) and rearranging the terms, we arrive at
\begin{align}
 & \frac{\gamma}{2}\|x^{k+1}-y^{k}\|^{2}\nonumber \\
\le{} & (1-\theta)\big[f(x^{k},B_{k})-f(x^{k+1})\big]+\theta\big[f(\hat{z}^{k},B_{k})-f(\hat{z}^{k})\big]+f(x^{k+1})-f_{x^{k}}(x^{k+1},B_{k})\nonumber \\
 & +\theta(\lambda\beta+\tau)\|x^{k}-x^{k+1}\|^{2}\nonumber \\
 & +\frac{\gamma\theta^{2}-\rho\theta}{2}\|\hat{z}^{k}-z^{k}\|^{2}-\frac{(\gamma-\lambda)\theta^{2}}{2}\|\hat{z}^{k}-z^{k+1}\|^{2}\nonumber \\
 & +\frac{\rho\theta}{2}\|x^{k+1}-z^{k}\|^{2}-\frac{\theta(\rho-3(\tau+\lambda)+2\lambda\theta)}{2}\|x^{k+1}-\hat{z}^{k}\|^{2}\label{eq:mbm-mid-04}
\end{align}
On both sides of the above inequality, we take expectation over $B_{k}$ conditioned on all the randomness that generates $B_1, B_2,\ldots, B_{k-1}$ .
Noting that $\Ebb_{k}\big[f(x^{k},B_{k})\big]=f(x^{k})$ and $\Ebb_{k}\big[f(\hat{z}^{k},B_{k})\big]=f(\hat{z}^{k})$,  it follows that
\begin{align}
 & \frac{\gamma}{2}\Ebb_{k}\big[\|x^{k+1}-y^{k}\|^{2}\big]\nonumber \\
\le{} & (1-\theta)\big[f(x^{k})-\Ebb_{k}[f(x^{k+1})]\big]+\Ebb_{k}\big[f(x^{k+1})-f_{x^{k}}(x^{k+1},B_{k})\big]\nonumber \\
 & +\theta(\lambda\beta+\tau)\Ebb_{k}\|x^{k}-x^{k+1}\|^{2}+\frac{\gamma \theta^{2}-\rho\theta}{2}\|\hat{z}^{k}-z^{k}\|^{2}-\frac{(\gamma -\lambda)\theta^{2}}{2}\Ebb_{k}\|\hat{z}^{k}-z^{k+1}\|^{2}\nonumber \\
 & +\frac{\rho\theta}{2}\Ebb_{k}\|x^{k+1}-z^{k}\|^{2}-\frac{\theta(\rho-3(\tau+\lambda)+2\lambda\theta)}{2}\Ebb_{k}\|x^{k+1}-\hat{z}^{k}\|^{2}\label{eq:mbm-mid-05}
\end{align}
Moreover, similar to the analysis for minibatch {\smod}, we apply Theorem~\ref{thm:stability-gen} and Lemma~\ref{lem:model-uni-stable} to show that 
\[\Ebb_{k}\big\{\Ebb_{\xi}\big[f_{x^{k}}(x^{k+1},\xi)\big]-f_{x^{k}}(x^{k+1},B_{k})\big\} \le\vep.\]
In view of this result and  Assumption~\ref{ass:two-sides-quad}, we arrive at
\begin{align}
 & \Ebb_{k}\big[f(x^{k+1})-f_{x^{k}}(x^{k+1},B_{k})\big]\nonumber \\
={} & \Ebb_{k}\big[f(x^{k+1})-\Ebb_{\xi}[f_{x^{k}}(x^{k+1},\xi)]\big]+\Ebb_{k}\big\{\Ebb_{\xi}\big[f_{x^{k}}(x^{k+1},\xi)\big]-f_{x^{k}}(x^{k+1},B_{k})\big\}\nonumber \\
\le{} & \frac{\tau}{2}\Ebb_{k}[\|x^{k}-x^{k+1}\|^{2}]+\vep. \label{eq:mbm-mid-02}
\end{align}
Putting (\ref{eq:mbm-mid-05}) and (\ref{eq:mbm-mid-02}) together and using the assumption $\rho>3(\tau+\lambda)$, we  have
\begin{align}
 & \frac{\gamma}{2}\Ebb_{k}\big[\|x^{k+1}-y^{k}\|^{2}\big]\nonumber \\
\le{} & (1-\theta)\big[f(x^{k})-\Ebb_{k}[f(x^{k+1})]\big]+\frac{2\theta(\lambda\beta+\tau)+\tau}{2}\Ebb_{k}[\|x^{k}-x^{k+1}\|^{2}]+\vep \nonumber \\
 & +\frac{\gamma\theta^{2}-\rho\theta}{2}\|\hat{z}^{k}-z^{k}\|^{2}-\frac{(\gamma-\lambda)\theta^{2}}{2}\Ebb_{k}[\|\hat{z}^{k}-z^{k+1}\|^{2}]\nonumber \\
 & +\frac{\rho\theta}{2}\Ebb_{k}[\|x^{k+1}-z^{k}\|^{2}]\label{eq:mbm-mid-06}
\end{align}

Moreover, we can bound the term $\Ebb_{k}[\|x^{k+1}-y^{k}\|^{2}]$
\begin{align}
 & \|x^{k+1}-y^{k}\|^{2}\nonumber \\
={} & \|x^{k+1}-x^{k}\|^{2}+\beta^{2}\|x^{k}-x^{k-1}\|^{2}-2\beta\langle x^{k+1}-x^{k},x^{k}-x^{k-1}\rangle\nonumber \\
\ge{} & \|x^{k+1}-x^{k}\|^{2}+\beta^{2}\|x^{k}-x^{k-1}\|^{2}-\beta\|x^{k+1}-x^{k}\|^{2}-\beta\|x^{k}-x^{k-1}\|^{2}\nonumber \\
={} & \theta\|x^{k+1}-x^{k}\|^{2}-\beta\theta\|x^{k}-x^{k-1}\|^{2},\label{eq:mbm-mid-08}
\end{align}
and 
\begin{align}
\frac{\rho\theta}{2}\|x^{k+1}-z^{k}\|^{2} & =\frac{\rho\theta}{2}\|x^{k+1}-x^{k}-\beta\theta^{-1}(x^{k}-x^{k-1})\|^{2}\nonumber\\
 & \le\rho\theta\|x^{k+1}-x^{k}\|^{2}+\rho\beta^{2}\theta^{-1}\|x^{k}-x^{k-1}\|^{2} \label{eq:mbm-mid-07}
\end{align}
where the inequality comes from the fact that $\|a+b\|^2\le2\|a\|^2+2\|b\|^2$.

Putting (\ref{eq:mbm-mid-06}), (\ref{eq:mbm-mid-08}) and (\ref{eq:mbm-mid-07}) together, we have 
\begin{align*}
 & \frac{\gamma\theta^{2}-2\theta(\rho+\lambda\beta+\tau)-\tau-2\rho\beta^{2}\theta^{-1}}{2}\Ebb_{k}[\|x^{k+1}-x^{k}\|^{2}]\\
\le{} & (1-\theta)\big[f(x^{k})-\Ebb_{k}[f(x^{k+1})]\big]+\vep\\
 & +\frac{\gamma\theta^{2}-\rho\theta}{2}\|\hat{z}^{k}-z^{k}\|^{2}-\frac{(\gamma-\lambda)\theta^{2}}{2}\Ebb_{k}[\|\hat{z}^{k}-z^{k+1}\|^{2}]\\
 & +\frac{\gamma\beta\theta+2\rho\beta^{2}\theta^{-1}}{2}\Ebb_{k}\big[\|x^{k}-x^{k-1}\|^{2}-\|x^{k+1}-x^{k}\|^{2}\big]
\end{align*}
It then follows that 
\begin{align}
 & \Ebb_{k}[\|\hat{z}^{k}-z^{k+1}\|^{2}]\nonumber \\
\le{} & \|\hat{z}^{k}-z^{k}\|^{2}-\frac{\rho-\lambda\theta}{\gamma\theta-\lambda\theta}\|\hat{z}^{k}-z^{k}\|^{2}+\frac{\vep}{(\gamma-\lambda)\theta^{2}}\nonumber \\
 & +\frac{\beta}{(\gamma-\lambda)\theta^{2}}\big[f(x^{k})-\Ebb_{k}[f(x^{k+1})]\big]\nonumber \\
 & -\frac{\gamma\theta^{2}-2\theta(\rho+\lambda\beta+\tau)-\tau-2\rho\beta^{2}\theta^{-1}}{2(\gamma-\lambda)\theta^{2}}\Ebb_k[\|x^{k+1}-x^{k}\|^{2}]\nonumber \\
 & -\frac{\gamma\beta+2\rho\beta^{2}\theta^{-2}}{\gamma\theta-\lambda\theta}\big(\Ebb_{k}\big[\|x^{k+1}-x^{k}\|^{2}-\|x^{k}-x^{k-1}\|^{2}\big]\big)\label{eq:extra-mb-mid-11}
\end{align}
In view of (\ref{eq:extra-mb-mid-11}) and the definition of Moreau
envelope, we have 
\begin{align}
 & \Ebb_{k}\left[f_{1/\rho}(z^{k+1})\right]\nonumber \\
={} & \Ebb_{k}\big[f(\hat{z}^{k+1})+\frac{\rho}{2}\|z^{k+1}-\hat{z}^{k+1}\|^{2}\big]\nonumber \\
\le{} & \Ebb_{k}\big[f(\hat{z}^{k})+\frac{\rho}{2}\|z^{k+1}-\hat{z}^{k}\|^{2}\big]\nonumber \\
\le{} & f_{1/\rho}(z^{k})-\frac{\rho(\rho-\lambda\theta)}{2(\gamma\theta-\lambda\theta)}\|z^{k}-\hat{z}^{k}\|^{2}+\frac{\rho\vep}{2(\gamma\theta^{2}-\lambda\theta^{2})}+\frac{\rho\beta}{2(\gamma-\lambda)\theta^{2}}\big[f(x^{k})-\Ebb_{k}[f(x^{k+1})]\big]\nonumber \\
 & -\frac{\rho(\gamma\theta^{2}-2\theta(\rho+\lambda\beta+\tau)-\tau-2\rho\beta^{2}\theta^{-1})}{4(\gamma-\lambda)\theta^{2}}\|x^{k+1}-x^{k}\|^{2}\nonumber \\
 & +\frac{\rho(\gamma\beta+2\rho\beta^{2}\theta^{-2})}{2(\gamma\theta-\lambda\theta)}\big\{\|x^{k}-x^{k-1}\|^{2}-\Ebb_{k}\big[\|x^{k+1}-x^{k}\|^{2}\big]\big\}.\label{eq:extra-mb-mid-6}
\end{align}

In view of the above result and the relation $\|z^{k}-\hat{z}^{k}\|^{2}=\rho^{-2}\|\nabla_{1/\rho}f(z^{k})\|^{2}$,
we obtain\,(\ref{eq:extra-mb-decent-prop}).
\end{proof}
\begin{thm}
\label{thm:extra-mb-momentum}Suppose we choose 
$
\gamma=\gamma_0\sqrt{\frac{K}{m}}+\theta^{-2}\zeta+\lambda
$, where $\zeta$ is defined in \ref{lem:extra-mb-2}.
Then we have
\[
\Ebb\thinspace[\|\nabla f_{1/\rho}(z^{k^{*}})\|^{2}]\le\frac{\rho}{\rho-\theta\lambda}\bigg[\frac{\theta^{-1}(\rho\beta+2\zeta)\Delta}{K}+ \Big(\theta\gamma_0\Delta+\frac{\rho L^2}{\theta\gamma_0}\Big)\frac{2}{\sqrt{mK}}\bigg].
\]
\end{thm}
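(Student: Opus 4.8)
The plan is to turn the one-step estimate of Lemma~\ref{lem:extra-mb-2} into a telescoping bound on a suitable Lyapunov function and then average over the iterates. First I would introduce the potential
\[
\Phi^{k}\triangleq f_{1/\rho}(z^{k})+\frac{\rho\beta}{2\theta^{2}(\gamma-\lambda)}\,f(x^{k})+\frac{\rho(\gamma\beta+2\rho\beta^{2}\theta^{-2})}{2\theta(\gamma-\lambda)}\,\|x^{k}-x^{k-1}\|^{2},
\]
chosen precisely so that the three difference terms on the right-hand side of \eqref{eq:extra-mb-decent-prop} collapse into $\Phi^{k}-\Ebb_{k}[\Phi^{k+1}]$. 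With this abbreviation, Lemma~\ref{lem:extra-mb-2} reads
\[
\frac{\rho-\lambda\theta}{2\theta\rho(\gamma-\lambda)}\|\nabla f_{1/\rho}(z^{k})\|^{2}\le\Phi^{k}-\Ebb_{k}[\Phi^{k+1}]+\frac{\rho\vep}{2\theta^{2}(\gamma-\lambda)}-\frac{\rho(\gamma\theta^{2}-\zeta)}{4\theta^{2}(\gamma-\lambda)}\|x^{k+1}-x^{k}\|^{2}.
\]

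The prescribed stepsize $\gamma=\gamma_{0}\sqrt{K/m}+\theta^{-2}\zeta+\lambda$ is engineered so that $\gamma\theta^{2}=\gamma_{0}\theta^{2}\sqrt{K/m}+\zeta+\lambda\theta^{2}\ge\zeta$; hence the last displacement term is nonpositive and may be discarded. Taking total expectation, summing over $k=1,\ldots,K$ and using the tower property telescopes the potential, leaving
\[
\frac{\rho-\lambda\theta}{2\theta\rho(\gamma-\lambda)}\sum_{k=1}^{K}\Ebb[\|\nabla f_{1/\rho}(z^{k})\|^{2}]\le\Ebb[\Phi^{1}]-\Ebb[\Phi^{K+1}]+\frac{\rho\vep K}{2\theta^{2}(\gamma-\lambda)}.
\]
For the boundary terms I would invoke the initialization $x^{1}=x^{0}$, so that the displacement term in $\Phi^{1}$ vanishes and $z^{1}=x^{1}$. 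Combining $f_{1/\rho}(z^{1})\le f(z^{1})=f(x^{1})$ with the lower bounds $f_{1/\rho}(z^{K+1})\ge\min_{x}f(x)$ and $f(x^{K+1})\ge\min_{x}f(x)$, and recalling $\Delta=f(x^{1})-\min_{x}f(x)$, bounds $\Ebb[\Phi^{1}]-\Ebb[\Phi^{K+1}]$ by $\big(1+\tfrac{\rho\beta}{2\theta^{2}(\gamma-\lambda)}\big)\Delta$.

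It then remains to divide by $K$, identify $\Ebb[\|\nabla f_{1/\rho}(z^{k^{*}})\|^{2}]=\tfrac1K\sum_{k}\Ebb[\|\nabla f_{1/\rho}(z^{k})\|^{2}]$, and substitute $\gamma-\lambda=\gamma_{0}\sqrt{K/m}+\theta^{-2}\zeta$ together with $\vep=\frac{2L^{2}}{m(\gamma-\lambda)}$, collecting terms by their decay rate. The $\theta^{-2}\zeta$ piece of $\gamma-\lambda$ and the $\beta$-weighted $f(x^{k})$ contribution assemble into the $\frac{\theta^{-1}(\rho\beta+2\zeta)\Delta}{K}$ term, while the $\gamma_{0}\sqrt{K/m}$ piece yields the $\theta\gamma_{0}\Delta/\sqrt{mK}$ term; for the stability error I would use $\frac{1}{\gamma-\lambda}\le\frac{1}{\gamma_{0}}\sqrt{m/K}$ to convert $\frac{\rho^{2}\vep}{(\rho-\lambda\theta)\theta}$ into the $\frac{\rho L^{2}}{\theta\gamma_{0}\sqrt{mK}}$ contribution, factoring out $\frac{\rho}{\rho-\lambda\theta}$ throughout. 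The main obstacle here is purely bookkeeping: keeping the numerous $\theta$, $\beta$, $\rho$ and $\gamma$ factors straight while merging the $\Ocal(1/K)$ and $\Ocal(1/\sqrt{mK})$ contributions, and carefully verifying that the displacement-term coefficient $\gamma\theta^{2}-\zeta$ is genuinely nonnegative under the chosen $\gamma$ so that it can be dropped rather than carried through the telescope.
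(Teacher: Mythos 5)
Your proposal is correct and follows essentially the same route as the paper: the potential $\Phi^{k}$ you introduce is exactly the one the paper describes just before Lemma~\ref{lem:extra-mb-2} (with coefficients made explicit by the lemma's three telescoping differences), and the remaining steps—dropping the displacement term since $\gamma\theta^{2}=\gamma_{0}\theta^{2}\sqrt{K/m}+\zeta+\lambda\theta^{2}\ge\zeta$, telescoping, bounding the boundary terms by $\bigl(1+\tfrac{\rho\beta}{2\theta^{2}(\gamma-\lambda)}\bigr)\Delta$ using $x^{1}=x^{0}=z^{1}$, and splitting $\gamma-\lambda=\gamma_{0}\sqrt{K/m}+\theta^{-2}\zeta$ into the $\Ocal(1/K)$ and $\Ocal(1/\sqrt{mK})$ contributions—match the paper's argument term for term.
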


\begin{proof}
Unfolding the relation\,(\ref{eq:extra-mb-decent-prop}) and then
taking expectation over all the randomness, we have 
\begin{align}
 & \frac{\rho-\lambda\theta}{2\theta\rho(\gamma-\lambda)}\sum_{k=1}^{K}\Ebb[\|\nabla f_{1/\rho}(z^{k})\|^{2}]\nonumber \\
\le{} & f_{1/\rho}(z^{1})-\Ebb\big[f_{1/\rho}(z^{K+1})\big]+\frac{\rho\beta}{2\theta^{2}(\gamma-\lambda)}\big[f(x^{1})-\Ebb_{k}[f(x^{K+1})]\big] \nonumber\\
 & +\frac{\rho\vep K}{2\theta^{2}(\gamma-\lambda)} +\frac{\rho(\gamma\beta+2\rho\beta^{2}\theta^{-2})}{2\theta(\gamma-\lambda)}\|x^{1}-x^{0}\|^{2}.\nonumber\\
 \le & \Big(1+ \frac{\rho\beta}{2\theta^{2}(\gamma-\lambda)}\Big)\Delta + \frac{L^2\rho K}{\theta^2m(\gamma-\lambda)^2},
 \label{eq:extra-mid-12}
\end{align}

where we use the assumption $x^{1}=x^{0}=z^{1}$ and that
\[
\max\left\{f_{1/\rho}(z^{1})-f_{1/\rho}(z^{K+1}),\, f_{1/\rho}(x^{1})-f_{1/\rho}(x^{K+1}) \right\} \le\Delta.\\
\]

Appealing to the definition of $k^{*}$, $\gamma$  and then using relation~(\ref{eq:extra-mid-12}),
we arrive at
\begin{align*}
 & \Ebb\thinspace[\|\nabla f_{1/\rho}(z^{k^{*}})\|^{2}]\\
 \le {} & \frac{\rho}{\rho-\theta\lambda}\bigg[\frac{\rho \beta\theta^{-1}\Delta}{K}+\frac{2\theta(\gamma-\lambda)\Delta}{K}+\frac{2\rho L^{2}}{\theta m(\gamma-\lambda)}\bigg]\\
\le{} & \frac{\rho}{\rho-\theta\lambda}\bigg[\frac{\theta^{-1}(\rho\beta+2\zeta)\Delta}{K}+ \Big(\theta\gamma_0\Delta+\frac{\rho L^2}{\theta\gamma_0}\Big)\frac{2}{\sqrt{mK}}\bigg].
\end{align*}
\end{proof}
\begin{rem}
While the  convergence result in Theorem~\ref{thm:extra-mb-momentum} is established for all $\gamma_{0}>0$,
we can see that the optimal $\gamma_{0}$ would be $\gamma_{0}=\theta^{-1}\sqrt{\frac{\rho}{\Delta}}L$,
which gives the bound 
$
\Ebb[\thinspace\|\nabla f_{1/\rho}(z^{k^{*}})\|^{2}]=\Ocal\big(\frac{\Delta}{K}+L\sqrt{\frac{\rho\Delta}{mK}}\big).
$
In practice we can set $\gamma_{0}$ to a suboptimal value and obtain a possibly loose upper-bound. 
\end{rem}

\newpage
\section{{\smod} for convex optimization}\label{app:cvx}
In this section, we develop new complexity results of model-based methods for stochastic convex optimization. 
To provide the sharpest convergence rate possible, we replace Assumption~\ref{ass:two-sides-quad} with the following assumption
\begin{enumerate}[label=\textbf{A\arabic*:},ref=A\arabic*,start=6]
\item For any $x\in\Xcal$, $f_{x}(\cdot,\xi)$ is a convex function,\label{cvx:two-sided} and  \
\begin{equation}\label{eq:cvx-two-sided}
-\frac{\tau}{2}\|x-y\|^{2}\le f_{x}(y,\xi)-f(y,\xi)\le0,\quad\xi\in\Xi, y\in\Xcal.
\end{equation}
\end{enumerate}

It is easy to see that Assumption~\ref{cvx:two-sided} ensures the convexity of  $f(y,\xi)$. More specifically, let $\bar{x}=(1-\alpha)x + \alpha y$ where $x,y\in\Xcal$ and $\alpha\in[0,1]$,  we have
\begin{align*}
	f(\bar{x},\xi) & =	f_{\bar{x}}(\bar{x},\xi) \\
	& \le (1-\alpha) f_{\bar{x}}({x},\xi) + \alpha f_{\bar{x}}(y,\xi) \\
	& \le (1-\alpha) f({x},\xi) + \alpha f(y,\xi) 
\end{align*}
where the equality comes from Assumption~\ref{ass:unbiased}, the first inequality follows from convexity of $f_{\bar{x}}(\cdot,\xi)$ and the second inequality uses (\ref{eq:cvx-two-sided}).

\textbf{Outline of this section.} Since convergence to global optimality can be guaranteed in convex optimization, it is  favorable to describe convergence rates with respect to the optimality gap. To this end, we conduct new convergence analysis of  {\smod} with minibatching and momentum for stochastic convex  optimization. In subsection~\ref{subsec:semod}, we show that  under the additional Assumption~\ref{cvx:two-sided}, after $K$ iterations of the extrapolated minibatch method (Algorithm~\ref{alg:semod-mb}),  the expected optimality gap converges at rate
\[
\Ocal\Big(\frac{1}{K}+\frac{1}{\sqrt{mK}}\Big).
\]
In view of the above result, the deterministic part of our rate is consistent with the best  $\Ocal{(\frac{1}{K}})$ rate for heavy-ball method. For example, see \citepapp{diakonikolas2021generalizeddup, ghadimi2015globaldup}.
Moreover, the stochastic part of the rate is improved from the result $\Ocal(\frac{1}{\sqrt{K}})$ of Theorem~4.4~\citepapp{davis2019stochasticweaklydup} by a factor of $\sqrt{m}$. 

As is mentioned in the main article, one major advantage of {\smod} methods is the robustness to stepsize selection (see \citep{asi2019stochasticdup}). In other words, compared to {\sgd}, {\spl} and {\spp} tend to admit a wider range of stepsizes. In subsection~\ref{subsec:robust}, we show that the extrapolated model-based method inherits the merits of robustness from the model-based method.

An important question arises naturally: Can we further improve the  convergence rate of model-based methods?  Due to the widely known limitation of heavy-ball type momentum, it would be interesting to consider Nesterov's acceleration.  In subsection~\ref{subsec:nes}, we present a model-based method with Nesterov type momentum. Thanks to the stability argument, we obtain the following improved rate of convergence: 
\[
\Ocal\Big(\frac{1}{K^2}+\frac{1}{\sqrt{mK}}\Big).
\]
We note that a similar convergence rate for minibatch model-based methods is obtained in a recent paper~\citepapp{chadha2021accelerateddup}. However, their result requires the assumption that the stochastic function is Lipschitz smooth while our assumption is much weaker.
The full complexity results are presented in Table~\ref{tab:cvx-bound}.

\begin{table}[h]
\caption{Complexity of stochastic algorithms to reach $\protect\vep$-accuracy:
$\protect\Ebb[f(x)-f(x^{*})]\le\protect\vep$. (M: minibatching; E:
Extrapolation (Polyak type); N: Nesterov acceleration\label{tab:cvx-bound}}
\centering{}%
\begin{tabular}{cccc}
\hline 
Algorithms & Problems & Current Best & Ours\tabularnewline
\hline 
M + {\smod} & $f$: smooth composite & $\Ocal(1/\vep^{2})$ \citeapp{davis2019stochasticweaklydup} & $\Ocal(1/\vep+1/(m\vep^{2}))$\tabularnewline
M + E + {\smod} & $f$: non-smooth & $\Ocal(1/\vep^{2})$ \citeapp{davis2019stochasticweaklydup}  & $\Ocal(1/\vep+1/(m\vep^{2}))$\tabularnewline
\hline 
M + N + {\smod} & $f$: smooth composite & $\Ocal(1/\vep^{1/2}+1/(m\vep^{2}))$ \citepapp{chadha2021accelerateddup} & $\Ocal(1/\vep^{1/2}+1/(m\vep^{2}))$\tabularnewline
M + N + {\smod} & $f$: non-smooth & --- & $\Ocal(1/\vep^{1/2}+1/(m\vep^{2}))$\tabularnewline
\hline 
\end{tabular}
\end{table}

\subsection{Convergence of extrapolated {\smod}}\label{subsec:semod}

The following Lemma summarizes some important convergence property of Extrapolated {\smod} for convex stochastic optimization.
\begin{lem}
\label{lem:smod-extra-recurs}Under Assumption~\ref{cvx:two-sided}, let $\theta=1-\beta$  in Algorithm~\ref{alg:semod-mb}. Then for any $\hat{x} \in \Xcal$ and $k=1,2,3,\ldots$, we have
\begin{equation}
\begin{aligned} & \Ebb_{k}\big[f(x^{k+1})-f(\hat{x})\big]-(1-\theta)\big[f(x^k)-f(\hat{x})\big]\\
\le{} & \frac{2L^2}{m\gamma}+\frac{\gamma\theta^{2}}{2}\|\hat{x}-z^{k}\|^{2}-\frac{\gamma\theta^{2}}{2}\Ebb_{k}[\|\hat{x}-z^{k+1}\|^{2}]\\
 & +\frac{\gamma\beta(1-\beta)}{2}\|x^{k}-x^{k-1}\|^{2}-\frac{\gamma(1-\beta)-\tau}{2}\Ebb_{k}[\|x^{k+1}-x^{k}\|^{2}]
\end{aligned}
\label{eq:smod-extra-recursion}
\end{equation}
\end{lem}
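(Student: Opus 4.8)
The plan is to mirror the momentum analysis of Lemma~\ref{lem:extra-2} and Lemma~\ref{lem:extra-mb-2}, but to exploit the \emph{convexity} granted by Assumption~\ref{cvx:two-sided} instead of weak convexity, which lets me track progress directly in the function gap $f(x^{k+1})-f(\hat{x})$ rather than in the Moreau envelope. Throughout I set $\lambda=0$, since the model functions are now convex, so that the stability constant of Lemma~\ref{lem:model-uni-stable} becomes $\vep=\frac{2L^{2}}{m\gamma}$, supplying the first term on the right-hand side.

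First I would introduce $\bar{x}=\beta x^{k}+\theta\hat{x}$, which is feasible by convexity of $\Xcal$, and apply the three-point Lemma~\ref{lem:three-point} to the proximal step \eqref{eq:update-xk-2} with $g=f_{x^{k}}(\cdot,B_{k})$ (convex, so $\eta=0$), $\kappa=\gamma$, and center $y^{k}$. Using the identities $\bar{x}-y^{k}=\theta(\hat{x}-z^{k})$ and $\bar{x}-x^{k+1}=\theta(\hat{x}-z^{k+1})$ from \eqref{eq:ident-mid-2} and \eqref{eq:ident-mid-3}, this yields
\[
f_{x^{k}}(x^{k+1},B_{k})+\tfrac{\gamma}{2}\|x^{k+1}-y^{k}\|^{2}\le f_{x^{k}}(\bar{x},B_{k})+\tfrac{\gamma\theta^{2}}{2}\|\hat{x}-z^{k}\|^{2}-\tfrac{\gamma\theta^{2}}{2}\|\hat{x}-z^{k+1}\|^{2}.
\]
Then I would bound $f_{x^{k}}(\bar{x},B_{k})\le\beta f_{x^{k}}(x^{k},B_{k})+\theta f_{x^{k}}(\hat{x},B_{k})$ by convexity, replace $f_{x^{k}}(x^{k},B_{k})=f(x^{k},B_{k})$ via the tightness Assumption~\ref{ass:unbiased}, and replace $f_{x^{k}}(\hat{x},B_{k})\le f(\hat{x},B_{k})$ via the upper bound in \eqref{eq:cvx-two-sided}. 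Taking the conditional expectation $\Ebb_{k}$ collapses $f(x^{k},B_{k})$ and $f(\hat{x},B_{k})$ to $f(x^{k})$ and $f(\hat{x})$.

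The key step, which I expect to be the main obstacle, is to pass from $\Ebb_{k}[f_{x^{k}}(x^{k+1},B_{k})]$ back to $\Ebb_{k}[f(x^{k+1})]$, since $x^{k+1}$ depends on $B_{k}$ and the empirical model is a biased estimate of the population model. Here I would combine two facts: the stability estimate of Theorem~\ref{thm:stability-gen} (with Lemma~\ref{lem:model-uni-stable}, $\lambda=0$), giving $\Ebb_{k}\{\Ebb_{\xi}[f_{x^{k}}(x^{k+1},\xi)]-f_{x^{k}}(x^{k+1},B_{k})\}\le\vep$; and the lower bound $f_{x^{k}}(x^{k+1},\xi)\ge f(x^{k+1},\xi)-\frac{\tau}{2}\|x^{k}-x^{k+1}\|^{2}$ of \eqref{eq:cvx-two-sided}, which after taking $\Ebb_{\xi}$ over a fresh sample independent of $x^{k+1}$ turns the population model into $f(x^{k+1})$. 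Together these give $\Ebb_{k}[f_{x^{k}}(x^{k+1},B_{k})]\ge\Ebb_{k}[f(x^{k+1})]-\frac{\tau}{2}\Ebb_{k}\|x^{k}-x^{k+1}\|^{2}-\vep$.

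Finally I would substitute this lower bound and rearrange; noting $\beta=1-\theta$, the left-hand side becomes exactly $\Ebb_{k}[f(x^{k+1})-f(\hat{x})]-(1-\theta)[f(x^{k})-f(\hat{x})]$. It then remains to dispose of the $-\frac{\gamma}{2}\|x^{k+1}-y^{k}\|^{2}$ term, for which I reuse the elementary bound \eqref{eq:mbm-mid-08}, namely $\|x^{k+1}-y^{k}\|^{2}\ge\theta\|x^{k+1}-x^{k}\|^{2}-\beta\theta\|x^{k}-x^{k-1}\|^{2}$. Merging the resulting $\|x^{k+1}-x^{k}\|^{2}$ contributions produces the coefficient $-\frac{\gamma\theta-\tau}{2}=-\frac{\gamma(1-\beta)-\tau}{2}$ and the momentum term $\frac{\gamma\beta\theta}{2}=\frac{\gamma\beta(1-\beta)}{2}$, matching \eqref{eq:smod-extra-recursion} term by term.
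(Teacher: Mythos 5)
Your proposal is correct and follows essentially the same route as the paper's proof: the three-point lemma at $\bar{x}=\beta x^{k}+\theta\hat{x}$ with the identities $\bar{x}-y^{k}=\theta(\hat{x}-z^{k})$, $\bar{x}-x^{k+1}=\theta(\hat{x}-z^{k+1})$, convexity plus tightness and the upper bound of \eqref{eq:cvx-two-sided} to control $f_{x^{k}}(\bar{x},B_{k})$, the stability bound with $\lambda=0$ combined with the lower bound of \eqref{eq:cvx-two-sided} to pass from the empirical model at $x^{k+1}$ to $f(x^{k+1})$, and Young's inequality on $\|x^{k+1}-y^{k}\|^{2}$ to produce the momentum terms. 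The only differences are presentational (you substitute $\bar{x}$ into the three-point lemma immediately and bundle the stability and approximation estimates into a single step, whereas the paper keeps a generic $x$ and takes expectations at the end), so no gap exists.
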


\begin{proof}
Applying three point lemma, for any $x\in \Xcal$, we have
\begin{align}
\begin{aligned}f_{x^{k}}(x^{k+1},B_{k})-f_{x^{k}}(x,B_{k}) & \le\frac{\gamma}{2}\|x-y^{k}\|^{2}-\frac{\gamma}{2}\|x-x^{k+1}\|^{2}-\frac{\gamma}{2}\|y^{k}-x^{k+1}\|^{2}.\end{aligned}
\label{eq:smod-extra-01}
\end{align}

Based on  Assumption~\ref{cvx:two-sided}, we have
\begin{align}
 & f(x^{k+1})-f_{x^{k}}(x^{k+1},B_{k})\nonumber \\
={} & \Ebb_{\xi}\big[f(x^{k+1},\xi)\big]-f_{x^{k}}(x^{k+1},B_{k})\nonumber \\
={} & \Ebb_{\xi}\big[f(x^{k+1},\xi)-f_{x^{k}}(x^{k+1},\xi)\big]+\Ebb_{\xi}\big[f_{x^{k}}(x^{k+1},\xi)-f_{x^{k}}(x^{k+1},B_{k})\big]\nonumber \\
\le{} & \frac{\tau}{2}\|x^{k}-x^{k+1}\|^{2}+\Ebb_{\xi}\big[f_{x^{k}}(x^{k+1},\xi)-f_{x^{k}}(x^{k+1},B_{k})\big].\label{eq:smod-extra-02}
\end{align}
Plugging the above into (\ref{eq:smod-extra-01}), we have that 
\[
\begin{aligned}f(x^{k+1})-f_{x^{k}}(x,B_{k}) & \le\frac{\gamma}{2}\|x-y^{k}\|^{2}-\frac{\gamma}{2}\|x-x^{k+1}\|^{2}-\frac{\gamma}{2}\|y^{k}-x^{k+1}\|^{2}\\
 & \quad+\frac{\tau}{2}\|x^{k}-x^{k+1}\|^{2}+\Ebb_{\xi}\big[f_{x^{k}}(x^{k+1},\xi)-f_{x^{k}}(x^{k+1},B_{k})\big].
\end{aligned}
\]
Let $x=(1-\theta)x^{k}+\theta\hat{x}$ and $z^{k}=x^{k}+\theta^{-1}\beta(x^{k}-x^{k-1})$. Then we have
\[
\begin{aligned}x-y^{k} & =\theta(\hat{x}-z^{k}),\\
x-x^{k+1} & =\theta(\hat{x}-z^{k+1}),
\end{aligned}
\]
and by convexity, we obtain that
\begin{equation}
\begin{aligned} & f(x^{k+1})-f(\hat{x},B_{k})-(1-\theta)\big[f(x^{k},B_{k})-f(\hat{x},B_{k})\big]\\
\le{} & f(x^{k+1})-f_{x^{k}}(x,B_{k})\\
\le{} & \frac{\gamma\theta^{2}}{2}\|\hat{x}-z^{k}\|^{2}-\frac{\gamma\theta^{2}}{2}\|\hat{x}-z^{k+1}\|^{2}-\frac{\gamma}{2}\|y^{k}-x^{k+1}\|^{2}\\
 & +\frac{\tau}{2}\|x^{k}-x^{k+1}\|^{2}+\Ebb_{\xi}\big[f_{x^{k}}(x^{k+1},\xi)-f_{x^{k}}(x^{k+1},B_{k})\big].
\end{aligned}
\label{eq:smod-extra-04}
\end{equation}
Then we have 
\begin{align} 
& -\frac{\gamma}{2}\|y^{k}-x^{k+1}\|^{2}+\frac{\tau}{2}\|x^{k}-x^{k+1}\|^{2}\nonumber\\
={} & -\frac{\gamma}{2}\|x^{k+1}-x^{k}\|^{2}+\gamma\beta\big\langle x^{k+1}-x^{k},x^{k}-x^{k-1}\big\rangle-\frac{\gamma\beta^{2}}{2}\|x^{k}-x^{k-1}\|^{2}+\frac{\tau}{2}\|x^{k}-x^{k+1}\|^{2}\nonumber\\
\le{} & \frac{\gamma\beta(1-\beta)}{2}\|x^{k}-x^{k-1}\|^{2}-\frac{\gamma(1-\beta)-\tau}{2}\|x^{k+1}-x^{k}\|^{2},\label{eq:smod-extra-07}
\end{align}
where the last inequality is by Cauchy-Schwarz and we deduce that
\[
\begin{aligned} & f(x^{k+1})-f(\hat{x},B_{k})-(1-\theta)\big[f(x^{k},B_{k})-f(\hat{x},B_{k})\big]\\
\le{} & \frac{\gamma\theta^{2}}{2}\|\hat{x}-z^{k}\|^{2}-\frac{\gamma\theta^{2}}{2}\|\hat{x}-z^{k+1}\|^{2}\\
 & +\frac{\gamma\beta(1-\beta)}{2}\|x^{k}-x^{k-1}\|^{2}-\frac{\gamma(1-\beta)-\tau}{2}\|x^{k+1}-x^{k}\|^{2}\\
 & +\Ebb_{\xi}\big[f_{x^{k}}(x^{k+1},\xi)-f_{x^{k}}(x^{k+1},B_{k})\big].
\end{aligned}
\]
Next, we take expectation over $B_{k}$ conditioned on $B_{1},B_{2},\ldots,B_{k-1}$. Note that $\Ebb_{k}[f(\hat{x},B_{k})]=f(\hat{x})$, $\Ebb_{k}[f(x^{k},B_{k})]=f(x^{k})$ and 
\begin{equation}
\begin{aligned} & \Ebb_{k}\big[f(x^{k+1})-f(\hat{x})\big]-(1-\theta)\big[f(x^{k})-f(\hat{x})\big]\\
\le{} & \frac{\gamma\theta^{2}}{2}\|\hat{x}-z^{k}\|^{2}-\frac{\gamma\theta^{2}}{2}\Ebb_{k}[\|\hat{x}-z^{k+1}\|^{2}]\\
 & +\frac{\gamma\beta(1-\beta)}{2}\|x^{k}-x^{k-1}\|^{2}-\frac{\gamma(1-\beta)-\tau}{2}\Ebb_{k}[\|x^{k+1}-x^{k}\|^{2}]\\
 & +\Ebb_{k}\big\{\Ebb_{\xi}\big[f_{x^{k}}(x^{k+1},\xi)-f_{x^{k}}(x^{k+1},B_{k})\big]\big\}.
\end{aligned}
\label{eq:smod-extra-05}
\end{equation}
Moreover, based on the stability of the proximal mapping, we have 
\begin{equation}
\Ebb_{k}\big\{\Ebb_{\xi}\big[f_{x^{k}}(x^{k+1},\xi)-f_{x^{k}}(x^{k+1},B_{k})\big]\big\}\le\vep_{k},\ \textrm{where }\vep_{k}=\frac{2L^{2}}{m\gamma}.\label{eq:smod-extra-06}
\end{equation}
Combining (\ref{eq:smod-extra-05}) and (\ref{eq:smod-extra-06})
gives the desired result (\ref{eq:smod-extra-recursion}).
\end{proof}

By specifying a constant stepsize and batch size, we develop the convergence rate of {\extra} in the following Theorem.
\begin{thm}
\label{thm:smod-extra-cvx-rate}Let $x^{1}=x^{0}$, $x^{*}$ 
be an optimal solution and $\gamma=\gamma_{0}\sqrt{\frac{K}{m}}+\theta^{-2}\tau$, where $\gamma_0=\frac{2\theta^{-1}L}{\til{D}}$ and $\til{D}\ge \|x^0-x^*\|$, then we have 
\begin{equation}
\Ebb\big[f(x^{k^{*}})-f(x^{*})\big]
\le \frac{f(x^0)-f(x^*)}{K} + \frac{\theta^{-1}\tau {\til{D}}^2}{2K}+\frac{2\til DL}{\sqrt{mK}}. \label{eq:app-bound-1}
\end{equation}
where $k^{*}$ is an index chosen in $\{1,2,\ldots, K\}$ uniformly at random.
\end{thm}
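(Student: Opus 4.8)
The plan is to instantiate Lemma~\ref{lem:smod-extra-recurs} with $\hat{x}=x^{*}$, take total expectation, and telescope the resulting recursion over $k=1,\ldots,K$. Writing $\delta_{k}\triangleq\Ebb[f(x^{k})]-f(x^{*})$ and using $1-\theta=\beta$, the lemma gives, in full expectation,
\[
\delta_{k+1}-\beta\delta_{k}\le\frac{2L^{2}}{m\gamma}+\frac{\gamma\theta^{2}}{2}\Ebb\big[\|x^{*}-z^{k}\|^{2}-\|x^{*}-z^{k+1}\|^{2}\big]+\frac{\gamma\beta\theta}{2}a_{k}-\frac{\gamma\theta-\tau}{2}a_{k+1},
\]
where $a_{k}\triangleq\Ebb[\|x^{k}-x^{k-1}\|^{2}]$. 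Summing over $k$, the $\|x^{*}-z^{k}\|^{2}$ terms telescope to $\tfrac{\gamma\theta^{2}}{2}\|x^{*}-z^{1}\|^{2}$ after discarding the nonnegative endpoint, and since $x^{1}=x^{0}$ forces $z^{1}=x^{1}=x^{0}$, this is at most $\tfrac{\gamma\theta^{2}}{2}\til{D}^{2}$.

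The second step shows the iterate-difference terms accumulate to a nonpositive quantity. After summing, the coefficient of each interior $a_{k}$ (for $2\le k\le K$) is $\tfrac{\gamma\beta\theta}{2}-\tfrac{\gamma\theta-\tau}{2}=-\tfrac{\gamma\theta^{2}-\tau}{2}$, which is nonpositive precisely because the stepsize satisfies $\gamma\ge\theta^{-2}\tau$; combined with $a_{1}=0$ (from $x^{1}=x^{0}$) and the nonpositive trailing term $-\tfrac{\gamma\theta-\tau}{2}a_{K+1}$, the whole momentum sum is $\le0$ and may be dropped. For the left-hand side, $\sum_{k=1}^{K}(\delta_{k+1}-\beta\delta_{k})$ equals $\delta_{K+1}+\theta\sum_{k=2}^{K}\delta_{k}-\beta\delta_{1}$; rewriting through $\theta+\beta=1$ and discarding $\delta_{K+1}\ge0$ leaves the clean bound $\theta\sum_{k=1}^{K}\delta_{k}\le\delta_{1}+\tfrac{2L^{2}K}{m\gamma}+\tfrac{\gamma\theta^{2}}{2}\til{D}^{2}$.

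The final step substitutes $\gamma=\gamma_{0}\sqrt{K/m}+\theta^{-2}\tau$ with $\gamma_{0}=2\theta^{-1}L/\til{D}$, which is exactly the value equating the two $\sqrt{K/m}$-dependent contributions: $\gamma\ge\gamma_{0}\sqrt{K/m}$ bounds $\tfrac{2L^{2}K}{m\gamma}$ by $\theta\til{D}L\sqrt{K/m}$, while $\tfrac{\gamma\theta^{2}}{2}\til{D}^{2}$ splits into $\theta\til{D}L\sqrt{K/m}+\tfrac{\tau\til{D}^{2}}{2}$. Dividing through by $\theta K$ and recognizing $\tfrac{1}{K}\sum_{k=1}^{K}\delta_{k}=\Ebb[f(x^{k^{*}})-f(x^{*})]$ then yields a bound of the form in~(\ref{eq:app-bound-1}): a deterministic gap term and a curvature term of order $1/K$, together with the noise term of order $1/\sqrt{mK}$. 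I expect the main obstacle to be the heavy-ball recursion $\delta_{k+1}-\beta\delta_{k}$: unlike a plain descent inequality it does not collapse into a single optimality gap, so one must carefully extract $\theta\sum_{k}\delta_{k}$ and verify that the coefficient condition $\gamma\theta^{2}\ge\tau$ and the initialization $x^{1}=x^{0}$ jointly annihilate the leftover $\|x^{k}-x^{k-1}\|^{2}$ terms.
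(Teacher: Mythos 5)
Your proposal is correct and follows essentially the same route as the paper's own proof: instantiate Lemma~\ref{lem:smod-extra-recurs} at $\hat{x}=x^{*}$, telescope over $k$, use $x^{1}=x^{0}$ (so $z^{1}=x^{0}$ and $a_{1}=0$) together with $\gamma\ge\theta^{-2}\tau$ to make the accumulated momentum terms nonpositive, then substitute the stepsize and divide by $\theta K$. One remark: your careful accounting yields $\frac{\delta_{1}}{\theta K}$ as the leading gap term, whereas the theorem statement (and the paper's proof, which silently replaces $\frac{\Delta_{1}}{\theta K}$ by $\frac{\Delta_{1}}{K}$ when dividing through) claims $\frac{f(x^{0})-f(x^{*})}{K}$; this missing factor $\theta^{-1}$ is a minor inaccuracy in the paper itself, not a gap in your argument.
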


\begin{proof} Let us denote $\Delta_k = \Ebb[f(x^k)-f(x^*)]$ for the sake of simplicity. 
Following Lemma~\ref{lem:smod-extra-recurs} (with $\hat{x}=x^*$), we sum up (\ref{eq:smod-extra-recursion})
over $k=1,2,\ldots,K$ and then take expectation over all the randomness, then
we have
\[
\Delta_{K+1} + \theta\sum_{k=1}^{K}\Delta_{k}\le\Delta_{1}+\frac{\gamma\theta^{2}}{2}\|\hat{x}-z^{1}\|^{2}+\frac{\gamma\beta(1-\beta)}{2}\|x^{1}-x^{0}\|^{2}+\frac{2L^{2}K}{m\gamma},
\]
where the inequality holds since $\gamma \geq \theta^{-2} \tau$.
Using  $x^{1}=z^1=x^{0}$, we have 
\begin{align*}
\Ebb\big[f(x^{k^{*}})-f(x^{*})\big] & =\frac{1}{K}\sum_{k=1}^{K} \Delta_{k} \\
 & \le \frac{\Delta_1}{K} + \frac{\gamma\theta}{2K}\|x^{*}-x^{0}\|^{2}+\frac{2L^{2}}{m\theta\gamma}\\
 & \le \frac{\Delta_1}{K} + \frac{\gamma\theta{\til{D}}^2}{2K}+\frac{2L^{2}}{m\theta\gamma}\\
  & \le \frac{\Delta_1}{K} + \frac{\theta^{-1}\tau {\til{D}}^2}{2K} +  \frac{\theta\gamma_0{\til{D}}^2}{2\sqrt{mK}}+\frac{2L^{2}}{\sqrt{mK}\theta\gamma_0}\\
 & =   \frac{\Delta_1}{K} + \frac{\theta^{-1}\tau {\til{D}}^2}{2K}+\frac{2\til DL}{\sqrt{mK}}.
\end{align*}
Therefore, we complete the proof.
\end{proof}

\subsection{Robustness of the extrapolated {\smod}}\label{subsec:robust}
As is mentioned in the main article, one major advantage of {\smod} methods is the robustness to stepsize selection (see \citep{asi2019stochasticdup}). In other words, compared to {\sgd}, {\spl} and {\spp} tend to admit a wider range of stepsizes. We show that the extrapolated model-based method inherit the merits of robustness. 
 For the sake of the asymptotic analysis, stepsize parameter $\gamma_k$ in {\extra} is now indexed by $k$. We present the main convergence property in the following theorem.
\begin{thm}\label{thm:expect-dist}
Suppose that Assumption~\ref{cvx:two-sided} holds, $x^1=x^0$ and the stepsize $\gamma_k$ satisfies $ \gamma_k \geq 2\tau\theta^{-2}$. Then we have
	\[ \mathbb{E} [\| x^{\ast} - x^{K + 1} \|^2] \leq \| {x}^* - x^1
   \|^2  + 2{\theta^{-2}}
   \sum_{k = 1}^K \gamma_k^{-2}\mathbb{E} [\| f' (x^{\ast}, B_k) \|^2].\]
\end{thm}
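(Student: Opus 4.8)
The plan is to establish a one-step recursion for the distance to the optimum that is driven solely by the stochastic subgradient $f'(x^*,B_k)$, and then telescope it. The engine is the three-point inequality (Lemma~\ref{lem:three-point}) applied to $f_{x^k}(\cdot,B_k)$, which is convex under Assumption~\ref{cvx:two-sided}. With $\eta=0$, $\kappa=\gamma_k$, base point $y^k$ and comparison point $x^*$ it gives
\[
\tfrac{\gamma_k}{2}\|x^*-x^{k+1}\|^2 \le \tfrac{\gamma_k}{2}\|x^*-y^k\|^2 - \tfrac{\gamma_k}{2}\|x^{k+1}-y^k\|^2 - \big[f_{x^k}(x^{k+1},B_k)-f_{x^k}(x^*,B_k)\big].
\]
First I would convert the model gap into a genuine subgradient term at $x^*$: the two-sided bound \eqref{eq:cvx-two-sided} yields $f_{x^k}(x^{k+1},B_k)\ge f(x^{k+1},B_k)-\tfrac{\tau}{2}\|x^k-x^{k+1}\|^2$ and $f_{x^k}(x^*,B_k)\le f(x^*,B_k)$, while convexity of $f(\cdot,B_k)$ gives $f(x^{k+1},B_k)-f(x^*,B_k)\ge \inprod{f'(x^*,B_k)}{x^{k+1}-x^*}$. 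This replaces the model gap by $\inprod{f'(x^*,B_k)}{x^{k+1}-x^*}$ at the cost of a $\tfrac{\tau}{2}\|x^k-x^{k+1}\|^2$ error, leaving a recursion in distances and a single inner product. Note the crucial difference from Lemma~\ref{lem:smod-extra-recurs}: we never invoke the Lipschitz bound $L$ to control the model error; we keep the subgradient at the optimum explicit, which is exactly what yields a data-dependent robustness estimate.

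The main difficulty is the extrapolation, since $\|x^*-y^k\|^2$ carries the heavy-ball cross term and a naive distance potential fails to telescope for large $\beta$. I would resolve this through the variable $z^k$ of \eqref{eq:zk}: writing $G_k=\gamma_k(y^k-x^{k+1})\in\partial f_{x^k}(x^{k+1},B_k)+N_{\Xcal}(x^{k+1})$ and $\theta=1-\beta$, the extrapolated prox step collapses to the ordinary update $z^{k+1}=z^k-(\gamma_k\theta)^{-1}G_k$, a plain proximal step with effective stepsize $(\gamma_k\theta)^{-1}$. Equivalently, working in $x$, I would telescope the weighted quantity $w_k=\tfrac12\big(\|x^*-x^k\|^2-\beta\|x^*-x^{k-1}\|^2\big)$, whose characteristic roots are exactly $1$ and $\beta$, and dispose of the residual $\|x^k-x^{k-1}\|^2$ contributions using the negative term $-\tfrac{\gamma_k(1-\beta)-\tau}{2}\|x^{k+1}-x^k\|^2$ furnished by the momentum simplification \eqref{eq:smod-extra-07}. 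The effective stepsize $\gamma_k\theta^2$ is what makes the hypothesis $\gamma_k\ge 2\tau\theta^{-2}$ (that is, $\gamma_k\theta^2\ge 2\tau$) the natural analogue of the condition $\gamma\ge 2\tau$ in the momentum-free ($\beta=0$) case.

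Next I would split $\inprod{f'(x^*,B_k)}{x^{k+1}-x^*}=\inprod{f'(x^*,B_k)}{x^{k+1}-x^k}+\inprod{f'(x^*,B_k)}{x^k-x^*}$. The step component is paired, via Young's inequality, against the available negative quadratic (through the $(\gamma_k\theta)^{-2}\|G_k\|^2$ term of the $z$-step, whose magnitude is of order $\|x^{k+1}-x^k\|^2$); tuning the split so the quadratic is absorbed and using $\gamma_k\theta^2\ge 2\tau$ leaves a coefficient on $\|f'(x^*,B_k)\|^2$ of at most $2\theta^{-2}\gamma_k^{-2}$. For the drift component along $x^k-x^*$ I take the conditional expectation $\Ebb_k[\cdot]$: since $x^k$ is measurable with respect to $B_1,\dots,B_{k-1}$ and $\Ebb_k[f'(x^*,B_k)]$ is a subgradient of $f$ at the global minimizer $x^*$, optimality of $x^*$ over $\Xcal$ forces $\Ebb_k\inprod{f'(x^*,B_k)}{x^k-x^*}\ge 0$, so this term is nonpositive and is discarded. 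Telescoping over $k=1,\dots,K$, taking full expectation, and using $x^1=x^0$ (hence $z^1=x^1$ and $x^1-x^0=0$, which annihilates every boundary momentum term) delivers the stated bound.

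The step I expect to be most delicate is the momentum bookkeeping: keeping the cross term $\inprod{x^{k+1}-x^k}{x^k-x^{k-1}}$ handled precisely enough that, after completing the square (equivalently, after passing to $z^k$), the $\|x^k-x^{k-1}\|^2$ contributions telescope rather than accumulate. A careless application of Cauchy--Schwarz at this point forces an upper bound on $\beta$, whereas the theorem must hold for all $\beta\in[0,1)$; the correct accounting is precisely what upgrades the raw requirement $\gamma_k\theta>\tau$ into the stated $\gamma_k\ge 2\tau\theta^{-2}$ and produces the $\theta^{-2}$ factor multiplying the gradient sum. A secondary point requiring care is the optimality argument for the drift term, which relies on the expected sampled subgradient being the subgradient that certifies stationarity of $x^*$.
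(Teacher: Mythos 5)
Your plan reproduces the paper's proof of Theorem~\ref{thm:expect-dist} in every structural respect: the three-point inequality of Lemma~\ref{lem:three-point} for the prox step, Assumption~\ref{cvx:two-sided} to trade the model gap for $f(\cdot,B_k)$ at a cost $\tfrac{\tau}{2}\|x^{k+1}-x^k\|^2$, linearization of $f(\cdot,B_k)$ at the optimum so that only $f'(x^*,B_k)$ enters, the split of $\inprod{f'(x^*,B_k)}{x^{k+1}-x^*}$ into a Young-bounded step part and a drift part killed by conditional expectation and optimality (your sign here is the correct one; the paper's ``$\le 0$'' is a typo), and the telescoping from $x^1=x^0$. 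The genuine gap sits exactly at the step you flag as most delicate, and neither of the two mechanisms you offer closes it for all $\beta\in[0,1)$. For the bare potential $w_k=\tfrac12\big(\|x^*-x^k\|^2-\beta\|x^*-x^{k-1}\|^2\big)$: because the comparison point $x^*$ is \emph{fixed}, expanding $\|x^*-y^k\|^2$ costs a residual $\beta(1+\beta)\|x^k-x^{k-1}\|^2$ on top of the $\beta(1-\beta)\|x^k-x^{k-1}\|^2$ residual from \eqref{eq:smod-extra-07}, i.e.\ an incoming weight $\gamma_k\beta\,\|x^k-x^{k-1}\|^2$ per step, against an outgoing weight of only $\tfrac{\gamma_k(1-\beta-\eta)-\tau}{2}\|x^{k+1}-x^k\|^2$; balancing these forces $3\beta+\eta+\tau/\gamma_k\le 1$, hence $\beta<1/3$. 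For the $z$-route: the update $z^{k+1}=z^k-(\gamma_k\theta)^{-1}G_k$ is \emph{not} a plain proximal step in $z$, because $G_k$ is anchored at $x^{k+1}$, not at any point of the $z$-trajectory; expanding $\|x^*-z^{k+1}\|^2$ therefore leaves the cross term $\inprod{G_k}{x^k-x^{k-1}}$ and a $\|G_k\|^2$ term with coefficient $(1-2\theta)/(\gamma_k^2\theta^2)$, which is \emph{positive} once $\beta>1/2$ (the negative quadratic you invoke does not exist there), and estimating the cross term by Cauchy--Schwarz reproduces the same $\beta<1/3$ barrier.

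The idea your proposal is missing is the paper's choice of a \emph{moving} comparison point: the three-point inequality is applied not at $x^*$ but at $x=(1-\theta)x^k+\theta x^*$, for which the identities $x-y^k=\theta(x^*-z^k)$ and $x-x^{k+1}=\theta(x^*-z^{k+1})$ hold \emph{exactly}. The distance terms are then already $z$-distances with no expansion residuals; the only momentum residual left is $\tfrac{\gamma_k\beta(1-\beta)}{2}\|x^k-x^{k-1}\|^2$ coming from $\|x^{k+1}-y^k\|^2$ via \eqref{eq:smod-extra-07}, and with $\eta=\theta^2/2$ and $\gamma_k\ge 2\tau\theta^{-2}$ it is absorbed by the negative term for every $\beta\in[0,1)$ (with equality when $\gamma_k=2\tau\theta^{-2}$); this choice of $\eta$ is also exactly where the coefficient $2\theta^{-2}\gamma_k^{-2}$ in the statement comes from. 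Be aware, however, that the trick has a price the paper itself glosses over: with the moving comparison point, convexity delivers the subgradient at the mixture point $(1-\theta)x^k+\theta x^*$, and the paper's phrase ``taking $x=x^*$'' silently replaces $f'\big((1-\theta)x^k+\theta x^*,B_k\big)$ by $f'(x^*,B_k)$ while still telescoping a potential with a fixed anchor. Your insistence on keeping the subgradient at $x^*$ is the faithful reading of the statement, but it is precisely what creates the $\beta<1/3$ obstruction; reconciling the two requirements (subgradient at $x^*$ \emph{and} a potential that telescopes for all $\beta$) is the real content of the proof, and neither your proposal nor, strictly speaking, the paper's own argument supplies it.
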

\begin{proof}
First, Assumption~\ref{cvx:two-sided} implies that
\begin{align*}
  f_{x^k} (x^{k + 1}, B_k) & \geq f (x^{k + 1}, B_k) - \frac{\tau}{2} \|
  x^{k + 1} - x^k \|^2,\\
  - f_{x^k} (x, B_k) & \geq - f (x, B_k).
\end{align*}
Summing up the above two relations gives
\[ f_{x^k} (x^{k + 1}, B_k) - f_{x^k} (x, B_k) \geq f (x^{k + 1}, B_k) - f (x,
   B_k) - \frac{\tau}{2} \| x^{k + 1} - x^k \|^2 . \]
In view of (\ref{eq:smod-extra-01}), we have
\begin{align*}
  f (x^{k + 1}, B_k) - f (x, B_k) & \leq f_{x^k} (x^{k + 1}, B_k) - f_{x^k}
  (x, B_k) + \frac{\tau}{2} \| x^{k + 1} - x^k \|^2\\
  & \leq \frac{\gamma_k}{2} \| x - y^k \|^2 - \frac{\gamma_k}{2} \| x - x^{k +
  1} \|^2 - \frac{\gamma_k}{2} \| y^k - x^{k + 1} \|^2 + \frac{\tau}{2} \| x^{k
  + 1} - x^k \|^2,
\end{align*}
which implies that
\begin{eqnarray*}
  \frac{\gamma_k}{2} \| x - x^{k + 1} \|^2 \leq \frac{\gamma_k}{2} \| x - y^k
  \|^2 + \frac{\tau}{2} \| x^{k + 1} - x^k \|^2 - \frac{\gamma_k}{2} \| y^k -
  x^{k + 1} \|^2 - [f (x^{k + 1}, B_k) - f (x, B_k)].
\end{eqnarray*}
By the convexity of $f (\cdummy, B_k)$, we have, for any $\eta > 0$ that
\begin{align*}
  & \quad f (x^{k + 1}, B_k) - f (x, B_k)\\
  & \geq \langle f' (x, B_k), x^{k + 1} - x \rangle\\
  & = \langle f' (x, B_k), x^k - x \rangle + \langle f' (x, B_k), x^{k + 1}
  - x^k \rangle\\
  & \geq \langle f' (x, B_k), x^k - x \rangle - \| f' (x, B_k) \| \| x^{k +
  1} - x^k \|\\
  & \geq \langle f' (x, B_k), x^k - x \rangle - \frac{1}{2 \eta \gamma_k} \|
  f' (x, B_k) \|^2 - \frac{\eta \gamma_k}{2} \| x^{k + 1} - x^k \|^2.
\end{align*}
Recalling the identities $ x - y^k  =\theta (\hat{x} - z^k)$ and $x - x^{k + 1} =\theta (\hat{x} - z^{k + 1})$, we have
\begin{align*}
  &  \quad \frac{\gamma_k \theta^2}{2} \| \hat{x} - z^{k + 1} \|^2\\
  & = \frac{\gamma_k}{2} \| x - x^{k + 1} \|^2\\
  & \leq \frac{\gamma_k}{2} \| x - y^k \|^2 + \frac{\tau}{2} \| x^{k + 1} -
  x^k \|^2 - \frac{\gamma_k}{2} \| y^k - x^{k + 1} \|^2 - [f (x^{k + 1}, B_k) -
  f (x, B_k)]\\
  & \leq \frac{\gamma_k}{2} \| x - y^k \|^2 + \frac{\tau}{2} \| x^{k + 1} -
  x^k \|^2 - \frac{\gamma_k}{2} \| y^k - x^{k + 1} \|^2 + \frac{\eta \gamma_k}{2}
  \| x^{k + 1} - x^k \|^2 \\
  &  \quad ~ - \langle f' (x, B_k), x^k - x \rangle + \frac{1}{2 \eta \gamma_k} \| f' (x, B_k) \|^2\\
  & = \frac{\gamma_k \theta^2}{2} \| \hat{x} - z^k \|^2 + \frac{\tau + \eta
  \gamma_k}{2} \| x^{k + 1} - x^k \|^2 - \frac{\gamma_k}{2} \| y^k - x^{k + 1}
  \|^2\\
  &  \quad ~ - \langle f' (x, B_k), x^k - x \rangle + \frac{1}{2 \eta \gamma_k} \| f' (x, B_k) \|^2.
\end{align*}
Moreover, using an argument of \eqref{eq:smod-extra-07}, we obtain
\begin{align*}
  & \quad \frac{\gamma_k}{2} \| y^k - x^{k + 1} \|^2 + \frac{\tau + \eta
  \gamma_k}{2} \| x^{k + 1} - x^k \|^2\nonumber\\
  & = - \frac{\gamma_k}{2} \| y^k - x^{k + 1} \|^2 + \frac{\tau + \eta
  \gamma_k}{2} \| x^{k + 1} - x^k \|^2\nonumber\\
  & \leq \frac{\gamma_k \beta (1 - \beta)}{2} \| x^k - x^{k - 1} \|^2 -
  \frac{\gamma_k (1 - \beta - \eta) - \tau}{2} \| x^{k + 1} - x^k \|^2.
\end{align*}
Combining the above two results and  taking expectation $\Ebb_k[\cdot]$, we have that
\begin{align*}
  & \quad \frac{\gamma_k \theta^2}{2} \mathbb{E}_k [\| \hat{x} - z^{k + 1} \|^2]\\
  & \leq \frac{\gamma_k \theta^2}{2} \| \hat{x} - z^k \|^2 -\mathbb{E}_k
  [\langle f' (x, B_k), x^k - x \rangle] + \frac{1}{2 \eta \gamma_k}
  \mathbb{E}_k [\| f' (x, B_k) \|^2]\\
  &  \quad ~+ \frac{\tau + \eta \gamma_k}{2} \mathbb{E}_k [\| x^{k + 1} - x^k \|^2]
  - \frac{\gamma_k}{2} \mathbb{E}_k [\| y^k - x^{k + 1} \|^2]\\
  & \leq \frac{\gamma_k \theta^2}{2} \| \hat{x} - z^k \|^2 -\mathbb{E}_k
  [\langle f' (x, B_k), x^k - x \rangle] + \frac{1}{2 \eta \gamma_k} \| f' (x,
  B_k) \|^2 + \frac{\gamma_k \beta (1 - \beta)}{2} \| x^k - x^{k - 1} \|^2\\
  &  \quad ~- \frac{\gamma_k (1 - \beta - \eta) - \tau}{2} \mathbb{E}_k [\| x^{k +
  1} - x^k \|^2].
\end{align*}
Dividing both sides of the above relation by ${\gamma_k \theta ^ 2}/2$ and taking take $x = x^{\ast}$ gives
\begin{align*}
  & \quad \mathbb{E}_k [\| \hat{x} - z^{k + 1} \|^2]\\
  & \leq \| \hat{x} - z^k \|^2 - \frac{2}{\gamma_k \theta^2} \mathbb{E}_k
  [\langle f' (x, B_k), x^k - x^* \rangle] + \frac{1}{\eta \gamma_k^2 \theta^2}
  \mathbb{E}_k [\| f' (x^*, B_k) \|^2] \\
  &  \quad ~ + \frac{\beta (1 - \beta)}{\theta^2} \|
  x^k - x^{k - 1} \|^2- \frac{(1 - \beta - \eta) - \tau / \gamma_k}{\theta^2} \mathbb{E}_k [\|
  x^{k + 1} - x^k \|^2] \\
    & \leq \| \hat{x} - z^k \|^2 + \frac{1}{\eta \gamma_k^2 \theta^2}
  \mathbb{E}_k [\| f' (x^{\ast}, B_k) \|^2] + \frac{\beta (1 -
  \beta)}{\theta^2} \| x^k - x^{k - 1} \|^2\\
  &  \quad ~- \frac{(1 - \beta - \eta) - \tau / \gamma_k}{\theta^2} \mathbb{E}_k [\|
  x^{k + 1} - x^k \|^2].
\end{align*}
where the last inequality uses the property $\mathbb{E}_k [\langle f' (x^{\ast}, B_k),
x^k - x^{\ast} \rangle] = \langle f' (x^{\ast}), x^k - x^{\ast} \rangle \leq
0$, which is derived from optimality condition.

Last we take $\eta=\theta^2/2$,  $\gamma_k \geq \frac{\tau}{(1 - \beta)^2 - \eta}$ such that $\beta (1 - \beta) \leq (1 - \beta - \eta) - \tau / \gamma_k $ and sum over $k = 1, \ldots, K$ to obtain
\[
  \mathbb{E} [\| \hat{x} - z^{K + 1} \|^2] \leq \| \hat{x} - z^1 \|^2 +
  \frac{\beta (1 - \beta)}{\theta^2} \| x^1 - x^0 \|^2 + \frac{2}{
   \theta^4} \sum_{k = 1}^K \gamma_k^{-2}\mathbb{E} [\| f' (x^{\ast}, B_k) \|^2].
   \]
Plugging  $\|\hat{x} - z^{K + 1} \|^2 = \frac{1}{\theta^2} \|
x^{\ast} - x^{K + 1} \|^2$ in the above inequality and then multiplying both sides by $\theta^2$, we obtain the desired result.
\end{proof}
\begin{rem}
Let $\mathcal{X}^*$ be the set of optimal solutions and assume that $\sup_{x\in \mathcal{X}^*} \Ebb_k [\|f'(x^*,B_k)\|^2]< \infty$. Using an argument of Cor~3.2~\cite{asi2019stochasticdup}, we can show that when $\sum_{k=1}^\infty\gamma_k^{-2}<\infty$, then $\sup_k \textrm{dist}(x^k,X^*)<\infty$ with probability one. This completes our proof of the boundedness of the iterates.

It is interesting to compare {\extra} and  {\sgd} in terms of the robustness to the stepsize policy. Consider that {\sgd} takes the form $x^{k+1}=\argmin_x \langle f'(x^k,B_k), x\rangle +\frac{\gamma_k}{2}\|x-x^k\|^2.$
Using the argument of \citep{nemirovski2009dup}, it is easy to show that {\sgd} exhibits the bound
	\[ \mathbb{E} [\| x^{\ast} - x^{K + 1} \|^2] \leq  \| {x}^\ast - x^1
   \|^2 + 
   \sum_{k = 1}^K \gamma_k^{-2}\mathbb{E} [\| f' (x^{k}, B_k) \|^2],\]
which explicitly depends on  the subgradients of iterates $\{x^k\}$. When $\|f'(x^k,B_k)\|$ is large, (e.g. $f$ is a high order polynomial or an exponential function)
we need sufficiently large $\{\gamma_k\}$ (i.e. small stepsize 
$1/\gamma_k$) to ensure the boundedness of iterates.
However, in contrast to {\sgd}, {\extra} has a bound only depending on the subgradient over the optimal solutions.
For many problems, (e.g. interpolation problems), $\|f'(x^*,\xi)\|$ can be substantially smaller than $\sup_{x}\|f'(x,\xi)\|$.

 We also note that the best bound for SMOD is when  $\theta=1$ (i.e. $\beta=0$). It appears that adding momentum encourages more exploration of the parameter space, however, at the cost of potentially departing from the original solution path.	
 \end{rem}
\subsection{Improved convergence using Nesterov acceleration\label{subsec:nes}}

It is known that the heavy-ball type stochastic gradient does not give an optimal rate of convergence. Next we show that our proposed stability analysis can be combined with Nesterov's acceleration \citep{lan2012optimaldup}, yielding  an accelerated {\smod} method which achieves the best complexity for convex stochastic optimization.

\begin{algorithm}[H]
\caption{Stochastic Model-based Method with Minibatching and Nesterov's Acceleration \label{alg:smod-cvx-nes}}

\begin{algorithmic}
 \STATE {\bfseries Input:} $x^{0}=z^{0}$;
\FOR{$k=0$ {\bfseries to} $K$}
  \STATE Sample a minibatch $B_{k}=\{\xi_{k,1},\ldots,\xi_{k,m_{k}}\}$  and update $y^{k}$, $z^{k+1}$, $x^{k+1}$
by 
\begin{align*}
y^{k} & =(1-\theta_{k})x^{k}+\theta_{k}z^{k},\\
z^{k+1} & =\argmin_{x\in\Xcal}\,\Big\{ f_{y^{k}}(x,B_{k})+\frac{\gamma_{k}}{2}\|x-z^{k}\|^{2}\Big\},\\
x^{k+1} & =(1-\theta_{k})x^{k}+\theta_{k}z^{k+1}.
\end{align*}
 \ENDFOR
\end{algorithmic}
\end{algorithm}

\begin{lem}
\label{lem:cmodcvx-main-recurs}Let $\Delta_{k}\triangleq f(x^{k})-f({x})$ for some ${x} \in \Xcal$.
For $k=0,1,2,\ldots$ we have 
\begin{equation}
\begin{aligned} & \Ebb_{k}\big[\Delta_{k+1}\big]-(1-\theta_{k})\Delta_{k}\\
\le{} & \frac{2L^{2}\theta_{k}}{m_{k}\gamma_{k}}+\frac{\gamma_{k}\theta_{k}}{2}\|x-z^{k}\|^{2}-\frac{\gamma_{k}\theta_{k}}{2}\Ebb_{k}[\|x-z^{k+1}\|^{2}]\\
 & -\frac{\gamma_{k}\theta_{k}-\tau\theta_{k}^{2}}{2}\Ebb_{k}[\|z^{k}-z^{k+1}\|^{2}].
\end{aligned}
\label{eq:smodcvx-nes-recur}
\end{equation}
\end{lem}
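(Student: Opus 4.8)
The plan is to mimic the standard estimate-sequence argument for Nesterov's method, but carried out on the stochastic model function and with the minibatch bias controlled by the stability estimate of Lemma~\ref{lem:model-uni-stable}. First I would apply the three-point lemma (Lemma~\ref{lem:three-point} with the convex choice $\eta=0$, which is legitimate because $f_{y^{k}}(\cdot,B_{k})$ is convex under Assumption~\ref{cvx:two-sided}) to the subproblem defining $z^{k+1}$, obtaining for every $x\in\Xcal$
\[
f_{y^{k}}(z^{k+1},B_{k})-f_{y^{k}}(x,B_{k})\le \tfrac{\gamma_{k}}{2}\|x-z^{k}\|^{2}-\tfrac{\gamma_{k}}{2}\|x-z^{k+1}\|^{2}-\tfrac{\gamma_{k}}{2}\|z^{k+1}-z^{k}\|^{2}.
\]

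Next I would connect $f(x^{k+1})$ to the model. Using the left inequality of \eqref{eq:cvx-two-sided} at $y=x^{k+1}$ with a fresh sample $\xi$ independent of $x^{k+1}$ gives $f(x^{k+1})\le \Ebb_{\xi}[f_{y^{k}}(x^{k+1},\xi)]+\tfrac{\tau}{2}\|x^{k+1}-y^{k}\|^{2}$. The crucial algebraic identity here is $x^{k+1}-y^{k}=\theta_{k}(z^{k+1}-z^{k})$, so this $\tau$-term equals $\tfrac{\tau\theta_{k}^{2}}{2}\|z^{k+1}-z^{k}\|^{2}$ and will later merge with the $-\tfrac{\gamma_{k}\theta_{k}}{2}\|z^{k+1}-z^{k}\|^{2}$ produced by the three-point lemma to yield the claimed $-\tfrac{\gamma_{k}\theta_{k}-\tau\theta_{k}^{2}}{2}$ coefficient. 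Since each $f_{y^{k}}(\cdot,\xi)$ is convex, so is $\Ebb_{\xi}[f_{y^{k}}(\cdot,\xi)]$; applying convexity across $x^{k+1}=(1-\theta_{k})x^{k}+\theta_{k}z^{k+1}$ together with the upper bound $f_{y^{k}}(x^{k},\xi)\le f(x^{k},\xi)$ (right inequality of \eqref{eq:cvx-two-sided}) splits the model value into a $(1-\theta_{k})f(x^{k})$ piece and a $\theta_{k}\,\Ebb_{\xi}[f_{y^{k}}(z^{k+1},\xi)]$ piece.

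The heart of the argument is to route the minibatch bias onto $z^{k+1}$. I would write $\Ebb_{\xi}[f_{y^{k}}(z^{k+1},\xi)]=f_{y^{k}}(z^{k+1},B_{k})+\big(\Ebb_{\xi}[f_{y^{k}}(z^{k+1},\xi)]-f_{y^{k}}(z^{k+1},B_{k})\big)$; the first term is handled by the three-point bound above (combined with $f_{y^{k}}(x,B_{k})\le f(x,B_{k})$), while the second term is precisely the quantity controlled by stability. Since $z^{k+1}=\prox_{f_{y^{k}}/\gamma_{k}}(z^{k},B_{k})$ and $f_{y^{k}}$ is convex ($\lambda=0$), Lemma~\ref{lem:model-uni-stable} together with Theorem~\ref{thm:stability-gen} gives $\Ebb_{k}\big[\Ebb_{\xi}[f_{y^{k}}(z^{k+1},\xi)]-f_{y^{k}}(z^{k+1},B_{k})\big]\le \vep_{k}=\tfrac{2L^{2}}{m_{k}\gamma_{k}}$, which after multiplication by $\theta_{k}$ becomes the $\tfrac{2L^{2}\theta_{k}}{m_{k}\gamma_{k}}$ term in \eqref{eq:smodcvx-nes-recur}.

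Finally I would assemble the pieces, take the conditional expectation $\Ebb_{k}[\cdot]$ (using $\Ebb_{k}[f(x^{k},B_{k})]=f(x^{k})$ and $\Ebb_{k}[f(x,B_{k})]=f(x)$ by unbiasedness, Assumption~\ref{ass:unbiased}), subtract $f(x)$ from both sides, and recognize $\Delta_{k}=f(x^{k})-f(x)$ to reach \eqref{eq:smodcvx-nes-recur}. The main obstacle is essentially bookkeeping around the bias: one must apply convexity to the \emph{full} model $\Ebb_{\xi}[f_{y^{k}}(\cdot,\xi)]$ \emph{before} invoking stability, so that the bias attaches to the genuine proximal output $z^{k+1}$ (whose stability is quantified by Lemma~\ref{lem:model-uni-stable}) rather than to $x^{k+1}$ (which is not itself a proximal point); and one must track the $\|z^{k+1}-z^{k}\|^{2}$ terms carefully so that the $\tau$-contribution from the one-sided bound lands with exactly the sign and coefficient appearing in the last line of \eqref{eq:smodcvx-nes-recur}.
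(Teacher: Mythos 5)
Your proposal is correct and follows essentially the same route as the paper's proof: the three-point inequality for the convex subproblem defining $z^{k+1}$, the bound $f(x^{k+1})\le f_{y^k}(x^{k+1})+\tfrac{\tau}{2}\|x^{k+1}-y^k\|^2$ with the identity $x^{k+1}-y^k=\theta_k(z^{k+1}-z^k)$, convexity of the full model split across $x^{k+1}=(1-\theta_k)x^k+\theta_k z^{k+1}$, and the stability bound $\Ebb_k\big[f_{y^k}(z^{k+1})-f_{y^k}(z^{k+1},B_k)\big]\le\tfrac{2L^2}{m_k\gamma_k}$ applied at the proximal output $z^{k+1}$. The bookkeeping you emphasize (attaching the bias to $z^{k+1}$ rather than $x^{k+1}$, and merging the $\tau$-term with the three-point remainder) is exactly how the paper assembles the final inequality.
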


\begin{proof}
First, recall that $f_{y}(x)=\Ebb_{\xi}[f_{y}(x,\xi)]$. Assumption
\ref{cvx:two-sided} implies that for any $x,y\in\Xcal$, we have 
\[
f(x)=\Ebb_{\xi}[f(x,\xi)]\le\Ebb_{\xi}\big[f_{y}(x,\xi)+\frac{\tau}{2}\|x-y\|^{2}\big]=f_{y}(x)+\frac{\tau}{2}\|x-y\|^{2}.
\]
Therefore, we deduce that
\begin{align}
f(x^{k+1}) & \le f_{y^{k}}(x^{k+1})+\frac{\tau}{2}\|x^{k+1}-y^{k}\|^{2}\nonumber \\
 & =f_{y^{k}}\big((1-\theta_{k})x^{k}+\theta_{k}z^{k+1}\big)+\frac{\tau\theta_{k}^{2}}{2}\|z^{k+1}-z^{k}\|^{2}\nonumber \\
 & \le(1-\theta_{k})f_{y^{k}}(x^{k})+\theta_{k}f_{y^{k}}(z^{k+1})+\frac{\tau\theta_{k}^{2}}{2}\|z^{k+1}-z^{k}\|^{2}\nonumber \\
 & \le(1-\theta_{k})f(x^{k})+\theta_{k}f_{y^{k}}(z^{k+1})+\frac{\tau\theta_{k}^{2}}{2}\|z^{k+1}-z^{k}\|^{2}\nonumber \\
 & =(1-\theta_{k})f(x^{k})+\theta_{k}f_{y^{k}}(z^{k+1},B_{k})+\frac{\tau\theta_{k}^{2}}{2}\|z^{k+1}-z^{k}\|^{2}\nonumber \\
 & \quad+\theta_{k}\big[f_{y^{k}}(z^{k+1})-f_{y^{k}}(z^{k+1},B_{k})\big]\label{eq:smod-nes-01}
\end{align}
where the equality uses the fact $\theta_{k}(z^{k+1}-z^{k})=x^{k+1}-y^{k}$,
the third inequality uses Assumption \ref{cvx:two-sided} again. Moreover, due
to the optimality of $z^{k+1}$ for the subproblem, for any $x\in \Xcal$,
we have
\begin{align}
\begin{aligned}f_{y^{k}}(z^{k+1},B_{k}) & \le f_{y^{k}}(x,B_{k})+\frac{\gamma_{k}}{2}\|x-z^{k}\|^{2}-\frac{\gamma_{k}}{2}\|x-z^{k+1}\|^{2}-\frac{\gamma_{k}}{2}\|z^{k}-z^{k+1}\|^{2}\\
 & \le f(x,B_{k})+\frac{\gamma_{k}}{2}\|x-z^{k}\|^{2}-\frac{\gamma_{k}}{2}\|x-z^{k+1}\|^{2}-\frac{\gamma_{k}}{2}\|z^{k}-z^{k+1}\|^{2}
\end{aligned}
\label{eq:smodcvx-01}
\end{align}
where the second inequality uses Assumption {\ref{cvx:two-sided}}. Following
(\ref{eq:smodcvx-01}) and (\ref{eq:smod-nes-01}), we obtain 
\begin{align}
f(x^{k+1}) & \le(1-\theta_{k})f(x^{k})+\theta_{k}f(x,B_{k})+\theta_{k}\big[f_{y^{k}}(z^{k+1})-f_{y^{k}}(z^{k+1},B_{k})\big]\nonumber \\
 & \quad+\frac{\gamma_{k}\theta_{k}}{2}\|x-z^{k}\|^{2}-\frac{\gamma_{k}\theta_{k}}{2}\|x-z^{k+1}\|^{2}-\frac{\gamma_{k}\theta_{k}-\tau\theta_{k}^{2}}{2}\|z^{k}-z^{k+1}\|^{2}.\label{eq:smod-nes-06}
\end{align}

On both sides of (\ref{eq:smod-nes-06}), we take expectation over
$B_{k}$ conditioned on $B_{1},B_{2},\ldots,B_{k-1}$. Noting that $\Ebb_{k}[f(x,B_{k})]=f(x)$,
we have that
\begin{equation}
\begin{aligned} & \Ebb_{k}\big[f(x^{k+1})-f(x)\big]-(1-\theta_{k})\big[f(x^{k})-f(x)\big]\\
\le{} & \frac{\gamma_{k}\theta_{k}}{2}\|x-z^{k}\|^{2}-\frac{\gamma_{k}\theta_{k}}{2}\Ebb_{k}[\|x-z^{k+1}\|^{2}]-\frac{\gamma_{k}\theta_{k}-\tau\theta_{k}^{2}}{2}\Ebb_{k}[\|z^{k}-z^{k+1}\|^{2}]\\
 & +\theta_{k}\Ebb_{k}\big[f_{y^{k}}(z^{k+1})-f_{y^{k}}(z^{k+1},B_{k})\big].
\end{aligned}
\label{eq:smodcvx-05}
\end{equation}
Moreover, based on the stability of proximal mapping, we have that
\begin{align}
 & \Ebb_{k}\big[f_{y^{k}}(z^{k+1})-f_{y^{k}}(z^{k+1},B_{k})\big] 
= \Ebb_{k}\big\{\Ebb_{\xi}\big[f_{y^{k}}(z^{k+1},\xi)-f_{y^{k}}(z^{k+1},B_{k})\big]\big\}
\le  \frac{2L^{2}}{m_{k}\gamma_{k}}.\label{eq:smod-nes-02}
\end{align}
Combining the above two results together immediately gives us the
desired result (\ref{eq:smodcvx-nes-recur}). 
\end{proof}
\begin{thm}
In Algorithm~\ref{alg:smod-cvx-nes}, let the sequence $\{\Gamma_{k}\}$,
\begin{equation}
\Gamma_{k}=\begin{cases}
(1-\theta_{k})^{-1}\Gamma_{k-1} & \textrm{if }k>0\\
1 & \textrm{if }k=0
\end{cases}\label{eq:smodcvx-Gamma}
\end{equation}
and assume that $\Gamma_{k}$, $\gamma_{k}$, and $\theta_{k}$ satisfy
\begin{align}
\Gamma_{k}\gamma_{k}\theta_{k} & \ge\Gamma_{k+1}\gamma_{k+1}\theta_{k+1},\label{eq:smod-nes-param-1}\\
\gamma_{k} & \ge\tau\theta_{k},\label{eq:smod-nes-param-2}
\end{align}
then we have 
\begin{equation}
\Gamma_{K}\Ebb[\Delta_{K+1}]\le(1-\theta_{0})\Delta_{0}+\frac{\Gamma_{0}\gamma_{0}\theta_{0}^{2}}{2}\|x-z^{0}\|^{2}+\sum_{k=0}^{K}\frac{2L^{2}\Gamma_{k}\theta_{k}}{m_{k}\gamma_{k}}.\label{eq:smod-nes-bound}
\end{equation}
Moreover, if we take ${x}=x^{*}$ be an optimal solution, and
assume that $m_{k}=m$, $\theta_{k}=\frac{2}{k+2}$, $\gamma_{k}=\frac{\gamma}{k+1}$,
$
\gamma=2\tau+\eta$, $\eta=\frac{2L}{\sqrt{3m}\til D}(K+2)^{\frac{3}{2}}
$
 where  $\tilde{D}\ge\|x^{0}-x^{*}\|$, then
we have 
\begin{equation}\label{eq:smod-nes-rate}
\Ebb\big[f(x^{K+1})-f(x^{*})\big]\le\frac{2\tau\til D^{2}}{(K+1)(K+2)}+\frac{4\sqrt{2}L\til D}{\sqrt{3m(K+1)}}.
\end{equation}
\end{thm}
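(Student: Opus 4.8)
The plan is to obtain the general bound \eqref{eq:smod-nes-bound} by a $\Gamma_k$-weighted telescoping of the per-iteration recursion in Lemma~\ref{lem:cmodcvx-main-recurs}, and then to instantiate the parameters and tune $\eta$ so that the deterministic and stochastic error terms balance. First I would multiply \eqref{eq:smodcvx-nes-recur} by the positive weight $\Gamma_k$. By the recursion \eqref{eq:smodcvx-Gamma} one has $\Gamma_k(1-\theta_k)=\Gamma_{k-1}$ for $k\ge1$, so the left-hand side turns into the telescoping difference $\Gamma_k\Ebb_k[\Delta_{k+1}]-\Gamma_{k-1}\Delta_k$. Taking total expectations and summing over $k=0,\dots,K$, the function-gap terms collapse to $\Gamma_K\Ebb[\Delta_{K+1}]-(1-\theta_0)\Delta_0$, with the $k=0$ index contributing $\Ebb[\Delta_1]-(1-\theta_0)\Delta_0$ since $\Gamma_0=1$. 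The curvature term $-\tfrac{\Gamma_k(\gamma_k\theta_k-\tau\theta_k^2)}{2}\Ebb_k[\|z^k-z^{k+1}\|^2]$ is nonpositive because \eqref{eq:smod-nes-param-2} gives $\gamma_k\ge\tau\theta_k$, so it is simply dropped.

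The one genuinely non-trivial manipulation is the weighted distance sum $\sum_{k=0}^K\tfrac{c_k}{2}\big(\Ebb\|x-z^k\|^2-\Ebb\|x-z^{k+1}\|^2\big)$ with $c_k=\Gamma_k\gamma_k\theta_k$. Summation by parts rewrites it as $\tfrac{c_0}{2}\|x-z^0\|^2+\sum_{k=1}^K\tfrac{c_k-c_{k-1}}{2}\Ebb\|x-z^k\|^2-\tfrac{c_K}{2}\Ebb\|x-z^{K+1}\|^2$; the monotonicity assumption \eqref{eq:smod-nes-param-1} (i.e. $c_k\le c_{k-1}$) makes both the middle sum and the final term nonpositive, so the whole quantity is at most $\tfrac{\Gamma_0\gamma_0\theta_0}{2}\|x-z^0\|^2$. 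Assembling the pieces and transferring $(1-\theta_0)\Delta_0$ to the right gives \eqref{eq:smod-nes-bound}.

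For the explicit rate I would first solve \eqref{eq:smodcvx-Gamma} with $\theta_k=\tfrac{2}{k+2}$, obtaining $\Gamma_k=\prod_{j=1}^k\tfrac{j+2}{j}=\tfrac{(k+1)(k+2)}{2}$. A short computation then gives $\Gamma_k\gamma_k\theta_k\equiv\gamma$, so \eqref{eq:smod-nes-param-1} holds with equality, while $\gamma=2\tau+\eta\ge2\tau\ge\tau\theta_k$ confirms \eqref{eq:smod-nes-param-2}. Here $\theta_0=1$ kills the $(1-\theta_0)\Delta_0$ term, and $z^0=x^0$ with $\|x^*-z^0\|\le\til D$. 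The stochastic sum becomes $\sum_{k=0}^K\tfrac{2L^2\Gamma_k\theta_k}{m\gamma_k}=\tfrac{2L^2}{m\gamma}\sum_{k=0}^K(k+1)^2\le\tfrac{2L^2(K+1)(K+2)^2}{3m\gamma}$, using $\sum_{k=0}^K(k+1)^2=\tfrac{(K+1)(K+2)(2K+3)}{6}$ and $2K+3\le2(K+2)$. Dividing \eqref{eq:smod-nes-bound} by $\Gamma_K=\tfrac{(K+1)(K+2)}{2}$ and splitting $\gamma=2\tau+\eta$ produces the deterministic term $\tfrac{2\tau\til D^2}{(K+1)(K+2)}$ together with a bias term $\tfrac{\eta\til D^2}{(K+1)(K+2)}$ and a variance term $\tfrac{4L^2(K+2)}{3m\gamma}\le\tfrac{4L^2(K+2)}{3m\eta}$.

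The crux, and the step I expect to be the main obstacle, is choosing $\eta$ to optimally balance the last two terms: the bias grows like $\eta$ whereas the variance decays like $1/\eta$, and equating them forces $\eta\propto(K+2)^{3/2}/\sqrt m$, which is exactly the prescribed $\eta=\tfrac{2L}{\sqrt{3m}\til D}(K+2)^{3/2}$. Substituting this choice and using $K+2\le2(K+1)$ to express both $\eta$-dependent terms over $\sqrt{3m(K+1)}$ collapses them to a single $\Ocal(1/\sqrt{mK})$ quantity bounded by $\tfrac{4\sqrt2\,L\til D}{\sqrt{3m(K+1)}}$ (the arithmetic leaves a constant $2\sqrt2+2\le4\sqrt2$ of slack), which yields \eqref{eq:smod-nes-rate}.
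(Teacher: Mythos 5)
Your proposal is correct and follows essentially the same route as the paper's own proof: multiply the recursion of Lemma~\ref{lem:cmodcvx-main-recurs} by $\Gamma_k$, use $\Gamma_k(1-\theta_k)=\Gamma_{k-1}$ to telescope the gap terms, drop the nonpositive curvature term via \eqref{eq:smod-nes-param-2}, bound the weighted distance differences by $\tfrac{\Gamma_0\gamma_0\theta_0}{2}\|x-z^0\|^2$ via the monotonicity \eqref{eq:smod-nes-param-1}, and then instantiate $\theta_k=\tfrac{2}{k+2}$, $\Gamma_k=\tfrac{(k+1)(k+2)}{2}$, and the prescribed $\eta$ to balance bias against variance. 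Your only deviations are cosmetic — an exact sum-of-squares evaluation instead of the paper's integral bound, and a slightly different distribution of the $K+2\le 2(K+1)$ slack yielding the constant $2\sqrt{2}+2\le 4\sqrt{2}$ — neither of which changes the argument or the final bound.
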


\begin{proof}
First of all, it can be easily checked that conditions (\ref{eq:smod-nes-param-1})
and (\ref{eq:smod-nes-param-2}) are satisfied by the proposed setting
of $\theta_{k}$ and $\gamma_{k}$. Next, multiplying both sides of
(\ref{eq:smodcvx-nes-recur}) by $\Gamma_{k}$, and then dropping
out the negative term $-\frac{\gamma_{k}\theta_{k}-\tau\theta_{k}^{2}}{2}\Gamma_{k}\Ebb_{k}[\|z^{k}-z^{k+1}\|^{2}]$
in the result, we have
\[
\begin{aligned} & \Gamma_{k}\Ebb_{k}\big[\Delta_{k+1}\big]-\Gamma_{k-1}\Delta_{k}\\
\le{} & \frac{2L^{2}\Gamma_{k}\theta_{k}}{m_{k}\gamma_{k}}+\frac{\Gamma_{k}\gamma_{k}\theta_{k}}{2}\|x-z^{k}\|^{2}-\frac{\Gamma_{k}\gamma_{k}\theta_{k}}{2}\Ebb_{k}[\|x-z^{k+1}\|^{2}]
\end{aligned}
\]
Summing up the above result over $k=0,1,2,...,K$ and taking expectation
over all the randomness, we obtain the desired result~(\ref{eq:smod-nes-bound}). 

Moreover, note that $\theta_{0}=1$, $\Gamma_{k}=\frac{(k+2)(k+1)}{2}$,
hence we have 
\begin{equation}
\sum_{k=0}^{K}\frac{2L^{2}\Gamma_{k}\theta_{k}}{m_{k}\gamma_{k}}=\sum_{k=0}^{K}\frac{2L^{2}(k+1)^{2}}{m\gamma}\le\frac{2L^{2}}{m\gamma}\int_{1}^{K+2}s^{2}\textrm{d}s\le\frac{2L^{2}}{3m\gamma}(K+2)^{3}.\label{eq:smod-nes-07}
\end{equation}
Placing ${x}=x^{*}$, then we have
\begin{align*}
\Ebb\big[f(x^{K+1})-f(x^{*})\big] & \le\Gamma_{K}^{-1}\Big\{(1-\theta_{0})\Delta_{0}+\frac{\Gamma_{0}\gamma_{0}\theta_{0}^{2}}{2}\|x-z^{0}\|^{2}+\sum_{k=0}^{K}\frac{2L^{2}\Gamma_{k}\theta_{k}}{m_{k}\gamma_{k}}\Big\}\\
 & \le\Gamma_{K}^{-1}\Big\{\frac{\gamma}{2}\tilde{D}^{2}+\frac{2L^{2}}{3m\gamma}(K+2)^{3}\Big\}\\
 & =\frac{1}{K+1}\Big\{\frac{\gamma\tilde{D}^{2}}{K+2}+\frac{4L^{2}(K+2)^{2}}{3m\gamma}\Big\}\\
 & \le\frac{2\tau\til D^{2}}{(K+1)(K+2)}+\frac{1}{K+1}\Big\{\frac{\eta\tilde{D}^{2}}{K+2}+\frac{4L^{2}(K+2)^{2}}{3m\eta}\Big\}\\
 & =\frac{2\tau\til D^{2}}{(K+1)(K+2)}+\frac{4L\til D}{K+1}\sqrt{\frac{K+2}{3m}}\\
 & \leq \frac{2\tau\til D^{2}}{(K+1)(K+2)}+\frac{4\sqrt{2}L\til D}{\sqrt{3m(K+1)}}.
\end{align*}
where the second inequality uses (\ref{eq:smod-nes-07}), and $\tilde{D}\ge\|x^{0}-x^{*}\|$, the third inequality uses the fact $\gamma=2\tau+\eta$ and $\frac{1}{\gamma}\le\frac{1}{\eta}$,
and the last inequality uses $K+2\le2(K+1)$ for $K\ge1$. This completes the proof.
\end{proof}

\section{Solving the subproblems\label{app:subproblem}}
In this section, we describe how to solve the subproblems arising from {\sgd}, {\spl} and {\spp}. For the sake of simplicity, we suppress all the iteration indices and express the {\smod} subproblems as follows
\begin{mini}
{x\in \Rbb^d}{\frac{1}{m} \sum_{i=1}^m \varphi_{z}(x,\xi_i)+\frac{\gamma}{2}\|x-y\|^2.}{}{}	
\end{mini}

\subsection{Phase retrieval}
We first state the expressions for the sequential updates (i.e. $m=1$). More technical derivations can be referred from \citeapp{davis2019stochasticweaklydup}. 
Given the current iterate $x$, we denote $x^+$ to be the output of {\smod} update. 

Let $\xi=(a,b)$ for $a\in\Rbb^d$ and $b\in\Rbb$, we have
\begin{flalign*}
x_{\text{sgd}}^{+} & =\argmin_x \ \left\{ \langle v, x - z \rangle + \frac{\gamma}{2} \| x - y \|^2 \right\},\\	
 x_{\text{spl}}^{+} & =\argmin_x\ \left\{ \left| \langle a, z \rangle^2 + 2
   \langle a, z \rangle \langle a,  x - z \rangle - b \right| + \frac{\gamma}{2} \| x -
   y \|^2 \right\},\\
   x_{\text{spp}}^{+}& =\argmin_x \  \left\{ | \langle a, x \rangle^2 - b|
   + \frac{\gamma}{2} \| x - y \|^2 \right\}.
\end{flalign*}
The above three subproblems admit closed-form solutions
\begin{flalign*}
x_{\text{sgd}}^{+} & = y - \gamma^{-1}v,\\
x_{\text{spl}}^{+} & = y + \tmop{Proj}_{[- 1, 1]} \left( -
   \tfrac{\delta}{\| \zeta \|^2} \right) \zeta,\\
x_{\text{spp}}^{+} & \in \left\{ y - \tfrac{2 \langle a, y \rangle a}{2 \| a \|^2 \pm \gamma}, y - 
   \tfrac{\langle a, y \rangle \pm \sqrt{b}}{\| a \|^2} a \right\},
\end{flalign*}
where 
\begin{align*}
	v &\in \partial_x (| \langle a, z \rangle^2 - b |)  \\
	  & =  2 \langle a, z \rangle a
\cdummy \left\{ \begin{array}{ll}
  \sign (\langle a, z \rangle^2 - b), & \text{if } \langle a, z
  \rangle^2 - b \neq 0\\
  {}[- 1, 1], & \text{o.w.}
\end{array} \right. \\	
	\delta & ={\gamma}^{-1}(\langle a, z \rangle^2 + 2 \langle a, z \rangle
\langle a,  x - z \rangle - b), \\
\zeta & = 2 \gamma^{-1}\langle a, z \rangle a,
\end{align*}
and $\tmop{Proj}_{[- 1, 1]} (\cdummy)$ denotes the orthogonal projection operator onto $[-1, 1]$.

When $m>1$, we let $y=z$ and
\begin{flalign*}
x_{\text{sgd}}^{+} = &\argmin_x \ \left\{ \frac{1}{m} \sum_{i = 1}^m
   \langle v_i, x - z \rangle +
   \frac{\gamma}{2} \| x - z \|^2 \right\}\\
x_{\text{spl}}^{+} = &\argmin_x \ \left\{ \frac{1}{m} \sum_{i = 1}^m
   \left| \langle a_i,  z \rangle^2 - b_i + 2 \langle a_i,  z \rangle \langle a_i, x - z \rangle \right| +
   \frac{\gamma}{2} \| x - z \|^2 \right\}\\
x_{\text{spp}}^{+} = & \argmin_x \ \left\{ \frac{1}{m} \sum_{i = 1}^m
   | \langle a_i,  x \rangle^2 - b_i | + \frac{\gamma}{2} \| x - z \|^2 \right\},
\end{flalign*}
where $v_i \in \partial_x (| \langle a_i, z \rangle^2 - b_i |)$. Minibatch subproblems can be reformulated as standard convex programs.
\begin{flalign}
 x_{\text{sgd}}^{+} & = z - \frac{1}{m \gamma}\sum_{i = 1}^m v_i \label{eq:batch-update-sgd}\\ \nonumber \\
  \big( x_{\text{spl}}^{+}, * \big)  & = \argmin_{(x, t)}\ \left\{
  \frac{1}{m} \sum_{i = 1}^m t_i + \frac{\gamma}{2} \| x - z \|^2 \right\}\nonumber \\
  \text{subject to} \ \ \ &  \langle a_i,  z \rangle^2 - b_i + 2 \langle a_i,  z \rangle \langle a_i, x - z \rangle \geq - t_i \\
  & \langle a_i,  z \rangle^2 - b_i + 2 \langle a_i,  z \rangle \langle a_i, x - z \rangle \leq t_i
  \  i=1,2,\ldots,m. \label{eq:batch-update-spl} \\ \nonumber\\
  \big(x_{\text{spp}}^{+}, * \big)  & = \argmin_{(x, t)}\ \left\{ 
  \frac{1}{m} \sum_{i = 1}^m t_i \right\}\nonumber\\
  \text{subject to}\ \ \  & x\trans \left( \frac{\gamma}{2} I - a_i a_i\trans
  \right) x - \gamma \langle z,  x \rangle + \frac{\gamma}{2} \| z \|^2 + b_i \leq
  t_i\nonumber \\
  & x\trans \left( \frac{\gamma}{2} I + a_i a_i\trans \right) x - \gamma
  \langle z,  x \rangle + \frac{\gamma}{2} \| z \|^2 - b_i \leq t_i, \ i=1,2,\ldots,m \label{eq:batch-update-spp}
\end{flalign}

We make a few comments. First, the update ({\sgd}) (\ref{eq:batch-update-sgd}) admits a simple closed-form solution by directly  averaging the subgradients over the minibatch samples. 
Second, the {\spl} subproblem (\ref{eq:batch-update-spl}) can be further transformed into an $O(m)$-dimensional dual quadratic program, which can be efficiently solved in parallel. (See \citeapp{asi2020minibatchdup}).
Third, the {\spp} subproblem (\ref{eq:batch-update-spp}) is solvable by interior point methods for quadratically constrained quadratic programming (QCQP).

However, despite the fast theoretical convergence, interior point methods are potentially unscalable to problems with a large number of constraints. In our experiments, the commercial solver Gurobi  fails to get an  accurate solution for solving (\ref{eq:batch-update-spp}) when $m>5$.  Therefore, we alternatively utilize the strong convexity of (\ref{eq:batch-update-spp}) and adopt deterministic prox-linear algorithm to obtain an accurate solution (up to \texttt{1e-08} accuracy) by solving several QPs as in (\ref{eq:batch-update-spl}). The theoretical linear convergence of this method is verified in Section~\ref{sec:subproblinconv}.
Finally, similar observations can be made for the experiments of blind deconvolution.

\subsection{Blind deconvolution}
Blind deconvolution aims to separate two unknown signals from their convolution, resulting in  the following non-smooth biconvex problem
\begin{mini}
{x, y \in\Rbb^d}{ \frac{1}{n} \sum_{i = 1}^n \big\vert\langle u_i, x\rangle
  \langle v_i, y\rangle - b_i \big\vert. }{}{}\label{pb:phase}
\end{mini}
For convenience we use $(x;y)$ to denote the vertical concatenation of two column vectors. Given current iterate $w = (w_x; w_y)$, the subproblems are given by
\begin{flalign*}
 w^+_\text{sgd} & = \argmin_{(x; y)}\  \left\{ \langle s, (x - z_x; y - z_y) \rangle + \frac{\gamma}{2} \| x - w_x \|^2 +
   \frac{\gamma}{2} \| y - w_y \|^2 \right\}\\
 w^+_\text{spl} & =  \argmin_{(\Delta_x; \Delta_y)}\ \Big\{ | \langle u, z_x \rangle \langle v,  z_y\rangle + \langle v, z_y \rangle \langle u, \Delta_x \rangle + \langle u, z_x \rangle \langle v, \Delta_y \rangle \\
  &\quad \quad \quad  + \langle v, z_y\rangle \langle u, w_x - z_x \rangle + \langle u, z_x \rangle \langle v, w_y - z_y \rangle - b | + \frac{\gamma}{2} [\| \Delta_x \|^2 + \| \Delta_y \|^2] \Big\} + w\\
 w^+_\text{spp} & = \argmin_{(x; y)}\  \left\{ | \langle u, x \rangle \langle v, y \rangle - b | + \frac{\gamma}{2} \| x - w_x \|^2 + \frac{\gamma}{2} \| y - w_y \|^2 \right\}
\end{flalign*}
and we have
\begin{flalign*}
   w^+_\text{sgd} & = w - \gamma^{-1}s\\
   w^+_\text{spl} & = w + \tmop{Proj}_{[- 1,
   1]} \left( - \tfrac{\delta}{\| \zeta \|^2} \right) \zeta
\end{flalign*}
where 
\begin{align*}
s &\in \partial_{(x; y)} (| \langle u, z_x \rangle \langle v, z_y
   \rangle - b |) \\
  & = (\langle v, z_y \rangle u; \langle u, z_x \rangle
   v) \cdummy \left\{ \begin{array}{ll}
     \tmop{sign} (\langle u, z_x \rangle \langle v, z_y \rangle - b), &
     \text{if } \langle u, z_x \rangle \langle v, z_y \rangle - b \neq
     0\\
     {}[- 1, 1], & \text{o.w.}
   \end{array} \right.\\
\delta & = {\gamma}^{-1} \left[\langle u, z_x \rangle \langle v, z_y \rangle + \langle v, z_y \rangle
\langle u, w_x - z_x \rangle + \langle u, z_x \rangle \langle v, w_y - z_y \rangle - b \right]\\
\zeta & = {\gamma}^{-1} \left(
  \langle v, z_y \rangle u; \langle u, z_x \rangle v \right).
\end{align*}

As for {\spp}, we consider the following two cases.\\

\textbf{Case 1.} If $\langle u, w_x \rangle \langle v, w_y\rangle - b \neq 0$, then
\[	w_x^{+} = w_x - \left\{ \tfrac{\pm \gamma \langle v, w_y \rangle - \| v \|^2
   \langle u, w_x \rangle}{\gamma^2 - \| u \|^2 \| v \|^2} \right\} u, \quad
   w_y^{+} = w_y - \left\{ \tfrac{\pm \gamma \langle u, w_x \rangle - \| u \|^2
   \langle v, w_y \rangle}{\gamma^2 -  \| u \|^2 \| v \|^2} \right\} v.
\]

\textbf{Case 2.} If $\langle u, w_x \rangle \langle v, w_y\rangle - b = 0$, then
\[
	w_x^{+} = w_x - \zeta \left( \tfrac{b}{\eta} \right) u,\quad
   w_y^{+}  = w_y - \zeta \eta v,
   \]
where $\zeta = \frac{\eta \langle u, w_x \rangle - \eta^2}{b \| u \|^2}$ and $\eta$ is determined by
\[ \eta^4 \| v \|^2 - \eta^3 \| v \|^2 \langle u, w_x \rangle + b \eta \| u
   \|^2 \langle v, w_y \rangle - b^2 \| u \|^2 = 0.\]

Moreover, for  the minibatch variants, we set $w=z$ and get the following subproblems
\begin{flalign*}
  w_{\tmop{sgd}}^+ & = \argmin_{(x; y)}\  \left\{ \frac{1}{m} \sum_{i = 1}^m
  \langle s_i,  (x - z_x; y - z_y) \rangle + \frac{\gamma}{2} \| x - z_x \|^2 +
  \frac{\gamma}{2} \| y - z_y \|^2 \right\}\nonumber\\
  w_{\tmop{spl}}^+ & = \argmin_{(\Delta_x; \Delta_y)}\ \Bigg\{
  \frac{1}{m} \sum_{i = 1}^m \left| \langle u_i, z_x \rangle \langle v_i, z_y
  \rangle + \langle v_i, z_y \rangle \langle u_i, \Delta_x \rangle + \langle
  w_i, z_x \rangle \langle v_i, \Delta_y \rangle - b_i \right| \\
  & \hspace{50pt} + \frac{\gamma}{2} \| \Delta_x \|^2 + \frac{\gamma}{2} \| \Delta_y \|^2
  \Bigg\} + z\nonumber\\
  w^+_{\tmop{spp}} & = \argmin_{(x; y)}\ \left\{ \frac{1}{m} \sum_{i =
  1}^m | \langle u_i, x \rangle \langle v_i, y \rangle - b_i | +
  \frac{\gamma}{2} \| x - z_x \|^2 + \frac{\gamma}{2} \| y - z_y \|^2 \right\},
  \nonumber
\end{flalign*}

where $s_i \in \partial_{(x; y)} (| \langle u_i, z_x \rangle \langle v_i, z_y
   \rangle - b_i |)$. Then we solve the subproblems by \\
\begin{flalign*}
 w^+_\text{sgd} & = z - \frac{1}{m \gamma}\sum_{i=1}^m  s_i,\\ \\
  \big(x_{\tmop{spl}}^+; y^+_{\tmop{spl}}, *\big) & = \argmin_{(x, y,
  t)}\ \left\{ \frac{1}{m} \sum_{i = 1}^m t_i + \frac{\gamma}{2} \| x - z_x \|^2 +
  \frac{\gamma}{2} \| y - z_y \|^2  \right\} \nonumber\\
  \text{subject to } \ \ \ &\langle u_i, z_x \rangle \langle v_i, z_y
  \rangle + \langle v_i, z_y \rangle \langle u_i, x - z_x \rangle + \langle
  u_i, z_x \rangle \langle v_i, y - z_y \rangle - b_i \leq t_i \\
  & \langle u_i, z_x \rangle \langle v_i, z_y
  \rangle + \langle v_i, z_y \rangle \langle u_i, x - z_x \rangle + \langle
  u_i, z_x \rangle \langle v_i, y - z_y \rangle - b_i \geq -t_i,
  \\& i=1,2,\ldots, m
  \nonumber\\
  &  \nonumber\\
  \big(x_{\tmop{spp}}^+; y^+_{\tmop{spp}}, \ast\big) & = \argmin_{(x, y,
  t)}\ \left\{ \frac{1}{m} \sum_{i = 1}^m t_i \right\} \nonumber\\
  \text{subject to } \ \ \ & \frac{\gamma}{2} [\| x - z_x \|^2 + \| y - z_y \|^2] +
  \langle u_i; x \rangle \langle v_i, y \rangle - b_i \leq t_i \nonumber\\
  & \frac{\gamma}{2} [\| x - z_x \|^2 + \| y - z_y \|^2] - \langle u_i, x
  \rangle \langle v_i,y \rangle + b_i \leq t_i,\quad i=1,2,\ldots, m, \nonumber
 \end{flalign*}
where the last two problems are solved by either QP (QCQP) optimizers or prox-linear iterations as in phase retrieval.\\

\subsection{Solving the {\spp} subproblem by Prox-linear algorithm} \label{sec:subproblinconv}
In this section we describe how to solve the subproblem of {\spp} when the objective admits a composition form $h(c(\cdot))$.  Specifically, we show that when applied to the {\spp} subproblem, the deterministic prox-linear algorithm obtains a linear convergence rate. Without loss of
generality, consider the SPP subproblem
\begin{mini}
{x\in\Xcal}{\varphi(x)+ \frac{\gamma}{2} \|
  x - \bar{x} \|^2}{}{} \label{ppsubprob}
\end{mini}
where $\varphi (x) =\frac{1}{m} \sum_{i = 1}^m  h (c (x, \xi_i))$. For clarity we denote $z^t$ to be the iterate of the subproblems and define $\varphi_{z^t} (z) \assign \frac{1}{m} \sum_{i = 1}^m h (c
(z^t, \xi_i) + \langle (\nabla c (z^t, \xi_i), z - z^t \rangle)$. In each prox-linear
iteration, we take $\eta \geq \tau$ and compute
\begin{eqnarray*}
  z^{t + 1} = \arg \min_z  \left\{ \varphi_{z^t}(z) +
  \frac{\gamma}{2} \| z - \bar{x} \|^2 + \frac{\eta}{2} \| z - z^t \|^2 \right\}.
\end{eqnarray*}
First, according to Assumption~\ref{ass:two-sides-quad}, we have
\begin{align*}
  \varphi_{z^t} (z) - \varphi (z) & \leq \frac{\tau}{2} \| z - z^t \|^2,\\
  \varphi (z^{t + 1}) - \varphi_{z^t} (z^{t + 1}) & \leq \frac{\tau}{2} \| z^{t + 1} -
  z^t \|^2,
\end{align*}
and by the strong convexity of the objective in~\eqref{ppsubprob}, we have
\begin{align*}
  & \quad \varphi_{z^t} (z^{t + 1}) + \frac{\gamma}{2} \| z^{t + 1} - \bar{x} \|^2 +
  \frac{\eta}{2} \| z^{t + 1} - z^t \|^2\\
  & \leq \varphi_{z^t} (z) + \frac{\gamma}{2} \| z - \bar{x} \|^2 + \frac{\eta}{2} \|
  z - z^t \|^2 - \frac{\gamma + \eta - \lambda}{2} \| z^{t + 1} - z \|^2.
\end{align*}
Combining the above three inequalities leads to
\begin{align*}
  & \quad \varphi_{z^t} (z^{t + 1}) + \frac{\gamma}{2} \| z^{t + 1} - \bar{x} \|^2 +
  \frac{\eta}{2} \| z^{t + 1} - z^t \|^2 + \varphi_{z^t} (z) - \varphi (z) + \varphi (z^{t + 1})
  - \varphi_{z^t} (z^{t + 1})\\
  & \leq \varphi_{z^t} (z) + \frac{\gamma}{2} \| z - \bar{x} \|^2 + \frac{\eta}{2} \|
  z - z^t \|^2 - \frac{\gamma + \eta - \lambda}{2} \| z^{t + 1} - z \|^2 +
  \frac{\tau}{2} \| z - z^t \|^2 + \frac{\tau}{2} \| z^{t + 1} - z^t \|^2 .
\end{align*}
Rearranging the terms accordingly, we have
\begin{align}
  & \quad \left[ \varphi (z^{t + 1}) + \frac{\gamma}{2} \| z^{t + 1} - \bar{x} \|^2
  \right] - \left[ \varphi (z) + \frac{\gamma}{2} \| z - \bar{x} \|^2 \right] \nonumber \\ 
  & \leq \frac{\eta + \tau}{2} \| z - z^t \|^2 - \frac{\gamma + \eta -
  \lambda}{2} \| z^{t + 1} - z \|^2 + \frac{\tau - \eta}{2} \| z^{t + 1} - z^t
  \|^2 \nonumber \\
  & \leq \frac{\eta + \tau}{2} \| z - z^t \|^2 - \frac{\gamma + \eta -
  \lambda}{2} \| z^{t + 1} - z \|^2, \label{linear-conv}
\end{align}
where the last inequality is by $\eta \geq \tau$. Define $\alpha = \frac{\eta + \tau}{\gamma + \eta - \lambda}$ and
divide both sides of the inequality by $\left( \frac{\eta + \tau}{2} \right)
\alpha^t$, we obtain
\begin{align*}
  & \quad \frac{2}{\alpha^t (\eta + \tau)} \left\{ \left[ \varphi (z^{t + 1}) +
  \frac{\gamma}{2} \| z^{t + 1} - \bar{x} \|^2 \right] - \left[ \varphi (z) +
  \frac{\gamma}{2} \| z - \bar{x} \|^2 \right] \right\}\\
  & \leq \frac{1}{\alpha^t} \| z^t - z \|^2 - \frac{1}{\alpha^t} \cdummy
  \frac{\gamma + \eta - \lambda}{\eta + \tau} \| z^{t + 1} - z \|^2\\
  & = \frac{1}{\alpha^t} \| z^t - z \|^2 - \frac{1}{\alpha^{t + 1}} \| z^{t
  + 1} - z \|^2 .
\end{align*}
We denote $\Delta_t \assign \left[ \varphi (z^{t + 1}) + \frac{\gamma}{2} \| z^{t + 1}
- \bar{x} \|^2 \right] - \left[ \varphi (z^{\ast}) + \frac{\gamma}{2} \| z^{\ast} - \bar{x}
\|^2 \right]$.  By placing $z=z^t$ in~\eqref{linear-conv}, we can easily verify that $\{\Delta_t \}$ is monotonically decreasing.
Taking $z = z^{\ast}$ and summing over $t = 0, \ldots, T$, we get
\begin{align*}
  \sum_{t = 0}^T \frac{2\Delta_T}{\alpha^t (\eta + \tau)}  & \leq \sum_{t = 0}^T \frac{2 \Delta_t}{\alpha^t (\eta + \tau)} \\
  & \leq \sum_{t = 0}^T \frac{1}{\alpha^t} \| z^t - z^{\ast} \|^2 -
  \frac{1}{\alpha^{t + 1}} \| z^{t + 1} - z^{\ast} \|^2\\
  & = \| z^0 - z^{\ast} \|^2 - \frac{1}{\alpha^{T + 1}} \| z^{T + 1} -
  z^{\ast} \|^2\\
  & \leq \| z^0 - z^{\ast} \|^2
\end{align*}
and we have
\[ \Delta_T \leq \frac{(\eta + \tau) \| z^0 - z^{\ast}
   \|^2}{2 \big( \sum_{t = 0}^T 1 / \alpha^t \big)} \leq \frac{(\eta +
   \tau) \| z^0 - z^{\ast} \|^2}{2} \left( \frac{\eta + \tau}{\gamma + \eta -
   \lambda} \right)^T. \]
Therefore, we have shown the linear convergence of prox-linear algorithm for solving {\spp} subproblems.

\section{Additional experiments}\label{app:addexp}
This section presents the experiments that were not displayed in the main article due to space limit. First, we complement the experiments in Section~\ref{sec:Experi} by visualizing the effectiveness of image recovery on \texttt{zipcode} datasets. Second, we provide new experiments on the problem of  blind deconvolution.

\subsection{Phase retrieval}
We conduct the experiments on  the test images of digit 6 and illustrate the results of {\spl} and {\sgd}  in Figure~\ref{fig:recover4-spl} and Figure~\ref{fig:recover4-sgd}, respectively.  We fix $\alpha_0 = 100$ and run each algorithm over 200 epochs (number of passes over the data). Then we report the results over the earliest 600 iterations and plot the recovered digits for different batch sizes $m \in \{1, 4, 8, 16, 32, 48, 64\}$.  It can be seen that with larger batch size, both methods exhibit improved performance and generate images with better quality, which suggests the practical advantage of using large batch size.
  Moreover,  {\spl} outperforms {\sgd} by giving a much better recovered image quality. This observation confirms the earlier study about the superior performance of prox-linear methods \citeapp{duchi2019solvingdup}. 

\begin{figure*}[!h]
	\centering
	\includegraphics[scale=0.15]{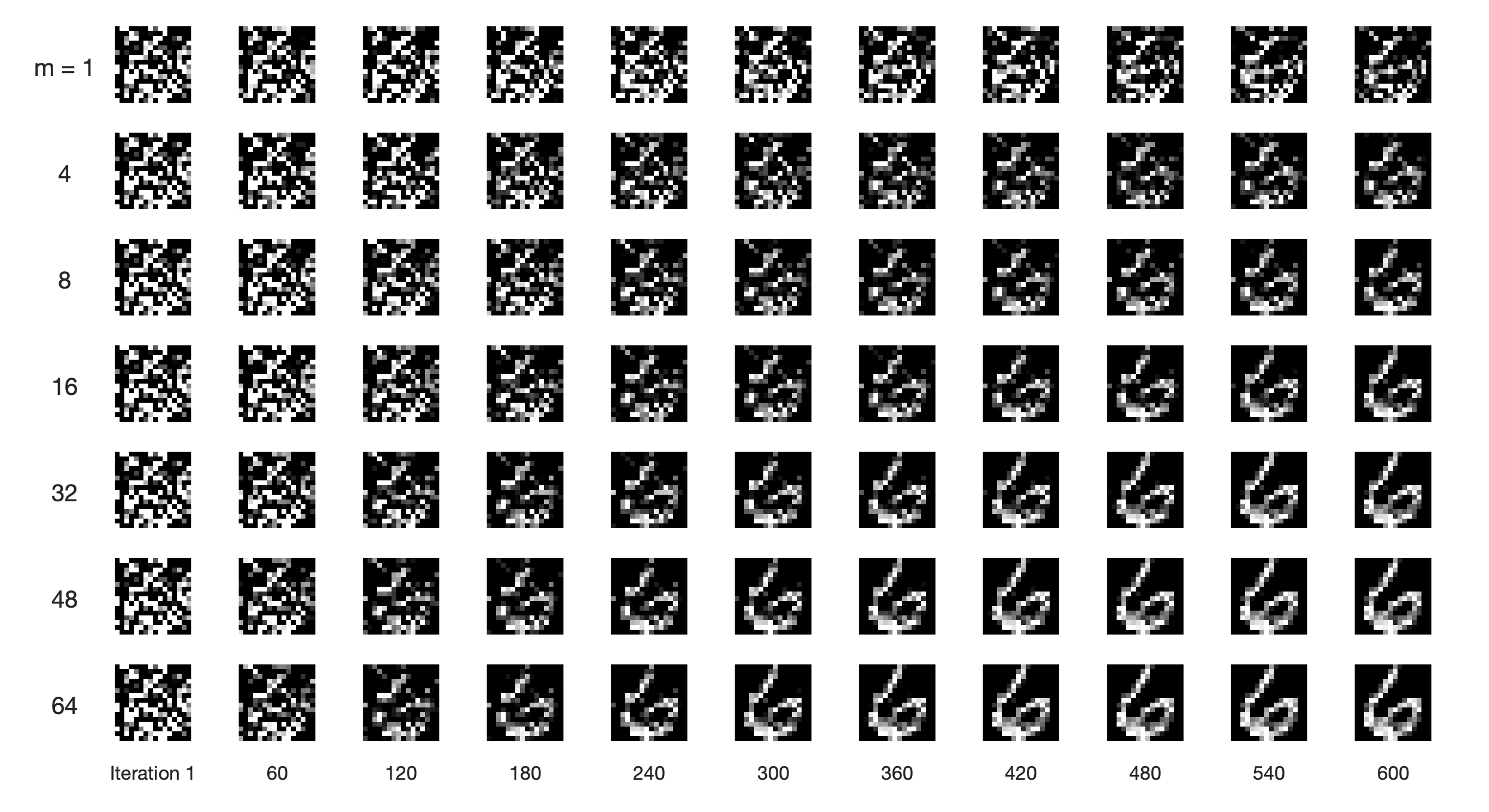}
	\caption{Reconstruction of image (digit 6) for {\spl}. Rows correspond to recovery results of different minibatch sizes $m \in \{1, 4, 8, 16, 32, 48, 64\}$. Columns correspond to recovery results after different number of iterations $T\in\{1, 60,120, 180, 240, 300, 360, 420, 480, 540, 600\}$. \label{fig:recover4-spl}}
\end{figure*}

\begin{figure*}[!h]
	\centering
	\includegraphics[scale=0.17]{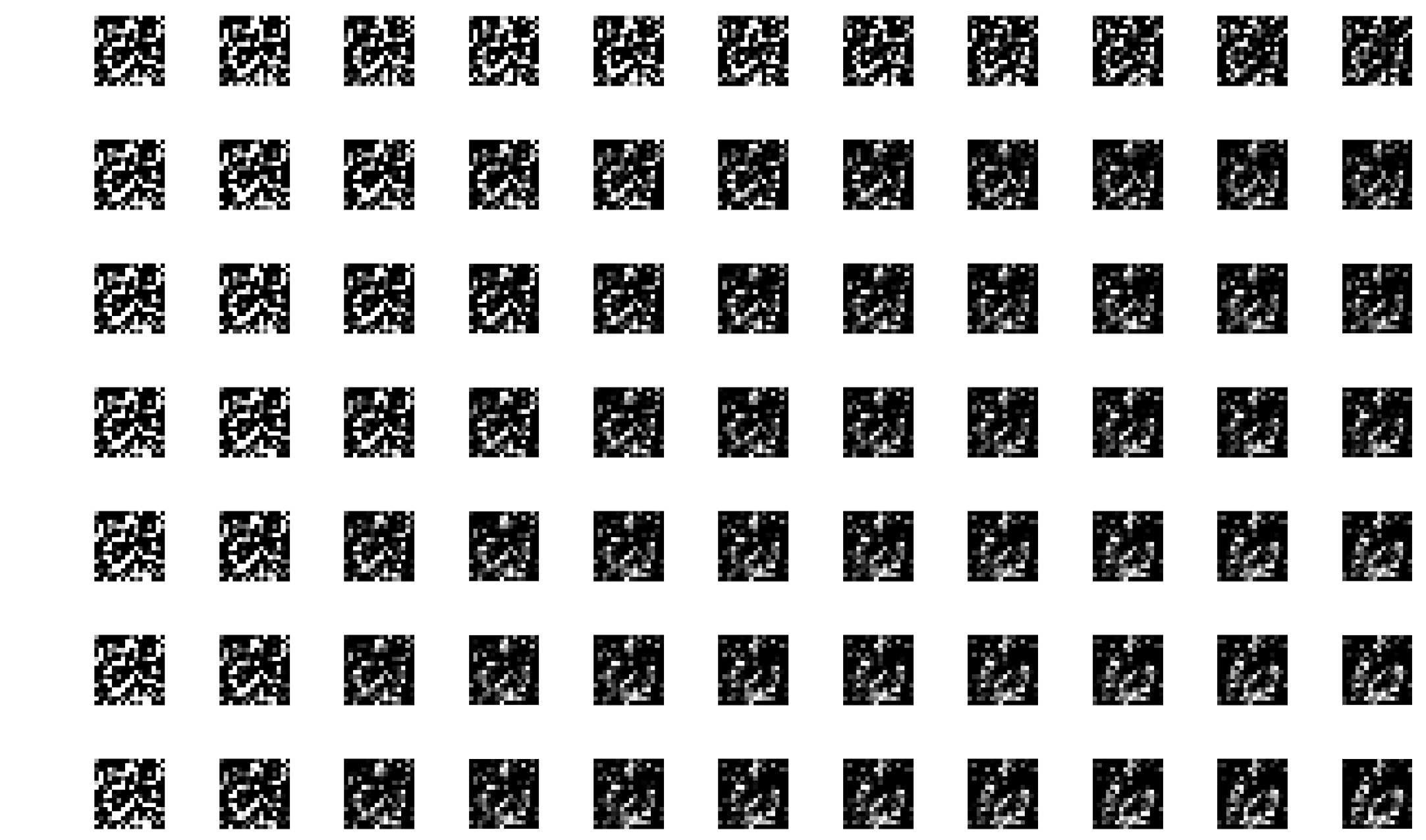}
	\caption{Reconstruction of real image (digit 6) for stochastic (sub)gradient descent. \label{fig:recover4-sgd}}
\end{figure*}

\subsection{Blind deconvolution}
\textbf{Data preparation.} We conduct the experiments over the synthetic dataset.

\textbf{1) Synthetic data.}
We choose $n$, $d$ and the signal $x^*$ in the same way as in phase retrieval. Namely we generate $U = Q_1 D_1, V = Q_2 D_2$
where $q_{i j} \sim \mathcal{N} (0, 1)$ and $D_1, D_2$ are diagonal matrices whose diagonal entries evenly distribute between 1 and $1/\kappa$; Measurements $\{b_i\}$ are generated by $b_i = \langle u_i, x^{\ast} \rangle \langle v_i, x^{\ast} \rangle + \delta_i \zeta_i$ with $\zeta_i \sim \mathcal{N} (0,
25)$ and $ \delta \sim \tmop{Bernoulli} (p_{\tmop{fail}})$.

The detailed experiment setup is given as follows:

\textbf{1) Dataset generation.}  We test $\kappa \in \{1, 10\}$ and $ p_{\tmop{fail}} \in \{0.2, 0.3\}$;

\textbf{2) Initial point.} For all the algorithms, we set the initial point $x^1(=x^0)$ and $y^1(=y^0)\sim\Ncal(0, I_d)$;

\textbf{3) Stepsize.} We set the parameter $\gamma=\alpha_0^{-1}\sqrt{K/m}$ where $m$ is the batch size;
we test 10 evenly spaced $\alpha_0$ values in range $ [10^{-1}, 10^{2}]$ for {\sgd}, {\spl} and in range $ [10^{-2}, 10^{1}]$ for {\sgd}, {\spl} and {\spp};

\textbf{4) Others.} The rest of the experiment setup are the same as in synthetic phase retrieval, which can be referred from Section \ref{sec:Experi}.

It should be noted that if $\alpha_0 \geq 10$, the resulting {\spp} subproblem is still nonconvex. Therefore, we present the results with $\alpha_0$ in two ranges for {\spp} and the other two {\smod} algorithms.

In Figure~\ref{fig:bd-speedup-batchsize}, we plot the the algorithm speedup over the size of minibatches for two different settings $p_\text{fail} \in \{0.2, 0.3\}$.
We find that both {\spl} and {\sgd} enjoy linear speedup over the size of minibatches.
Figure~\ref{fig:bd-speedup-stepsize} shows the algorithm speedup over different values of $\alpha_0$.  In comparison with {\sgd},  {\spl} has significant acceleration over a much wider range of stepsize values. Figure~\ref{fig:bd-itercount-stepsize} shows the total iteration number over different values of $\alpha_0$. The result suggests that momentum can further improve the performance of all the stochastic algorithms, particularly, when the algorithms are initiated with small stepsizes.

\begin{figure*}[!ht]
\centering
\includegraphics[scale=0.20]{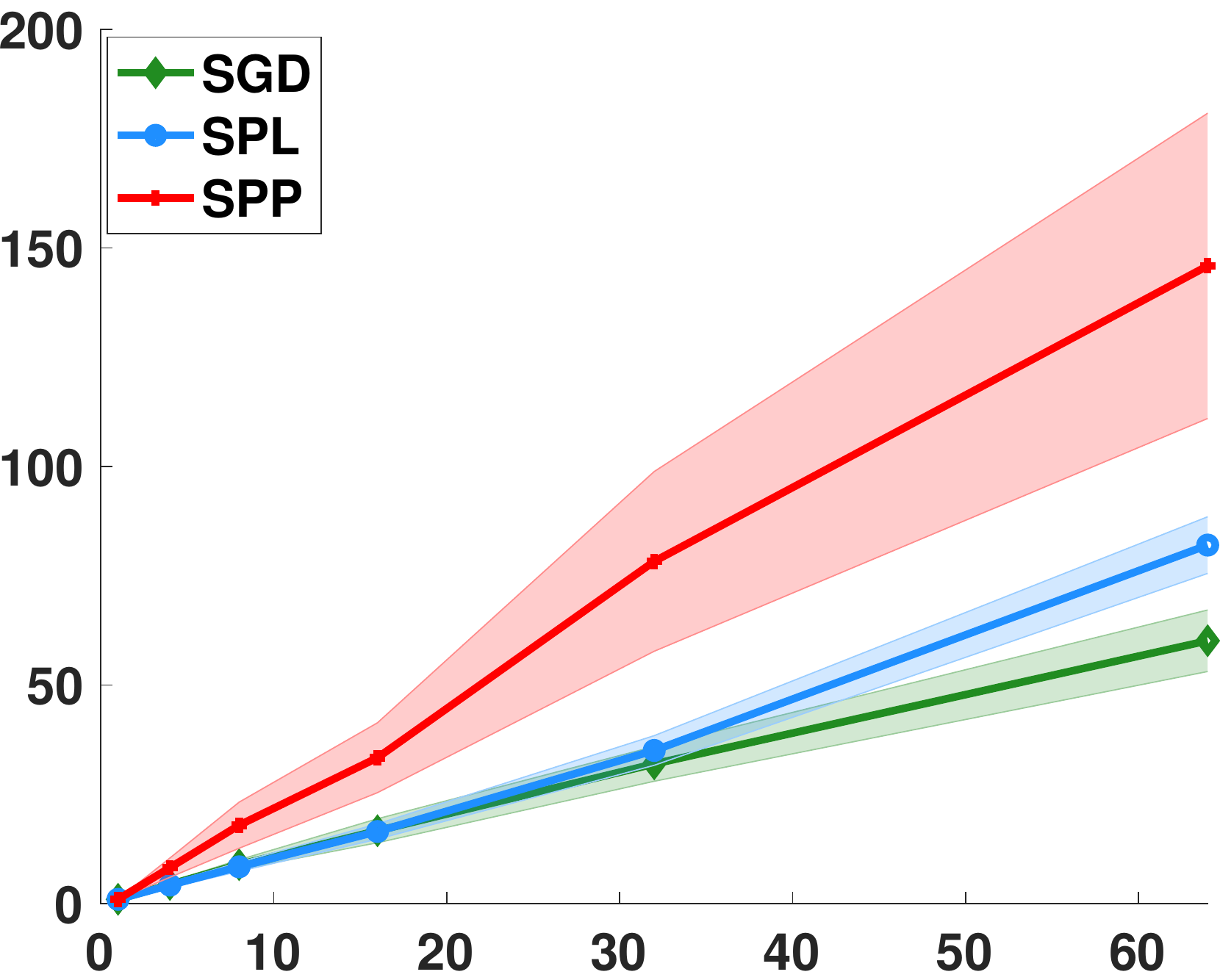}
\includegraphics[scale=0.20]{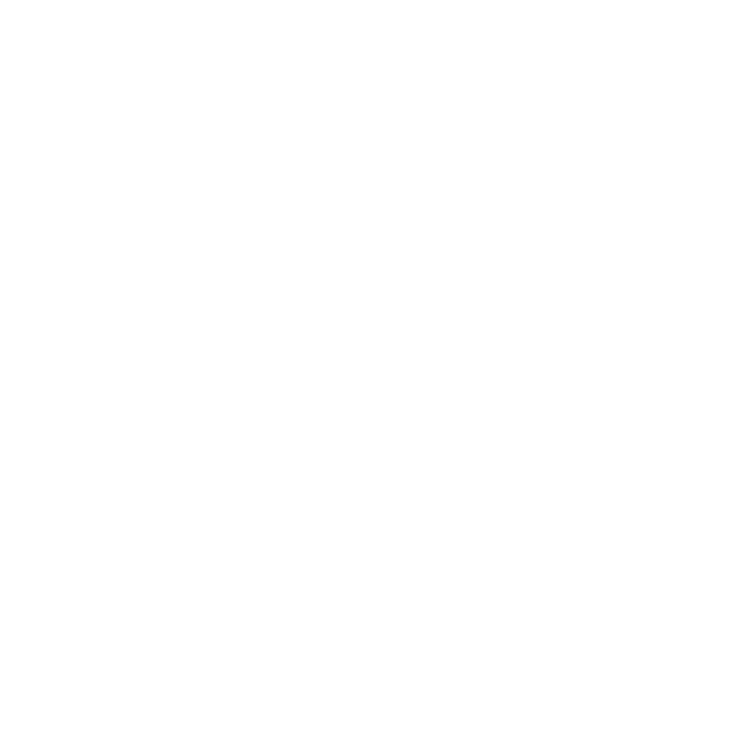}
\includegraphics[scale=0.20]{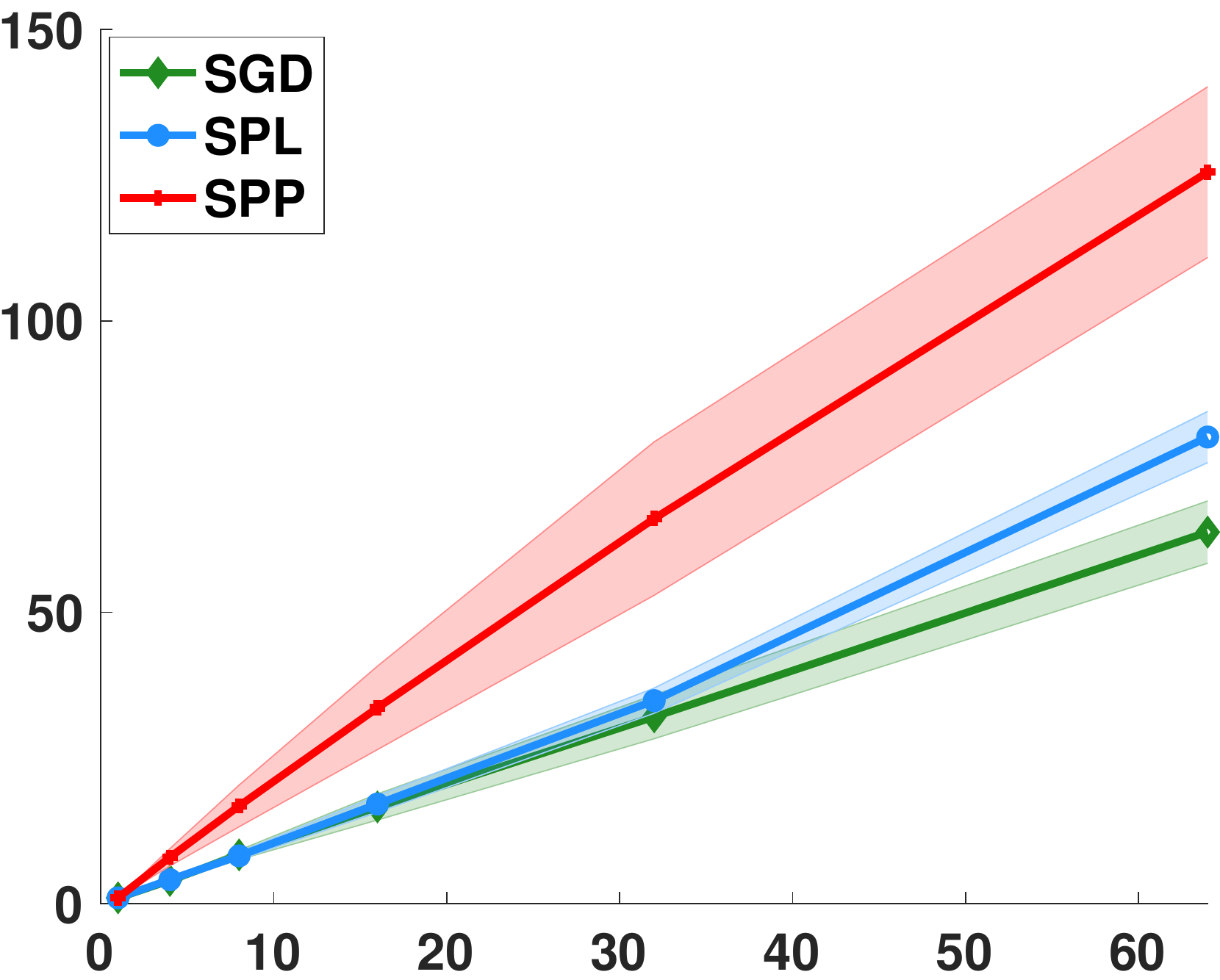}
	\caption{Speedup vs. batch size $m$.
	$\kappa=10$.
	From left to right: $(\alpha_0, p_\text{fail}) = ([10^{-2}, 10], 0.2), ([10^{-1}, 10^2], 0.2), ([10^{-2}, 10], 0.3), ([10^{-1}, 10^2], 0.3)$.\label{fig:bd-speedup-batchsize}}
\end{figure*}

\begin{figure*}[!htbp]
	\centering
	\includegraphics[scale=0.20]{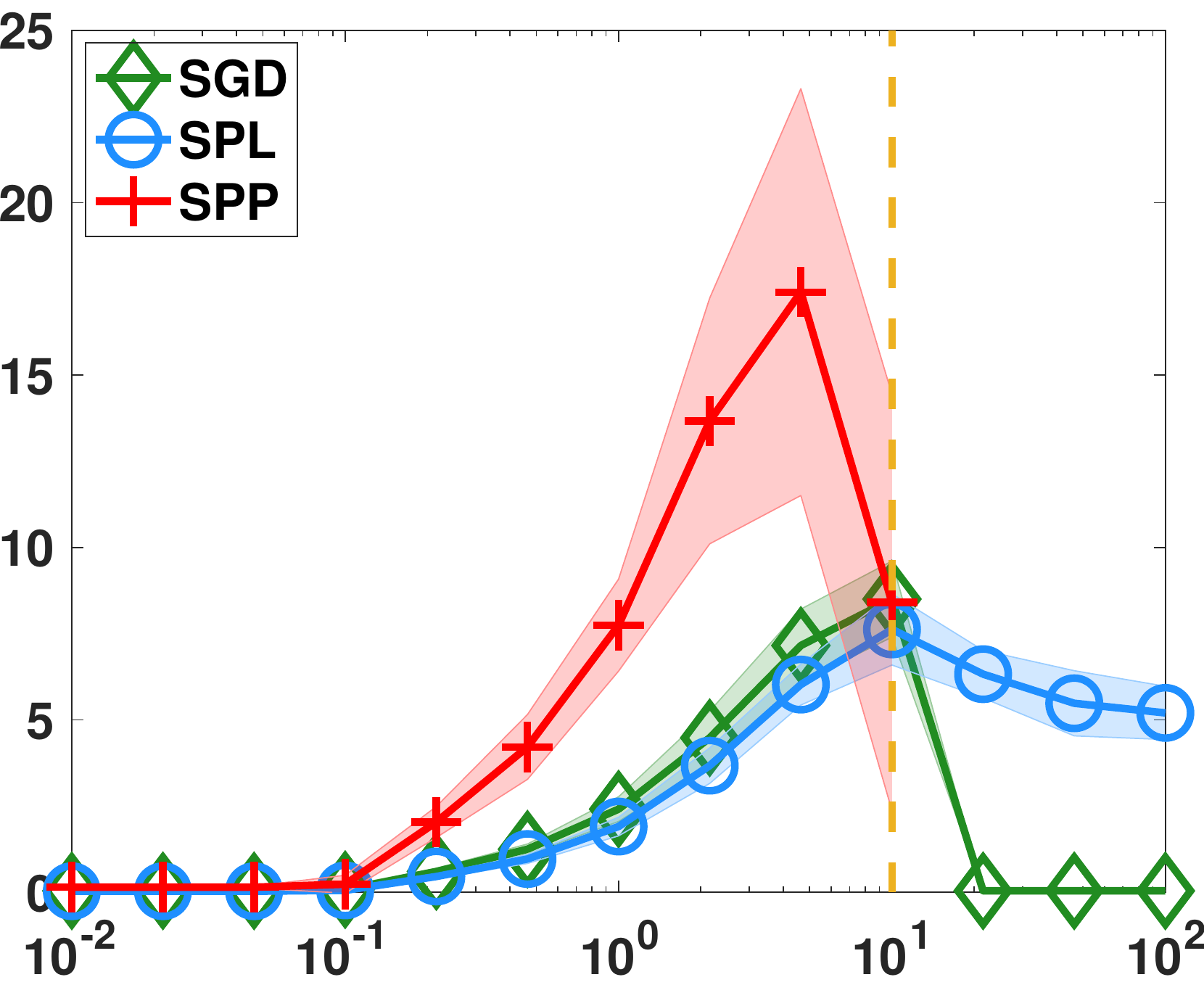}
	\includegraphics[scale=0.20]{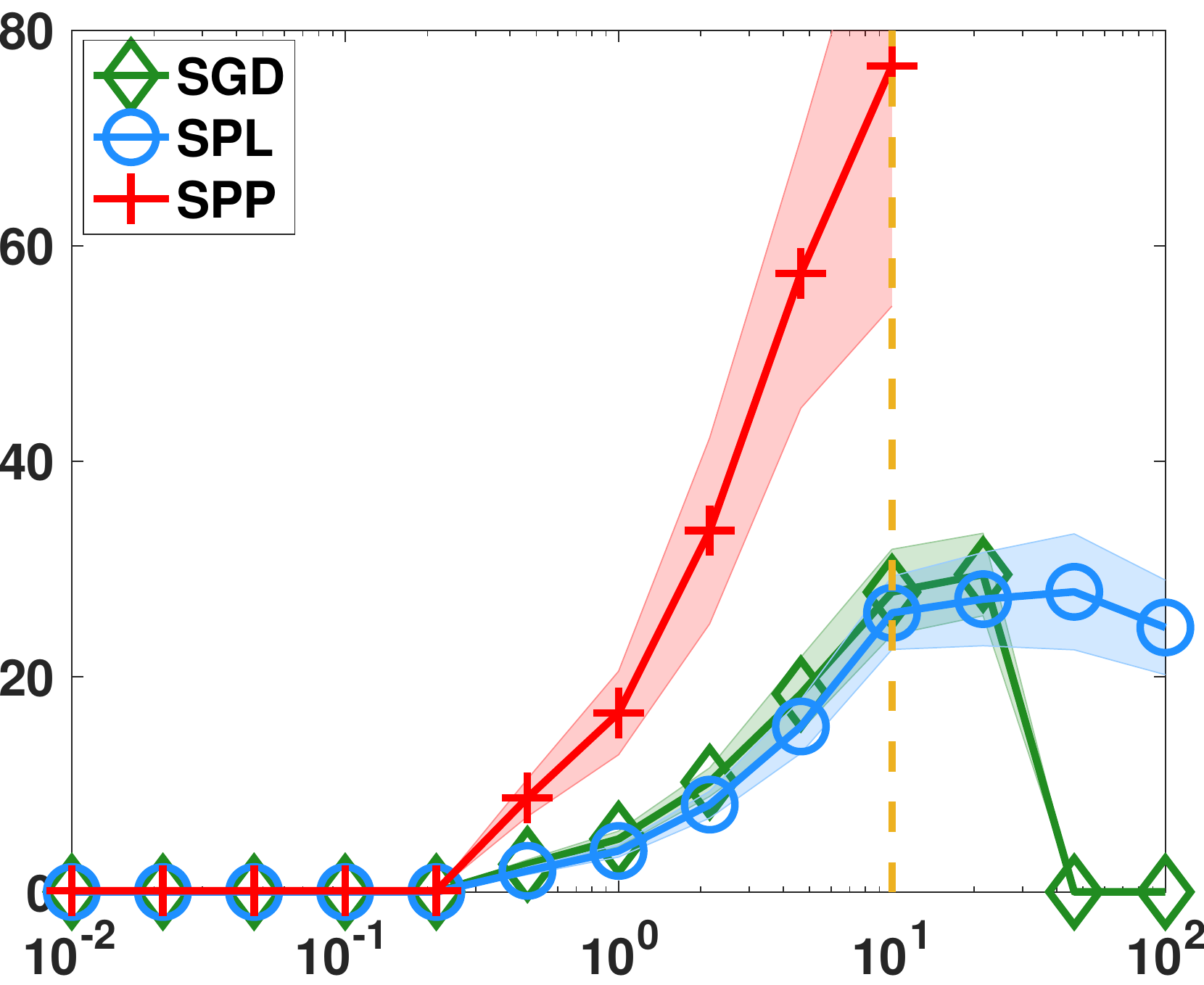}	
	\includegraphics[scale=0.20]{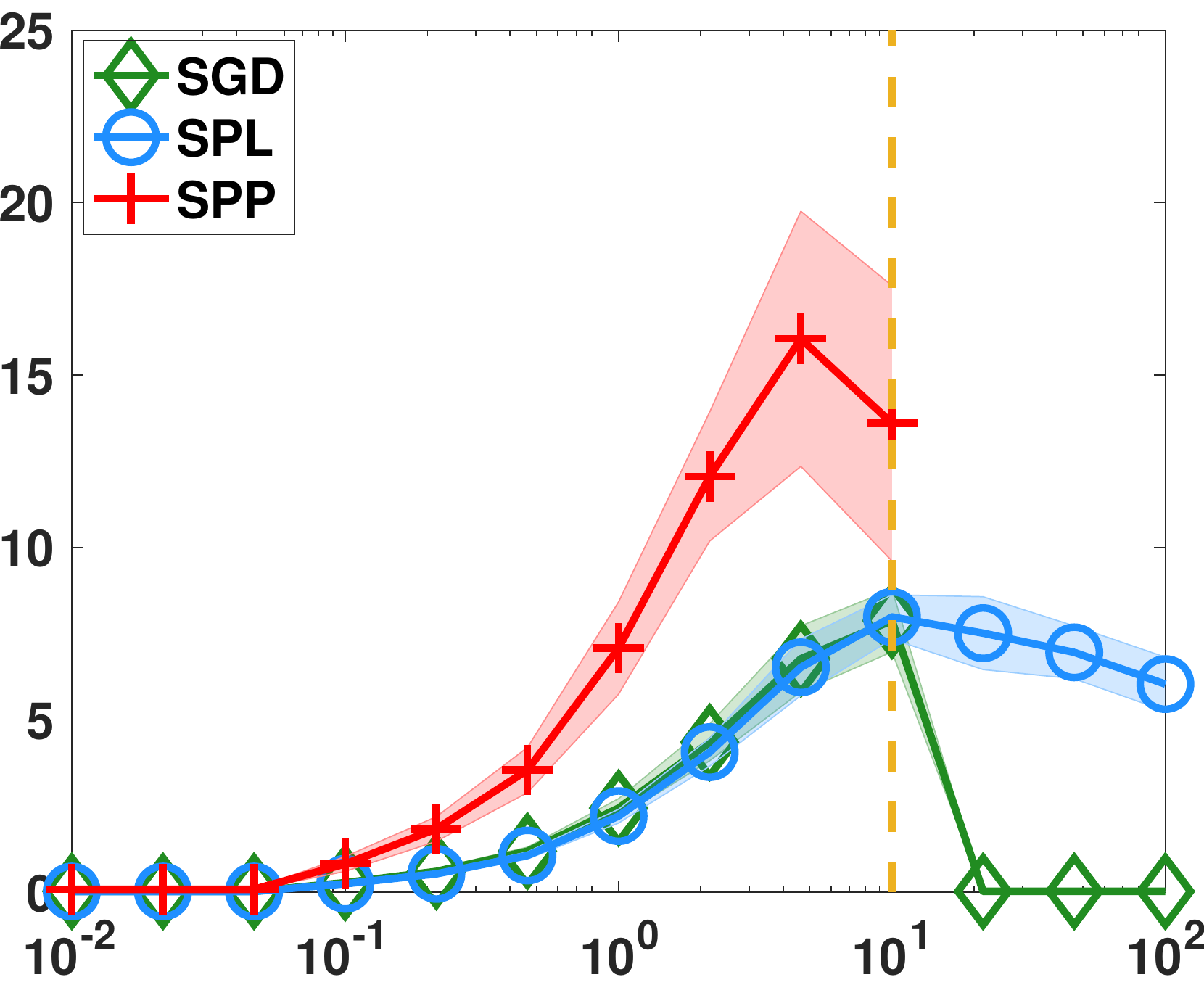}
	\includegraphics[scale=0.20]{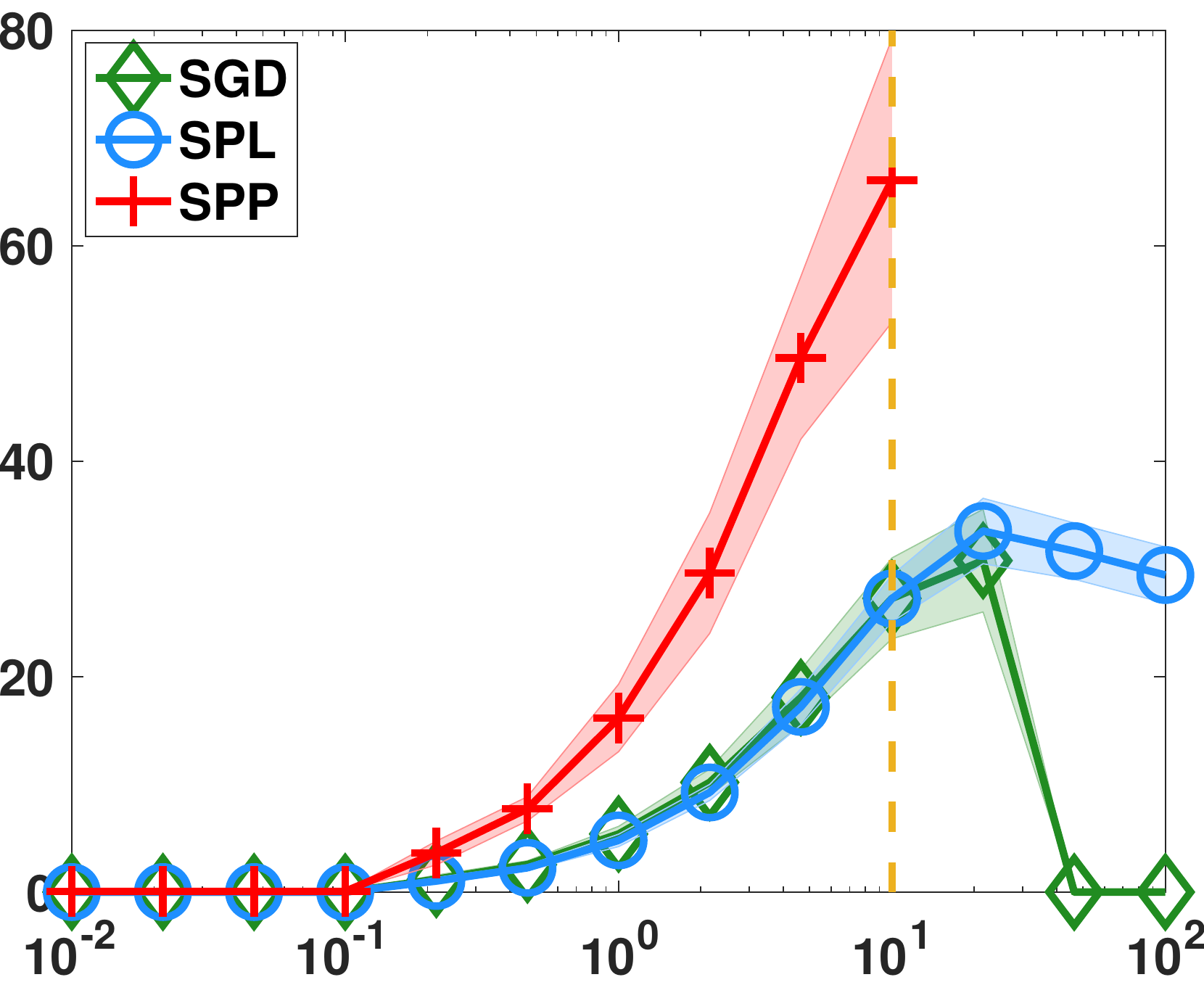}
		\includegraphics[scale=0.20]{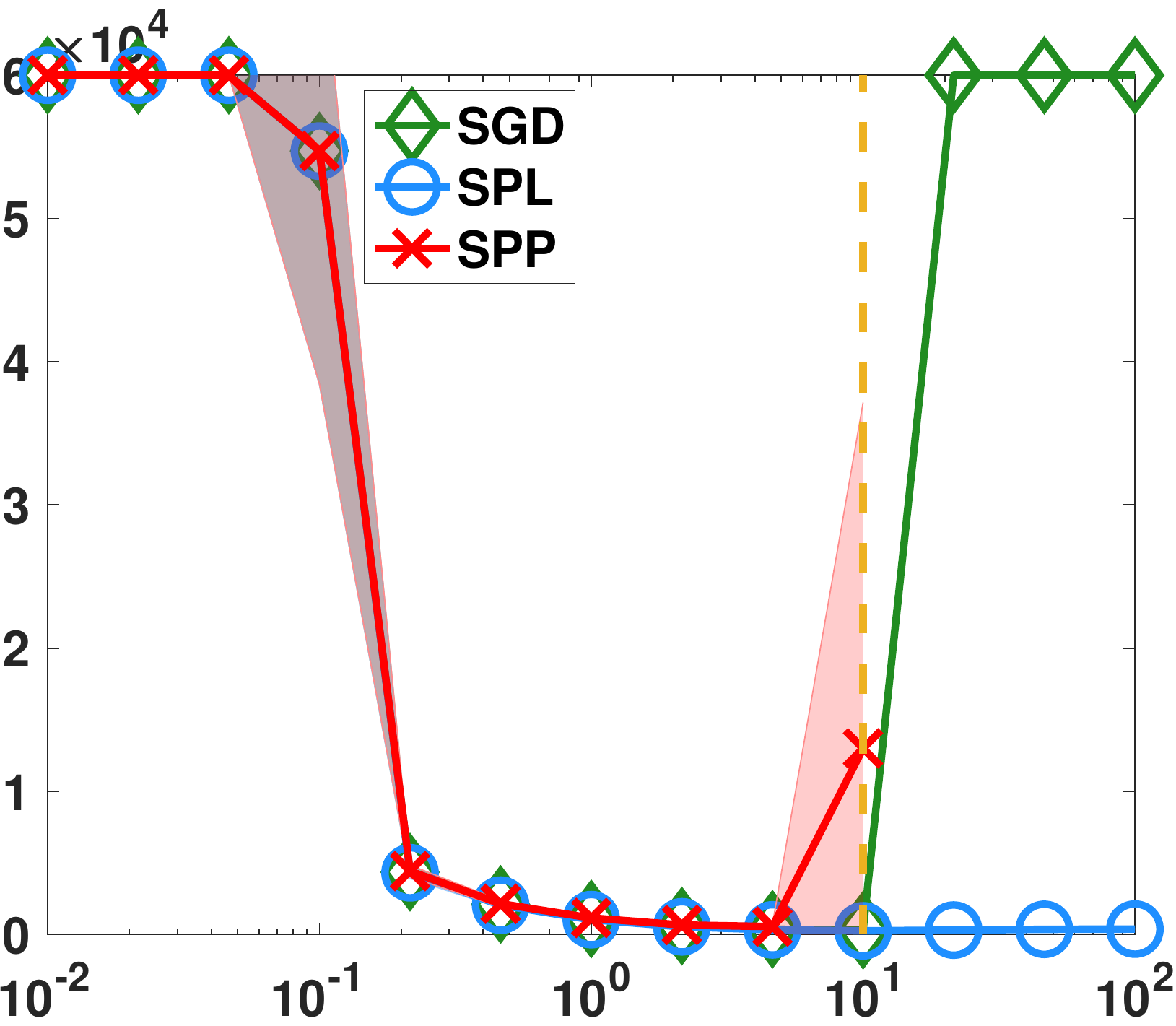}
		\includegraphics[scale=0.20]{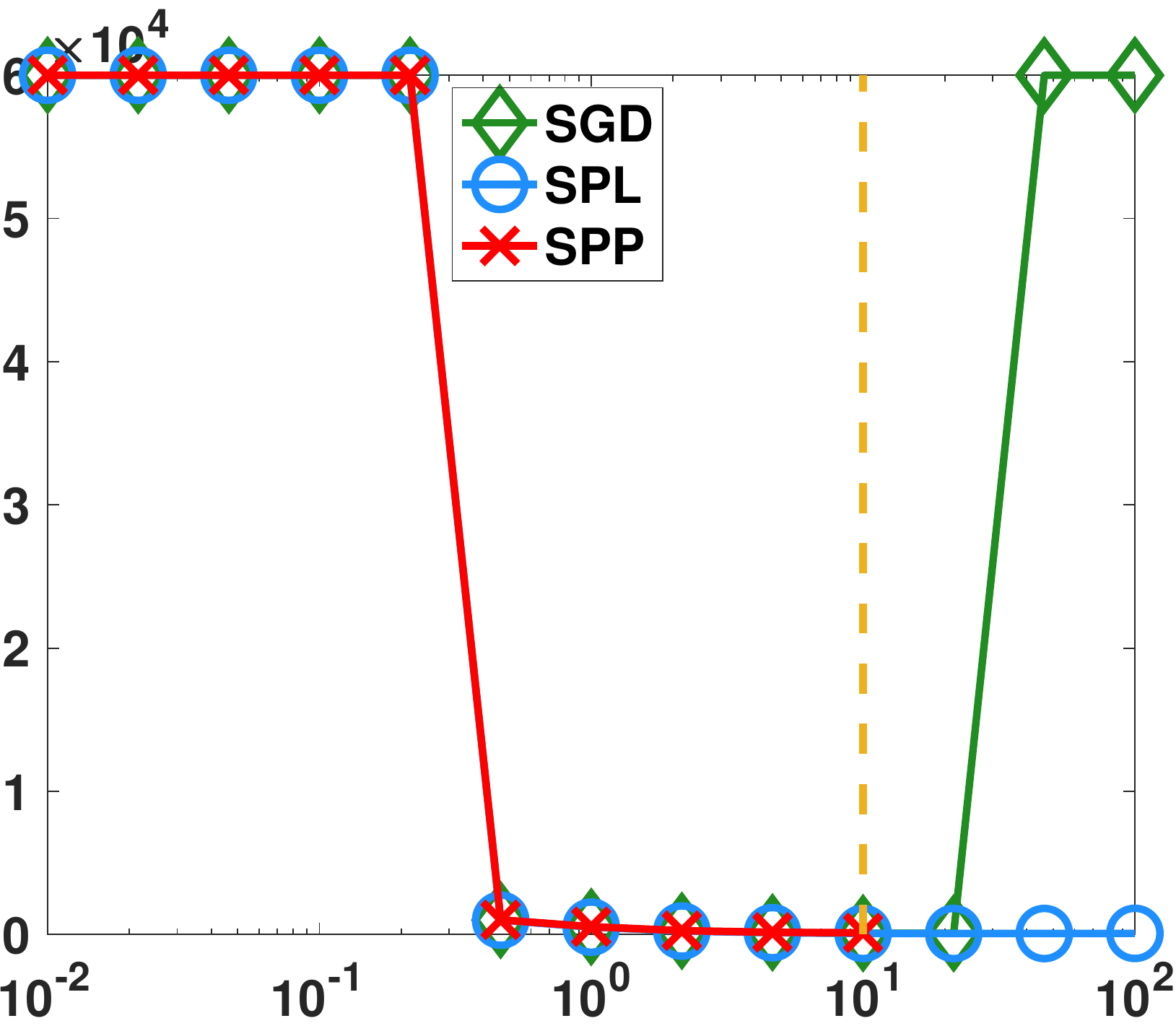}
		\includegraphics[scale=0.20]{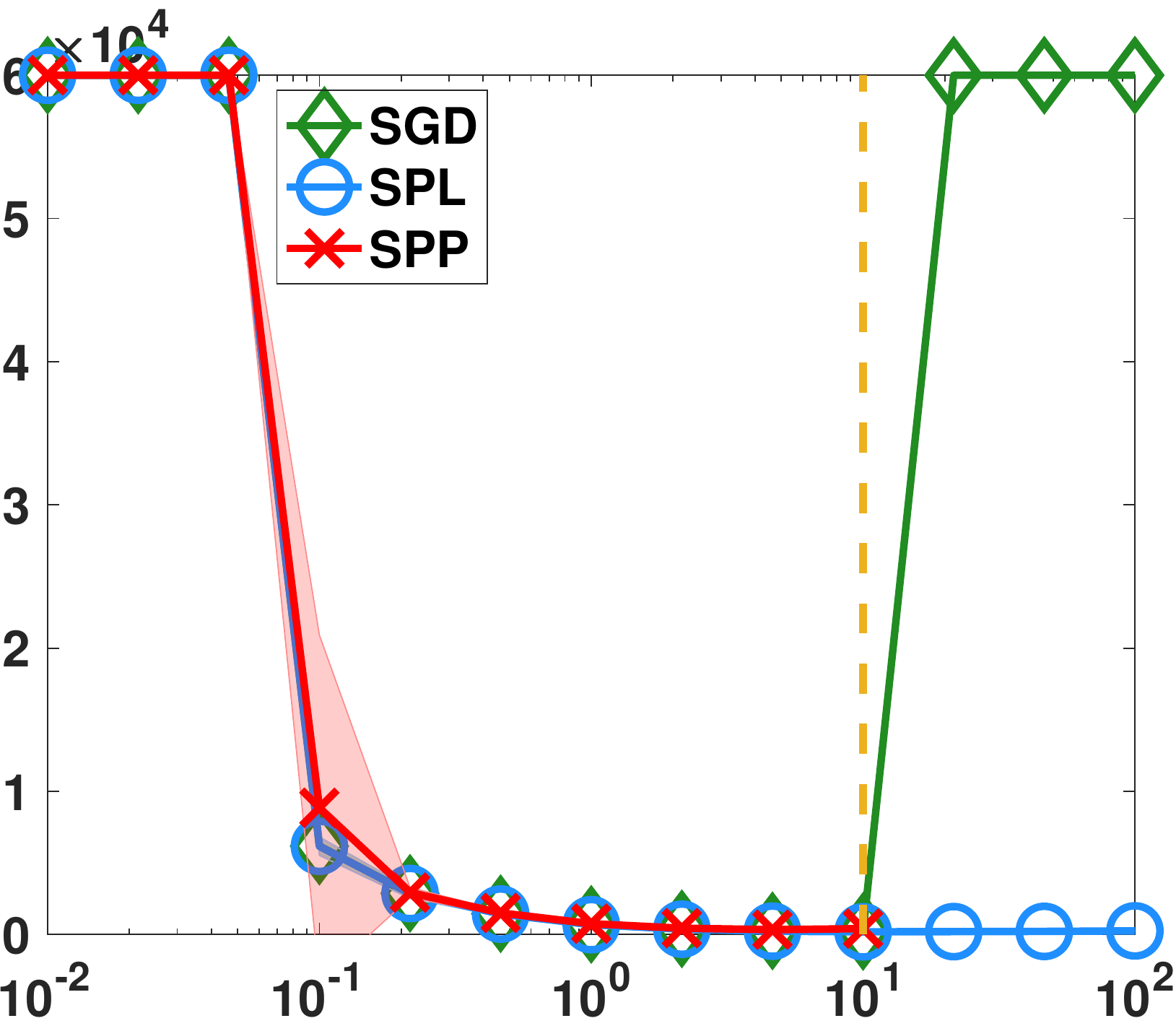}
		\includegraphics[scale=0.20]{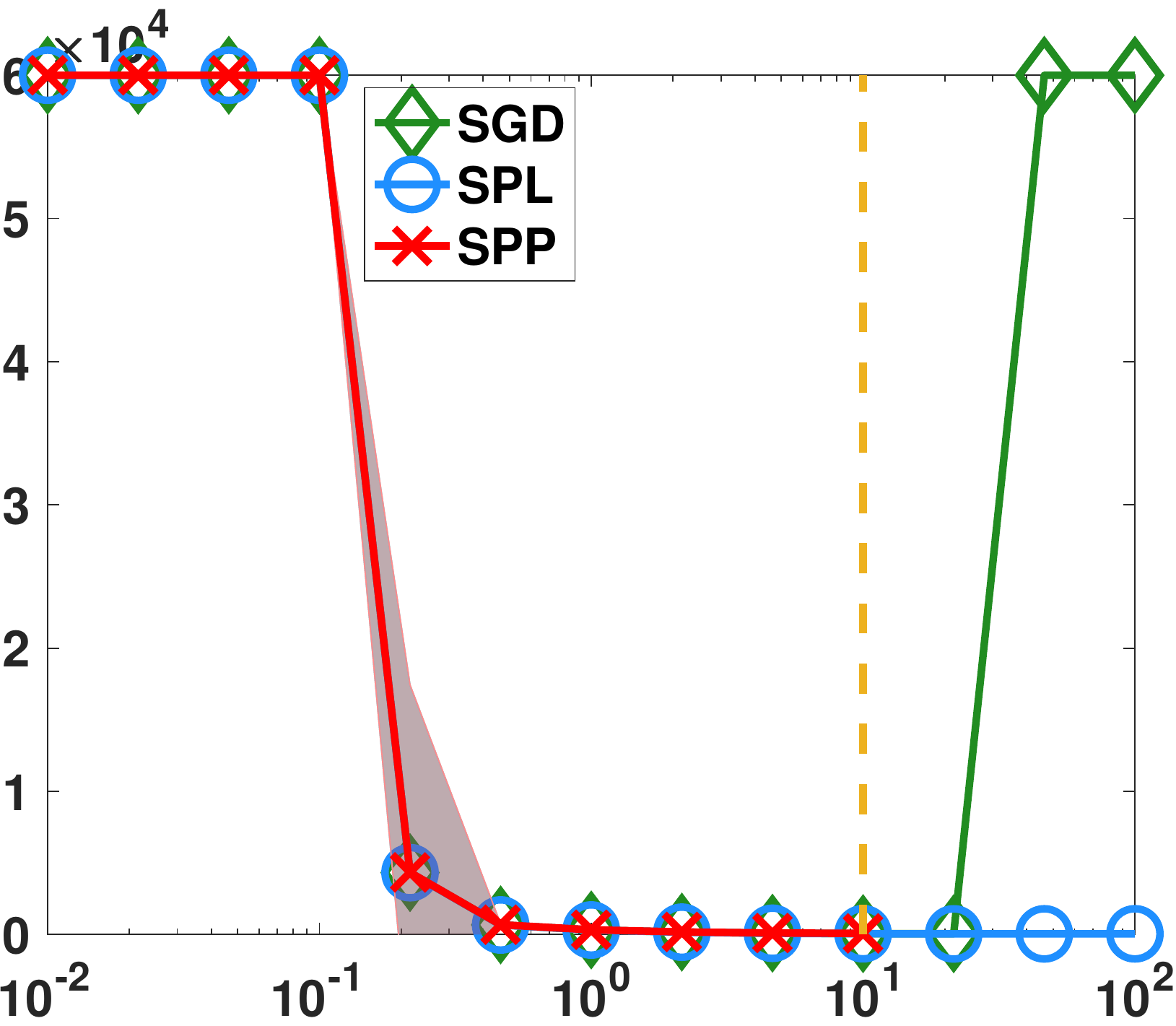}

		\caption{First row: Speedup vs. Stepsize $\alpha_0$. Second row: Iteration number on reaching desired accuracy vs. Stepsize $\alpha_0$. 
 From left to right: $\kappa=10, (p_\text{fail},m) = (0.2, 8), (0.2, 32), (0.3, 8), (0.3, 32)$. 
		\label{fig:bd-speedup-stepsize}}
\end{figure*}

\begin{figure*}[!h]
\centering
\includegraphics[scale=0.20]{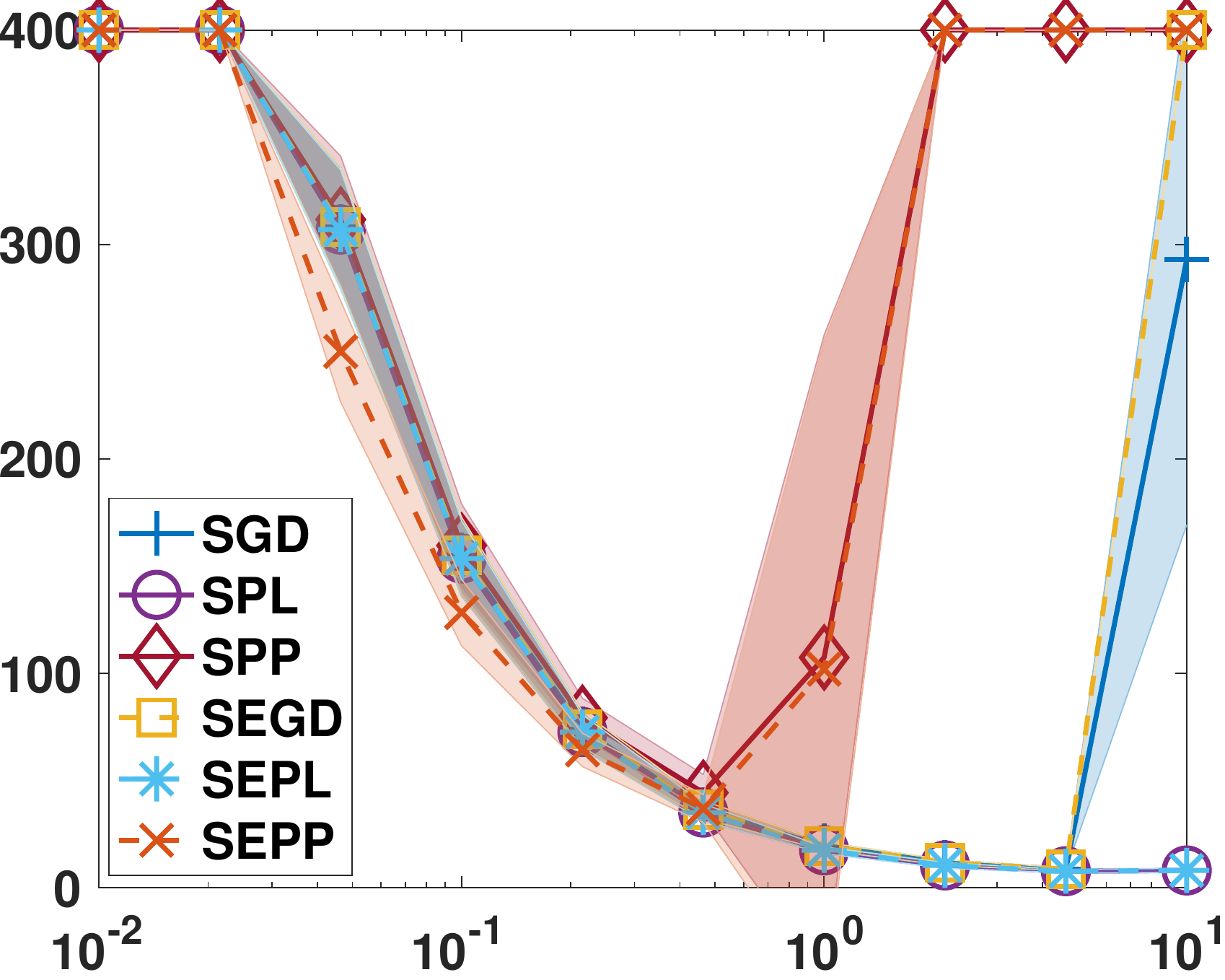}
 \includegraphics[scale=0.20]{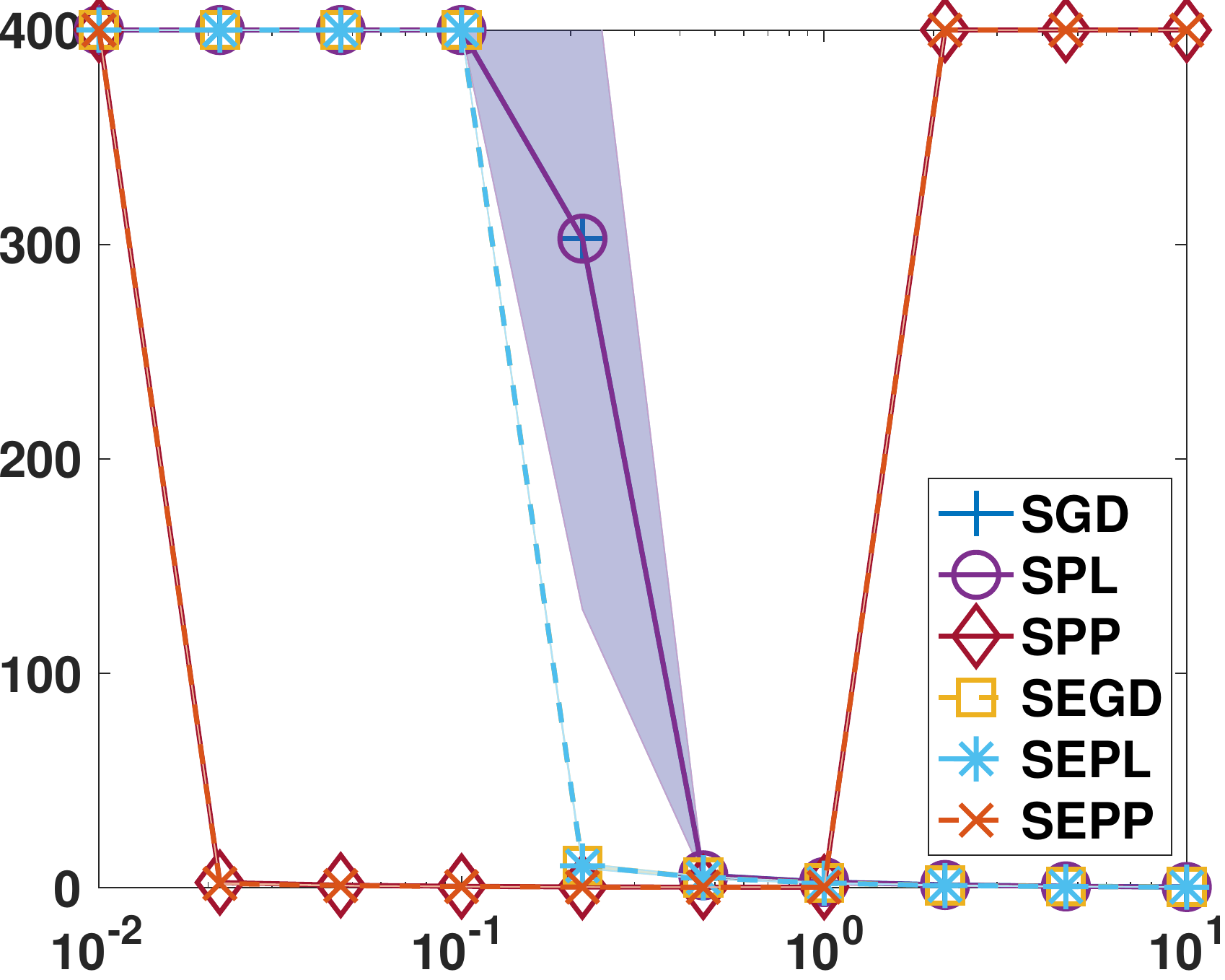}\includegraphics[scale=0.20]{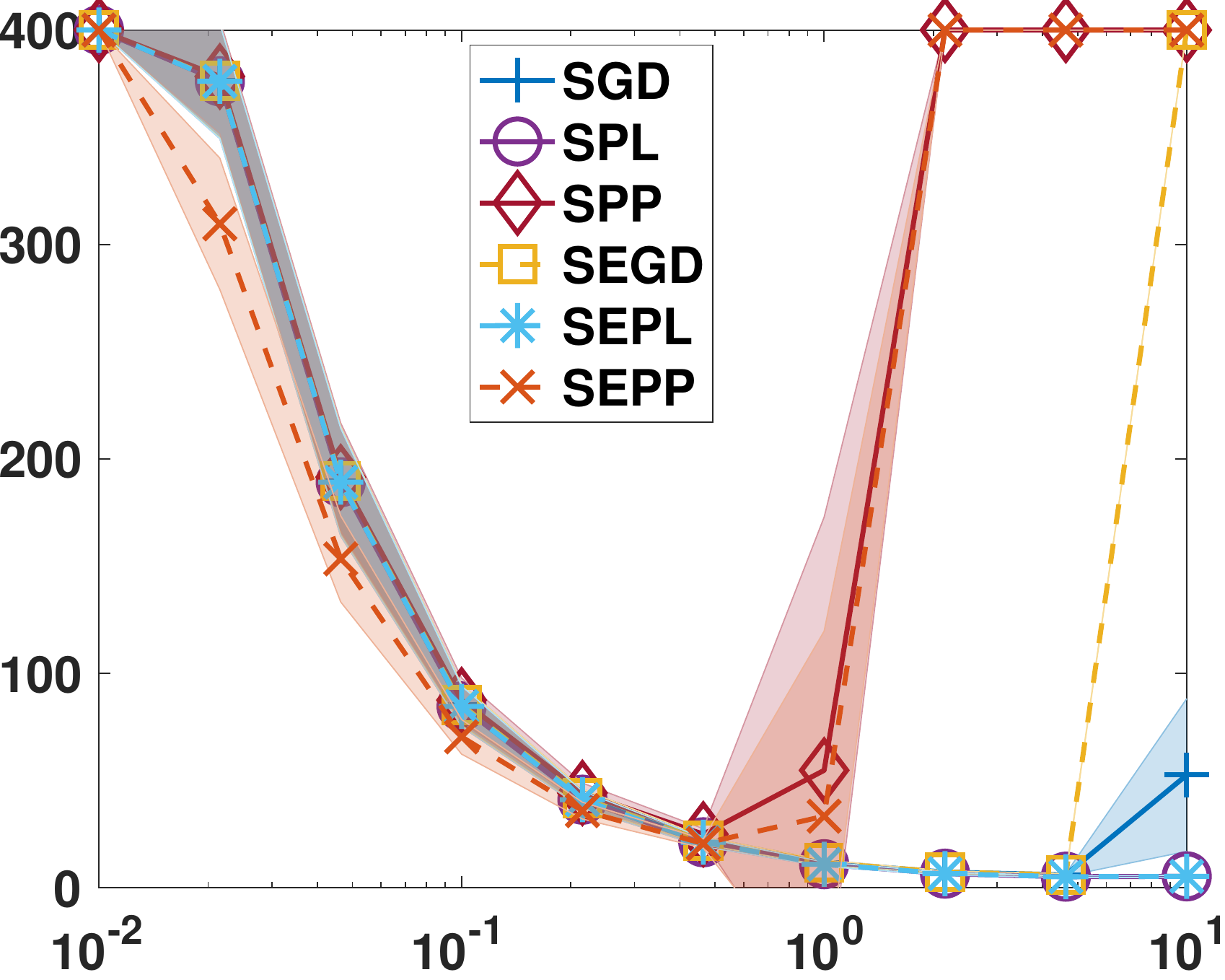}\includegraphics[scale=0.20]{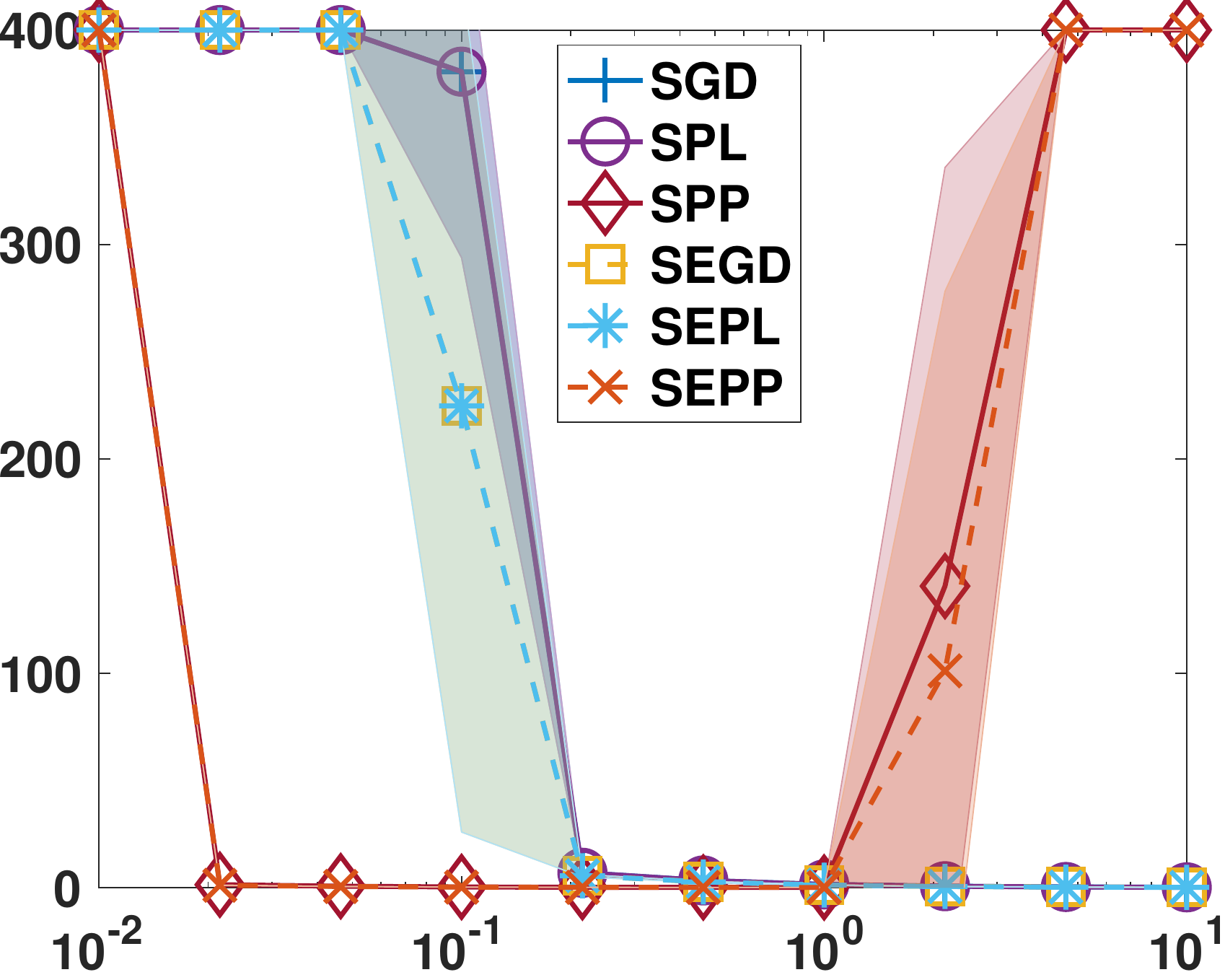}\\
\includegraphics[scale=0.20]{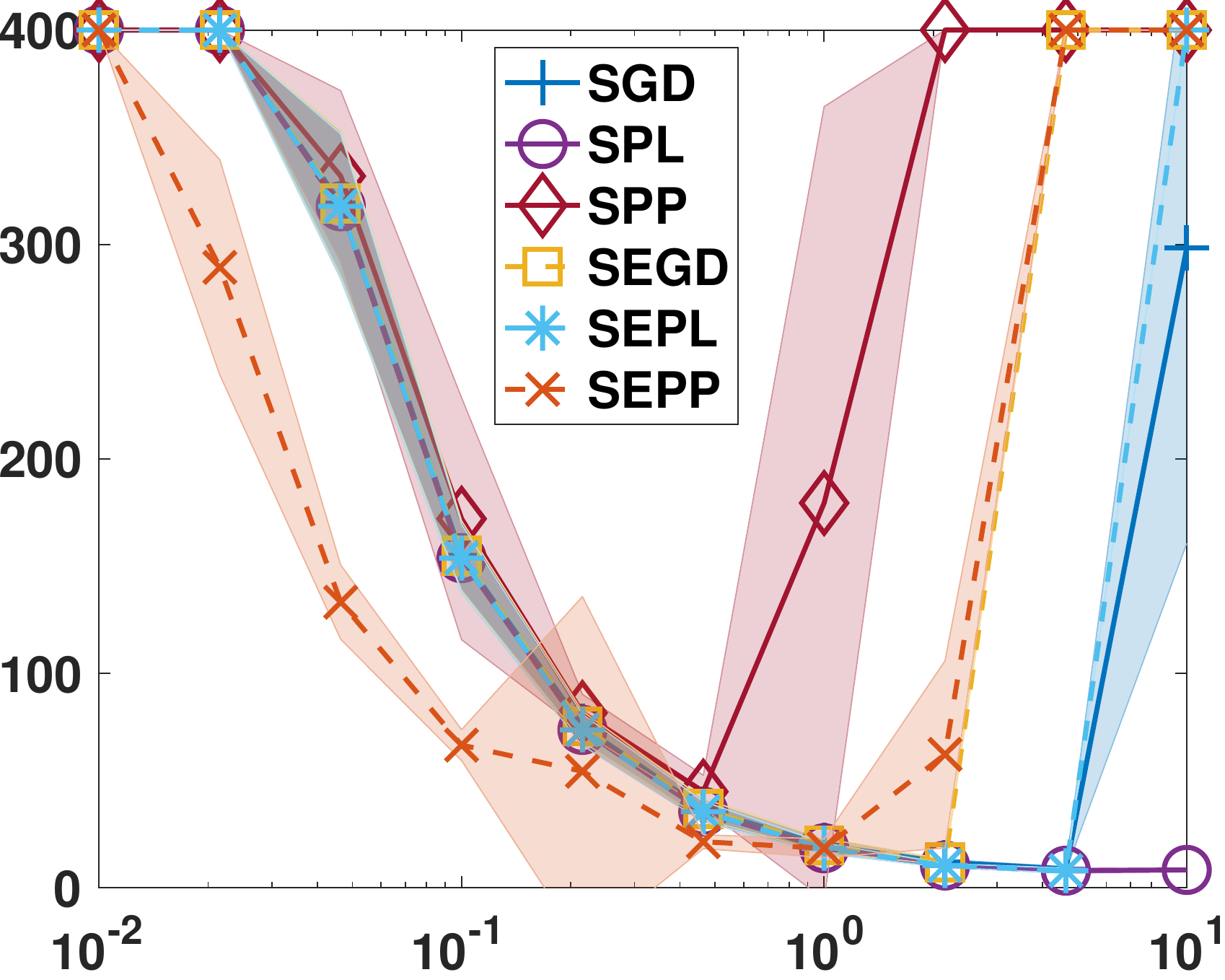} \includegraphics[scale=0.20]{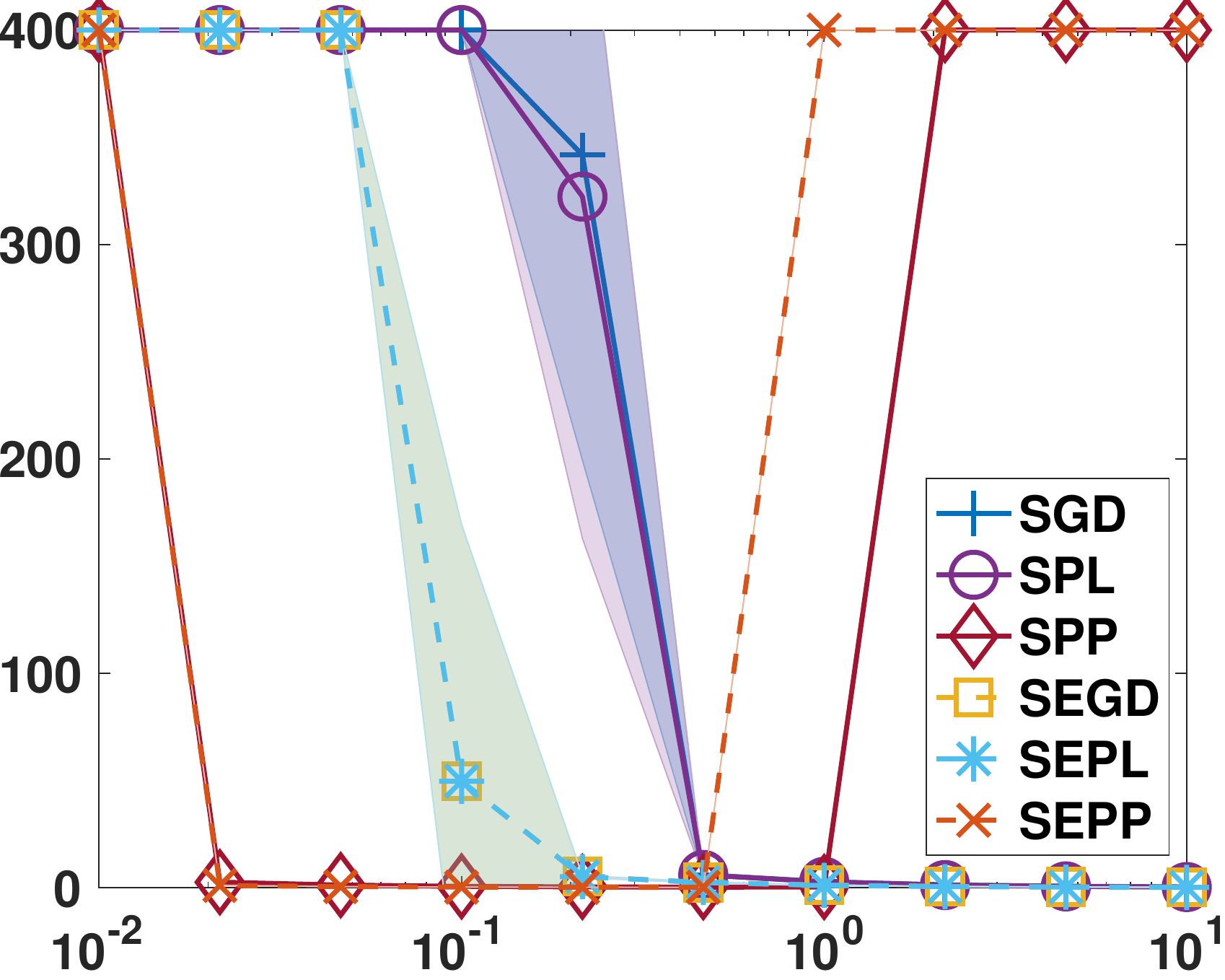}\includegraphics[scale=0.20]{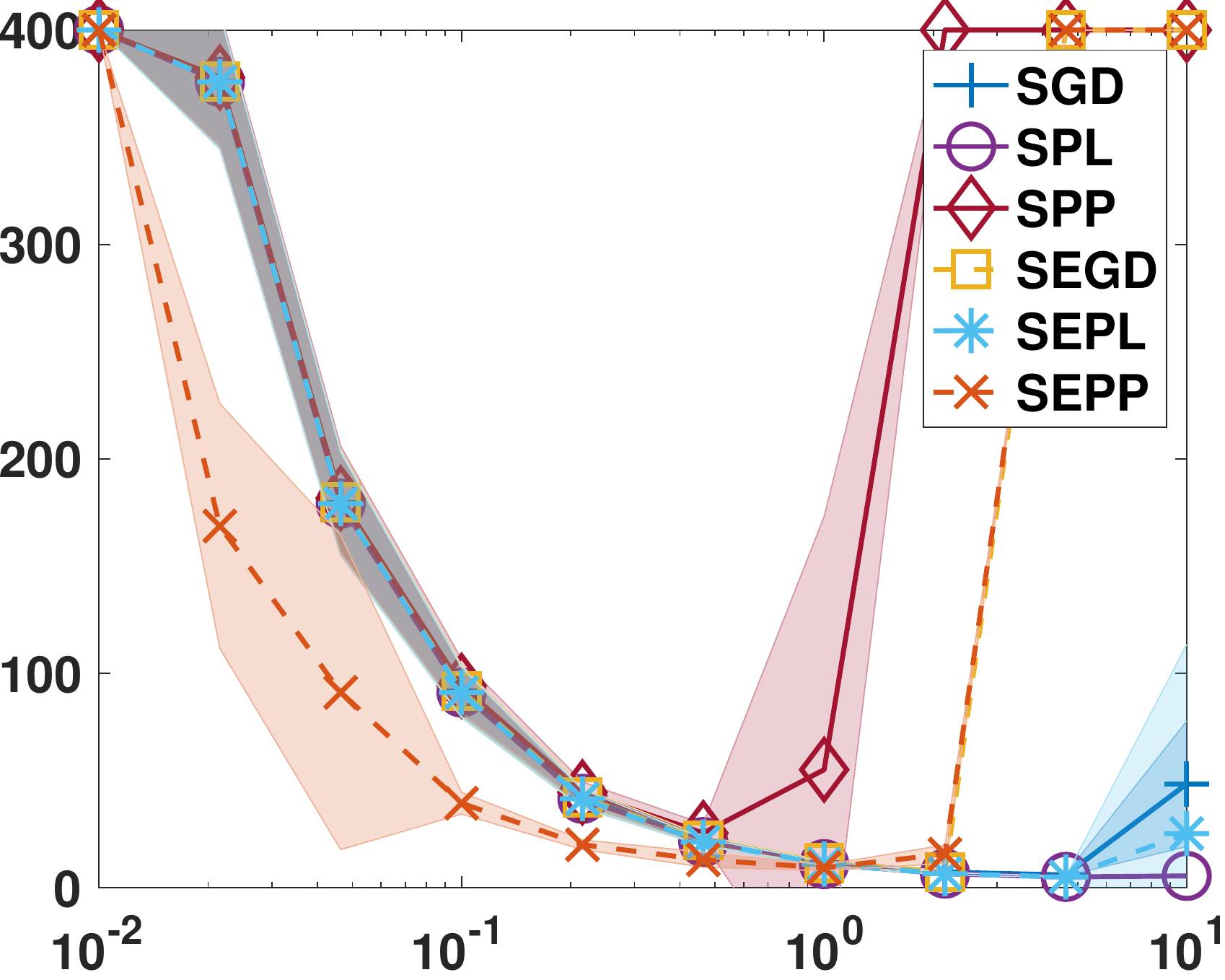}\includegraphics[scale=0.20]{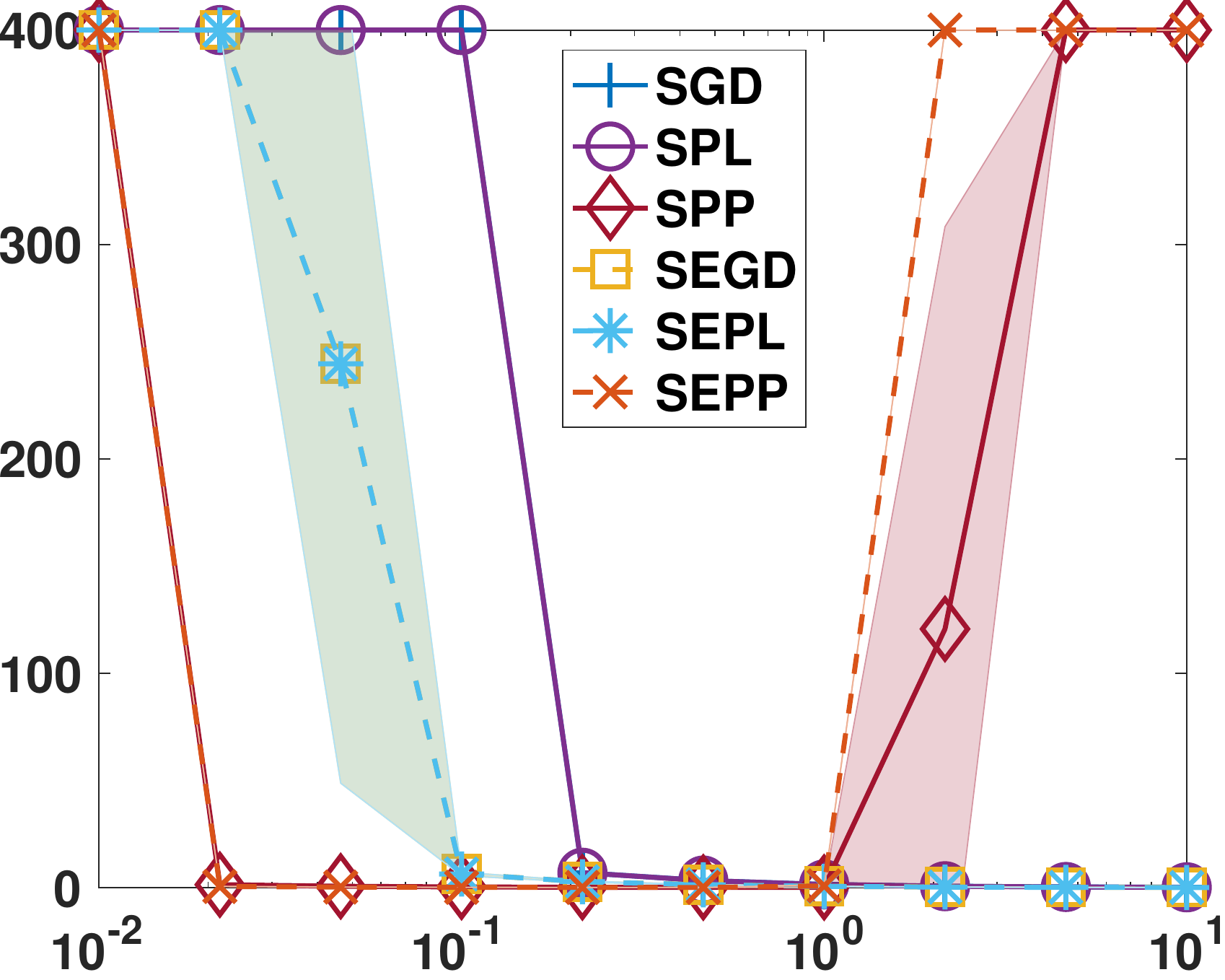}

	\caption{Epoch number on reaching desired accuracy vs. Stepsize $\alpha_0$. First row: $\beta = 0.2$. Second row: $\beta = 0.6$. From left to right: $ \kappa = 10, (p_\text{fail},m) = (0.2, 1), (0.2, 32), (0.3, 1), (0.3, 32)$.\label{fig:bd-itercount-stepsize}}
\end{figure*}

\newpage
\bibliographystyleapp{abbrvnat}
\bibliographyapp{citations}

\end{document}